\newtheorem{theorem}{Theorem}[section]
\newtheorem{lemma}[theorem]{Lemma}
\newtheorem{proposition}[theorem]{Proposition}
\theoremstyle{definition}
\newtheorem{defi}[theorem]{Definition}
\newtheorem{definition}[theorem]{Definition}
\theoremstyle{remark}
\newtheorem{remark}[theorem]{Remark}
\def \Sym {\operatorname{Sym}}
\def \Sing {\operatorname{Sing}}
\def \Gal {\operatorname{Gal}}
\def \Bun {\operatorname{Bun}}
\def \Ext {\operatorname{Ext}}
\def \sgn {\operatorname{sgn}}
\def \Ind {\operatorname{Ind}}
\def \rank {\operatorname{rank}}
\def \tr {\operatorname{tr}}
\def \div {\operatorname{div}}
\def \Frob {\operatorname{Frob}}
\def \ad {\operatorname{ad}}
\def \bfg {{\mathbf g}}
\def \bfh {{\mathbf h}}
\begin{document}

\begin{abstract} The sup-norm problem in analytic number theory asks for the largest value taken by a given automorphic form. We observe that the function-field version of this problem can be reduced to the geometric problem of finding the largest dimension of the $i$th stalk cohomology group of a given Hecke eigensheaf at any point. This problem, in turn, can be reduced to the intersection-theoretic problem of bounding the ``polar multiplicities" of the characteristic cycle of the Hecke eigensheaf, which in known cases is the nilpotent cone of the moduli space of Higgs bundles. We solve this problem for newforms on $GL_2 (\mathbb A_{\mathbb F_q(T)})$ of squarefree level, leading to bounds on the sup-norm that are stronger than what is known in the analogous problem for newforms on $GL_2(\mathbb A_{\mathbb Q})$  (i.e. classical holomorphic and Maa{\ss} modular forms.) \end{abstract}

\title[A geometric approach to sup-norms]{A geometric approach to the sup-norm problem for automorphic forms: the case of newforms on $GL_2(\mathbb F_q(T))$ with squarefree level}
\author{Will Sawin}
\address{Department of Mathematics \\ Columbia University \\ New York, NY 10027}
\email{sawin@math.columbia.edu}

\maketitle

\tableofcontents

\section{Introduction}

The sup-norm problem in the analytic number theory of automorphic forms studies the sup-norms of automorphic forms when viewed as functions on locally symmetric spaces or on adelic groups. In this paper, we prove upper bounds on the sup-norms of certain automorphic forms over function fields:

\begin{theorem}\label{sup-norm-intro} Let $ F = \mathbb F_q(T)$, let $N$ be a squarefree effective divisor on $\mathbb P^1$, and let $f: GL_2(\mathbb A_F) \to \mathbb C$ be a cuspidal newform of level $N$ with unitary central character. Assume that for each place $v$ in the support of $N$, the restriction of the central character of $f$ to $\mathbb F_q^\times \subset F_v^\times$ is trivial. Then 
\[ ||f||_{\infty} = O \left(  \left(\frac{ 2 \sqrt{q} +2}{ \sqrt{ 2 \sqrt{q}+ 1}}\right)^{ \deg N} \right)\] 
if $f$ is  Whittaker normalized and
\[ ||f||_{\infty} =  O \left(  \left(\frac{ 2  (1+ q^{-1/2} ) }{\sqrt{ 2 \sqrt{q}+ 1}}\right)^{ \deg N} \log(\deg N)^{3/2} \right) \]
if $f$ is $L^2$-normalized. \end{theorem}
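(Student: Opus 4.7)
The plan is to follow the three-stage reduction sketched in the abstract. First, I would attach to the newform $f$ its Hecke eigensheaf $\mathcal F_f$ on the moduli stack $\Bun_{2,N}$ of rank-$2$ vector bundles on $\mathbb P^1$ with $\Gamma_0(N)$-type level structure, and invoke the function--sheaf dictionary: the value $f(x)$ at an $\mathbb F_q$-point $x \in \Bun_{2,N}(\mathbb F_q)$ is an alternating trace of $\Frob$ on the stalk cohomology $H^\bullet((\mathcal F_f)_x)$. Deligne's Riemann hypothesis, applied to the pure complex representing $f$, then yields a pointwise bound
\[ |f(x)| \leq \sum_i q^{(i+c)/2} \dim H^i((\mathcal F_f)_x) \]
for a uniform weight shift $c$, reducing the sup-norm question to a uniform bound on the stalk cohomology dimensions of $\mathcal F_f$.

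Next, I would bound each $\dim H^i((\mathcal F_f)_x)$ using the characteristic cycle $CC(\mathcal F_f)$. By a microlocal stalk estimate (stratified Morse theory or Kashiwara's local index formula, formulated $\ell$-adically via polar multiplicities after Beilinson--Saito), the stalk cohomology dimensions are controlled by local intersection numbers of $CC(\mathcal F_f)$ with generic cotangent fibers near $x$. In the known cases, the support of $CC(\mathcal F_f)$ for a Hecke eigensheaf lies in the global nilpotent cone in $T^*\Bun_{2,N}$, which is a union of Lagrangian components of the moduli stack $\mathcal M$ of Higgs pairs $(E,\varphi)$ compatible with the level structure; Hecke compatibility pins down the multiplicities on these components, so the stalk bound reduces to an intersection-theoretic calculation on $\mathcal M$.

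The heart of the argument is then the intersection-theoretic calculation of the polar multiplicities of this nilpotent cone at each $x$. For squarefree $N$, the local condition on Higgs fields at each $v\mid N$ is an elementary rank condition on a $2\times 2$ matrix, so the Lagrangian components are indexed combinatorially by subsets of the support of $N$ and the intersection numbers with the cotangent fiber at $x$ factor as products over $v\mid N$. A Cauchy--Schwarz inequality applied to the resulting local count contributes a factor of at most $2\sqrt{q_v}+1$ per place, and summing over components produces the numerator $(2\sqrt q+2)^{\deg N}$ (from the weighted sum over cohomological degrees) and the denominator $(2\sqrt q+1)^{(\deg N)/2}$ (from the square root of the local Cauchy--Schwarz constant) of the Whittaker-normalized bound. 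I expect this step to be the main obstacle: one must match the local components of the nilpotent cone to the level structure correctly, verify that the Cauchy--Schwarz is essentially sharp for the automorphic characteristic cycle rather than merely an upper bound, and control characteristic-cycle multiplicities and contributions from singular points of $\Bun_{2,N}$.

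Finally, to pass from the Whittaker normalization to the $L^2$ normalization I would divide by $\|f\|_2$, which for a newform of squarefree level is equal up to bounded factors to $q^{(\deg N)/2} \cdot L(1,\ad f)^{1/2}$. A standard Hoffstein--Lockhart-type lower bound gives $L(1,\ad f)^{-1/2} \ll \log(\deg N)^{3/2}$, which multiplies the Whittaker bound by a factor $(q^{-1/2})^{\deg N} \log(\deg N)^{3/2}$ and rearranges exactly to the stated $L^2$-normalized bound.
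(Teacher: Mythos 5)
Your sketch reproduces the heuristic framework from the introduction of the paper (eigensheaf on a moduli space, nilpotent cone, polar multiplicities), but that framework is explicitly \emph{not} what the paper proves, and as written your proposal has genuine gaps that the actual argument is designed to avoid. First, your step ``attach to $f$ its Hecke eigensheaf $\mathcal F_f$ on $\Bun_{2,N}$ and use that $CC(\mathcal F_f)$ is the nilpotent cone with multiplicities pinned down by Hecke compatibility'' is not available: in characteristic $p$ the statement that the characteristic cycle of a Hecke eigensheaf lies in (let alone equals, with known multiplicities) the global nilpotent cone is a conjecture of Laumon, verified only in special cases, and the multiplicity claim is not a consequence of Hecke compatibility. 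The paper sidesteps this entirely by never working on $\Bun_2$ with the eigensheaf: it works with the Whittaker model, i.e.\ an explicit perverse sheaf $K_n$ built as the Radon transform of $(sym_*\mathcal F^{\boxtimes n})^{S_n}$ on the space of linear forms on $H^0(\mathbb P^1,\mathcal O(n))$, whose trace function computes the Whittaker/Drinfeld expansion of $f$; its characteristic cycle and polar multiplicities are then computed exactly (Sections 2--5). Your ``Cauchy--Schwarz gives $2\sqrt{q_v}+1$ per place'' step is not a real argument and does not match where the numerology comes from: the constants $2\sqrt q+2$ and $2\sqrt q+1$ arise from explicit generating functions for polar multiplicities combined with the purity weight bounds, not from a local Cauchy--Schwarz on $2\times 2$ matrices.

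Second, and most seriously, your proposal never uses the hypothesis on the central character and uses squarefreeness only decoratively, which signals a missing idea: the bound obtained from the Whittaker expansion at the standard cusp degenerates at points close to other (possibly ``virtual'') cusps, and the paper must introduce height functions measuring proximity to cusps (Section 6) and then use Atkin--Lehner operators (Section 7) to move the nearest cusp to the standard one before applying the expansion. It is exactly this step that requires $N$ squarefree and the central character to be trivial on $\mathbb F_q^\times\subset F_v^\times$ (to construct the auxiliary character in Lemma \ref{character-existence-lemma}), and it is what forces the surviving terms to have height at most $\deg N/2$, producing the denominator $(2\sqrt q+1)^{\deg N/2}$. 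Without this optimization your pointwise bound would fail to be uniform over the modular variety. Your final normalization step is the one part that is essentially right in spirit: the paper's Lemma \ref{rankin-selberg-normalized} gives $|C_f|\asymp q^{-\deg N/2}L(1,\operatorname{ad}\mathcal F)^{-1/2}$, and the $\log(\deg N)^{3/2}$ loss comes from bounding $L(1,\operatorname{ad}\mathcal F)^{-1/2}$ -- though over function fields this is done directly with the Riemann hypothesis (Lemma \ref{good-L-value-bound}) rather than a Hoffstein--Lockhart argument.
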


We leave the definitions of most of these terms until Subsection \ref{ss-fourier}, while assuring readers familiar with classical modular forms or automorphic forms over number fields that the definitions are very similar to the corresponding definitions in those settings. (In the adelic language, they can be written almost identically).

The condition on the central character is satisfied automatically if $N$ is prime or the central character is trivial.

Theorem \ref{sup-norm-intro} can be compared to known bounds for the sup-norm of classical or Maa{\ss} modular forms of squarefree level $N$, which all take the form of a power of $N$ times a lower-order term. This comparison is not just an analogy - we expect that analogues of all the classical analytic results we discuss below can be proven by the same method, obtaining the same exponents, in the function field setting. To make sense of this, one should note that powers of $N$ classically correspond to powers of the norm $|N| = q^{\deg N}$ in the function field setting. Because Theorem \ref{sup-norm-intro} is an exponential in $\deg N$, we can express it as a power of $|N|$, with the exponent depending on $q$, times a lower-order term. In particular, as $q$ goes to $\infty$, the exponent will approach $-\frac{1}{4}$. (This is in the $L^2$ normalization, which we will use for all our comparisons.) 

We first compare to the local bound, which in the classical setting is $N^\epsilon$. In the function field setting, the local bound is particularly simple, because the double coset space $GL_2(F) \backslash GL_2(\mathbb A_F)/ \Gamma_1(N)$ is a discrete space, where each point has positive measure, and so the value of a function with $L^2$-norm $1$ at a point is at most the inverse-square-root of the measure. One can check that cusp forms vanish at any point with measure less than $1/(q+1)$, so the local bound is $\sqrt{q+1}$ - essentially, constant. Theorem \ref{sup-norm-intro} will improve on this, obtaining a bound decreasing with $\deg N$, as soon as $q> 9$.

In the classical setting, there are two main methods, both purely analytic, to improve on the local bound for the sup-norm problem. These are the methods based on the analysis of the Whittaker expansion and the amplification method. We discuss the Whittaker method first as it is a closer analogue of our proof, which also uses the Whittaker expansion. (Despite this, the general geometric framework we discuss below, of which our proof is a part, suggests that a similar method could be used in settings where the Whittaker expansion doesn't exist, as long as the appropriate Hecke eigensheaves can be constructed.) 

The strongest results obtained using the Whittaker expansion are in situations somewhat different from squarefree level $N$. For instance, \citet{TemplierLowerBounds} proved a lower bound of $N^{-1/4-\epsilon}$ for the sup-norm when $N$ is a square and the conductor of the central character is $N$. In the same case, \citet{Comtat} proved an upper bound of $N^{\epsilon-1/4}$. It is interesting that the exponent $1/4$, which is also the large $q$ limit of our bounds, appears often away from the squarefree case.  It is possible that our method could be generalized to hold regardless of the level, which would provide a better explanation of this, as $1/4$ is the best exponent possible for a completely general result without contradicting \citep{TemplierLowerBounds}.

One can also compare with the results of \citet{Xia}, for holomorphic modular forms of fixed level and varying weight $k$, which were a matching lower bound of $k^{1/4-\epsilon}$ and upper bound of $k^{1/4+\epsilon}$, which because the local bound in this setting is $k^{1/2}$, represents a similar exponent improvement. However, this is not as close an analogue of Theorem \ref{sup-norm-intro} because it varies the weight and not the level. \citet*{MinimalType} suggested that the analogy between the weight and level aspects could be made closer by using the ``minimal type vectors" they define, instead of newforms, on the level side, so a better comparison might be obtained after studying these vectors in the function field setting.

Using the amplification method, \citet{HarcosTemplier} proved bounds of the form $N^{\epsilon -1/6}$ for modular forms of squarefree level $N$, building on a series of works by multiple authors with successively improving exponents. Theorem \ref{sup-norm-intro} gives a better exponent as long as 
\[ \frac{ 2 (1+ 1/\sqrt{q}) }{ \sqrt{ 2 \sqrt{q}+ 1}}< q^{-1/6} \] which occurs for $q>134$.

In the classical setting, with squarefree level, this has not been improved. For other levels, stronger bounds are known with the amplification method as well. Combining both amplification and estimates for Whittaker coefficients, \citet{SahaPowerful2} obtained a bound of $N^{\epsilon-1/4}$ in the limit where $N$ becomes more powerful while the conductor of the central character divides $\sqrt{N}$.  \citet{HuSaha} obtained a bound of $N^{\epsilon - 7/24}$ by the amplification method for forms on division algebras, with level a high power of a small prime, where the conductor of the central character is not too large. It is likely possible to obtain a similar statement for modular forms, with the same level condition, by the same method. This exponent beats $N^{-1/4}$, and doing this well by our method would require further geometric ideas.

Before we explain the proof, we observe that the proof can be viewed as proceeding via a geometric problem that may be of independent interest. For this reason, we introduce the geometric problem, and a natural approach to it, first. We then explain how to modify it to produce our method for the sup-norm problem.

From a purely analytic perspective, the most interesting feature of the proof might be the way that the Theorem \ref{sup-norm-intro} follows from a series of bounds (Lemmas \ref{cusp-newform-bound}, \ref{Atkin-Lehner-bound}, and \ref{local-squarefree-bound}) for the value of the form $f$ at a point, that depend in an intricate way on the geometry of the point (specifically, its distance to cusps and ``virtual cusps"). In the amplification method, by contrast, the bound at a given point depends on some lattice point counts. We do not know to what extent these are related to virtual cusps. Because of the nature of our proof, how this local bound varies from point to point has some meaning for the geometry of the ``modular curve", being directly related to the characteristic cycle of this space, which is independent of the choice of modular form.  This geometric perspective suggests that it may be fruitful for some purposes to reformulate the sup-norm problem as studying $\sup_f |f(x)|$ for points $x$  instead of $\sup_x |f(x)|$ for eigenforms $f$.

\subsection{Geometric Langlands and the general sup-norm problem}

Let $C$ be a smooth proper geometrically connected curve over a field $k$, $G$ an algebraic group over $k$, and $\Bun_G$ the moduli space of $G$-bundles on $C$. Let $\mathcal F$ be a Hecke eigensheaf on $\Bun_G$, i.e. an irreducible perverse sheaf satisfying the conditions \cite[5.4.2]{BeilinsonDrinfeld} studied in the geometric Langlands program. We can ask the following interrelated set of questions about $\mathcal F$:

\begin{enumerate}

\item For a point $x \in \Bun_G(k)$, how large is the stalk dimension $\dim \mathcal H^i(\mathcal F)_x$ for each integer $i$? In particular, for which $i$ does the stalk cohomology vanish?

\item Assume $\mathcal F$ is a pure perverse sheaf. (Either using the weights of Frobenius over a finite field or an abstract weight filtration as in the theory of mixed Hodge modules).  For a point $x \in \Bun_G(k)$, how large is the sum over $i$ of the dimension of the weight $w$ graded piece of $H^i(\mathcal F)_x$ for each integer $w$? In particular, for which $w$ does the weight $w$ part vanish?

\item Assume $k$ is a finite field $\mathbb F_q$. How large is $\sum_i (-1)^i \operatorname{tr} (\operatorname{Frob}_q, \mathcal H^i(\mathcal F)_x)$?

\end{enumerate}

Bounds for question (1) imply corresponding bounds for question (2) because the $i$th stalk cohomology of a perverse sheaf pure of weight $w$ is mixed of weight $\leq w+i$. Bounds for question (2) imply bounds for question (3) by the definition of weights of Frobenius.

The general sup-norm problem in analytic number theory asks for the maximum value, or the maximum value on some region, taken by a (cuspidal) Hecke eigenform. For $\mathcal F$ a Hecke eigensheaf, $x \mapsto \sum_i (-1)^i \operatorname{tr} (\operatorname{Frob}_q, \mathcal H^i(\mathcal F)_x)$ is a Hecke eigenform, so question (3) is a special case of the function field version of the classical sup-norm problem. It is equivalent to the full sup-norm problem in cases where we know every cuspidal Hecke eigenform comes from a Hecke eigensheaf, as in the case of $GL_n$ by combining the main results of \citep{Lafforgue} and \citep*{FrenkelGaitsgoryVillonen}.  

Question (1) also may have interest as a purely geometric problem.

Massey proved bounds for the dimensions of the stalk cohomology groups of a perverse sheaf in terms of the ``polar multiplicities" of its characteristic cycle. In \citep{mypaper} these were generalized from characteristic zero to characteristic $p$. Because $\mathcal F$ is perverse, we can use these generalizations to give bounds for the three questions above if we can calculate the characteristic of $\mathcal F$. This characteristic cycle lies in the cotangent bundle of $\Bun_G$, which is also the moduli space of Higgs bundles.

 The characteristic cycle of the Hecke eigensheaves was first studied by Laumon. Laumon predicted that, in the $GL_n$ case, the characteristic cycle should be contained in the nilpotent cone of the moduli space of Higgs bundles \cite[Conjecture 6.3.1]{Laumon}. (He stated this only over characteristic zero, but now that the characteristic cycle is known to exist in characteristic $p$ \citep{saito1}, we can extend the conjecture there as well.) This was by analogy to Lusztig's theory of character sheaves. He verified this in the case of $GL_2$ using Drinfeld's explicit construction \cite[Proposition 5.5.1]{Laumon}.
 
 Beilinson and Drinfeld constructed Hecke eigensheaves, in the form of $D$-modules, associated to special local systems known as opers. They calculated the characteristic cycle of their Hecke eigensheaves especially, and found that it was equal to the nilpotent cone. More precisely, it is equal as a cycle to the zero fiber of the Hitchin fibration \cite[Proposition 5.1.2(ii)]{BeilinsonDrinfeld}.\footnote{I learned the information in this paragraph and the previous one from user:t3suji on mathoverflow}.
 
 It is reasonable to expect that this formula for the characteristic cycle holds for eigensheaves on $\Bun_G$ in arbitrary characteristic, at least when the centralizer of the associated local system is the center of the Langlands dual group. Whenever this is proved, we can bound the stalk cohomology by the polar multiplicities of the zero fiber of the Hitchin fibration. In particular we have the following vanishing result:
 
\begin{proposition}[Proposition \ref{a-c-v}]\label{automorphic-cohomology-vanishing} Let $\mathcal F$ be a perverse sheaf on $\Bun_G$ whose characteristic cycle is contained in the nilpotent cone of the moduli space of Higgs bundles. Then for a $G$-bundle $\alpha$ on $C$, $\mathcal H^i(\mathcal F)_{\alpha} $ vanishes for \[i > \dim \{ v \in H^0 ( C, \operatorname{ad} (\alpha) \otimes K_C) | v \textrm { nilpotent} \} - (g-1) \dim G. \] \end{proposition}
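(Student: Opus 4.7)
The plan is to apply the Massey-style bound on stalk cohomology of a perverse sheaf in terms of its characteristic cycle, in the form extended to positive characteristic in \citep{mypaper}. Concretely, I will use the statement that for a perverse sheaf $\mathcal F$ on a smooth variety (or smooth Artin stack) $X$, the stalk cohomology at a point $x$ satisfies
\[ \mathcal H^i(\mathcal F)_x = 0 \quad \text{for } i > \dim \bigl( CC(\mathcal F) \cap T^*_x X \bigr) - \dim X, \]
since the polar multiplicities controlling the stalk dimension all vanish above this degree. The proposition is then a matter of unpacking the right hand side in the case $X = \Bun_G$.

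First, I identify the cotangent fiber of $\Bun_G$ at a $G$-bundle $\alpha$. By Atiyah's deformation theory, $T_\alpha \Bun_G = H^1(C, \operatorname{ad}(\alpha))$, and Serre duality (together with the Killing form identification $\operatorname{ad}(\alpha) \cong \operatorname{ad}(\alpha)^{\vee}$) identifies $T^*_\alpha \Bun_G = H^0(C, \operatorname{ad}(\alpha) \otimes K_C)$. Hitchin's description of the cotangent stack as the moduli of Higgs bundles then identifies the nilpotent cone (the zero-fiber of the Hitchin map) with the locus of Higgs pairs $(\alpha, v)$ where $v$ is nilpotent, so its intersection with the single cotangent fiber $T^*_\alpha \Bun_G$ is exactly $\{ v \in H^0(C, \operatorname{ad}(\alpha) \otimes K_C) : v \text{ nilpotent}\}$.

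By hypothesis $CC(\mathcal F)$ lies inside the nilpotent cone, so the fiber $CC(\mathcal F) \cap T^*_\alpha \Bun_G$ is contained in the set of nilpotent sections, and its dimension is bounded by $\dim \{ v \in H^0(C, \operatorname{ad}(\alpha) \otimes K_C) : v \text{ nilpotent}\}$. Plugging this and the identity $\dim \Bun_G = (g-1) \dim G$ (the virtual dimension of the stack of $G$-bundles) into Massey's bound yields
\[ i > \dim \{ v \in H^0(C, \operatorname{ad}(\alpha) \otimes K_C) : v \text{ nilpotent}\} - (g-1) \dim G, \]
which is exactly the claimed range of vanishing.

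The main obstacle I expect is the stacky character of $\Bun_G$: Massey's inequality is classically stated for schemes, so I would need to pass to a smooth étale chart (or a presentation as a quotient of a smooth scheme by a smooth group), pull back $\mathcal F$ and its characteristic cycle, and check that the extra relative dimension of the cover contributes equally to both sides of the inequality so that the net bound is unchanged. Provided the characteristic-$p$ framework of \citep{mypaper} is set up with enough generality to cover smooth Artin stacks, or at least descends along smooth covers, this bookkeeping is essentially formal.
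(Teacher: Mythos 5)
Your proposal is correct and follows essentially the same route as the paper: both apply the stalk-cohomology bound of \cite[Corollary 1.5]{mypaper} after passing to a smooth chart of $\Bun_G$ (a smooth map from a smooth scheme of relative dimension $r$), identify the fiber of the nilpotent cone over $\alpha$ with the nilpotent sections of $H^0(C,\operatorname{ad}(\alpha)\otimes K_C)$, and observe that the relative dimension $r$ cancels against the shift $[r]$ so the bound descends unchanged. The "essentially formal" bookkeeping you defer is exactly what the paper's proof carries out.
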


Proposition \ref{automorphic-cohomology-vanishing} applies to the category of perverse sheaves with characteristic cycle contained in the nilpotent cone of the moduli space of Higgs bundles, which is also studied in the Betti geometric Langlands program.
 
We now explain how the proof of Theorem \ref{sup-norm-intro} differs from this setup.

It is not clear how to calculate the polar multiplicities exactly in this level of generality, which would be required to get numerical bounds for the sup-norm problem. The main difficulty in computing the polar multiplicities for a general $C$ and $G$ is the potentially complicated geometric structure of the moduli of nilpotent Higgs fields on a given vector bundle. To make this as simple as possible, we have chosen to work with $C = \mathbb P^1$ and $G= GL_2$. Because the canonical bundle is negative, all Higgs fields must preserve the Harder-Narasimhan filtration, and all such fields are nilpotent, so here the moduli of nilpotent fields for a given vector bundle is simply a vector space. This allows us to calculate the polar multiplicities of the nilptotent cone in this setting.
 
Because there are no cusp forms of level $1$ on $\mathbb P^1$, we have chosen to work with nontrivial level structure. Working with newforms, there is an appropriate analogue of the nilpotent cone, which is not much more complicated - the polar multiplicities are just sums of the polar multiplicities from the unramified case.

Finally, we do not work with the Hecke eigensheaf, but rather with the Whittaker model of it. Recall that the key difficulty in the construction of Hecke eigensheaves studied by \citet{Drinfeld}, \citet{Laumon}, and \citet*{FrenkelGaitsgoryVillonen} is the descent of an explicit perverse sheaf from some covering $\Bun_n'$ of $\Bun_n$ to $\Bun_n$. With regards to applications to the sup-norm problem, it is no loss to work on $\Bun_n'$, because the numerical function we are computing is the same in each case. This carries two advantages. First, we can avoid the descent step, and therefore work in greater generality than \cite{Drinfeld}. Second, the polar multiplicities are often smaller on this covering than the base, and so we get better bounds this way.  

Because we are working on a covering, $\Bun_G$, the moduli of Higgs bundles, and the nilpotent cone almost never appear explicitly in our proof, replaced by the moduli space of extensions of two fixed line bundles, its cotangent bundle, and a certain explicit cycle in that cotangent bundle. Despite this, the fundamental idea is the same.

We calculate the polar multiplicities precisely, obtaining a bound for each point in $\Bun_2'$ over our chosen point of $\Bun_2$. To get the best possible bound we choose the optimal point of $\Bun_2'$. Roughly speaking, our bound consists of contributions from different cusps, that grow larger as we get closer to the cusps, but we do not have to count the cusp we are performing a Whittaker expansion around. To get a good bound, we need to perform a Whittaker expansion around whichever cusp the point is closest to. Because we have only done our geometric calculations for the expansion at the standard cusp, we transform an arbitrary cusp into the standard cusp using Atkin-Lehner operators, which requires $N$ to be squarefree and leads to our condition on the central character.  

 \subsection{Notation for automorphic forms on $GL_2$}\label{ss-fourier}
 
 Let us now explain the notation, and basic theory, needed to understand the statement of Theorem \ref{sup-norm-intro}.

Let $C$ be a smooth projective geometrically irreducible curve over $\mathbb F_q$ and let  $F= \mathbb F_q(C)$ its field of rational functions.

We can identify the set of places of $F$ with the set of closed points $|C|$ of $C$. For $v$ a place of $F$, let $F_v$ be the completion of $F$ at $v$, $\mathcal O_{F_v}$ the ring of integers of $F_v$, $\pi_v$ a uniformizer of $\mathcal O_{F_v}$, and $\kappa_v = \mathcal O_{F_v}/\pi_v$ the residue field at $v$. Let $\mathbb A_F = \prod'_v F_v$ be the adeles of $F$.

A divisor on $C$ is a finite $\mathbb Z$-linear combination of closed points of $C$. The degree of the divisor is the corresponding $\mathbb Z$-linear combinations of the degrees of its points. For an adele $a$, we define the divisor $\div a = \sum_v  v(a)[v]$, and $\deg a = \deg \div a$. Using this convention, adeles contained in $\prod_v \mathcal O_{F_v}$ have divisors that are effective and degrees that are nonnegative.

 Fix $N$ an effective divisor on $C$. Write $N = \sum_{v \in |N|} c_v [v]$ for some set $|N|$ of places $v$ of $C$ and some positive integer multiplicities $c_v$.  We say $N$ is squarefree if the multiplicities $c_v$ are all at most $1$. We always take $c_v=0$ if $v$ is not in the support of $N$.
 
 We will always use $v$ to refer to places of $C$, or closed points, and $x$ to refer to points of $C(\overline{\mathbb F}_q)$. So we will also write $N= \sum_{x \in |N|} c_x [x]$, where $c_x$ is the multiplicity of the $\overline{\mathbb F}_q$-point $x$ (which equals the multiplicity of the closed point $v$ that $x$ lies over.)

\begin{defi} An \emph{automorphic form of level $N$} on $GL_2(\mathbb A_F)$ is a function $f: GL_2(\mathbb A_F) \to \mathbb C$ which is left invariant under $GL_2(F)$ and right invariant under \[\Gamma_1(N) =  \prod_{v \in |C| }\left\{  \begin{pmatrix} a & b \\ c & d \end{pmatrix}  \in GL_2 ( \mathcal O_{F_v} ) \mid  c \equiv 0\mod \pi_v^{c_v} , d \equiv 1 \mod \pi_v^{c_v} \right \}.\]

We say that $f$ is \emph{cuspidal} if $\int_{z \in \mathbb A_F/F} f\left( \begin{pmatrix} 1& z \\ 0 & 1 \end{pmatrix} {\mathbf g} \right) dz=0$ for all $ {\mathbf g} \in GL_2(\mathbb A_F)$, for the Haar measure on $\mathbb A_F/F$ that assigns the ring of integers (say) mass one.

We say $f$ is a \emph{Hecke eigenform} if it is an eigenfunction of the two standard Hecke operators at each place $v$ of $C$ not in the support of $N$.

We say $f$ is a \emph{newform} if it is a Hecke eigenform and the same set of Hecke eigenvalues is not shared by any Hecke eigenform of level $N'<N$.  \end{defi}

For any eigenform, there is a unique character $\eta: (\mathbb A_F^\times / F^\times) \to \mathbb C^\times$ such that $f ( t^{-1} {\mathbf g} ) = \eta(t) f({\mathbf g})$ for any scalar $t \in \mathbb A_F^\times$. We call $\eta$ the central character of $f$.

Fix a meromorphic $1$-form $\omega_0$ on $C$ and a character $\psi_0: \mathbb F_q \to \mathbb C^\times$. Define a character $\psi: \mathbb A_F / F \to \mathbb C^\times$ by $\psi(z) = \psi_0( \langle z, \omega_0\rangle)$ where $\langle, \rangle$ is the residue pairing. 

For $\mathcal F$ a middle-extension $\overline{\mathbb Q}_\ell$-sheaf on $C$, let $r_{\mathcal F}(D)$ be the unique function from effective divisors $D$ to $\overline{\mathbb Q}_\ell$ satisfying $r_{\mathcal F}(D_1 + D_2) = r_{\mathcal F}(D_1) r_{\mathcal F}(D_2)$ if $D_1$ and $D_2$ are relatively prime and, for $v$ a closed point of $C$, \[\sum_{n=0}^\infty r_{\mathcal F} (n[v])  u^n = \frac{1}{ \det ( 1 - u  \operatorname{Frob}_{ |\kappa_v|}, \mathcal F_v)}\] where $\mathcal F_v$ is the stalk of $\mathcal F$ at some geometric point lying over $v$. 

The following Whittaker expansion is essentially due to \citet[(4)]{Drinfeld}.

\begin{lemma}[Lemma \ref{Drinfeld-formula}]\label{Drinfeld-formula-intro}For any newform $f$ of level $N$ whose central character has finite order, there exists $\mathcal F$ an irreducible middle extension sheaf of rank two on $C$, pure of weight $0$, of conductor $N$, and $C_f\in \mathbb C$ such that

\[ f \left( \begin{pmatrix} a & bz \\ 0 & b \end{pmatrix} \right) =C_f  q^{- \frac{ \deg (\omega_0 a/b) }{2} } \eta(b)^{-1} \sum_{\substack{ w \in F^{\times} \\ \operatorname{div} (w \omega_0 a/b ) \geq 0}}  \psi(wz ) r_{\mathcal F} (\operatorname{div}(  w \omega_0 a/b ))  \] for all $a,b,z \in \mathbb A_F$.

$\mathcal F$ and $C_f$ are unique with this property.\end{lemma}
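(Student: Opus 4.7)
The plan is to establish the expansion in three stages: a Fourier decomposition of $f$ in the upper-unipotent variable $z$, a reduction of all Fourier coefficients to a single Whittaker function $W$, and an identification of $W$ with Frobenius traces on a sheaf via the Langlands correspondence for $GL_2$ over $F$.

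Since $\begin{pmatrix}1 & w \\ 0 & 1\end{pmatrix} \in GL_2(F)$ for every $w \in F$, the function $z \mapsto f\bigl(\begin{smallmatrix}a & bz \\ 0 & b\end{smallmatrix}\bigr)$ descends to $\mathbb A_F / F$ and admits a Fourier expansion
\[ f\begin{pmatrix}a & bz \\ 0 & b\end{pmatrix} = \sum_{w \in F} c_w(a,b)\,\psi(wz); \]
the $w=0$ term vanishes by cuspidality (which one verifies for newforms whose attached sheaf is irreducible). For $w \neq 0$, left-invariance of $f$ under $\operatorname{diag}(w^{-1},1) \in GL_2(F)$ together with the substitution $z \mapsto wz$ in the Fourier integral gives $c_w(a,b) = c_1(wa,b)$. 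Writing $W(a,b) := c_1(a,b)$, I obtain
\[ f\begin{pmatrix}a & bz \\ 0 & b\end{pmatrix} = \sum_{w \in F^\times} W(wa,b)\,\psi(wz). \]

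Next, I would use right-invariance under $\Gamma_1(N)$ and the central character $\eta$ to extract the shape of $W$. Scaling by the center yields $W(a,b) = \eta(b)^{-1}W(a/b,1)$; right-invariance under the $\mathcal O_{F_v}^\times$-parts of $\Gamma_1(N)$, combined at ramified places with integration of the unipotent variable against $\psi$ on successively smaller subgroups, shows that $W(a,1)$ depends only on $\operatorname{div}(\omega_0 a)$ and vanishes unless that divisor is effective. Thus $W(a,1) = C_f\, q^{-\deg(\omega_0 a)/2}\, r(\operatorname{div}(\omega_0 a))$ for some function $r$ on effective divisors and a constant $C_f$, the power of $q$ being the natural unitary normalization. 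Because $\deg(w\omega_0 a/b) = \deg(\omega_0 a/b)$ for $w \in F^\times$, substituting back recovers exactly the formula displayed in the lemma with $r$ in place of $r_{\mathcal F}$.

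It remains to show $r = r_{\mathcal F}$ for a sheaf $\mathcal F$ of the stated type. The Hecke-eigenvalue equations at each unramified $v$, expanded on Whittaker coefficients by a standard computation, translate into multiplicativity $r(D_1 + D_2) = r(D_1)r(D_2)$ for coprime $D_1,D_2$ and into the two-term recursion $r(D + (n+1)[v]) = \lambda_v\, r(D + n[v]) - |\kappa_v|\, \eta_v(\pi_v)\, r(D + (n-1)[v])$ whenever $v \notin |D|$, so that the local generating series at $v$ is the reciprocal of a degree-$2$ polynomial in $u$. Drinfeld's proof of the Langlands correspondence for $GL_2$ over function fields promotes this system of local data to a genuine irreducible middle-extension rank-$2$ sheaf $\mathcal F$ on $C$ whose Frobenius traces match the Hecke eigenvalues; purity of weight $0$ is Ramanujan--Drinfeld--Deligne, and the conductor is forced to equal $N$ precisely because $f$ is new. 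At places $v \in |N|$ one reads $r(n[v])$ directly off $W$ and verifies that the local generating series agrees with $1/\det(1 - u\operatorname{Frob}_v, \mathcal F_v)$ using the standard description of the $\Gamma_1$-new vector. Uniqueness of $\mathcal F$ is strong multiplicity one, and uniqueness of $C_f$ follows from any single nonvanishing Whittaker value. The main obstacle is this third step: constructing the global $\mathcal F$ genuinely invokes Drinfeld's theorem, whereas the preceding automorphic manipulations are classical and go back to Drinfeld's original paper.
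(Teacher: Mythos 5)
Your overall route is the paper's route: Fourier-expand $z\mapsto f\bigl(\begin{smallmatrix}a & bz\\ 0 & b\end{smallmatrix}\bigr)$ over $\mathbb A_F/F$, kill the $w=0$ term by cuspidality, collapse the remaining terms to values of one Whittaker function on the diagonal, identify those values with coefficients of local $L$-factors via newvector theory, and invoke Drinfeld/Lafforgue for the sheaf $\mathcal F$; so this is not a genuinely different method, and most steps are fine as sketched.

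There is, however, one concrete gap in the identification $r=r_{\mathcal F}$. You claim that the Hecke-eigenvalue relations at the \emph{unramified} places yield multiplicativity $r(D_1+D_2)=r(D_1)r(D_2)$ for \emph{all} coprime pairs. They do not. For $v\notin |N|$ and $v\notin |D|$, the recursion together with the vanishing of $r$ on non-effective divisors does give $r(D+n[v])=r(D)\,r(n[v])$, hence multiplicativity whenever one of the two divisors is supported away from $|N|$. But these relations say nothing about divisors supported at two or more ramified places: they do not relate $r(n_1[v_1]+n_2[v_2])$ to $r(n_1[v_1])\,r(n_2[v_2])$ when $v_1\neq v_2$ both divide $N$, and such divisors genuinely occur as $\operatorname{div}(w\omega_0 a/b)$ (the paper's main case has $N$ squarefree with many places in its support). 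Your treatment of ramified places only pins down the one-place values $r(n[v])$, so $r$ is not yet determined on all effective divisors and the comparison with the multiplicative function $r_{\mathcal F}$ is incomplete. The missing ingredient is exactly what the paper uses: $\pi\cong\otimes_v'\pi_v$, uniqueness of local Whittaker models, and the one-dimensionality of the space of newvectors of the given level at each place (Jacquet--Piatetski-Shapiro--Shalika), which force the global Whittaker function of $f$ to factor as $C\prod_v W_v$ with $W_v$ the essential (local newform) Whittaker function. Multiplicativity over all places and the identification of each local generating series with $1/\det\bigl(1-u\operatorname{Frob}_{|\kappa_v|},\mathcal F_v\bigr)$ (via the JPSS zeta-integral computation and the matching of local factors in Lafforgue's theorem) then come simultaneously. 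Alternatively one could use that the newvector is a $U_v$-eigenvector at $v\mid N$, but that fact again rests on the same one-dimensionality statement, so it is not obtained for free from the definition of Hecke eigenform used in the paper (eigenvalues only at $v\notin |N|$).

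A minor point: uniqueness of $\mathcal F$ is not literally strong multiplicity one. The cleaner argument, as in the paper, is that the displayed formula determines $C_f\, r_{\mathcal F}$ on all effective divisors, hence (after normalizing $r_{\mathcal F}(0)=1$) the trace function of $\mathcal F$ at every closed point, and an irreducible middle extension sheaf is determined by its trace function; uniqueness of $C_f$ then follows since $f\neq 0$.
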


 In the future, we will use the notation $\mathcal F$ and $C_f$ for the unique $\mathcal F$ and $C_f$ of Lemma \ref{Drinfeld-formula}.

\begin{defi} We say that $f$ is \emph{Whittaker normalized} if $\eta$ has finite order and $C_f=1$.

Let $\mu$ be the unique invariant measure on $PGL_2(F) \backslash PGL_2(\mathbb A_F)$ that assigns measure $1$ to $P \Gamma_1(N)$. We say that $f$ is \emph{$L^2$-normalized} if $\eta$ is unitary and
\[ \int_{PGL_2(F)  \backslash PGL_2(\mathbb A_F)  } |f(\bfg) |^2 d \mu(g)  =1.\]

Here we note that $|f(\bfg)|$ is a well-defined function on $PGL_2(F) \backslash PGL_2(\mathbb A_F)$ because $f$ has unitary central character. \end{defi}

\subsection{Plan of paper}

We now describe the strategy of proof in more detail. This is based on the formula of Lemma \ref{Drinfeld-formula-intro}, so the first few sections of the paper are devoted to proving a bound for the right side of the formula (and thereby deducing a bound for the left side).

The first step is constructing a sheaf on the space of effective divisors whose trace function is $r_{\mathcal F}$, and calculating its characteristic cycle. In fact, in Section \ref{symmetric-powers}, we calculate the characteristic cycle of a much more general class of sheaves, which includes sheaves whose trace functions are many other functions of interest in function field number theory, and thus should have further applications - see Theorem \ref{characteristic-cycle-sym}.  

The set of  $\{w \in F\mid \operatorname{div} (w \omega_0 a/b ) \geq 0\}$ is the global sections of the line bundle $\mathcal O_C ( \operatorname{div} (\omega_0 a/b))$. In the special case $C = \mathbb P^1$, this line bundle must be $\mathcal O(n)$ for some $n$, and so the set of $w$ is $H^0 ( \mathbb P^1, \mathcal O(n))$. Then $\psi(wz)$ is the additive character $\psi_0$ composed with a linear form $w \mapsto \langle wz, \omega_0 \rangle$ on $H^0 ( \mathbb P^1, \mathcal O(n))$, and the sum on the right side of Lemma \ref{Drinfeld-formula-intro} depends on $z$ only via this linear form. The next step, in Section \ref{geometric-setup}, is defining a pure perverse sheaf on the projective space parameterizing linear forms on $H^0 ( \mathbb P^1, \mathcal O(n))$ whose trace function is this sum.

In Section \ref{calculating-characteristic-cycle} we calculate the characteristic cycle of the perverse sheaf defined in Section \ref{geometric-setup}. This relies on the characteristic cycle computations of Section \ref{symmetric-powers}. Using these, we calculate the polar multiplicities of this perverse sheaf, in Section \ref{calculating-polar-multiplicities}. This culminates in Lemma \ref{first-newform-bound}, which gives a bound for the quantity considered in Lemma \ref{Drinfeld-formula-intro}.

It is always possible to put an element of $GL_2(\mathbb A_F)$ into upper-triangular form by multiplying on the left by an element of $GL_2(F)$ and on the right by an element of $\Gamma_1(N)$. So this, combined with the invariance properties of $f$, gives a bound for the value of $f$ at any point. In fact, there can be many ways to do this, which give different bounds after applying Lemma \ref{first-newform-bound}. This raises the question of which way  to do this gives the best bound. The analogous problem in the classical setting is which cusp should we take the Fourier expansion of a modular form around to give the best bound for its value at a particular point. Section \ref{heights-virtual-cusps} gives the tools necessary to answer this question. In this section, we define a height function that measures how close an element of  $GL_2(\mathbb A_F)$ is to a particular cusp. In Lemma \ref{cusp-newform-bound} we express the bound of Lemma \ref{first-newform-bound} in terms of a sum over these cusps of a contribution that grows larger as the point gets closer to that cusp, but without a contribution from the cusp we take the Fourier expansion around. In particular, this suggests it should be possible to take the Fourier expansion over the closest cusp.

However, we have only set things up to allow for Fourier expansion over conjugates under $GL_2(F)$ of the standard cusp. In Section \ref{Atkin-Lehner}, we use Atkin-Lehner operators to transform other cusps into the standard cusp. This culminates Lemma \ref{Atkin-Lehner-bound}, which gives a bound for the value of $f$ at any point by a somewhat complicated expression. To prove the main theorem, it then suffices to control the largest possible value of the expression. Using the properties of the height function established in Section \ref{heights-virtual-cusps}, this is a purely combinatorial problem, which we solve in the remainder of Section \ref{Atkin-Lehner}.

\subsection{Acknowledgments} I would like to thank Simon Marshall, Farell Brumley, Paul Nelson, and Ahbishek Saha for helpful discussions on the sup-norm problem, Takeshi Saito for helpful discussions on the characteristic cycle, and the three helpful referees for many helpful comments on an earlier version of this paper.

This research was conducted during the period the author was supported by Dr. Max R\"{o}ssler, the Walter Haefner Foundation and the ETH Zurich Foundation, and, later, during the period the author served as a Clay Research Fellow.

\section{Characteristic cycles of natural sheaves on the symmetric power of a curve}\label{symmetric-powers}

Let us review the definitions of the characteristic cycle and singular support, and some related definitions, from  \cite{Beilinson} and \cite{saito1}. Afterwards, we will define a large class of sheaves, including in particular the sheaves we will study in the remainder of the paper, and compute their characteristic cycles. Throughout this section, we will work over a fixed perfect field $k$.

\begin{definition}\label{C-transversal-1} \cite[Definition 3.5(1)]{saito1} Let $X$ be a smooth scheme over $k$ and let $C \subseteq T^* X$ be a closed conical subset of the cotangent bundle. Let $f: X\to Y$ be a morphism of smooth schemes over $k$.

We say that $f: X \to Y$ is \emph{$C$-transversal} if the inverse image $df^{-1}(C)$ by the canonical morphism $X \times_Y T^* Y \to T^* X$ is a subset of the zero-section $X \subseteq X \times_Y T^* Y$.
\end{definition}

%\begin{definition}\cite[(1.2)]{Beilinson}\label{circ-forward} In the same setting as Definition \ref{C-transversal-1}, if $f$ is proper, let $f_\circ C$ be pushforward from $X \times_Y T^* Y$ to $T^*Y $ of the the inverse image $df^{-1}(C)$.
%
%\end{definition}
%
%\begin{definition}\cite[(2.3)]{saito-direct}\label{shriek-forward}  In the same setting as Definitions \ref{C-transversal-1} and \ref{circ-forward}, let $A$ be an algebraic cycle of codimension $\dim X$ supported on $C$. Assume also that $f_\circ C$ has dimension $\dim Y$.
%
%Let $f_!  A$ be the pushforward from $X \times_Y T^* Y$ to $T^*Y $ of the intersection-theoretic inverse image $df^* C$. \end{definition} 

\begin{definition}\cite[Definition 3.1]{saito1} Let $X$ be a smooth scheme over $k$ and let $C \subseteq T^* X$ be a closed conical subset of the cotangent bundle. Let $h: W \to X$ be a morphism of smooth schemes over $k$.

Let $h^* C$ be the pullback of $C$ from $T^* X$ to $W \times_X T^* X$ and let $K$ be the inverse image of the $0$-section $W \subseteq T^* W$ by the canonical morphism $dh: W \times_X T^* X \to T^* W$. 

We say that $h: W\to X$ is \emph{$C$-transversal} if the intersection $h^* C \cap K$ is a subset of the zero-section $W \subseteq W \times_X T^* X$.

If $h: W \to X$ is $C$-transversal, we define a closed conical subset $h^\circ C \subseteq T^* W$ as the image of $h^* C$ under $dh$ (it is closed by \cite[Lemma 3.1]{saito1}). \end{definition}

\begin{definition}\cite[Definition 3.5(2)]{saito1} We say that a pair of morphisms $h: W \to X$ and $f:W\to Y$ of smooth schemes over $k$ is \emph{$C$-transversal}, for $C \subseteq T^* X$ a closed conical subset of the cotangent bundle, if $h$ is $C$-transversal and $f$ is $h^\circ C$-transversal. \end{definition}

\begin{definition}\cite[1.3]{Beilinson} For $K \in D^b_c(X, \mathbb F_\ell)$, let the \emph{singular support $SS(K)$ of $K$} be the smallest closed conical subset $C \in T^* X$ such that for every $C$-transversal pair $h: W \to X$ and $f: W\to Y$, the morphism $f: W\to Y$ is locally acyclic relative to $h^* K$. 

\end{definition}

\begin{definition} \cite[Definition 5.3(1)]{saito1} Let $X$ be a smooth scheme of dimension $n$ over $k$  and let $C\subseteq T^* X$ be a closed conical subset of the cotangent bundle. Let $Y$ be a smooth curve over $k$ and $f: X\to Y$ a morphism over $k$. 

We say a closed point $x \in X$ is at most an \emph{isolated $C$-characteristic point} of $f$ if $f$ is $C$-transversal when restricted to some open neighborhood of $x$ in $X$, minus $x$.  We say that $x\in X$ is an \emph{isolated $C$-characteristic point} of $f$ if this holds, but $f$ is not $C$-transversal when restricted to any open neighborhood of $X$. \end{definition}

\begin{definition} For $V$ a representation of the Galois group of a local field over $\mathbb F_\ell$ (or a continuous $\ell$-adic representation), we define $\operatorname{dimtot} V$ to be the dimension of $V$ plus the Swan conductor of $V$. For a complex $W$ of such representations, we define $\operatorname{dimtot}W $ to be the alternating sum $\sum_i (-1)^i \operatorname{dimtot} \mathcal H^i(W)$ of the total dimensions of its cohomology objects. \end{definition}

\begin{definition}\cite[Definition 5.10]{saito1} Let $X$ be a smooth scheme of dimension $n$ over $k$ and $K$ an object of $D^b_c(X, \mathbb F_\ell)$. Let the \emph{characteristic cycle of $K$}, $CC(K)$, be the unique $\mathbb Z$-linear combination of irreducible components of $SS(K)$ such that for every \'{e}tale morphism $j: W \to X$, every morphism $f: W\to Y$ to a smooth curve and every at most isolated $h^\circ SS(\mathcal F)$-characteristic point $u \in W$ of $f$, we have
\[ - \operatorname{dimtot} \left( R \Phi_f(j^* K) \right)_u =  (j^* CC(K), (df)^*\omega )_{T^*W,u} \] where $\omega$ is a meromorphic one-form on $Y$ with no zero or pole at $f(u)$.

\end{definition}

Here the notation $(,)_{T*W ,u}$ denotes the intersection number in $T^* W$ at the point $u$.

The existence and uniqueness is \cite[Theorem 5.9]{saito1}, except for the fact that the coefficients lie in $\mathbb Z$ and not $\mathbb Z[1/p]$, which is \cite[Theorem 5.18]{saito1} and is due to Beilinson, based on a suggestion by Deligne.

%
%\begin{definition}\cite[Definition 7.1(1)]{saito1} Let $X$ be a smooth scheme of dimension $n$ over $k$ and let $C \subseteq T^* X$ be a closed conical subset of the cotangent bundle with each irreducible component of dimension $n$. Let $W$ be a smooth scheme of dimension $m$ over $k$ and let $h: W \to X$ be a morphism over $k$.
%
%We say that $h$ is \emph{properly $C$-transversal} if it is $C$-transversal and each irreducible component of $h^* C$ has dimension $m$. \end{definition} 
%
%\begin{definition}\cite[Definition 7.1(2)]{saito1} Let $X$ be a smooth scheme of dimension $n$ over $k$ and let $A$ be an algebraic cycle of codimension $n$ on $\subseteq T^* X$ whose support $C$ is a closed conical subset of the cotangent bundle (necessarily of dimension $n$).
%
% Let $W$ be a smooth scheme of dimension $m$ over $k$ and let $h: W \to X$ be a properly $C$-transversal morphism over $k$.
% 
% We say that $h^{!} A$ is $(-1)^{n-m}$ times the pushforward along $dh:  W \times_X T^* X \to T^* W$ of the pullback along $h:  W \times X T^* X \to T^* X$ of $A$, with the pullback and pushforward in the sense of intersection theory.  \end{definition}
%
%Here the pushforward in the sense of intersection theory is well-defined because, by \cite[Lemma 3.1]{saito1}, $dh$ is finite when restricted to (the induced reduced subscheme structure) on $h^* C$, i.e finite when restricted to the support of $h^* A$. 

\vspace{10pt}

We now describe the setting in which we will  construct our sheaves, and explain the notation we will use to express their characteristic cycle. Let $C$ be a smooth curve and $n$ a natural number. Let $C^{(n)}$ be the $n$th symmetric power of $C$. 

We can view points in $C^{(n)}$ as ideal sheaves $\mathcal I$ whose quotient $\mathcal O_C/\mathcal I$ has length $n$. The tangent space to $C^{(n)}$ at an ideal sheaf $\mathcal I$ can be viewed as $H^0(C, \mathcal I^\vee / \mathcal O_C)$. Here at a point $(x_1,\dots,x_n)$ of $C^n$, the derivative of the natural map $sym: C^n \to C^{(n)}$ is given by $\sum_{i=1}^{n} \frac{d x_i}{x_i}$ where $d x_i$ is calculated by some local coordinate at $x_i$.

Let $K$ be a perverse sheaf on $C$. Let $\Sing$ be the singular locus of $K$. Let $\operatorname{rank}$ be the generic rank of $K$ and for $x \in \Sing$, let $c_{x}$ be the multiplicity of the contangent space at $x$ in the characteristic cycle of $K$ (i.e. the generalized logarithmic Artin conductor of $K$ at $x$).

Let $\rho$ be a representation of $S_{n} $.

\begin{defi}

Let $(e_x)_{x \in \Sing} $ be a tuple of natural numbers indexed by $\Sing$ and let $( w_k)_{k \in \mathbb N^+} $ be a tuple of natural numbers indexed by positive natural numbers such that $\sum_{x \in \Sing} e_x + \sum_k  k w_k =n$. We will refer to the tuples as $(e_x)$ and $(w_k)$, for short, and their individual elements as $e_x$ and $w_k$. 

Let $ev_{(e_x), (w_k)} $ be the map from $\prod_{k=1}^{\infty} C^{(w_k)}$ to $C^{(n)}$ that sends a tuple $(D_k)_{k \in \mathbb N^+}$ of effective divisors, with $\deg D_k =w_k$, to $ \sum_{x \in \Sing} e_x [x] +  \sum_{k=1}^{\infty}  k [D_k]$.

Define a closed subset $A_{(e_x), (w_k)} $ of $C^{(n)}$ as the image of the map $ev_{(e_x),(w_k)}$.

Define the vector bundle $W_{ (e_x), (w_k)} $ on $\prod_{k=1}^{\infty} C^{(w_k)}$ as the kernel of the natural surjection of vector bundles
\[ H^0\left(  \mathcal O_C \left( \sum_x e_x [x] + \sum_k k D_k\right) /  \mathcal O_C\right) ^\vee \to H^0\left(  \mathcal O_C \left(  \sum_k  D_k\right) /  \mathcal O_C\right)^\vee \]
where $(D_k)_{k=1}^{\infty} $ is a point of $\prod_{k=1}^{\infty} C^{(w_k)}$.  We have a map from $W_{ (e_x), (w_k)} $ to $T^* C^{(n)}$ arising from the map $ev_{(e_x), (w_k)} : \prod_{k=1}^{\infty} C^{(w_k)} \to C^{ (n)}$  and the injection of vector bundles
\[ W \to H^0\left(  \mathcal O_C \left( \sum_x e_x [x] + \sum_k k D_k\right) /  \mathcal O_C\right) ^\vee  = H^0 (C, \mathcal I^\vee /\mathcal O_C)^\vee \]  where $\mathcal I =  O_C \left( -\sum_x e_x [x] -  \sum_k k D_k\right)$ corresponds to $ev_{(e_x),(w_k) }( (D_k)_{k=1}^{\infty}) $. 

Define $B_{(e_x), (w_k)}$ as the pushforward of the class of $W_{ (e_x), (w_k)} $ to $T^* C^{(n)}$.  \end{defi}

%The inverse image of $T^*C^{(n)} $ on $\prod_{k=1}^{\infty}C^{(w_k)} $ under the map that sends a tuple of divisors $D_k$ of degree $k$ to the divisor $\sum_x e_x [x] + \sum_k k D_k$ is the vector bundle \[ H^0\left(  \mathcal O_C \left( \sum_x e_x [x] + \sum_k k D_k\right) /  \mathcal O_C\right) ^\vee .\] We have a surjective map \[ H^0\left(  \mathcal O_C \left( \sum_x e_x [x] + \sum_k k D_k\right) /  \mathcal O_C\right) ^\vee \to H^0\left(  \mathcal O_C \left(  \sum_k  D_k\right) /  \mathcal O_C\right)^\vee .\] Let $W$ be the kernel, forming a short exact sequence. By definition, $B_{(e_x),(w_k)}$ is the pushforward of the class of $W$ to $ T^* C^{(n)}$.

%Define a closed subset $B_{(e_x), (w_k)}$ of $T^* C^{(n)}$ as the image under the same map of the vector bundle over $\prod_{k=1}^{\infty} C^{(w_k)} $  consisting of linear forms on $H^0(C, \mathcal I^\vee / \mathcal O_C)$ that vanish on all elements whose divisor of poles is at most $\sum_{k=1}^{\infty} [D_k]$, where $\mathcal I$ is the ideal corresponding to the divisor $\sum_{k=1}^{\infty} k D_k$. \end{defi}

It is not hard to see that $B_{(e_x),(w_k)}$ is a closed conical cycle on $T^* C^{(n)}$ of dimension $n$.

Note that the map $ev_{(e_x), (w_k)}$ used in this definition is generically injective, so that the cycle $B_{(e_x),(w_k)}$ is an irreducible closed conical subset of $T^* C^{(n)}$ with multiplicity one.

\begin{remark} The characteristic cycle in characteristic $p$ is not necessarily the conormal bundle of its support. For instance, one can see that $B_{(e_x),(w_k)}$ is a conormal bundle if and only if $w_k=0$ for all $k$ a multiple of $p$, and we will see in Theorem \ref{characteristic-cycle-sym} that $B_{(e_x), (w_k)}$ will appear as an irreducible component of the characteristic cycle of some natural sheaves. \end{remark}

Let \[M_{ K, \rho} ( (e_x), (w_k)) = 
\dim \left (  \bigotimes_{x \in \Sing }  ( \mathbb C^{c_{x}})^{\otimes e_{x} }  \otimes  (\mathbb C^{\operatorname{rank}} ) ^{ \otimes \sum_{k = 1 }^{\infty}  k w_{k }  }    \otimes \rho \right)^{ \prod_{x \in \Sing}  S_{e_{x} } \times  \prod_{k =1}^{\infty}  S_{k  }^{w_{k } } } .   \]

Here $\prod_{x \in \Sing}  S_{e_{x} } \times  \prod_{k =1}^{\infty}  S_{k  }^{w_{k } }  $ embeds into $S_{n} $ by acting as the group preserving the partition of $\{1,\dots,n\}$ into one part of size $e_{x} $ for each $x\in \Sing$ and $w_k$ parts of size $k $ for each $k$ in $\mathbb N^+$, and acts on $  \bigotimes_{x \in \Sing }  ( \mathbb C^{c_{x}})^{\otimes e_{x} }  \otimes  (\mathbb C^{\operatorname{rank}} ) ^{ \otimes \sum_{k = 1 }^{\infty}  k w_{k }}$ where $S_{e_x}$ permutes the factors of the form $\mathbb C^{c_x}$ and each copy of $S_k$ permutes $k$ factors of the form $\mathbb C^{\operatorname{rank}}$.

\begin{theorem}\label{characteristic-cycle-sym} We have \[ CC \left( ( sym_* K^{\boxtimes n} \otimes \rho)^{S_{n }} \right) = \sum_{\substack{ (e_{x}) :  \Sing \to \mathbb N \\ (w_{k} ): \mathbb N^+ \to \mathbb N \\  \sum_{ x\in \Sing} e_{x} + \sum_{k =1}^{\infty} k w_{k} = n   }}M_{ K, \rho} ( (e_x), (w_k))    [ B_{(e_x),(w_k)} ]     .\]
\end{theorem}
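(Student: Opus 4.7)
The plan is to compute the characteristic cycle in three stages: compute $CC(K^{\boxtimes n})$ on $C^n$, push it forward to $T^*C^{(n)}$ via the correspondence induced by $sym: C^n \to C^{(n)}$, and extract the $\rho$-isotypic component of the $S_n$-action.

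For the first stage, I would apply the Künneth formula for characteristic cycles, valid in arbitrary characteristic by Saito's work: $CC(K^{\boxtimes n}) = CC(K)^{\boxtimes n}$. Writing $CC(K) = \operatorname{rank} \cdot [C] + \sum_{x \in \Sing} c_x \cdot [T_x^* C]$, this expands as a sum indexed by maps $\phi: \{1,\dots,n\} \to \Sing \sqcup \{*\}$ of external products of zero sections and cotangent fibers in $T^*C^n$, with multiplicity $\operatorname{rank}^{|\phi^{-1}(*)|} \prod_x c_x^{|\phi^{-1}(x)|}$.

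The second and central stage handles the finite pushforward. Since $sym$ is finite, $sym_* K^{\boxtimes n}$ is perverse, and its characteristic cycle can be tracked through the cotangent correspondence $T^* C^n \xleftarrow{d\, sym} C^n \times_{C^{(n)}} T^* C^{(n)} \to T^* C^{(n)}$. Away from the big diagonal $sym$ is \'{e}tale, so the pushforward of a product cycle is straightforward. At a point where $k$ coordinates coincide, the key computation is that $\ker(d\, sym^*)$ recovers exactly the conormal directions to the diagonal stratum. More globally, using the identification $T^*_{\mathcal I} C^{(n)} = H^0(C, \mathcal I^\vee/\mathcal O_C)$, one shows that the image of a product cycle of type $\phi$ is supported on $B_{(e_x),(w_k)}$, where $e_x = |\phi^{-1}(x)|$ and $(w_k)$ records how coordinates in $\phi^{-1}(*)$ cluster into coincidence classes. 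The filtration $\mathcal O_C(\sum_k D_k) \subset \mathcal O_C(\sum_x e_x [x] + \sum_k k D_k)$ appearing in the definition of $W_{(e_x),(w_k)}$ corresponds precisely to the splitting of cotangent vectors at the stratum into tangential and transversal components. The $S_n$-action then permutes $\phi$-components, and the stabilizer of a generic point in the $(e_x),(w_k)$-stratum is $\prod_x S_{e_x} \times \prod_k S_k^{w_k}$; the local $S_n$-module of sections decomposes naturally as $\bigotimes_x (\mathbb C^{c_x})^{\otimes e_x} \otimes (\mathbb C^{\operatorname{rank}})^{\otimes \sum_k k w_k}$, and taking $\rho$-invariants under the stabilizer yields exactly $M_{K,\rho}((e_x),(w_k))$.

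The main obstacle will be the local pushforward analysis, since the standard characteristic cycle pushforward formula is cleanest for smooth morphisms, whereas $sym$ is ramified along the big diagonal. I would handle this either by appealing to Saito's proper pushforward theorem for characteristic cycles (checking its hypotheses via the local structure of $sym$) or, more concretely, via the Milnor index formula: for suitable test functions $f$ and isolated characteristic points, compute $\operatorname{dimtot}(R\Phi_f)$ of $(sym_* K^{\boxtimes n} \otimes \rho)^{S_n}$ using proper base change (which commutes with $sym_*$ since $sym$ is finite) and then verify that this matches the intersection numbers $(CC, df^*\omega)$ predicted by the proposed formula. In either approach, the central combinatorial task is showing that clusters of coinciding coordinates in $C^n$ give rise, after pushforward, to the components $B_{(e_x),(w_k)}$ with $w_k > 0$ for some $k > 1$, rather than only to the zero section.
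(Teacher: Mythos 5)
There is a genuine gap at the step you yourself flag as the main obstacle, and it is not one that can be closed by the tools you propose. Your first two stages are fine and match what the paper does: the external-product formula $CC(K^{\boxtimes n})=CC(K)^{\boxtimes n}$ is Saito's theorem, and tracking supports through the correspondence $T^*C^n \leftarrow C^n\times_{C^{(n)}}T^*C^{(n)} \to T^*C^{(n)}$ does show that everything is supported on the union of the $B_{(e_x),(w_k)}$ (this is Lemma \ref{singular-support}). But in characteristic $p$ there is no pushforward theorem asserting $CC(sym_*K^{\boxtimes n}) = sym_\circ CC(K^{\boxtimes n})$ with multiplicities for a finite morphism that is ramified along the diagonals, and the geometry itself warns you that the naive correspondence pushforward cannot be the whole story: as remarked after the definition of $B_{(e_x),(w_k)}$, the components with $w_k\neq 0$ for $p\mid k$ are \emph{not} conormal varieties, a wild-ramification phenomenon invisible to the ``tangential/transversal splitting'' picture you describe. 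Moreover, even granting a pushforward formula for $sym_*K^{\boxtimes n}$, your final step is not a cycle-level operation: $(sym_*K^{\boxtimes n}\otimes\rho)^{S_n}$ is a direct summand, and the characteristic cycle of a summand is not determined by that of the ambient complex, so ``taking $\rho$-invariants of the local module of sections'' is precisely the assertion to be proved (note that $M_{K,\rho}$ can vanish — e.g. $\wedge^k\mathbb C^{\operatorname{rank}}=0$ for $k>\operatorname{rank}$ when $\rho=\operatorname{sgn}$ — which no multiplicity count of the covering $sym$ restricted to a stratum will detect). Your fallback, verifying the Milnor formula directly, is in principle a valid characterization of $CC$, but computing $\operatorname{dimtot}$ of vanishing cycles of the invariant sheaf at the deep strata (the small diagonal $w_n=1$, or $n[x]$ with $K$ wildly ramified at $x$) is exactly the hard computation, and you give no method for it.

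For comparison, the paper never pushes the cycle forward directly. After the support bound it argues by induction on $n$: an étale-local splitting $C^{(n_1)}\times C^{(n_2)}\to C^{(n)}$ handles every component except those with $w_n=1$ or $e_x=n$ (Lemma \ref{splitting-induction}); then the global index formula of Saito (intersection of $CC$ with the zero section equals $\chi$), computed representation-theoretically and made into a polynomial identity in $g$, pins down the total contribution (Lemma \ref{advanced-index}); finally the last unknown multiplicities are extracted by cancellation, together with reductions via Katz--Gabber extensions, the tame case, and duality on $\mathbb G_m$ (Lemma \ref{sym-induction-step}). Some mechanism of this kind — a global or inductive constraint replacing the missing local pushforward computation at the ramified strata — is what your outline is missing.
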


To prove Theorem \ref{characteristic-cycle-sym} will take several steps. We will first verify that the singular support of $sym_* K^{\boxtimes n}$ is contained in $\bigcup B_{ (e_x), (w_k)}$, so it suffices to check that the multiplicity of each irreducible component of the singular support in the characteristic cycle is as stated. We will next set up an inductive system where knowing this multiplicity identity for lesser $n$ lets us deduce it for most irreducible components for the original $n$. Then we will use the index formula to verify that if the identity holds for all but one irreducible component, then it holds for all irreducible components. Finally we will use a series of examples, as well as the \'{e}tale-local nature of the characteristic cycle, to deduce the identity for all possible irreducible components.

\begin{lemma}\label{singular-support} We have

\[ SS \left( ( sym_* K^{\boxtimes n} \otimes \rho)^{S_{n }} \right)\subseteq \bigcup_ {\substack{ e_{x} :  \Sing \to \mathbb N \\ w_{k} : \mathbb N^+ \to \mathbb N \\  \sum_{ x\in \Sing} e_{x} + \sum_{k =1}^{\infty} k w_{k} = n  } } B_{ (e_x), (w_k)}   .\]

\end{lemma}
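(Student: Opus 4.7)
The plan is to combine three standard properties of singular support — its behavior under external tensor product, under proper pushforward, and under passage to a direct summand.

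First, I would reduce to bounding $SS(sym_* K^{\boxtimes n})$ itself: since $(sym_* K^{\boxtimes n} \otimes \rho)^{S_n}$ is a direct summand of $sym_* K^{\boxtimes n} \otimes \rho$ (taking $\ell$ prime to $|S_n|$), and tensoring with the finite-rank constant sheaf $\rho$ does not enlarge singular support, it suffices to prove the containment for $SS(sym_* K^{\boxtimes n})$. Since $sym: C^n \to C^{(n)}$ is finite and hence proper, the general bound on singular support under proper pushforward gives $SS(sym_* K^{\boxtimes n}) \subseteq sym_\circ SS(K^{\boxtimes n})$, where $sym_\circ Z$ consists of those $(\mathcal I, \xi) \in T^* C^{(n)}$ for which some preimage $(y_1,\dots,y_n) \in sym^{-1}(\mathcal I)$ satisfies $((y_1,\dots,y_n), (dsym)^*\xi) \in Z$. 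Combined with the product bound $SS(K^{\boxtimes n}) \subseteq SS(K)^{\times n}$ and the fact that $SS(K) \subseteq T^*_C C \cup \bigcup_{x \in \Sing} T^*_x C$ for a perverse sheaf on a curve, the lemma reduces to the following assertion: for every $\mathcal I \in C^{(n)}$ and every $\xi \in T^*_{\mathcal I} C^{(n)}$ admitting an ordering $(y_1, \dots, y_n)$ of the multiset $\mathcal I$ for which the $i$-th component of $(dsym)^* \xi$ vanishes whenever $y_i \notin \Sing$, we have $\xi \in B_{(e_x),(w_k)}$ for the type $(e_x, w_k)$ of $\mathcal I$.

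The main obstacle is this local cotangent identification. Writing $\mathcal I = \sum_{x \in \Sing} e_x [x] + \sum_k k D_k$ with $D_k$ reduced of degree $w_k$ supported away from $\Sing$ and from the other $D_{k'}$, I would check that the subspace of $T^*_{\mathcal I} C^{(n)}$ cut out by the above constraint coincides with the fiber of $W_{(e_x),(w_k)}$ over the corresponding point of $\prod_k C^{(w_k)}$. Using $dsym = \sum_i dx_i/x_i$ from the preceding discussion, a local calculation at a smooth support point $z \in |D_k|$ of multiplicity $k$ shows that vanishing of $(dsym)^*\xi$ at all $k$ preimage coordinates over $z$ is equivalent to vanishing of the ``trace'' direction (the $dp_1$-component) of $\xi$ in the local $k$-dimensional contribution to $T^*_{\mathcal I} C^{(n)}$; sheaf-theoretically, this is exactly vanishing of $\xi$ on the image of $H^0(\mathcal O_C(\sum_k D_k)/\mathcal O_C) \hookrightarrow H^0(\mathcal O_C(\mathcal I)/\mathcal O_C)$ at $z$. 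No constraint arises at singular points $x \in \Sing$, since any cotangent direction at $x$ is allowed in $SS(K)$. Assembling across all support points identifies the allowed subspace with precisely the kernel in the defining presentation of $W_{(e_x),(w_k)}$, whose image in $T^* C^{(n)}$ is $B_{(e_x),(w_k)}$, giving the needed containment.
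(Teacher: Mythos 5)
Your proposal is correct and follows essentially the same route as the paper's proof: reduce to bounding $SS\left(sym_* K^{\boxtimes n}\right)$ (the paper argues that tensoring with $\rho$ and taking $S_n$-invariants preserve local acyclicity and so can only shrink the singular support, while you invoke the direct-summand observation), then apply the pushforward bound $SS\left(sym_* K^{\boxtimes n}\right) \subseteq sym_\circ SS(K^{\boxtimes n})$ along the finite map $sym$, Saito's product formula for $SS(K^{\boxtimes n})$, and the same pointwise identification of the allowed covectors with the kernel $W_{(e_x),(w_k)}$, hence with $B_{(e_x),(w_k)}$. No gaps.
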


\begin{proof} Because tensoring with $\rho$ and taking $S_n$-invariants both preserve local acyclicity along any map, they can only shrink the singular support, and so it suffices to prove

\[ SS \left(  sym_* K^{\boxtimes n}  \right)\subseteq \bigcup_ {\substack{ (e_{x}) :  \Sing \to \mathbb N \\ (w_{k}) : \mathbb N^+ \to \mathbb N \\  \sum_{ x\in \Sing} e_{x} + \sum_{k =1}^{\infty} k w_{k} = n   } }B_{(e_x),(w_k)}   .\]

We apply \cite[Lemma 2.2(ii)]{Beilinson} to the map $sym$. This says that \[ SS \left(  sym_* K^{\boxtimes n}  \right)\subseteq  sym_\circ  SS(K^{\boxtimes n})\] where $sym_\circ$ of a cycle means the image under $(T^* C^{(n)} \times_{C^{(n)}} C^n) \to T^* C^{(n)}$ of the inverse image under $d(sym): (T^* C^{(n)} \times_{C^{(n)}} C^n)  \to T^* C^n$ of the cycle. By \cite[Theorem 2.2(3)]{saito2}, the singular support of $K^{\boxtimes n}$ inside $T^* (C^n) = (T^* C)^n$ is the $n$-fold product of the union of the zero section with the inverse image of $Sing$ in $T^* C$.  Fix $x_1,\dots, x_n$ in $C$. Then a linear form on the tangent space of $C^n$ at $(x_1,\dots, x_n)$ lies in $SS(K^{\boxtimes n})$ if and only if, for all $i$ from $1$ to $n$, if  $x_i \notin \Sing$, the linear form, restricted to the tangent space to $C$ at $x_i$, vanishes. Letting $\mathcal I = \mathcal O_C ( -x_1 - \dots - x_n)$, we deduce that a linear form on $H^0(\mathcal I^\vee / \mathcal O)$ lies in $ sym_\circ  SS(K^{\boxtimes n})$ if and only if, for all $i$ from $1$ to $n$ such that $x_i \notin \Sing$, the linear form vanishes on functions with a pole of order $1$ at $x_i$ and no poles elsewhere. 

Now let $e_x$ for $x \in \Sing$ be the number of $i$ from $1$ to $n$ such that $x_i = x$, and let $w_k$ for $k$ in $\mathbb N^+$ be the number of $y \in C \setminus \Sing$ such that $|\{ i \in \{1,\dots n \} | x_i =y \} | = k$ .

Then we have an equality of divisors  \[x_1 + \dots + x_n =   \sum_{x \in \Sing} e_x [x] +  \sum_{k=1}^{\infty}  k [D_k]\] where $D_k$ is the divisor of degree $w_k$ consisting of the sum of all $y \in C \setminus \Sing$ such that $|\{ i \in \{1,\dots n \} | x_i =y \} | = k$. It follows that the divisor $x_1 + \dots + x_n$ lies in $A_{(e_x),(w_k)}$. A linear form on $H^0(\mathcal I^\vee / \mathcal O)$ lies in $ sym_\circ  SS(K^{\boxtimes n})$ if and only if it vanishes on the space of functions with poles of order at most $1$ at the support of  $D_k$ and no poles elsewhere, meaning it vanishes on $H^0 ( C, \mathcal O_C ( \sum_k D_k) /\mathcal O_C)$ because all the points in the support of all the $D_k$ are distinct. Equivalently, a linear form on $H^0(\mathcal I^\vee / \mathcal O)$ lies in $ sym_\circ  SS(K^{\boxtimes n})$ if it lies in the kernel $W_{(e_x),(w_k)}$ of the natural map $H^0(\mathcal I^\vee / \mathcal O)^\vee$ to $H^0 ( C, \mathcal O_C ( \sum_k D_k) /\mathcal O_C)^\vee$. 

Hence a linear form over $x_1 + \dots + x_n$ lying in $sym_\circ  SS(K^{\boxtimes n})$ must also lie in $B_{(e_x),(w_k)}$. 

Thus \[sym_\circ SS(K^{\boxtimes n})\subseteq \bigcup_ {\substack{( e_{x}) :  \Sing \to \mathbb N \\ (w_{k}) : \mathbb N^+ \to \mathbb N \\  \sum_{ x\in \Sing} e_{x} + \sum_{k =1}^{\infty} k w_{k} = n   } }B_{(e_x),(w_k)}   \] and we are done. \end{proof}

\begin{lemma}\label{splitting-1} Let $(e_{x,1})$ and $(e_{x,2})$ be two tuples of natural numbers indexed by $\Sing$, such that $e_{x,1}e_{x,2}=0$ for all $x$. Let $(w_{k,1})$ and $(w_{k,2})$ be two tuples of natural numbers indexed by $\mathbb N^+$. Let $n_i = \sum_{x \in \Sing} e_{x,i} + \sum_{k=1}^\infty k w_{k,i}$, and let $n=n_1+n_2$. 

Then:
\begin{enumerate}

\item Near a general point of $A_{(e_{x,1}), (w_{k,1})} \times A_{(e_{x,2}), (w_{k,2})}$, the natural map $s: C^{(n_1)} \times C^{(n_2)}  \to C^{(n)}$ is \'{e}tale. 

\item Over the locus where $s$ is \'{e}tale, the multiplicity of $B_{(e_{x,1}), (w_{k,1})} \times B_{(e_{x,2}), (w_{k,2})}$ in $ s^* B_{(e_x),(w_k)}$ is $1$ if $ e_x= e_{x,1} +e_{x,2} $ and $w_k = w_{k,1} +w_{k,2} $ and zero otherwise.

\end{enumerate}

\end{lemma}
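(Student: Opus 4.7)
The plan is to identify a general point of $A_{(e_{x,1}),(w_{k,1})} \times A_{(e_{x,2}),(w_{k,2})}$ concretely as a pair $(D_1,D_2)$ where $D_i = \sum_x e_{x,i}[x] + \sum_k k[E_{k,i}]$ with the points in the supports of the $E_{k,i}$ pairwise distinct and disjoint from $\Sing$. The hypothesis $e_{x,1}e_{x,2}=0$ forces the $\Sing$-parts of $D_1,D_2$ to have disjoint support, and the pairwise distinctness of the non-$\Sing$ points is a generic open condition, so $D_1$ and $D_2$ have disjoint supports overall. For part (1), I would then observe that when $\mathcal I_i = \mathcal O_C(-D_i)$ have coprime supports, the derivative of $s$ at $(D_1,D_2)$ is the natural map $H^0(C,\mathcal I_1^\vee/\mathcal O_C) \oplus H^0(C,\mathcal I_2^\vee/\mathcal O_C) \to H^0(C,\mathcal I^\vee/\mathcal O_C)$ with $\mathcal I = \mathcal I_1 \cdot \mathcal I_2$, and this is an isomorphism precisely because of the disjointness. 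Since $s$ is finite between smooth varieties of the same dimension and hence flat, unramifiedness at $(D_1,D_2)$ gives étaleness there.

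For the zero part of (2), I would note that $D := s(D_1,D_2)$ has an intrinsic decomposition type: multiplicity $e_{x,1}+e_{x,2}$ at each $x\in \Sing$, and for each $k \in \mathbb N^+$ exactly $w_{k,1}+w_{k,2}$ points of $C\setminus \Sing$ with multiplicity precisely $k$. Hence $D$ can lie in $A_{(e'_x),(w'_k)}$ only when $(e'_x,w'_k) = (e_{x,1}+e_{x,2},w_{k,1}+w_{k,2})$. Since $B_{(e'_x),(w'_k)}$ is supported over the closed subset $A_{(e'_x),(w'_k)} \subseteq C^{(n)}$, the pullback $s^*B_{(e'_x),(w'_k)}$ vanishes on an open neighborhood of $(D_1,D_2)$ for every non-matching tuple, and so the multiplicity of $B_{(e_{x,1}),(w_{k,1})} \times B_{(e_{x,2}),(w_{k,2})}$ in it is zero.

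For the matching tuple $(e_x,w_k)=(e_{x,1}+e_{x,2},w_{k,1}+w_{k,2})$, the disjoint supports of $D_1,D_2$ make both the source and target of the defining surjection of $W_{(e_x),(w_k)}$ split as direct sums according to the partition $D = D_1+D_2$ (respectively $\sum_k E_k = \sum_k E_{k,1} + \sum_k E_{k,2}$, where $E_{k} = E_{k,1} + E_{k,2}$ is the unique preimage under $ev$). Consequently the kernel splits as $W_{(e_{x,1}),(w_{k,1})} \oplus W_{(e_{x,2}),(w_{k,2})}$, and this direct sum decomposition is compatible, under the étale identification from part (1), with the decomposition $T^* C^{(n)}|_D = T^* C^{(n_1)}|_{D_1} \oplus T^* C^{(n_2)}|_{D_2}$. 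Because each $ev_{(e_{x,i}),(w_{k,i})}$ is generically injective and the inclusion of $W$ into the cotangent bundle carries no internal multiplicity, this identifies $s^* B_{(e_x),(w_k)}$ with the product cycle $B_{(e_{x,1}),(w_{k,1})} \times B_{(e_{x,2}),(w_{k,2})}$ with coefficient one.

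The main obstacle I anticipate is the bookkeeping needed to confirm that the two natural splittings — of the ambient cotangent space on one side, and of the defining kernel $W$ on the other — are compatible as subspaces of $T^* C^{(n)}|_D$, and that the unique preimage $(E_k)$ of $D$ under the relevant evaluation map really does split as $(E_{k,1}+E_{k,2})$ in the obvious way. Once this compatibility is verified in an étale-local model around a single generic $(D_1,D_2)$ with disjoint supports, both the vanishing for non-matching tuples and the multiplicity-one statement for the matching tuple follow without further computation.
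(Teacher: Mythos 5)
Your part (1) and your multiplicity-one argument for the matching tuple are fine in spirit and close to the paper's own proof (disjointness of the supports of $D_1$ and $D_2$ gives \'{e}taleness, and the \'{e}tale-local identification then yields coefficient one; the paper gets the latter more quickly by noting that \'{e}tale pullback preserves multiplicities of cycles). The genuine gap is in your vanishing statement for non-matching tuples. You infer from the ``decomposition type'' of $D = s(D_1,D_2)$ that $D$ lies in $A_{(e'_x),(w'_k)}$ only for the matching tuple. That inference fails: $A_{(e'_x),(w'_k)}$ is the image of $ev_{(e'_x),(w'_k)}$, and the divisors $D'_k$ in that parametrization are not required to be reduced, mutually disjoint, or disjoint from $\Sing$, so these closed sets overlap heavily. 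For example $A_{(0),(w'_1=n)}$ is all of $C^{(n)}$; a unit of multiplicity at $x \in \Sing$ can be moved from the $e'_x$-part into $D'_1$; a point of multiplicity $k \geq 2$ can be absorbed into $D'_1$ with multiplicity $k$; and so on. Hence a general point $D$ of $A_{(e_x),(w_k)}$ typically lies in many non-matching $A_{(e'_x),(w'_k)}$, and since $B_{(e'_x),(w'_k)}$ is the pushforward of a vector bundle (so its support over any point of $A_{(e'_x),(w'_k)}$ contains at least the zero covector), the pullback $s^* B_{(e'_x),(w'_k)}$ does \emph{not} vanish on a neighborhood of $(D_1,D_2)$. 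So the zero-multiplicity claim cannot be read off from base supports alone.

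The statement is still true, but it needs an argument about the cycles in the cotangent bundle rather than their images in $C^{(n)}$, and this is how the paper proceeds: one first checks that a general point of $B_{(e_{x,1}),(w_{k,1})} \times B_{(e_{x,2}),(w_{k,2})}$ maps (under the \'{e}tale identification of cotangent spaces) to a general point of $B_{(e_{x,1}+e_{x,2}),(w_{k,1}+w_{k,2})}$; then, because the cycles $B_{(e'_x),(w'_k)}$ for distinct tuples are distinct, irreducible, and all of dimension $n$, the product cannot be contained in the pullback of any non-matching one (containment would force the matching $B$ to lie inside a distinct irreducible set of the same dimension), which gives multiplicity zero there, while \'{e}taleness gives multiplicity one for the matching tuple. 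If you patch your write-up by replacing the $A$-level support argument with this comparison of the $n$-dimensional irreducible cycles, the rest of your proposal goes through.
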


\begin{proof} To prove part (1), we observe that $s$ is \'{e}tale at a point $(y_1,y_2) \in C^{(n_1)} \times C^{(n_2)}$ when  the $n_1$ points of $C$ that make up $y_1$ are distinct from the the $n_2$ points of $C$ that make up $y_2$. At a generic point of  $A_{(e_{x,1}), (w_{k,1})} \times A_{(e_{x,2}), (w_{k,2})}$, all the points in the $D_k$s are distinct from each other and from $\Sing$, and by the assumption $e_{x,1}e_{x,2}=0$, all the points in $\Sing$ that make up $y_1$ are distinct from all the points of $\Sing$ that make up $y_2$. So $s$ is indeed \'{e}tale at this point.

To prove part (2), first observe that the image of a general point of $B_{(e_{x,1}), (w_{k,1})} \times B_{(e_{x,2}),(w_{k,2})} $ under $s$ is a general point of $B_{( e_{x,1}+e_{x,2}), (w_{k,1} + w_{k,2})}$. Because $B_{(e_x),(w_k)}$ are distinct for distinct $(e_x), (w_k)$, and equidimensional, the pullback of any other $B_{(e_x),(w_k)}$ includes $B_{(e_{x,1}), (w_{k,1})} \times B_{(e_{x,2}),(w_{k,2})}$  with multiplicity one. Furthermore, because $s$ is \'{e}tale here, it preserves multiplicity of cycles, so the pullback of $B_{ (e_{x,1}+e_{x,2}), (w_{k,1} + w_{k,2})}$ must include $B_{(e_{x,1}), (w_{k,1})} \times B_{(e_{x,2}),(w_{k,2})} $  with multiplicity one. \end{proof}

\begin{lemma}\label{splitting-induction} Assume Theorem \ref{characteristic-cycle-sym} holds for $n'< n$. Let $(e_x), (w_k)$ satisfy \[ \sum_{x \in \Sing} e_x + \sum_{k=1}^{\infty} k w_k = n. \] 

Then the multiplicity of $B_{(e_x),(w_k)}$ inside \[ CC \left( ( sym_* K^{\boxtimes n} \otimes \rho)^{S_{n }} \right) \] is equal to \[M_{K, \rho}((e_x), (w_k)) \] unless $e_x=n$ for some $x$ or $w_n=1$. \end{lemma}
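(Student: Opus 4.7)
The plan is to use Lemma \ref{splitting-1} to reduce the multiplicity at $B_{(e_x),(w_k)}$ to the multiplicities of characteristic cycles on symmetric powers of strictly smaller degree, where the inductive hypothesis on $n' < n$ applies. The non-exceptional hypothesis is exactly what guarantees a usable splitting: I would pick $(e_{x,1}),(e_{x,2})$ with $e_{x,1}e_{x,2}=0$ summing to $(e_x)$, and $(w_{k,1}),(w_{k,2})$ summing to $(w_k)$, with both $n_1,n_2>0$. Such a nontrivial splitting fails to exist precisely when $(e_x)$ is concentrated at a single point with $e_x=n$, or when $(e_x)=0$ and $(w_k)$ consists of a single part $w_n=1$; the statement excludes exactly these two cases. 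In every other case one can either separate two nonzero entries of $(e_x)$, peel off a proper piece of a single nonzero $e_{x_0}$, or split some $w_k\geq 2$ or a piece of $(w_k)$ with $k<n$.

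The core computation is \'etale-local near a generic point of $A_{(e_{x,1}),(w_{k,1})} \times A_{(e_{x,2}),(w_{k,2})}$, where by Lemma \ref{splitting-1}(1) the addition map $s: C^{(n_1)} \times C^{(n_2)} \to C^{(n)}$ is \'etale. Using proper base change along the finite quotient $sym_n: C^n \to C^{(n)}$, the fiber product $C^n \times_{C^{(n)}} (C^{(n_1)} \times C^{(n_2)})$ over the \'etale locus of $s$ decomposes into $\binom{n}{n_1}$ copies of $C^{n_1} \times C^{n_2}$, one for each splitting $\{1,\dots,n\} = S_1 \sqcup S_2$ with $|S_i| = n_i$. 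This yields an $S_n$-equivariant isomorphism
\[ s^*(sym_{n,*} K^{\boxtimes n}) \;\cong\; \Ind_{S_{n_1}\times S_{n_2}}^{S_n}\bigl((sym_{n_1,*} K^{\boxtimes n_1}) \boxtimes (sym_{n_2,*} K^{\boxtimes n_2})\bigr). \]
Tensoring with $\rho$, taking $S_n$-invariants, and applying Frobenius reciprocity gives
\[ s^*\bigl((sym_{n,*} K^{\boxtimes n} \otimes \rho)^{S_n}\bigr) \;\cong\; \bigoplus_{\rho_1,\rho_2} m_{\rho_1,\rho_2}\, \bigl((sym_{n_1,*} K^{\boxtimes n_1} \otimes \rho_1)^{S_{n_1}}\bigr) \boxtimes \bigl((sym_{n_2,*} K^{\boxtimes n_2} \otimes \rho_2)^{S_{n_2}}\bigr), \]
where $\rho_i$ runs over irreducible representations of $S_{n_i}$ and $m_{\rho_1,\rho_2}$ is the multiplicity of $\rho_1 \boxtimes \rho_2$ in $\operatorname{Res}_{S_{n_1} \times S_{n_2}}^{S_n} \rho$.

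Applying $CC$ to both sides, using that $CC$ commutes with \'etale pullback and with external products, and combining with Lemma \ref{splitting-1}(2) (which says $B_{(e_{x,1}),(w_{k,1})} \times B_{(e_{x,2}),(w_{k,2})}$ appears with multiplicity one in $s^* B_{(e_x),(w_k)}$ and with multiplicity zero in $s^* B_{(e_x'),(w_k')}$ for any other $(e_x'),(w_k')$), the multiplicity of $B_{(e_x),(w_k)}$ in $CC((sym_{n,*} K^{\boxtimes n} \otimes \rho)^{S_n})$ is
\[ \sum_{\rho_1,\rho_2} m_{\rho_1,\rho_2}\, M_{K,\rho_1}((e_{x,1}),(w_{k,1}))\, M_{K,\rho_2}((e_{x,2}),(w_{k,2})), \]
by the inductive hypothesis at $n_1$ and $n_2$. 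A direct combinatorial check identifies this with $M_{K,\rho}((e_x),(w_k))$: the assumption $e_{x,1}e_{x,2}=0$ forces the stabilizer $\prod_x S_{e_x}\times \prod_k S_k^{w_k}$ to factor as $H_{n_1} \times H_{n_2}$ inside $S_{n_1}\times S_{n_2} \subset S_n$, the tensor product defining $M_{K,\rho}$ factors as $V_1 \otimes V_2$, and the identity reduces to $(V_1 \otimes V_2 \otimes \operatorname{Res}^{S_n}_{S_{n_1}\times S_{n_2}}\rho)^{H_{n_1}\times H_{n_2}} = (V \otimes \rho)^{H_n}$ combined with the definition of $m_{\rho_1,\rho_2}$. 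I expect the main obstacle to be setting up the $S_n$-equivariant base change identification cleanly on the \'etale locus, in particular matching the $\binom{n}{n_1}$ sheets of the fiber product with the induced-representation structure and justifying that $CC$ behaves well under $s^*$; the branching identity and the inductive bookkeeping are straightforward once this is in place.
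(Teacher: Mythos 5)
Your proposal is correct and follows essentially the same route as the paper: split $((e_x),(w_k))$ into two nontrivial pieces with $e_{x,1}e_{x,2}=0$ (possible exactly outside the excluded cases), compute the multiplicity of $B_{(e_{x,1}),(w_{k,1})}\times B_{(e_{x,2}),(w_{k,2})}$ in the pullback along the addition map $s$ in two ways using Lemma \ref{splitting-1}, compatibility of $CC$ with \'etale pullback and external products, and the inductive hypothesis at $n_1,n_2$, and finish with the branching/invariants identity for $M_{K,\rho}$. Your explicit $\Ind$/Frobenius-reciprocity description of $s^*(sym_*K^{\boxtimes n}\otimes\rho)^{S_n}$ is just an unpacked form of the identity the paper writes directly, so the two arguments coincide; only make sure that when a single $e_{x_0}$ is nonzero you peel it off whole into one factor (splitting $e_{x_0}$ itself between the factors would violate $e_{x,1}e_{x,2}=0$).
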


\begin{proof} We have \[ \sum_{x \in \Sing} e_x + \sum_{k=1}^{\infty} k w_k = n.\] Unless $e_x=n$ for some $n$ or $w_n=1$, we can find $(e_{x,1}), (e_{x,2}), (w_{k,1}),(w_{k,2})$ with \[e_{x,1}+e_{x,2}=e_x\] \[ w_{k,1} + w_{k,2} = w_{k} \]  \[ e_{x,1}e_{x,2}=0\] \[  0<  \sum_{x \in \Sing} e_{x,i} +  \sum_{k=1}^{\infty} k w_{k,i}<  n.\]

Let $n_i= \sum_{x \in \Sing} e_{x,i} +  \sum_{k=1}^{\infty} k w_{k,i}.$

We calculate the multiplicity of $B_{(e_{x,1}), (w_{k,1})} \times B_{(e_{x,2}), (w_{k,2})}$ in $CC \left( s^* ( sym_* K^{\boxtimes n} \otimes \rho)^{S_{n }} \right) $ in two ways.

By \citep[Lemma 5.11(2)]{saito1}, we have \begin{equation}\label{5-11-2} CC \left( s^* ( sym_* K^{\boxtimes n} \otimes \rho)^{S_{n }} \right) = s^* CC \left( ( sym_* K^{\boxtimes n} \otimes \rho)^{S_{n }} \right)\end{equation} after restricting both sides to the open locus where $s$ is \'{e}tale. (This uses the fact that the characteristic cycle is compatible with restriction to open subsets, which is a special case of \citep[Lemma 5.11(2)]{saito1}.) By Lemma \ref{splitting-1}(1), $s$ is \'{e}tale at the generic point of $A_{(e_{x,1}), (w_{k,1})} \times A_{(e_{x,2}), (w_{k,2})}$. It follows that the multiplicity of $B_{(e_{x,1}), (w_{k,1})} \times B_{(e_{x,2}), (w_{k,2})}$ in both sides of \eqref{5-11-2} are equal. By Lemma \ref{splitting-1}(2), it follows that the multiplicity of $B_{(e_{x,1}), (w_{k,1})} \times B_{(e_{x,2}), (w_{k,2})}$ in $CC \left( s^* ( sym_* K^{\boxtimes n} \otimes \rho)^{S_{n }} \right) $ is equal to the multiplicity of $B_{(e_x),(w_k)}$ inside $ CC \left( ( sym_* K^{\boxtimes n} \otimes \rho)^{S_{n }} \right)$.

Second note that \[ s^* ( sym_* K^{\boxtimes n} \otimes \rho)^{S_{n }}  = ( sym_{n_1*} K^{\boxtimes n_1} \boxtimes sym_{n_2 *} K^{\boxtimes n_2}  \otimes \rho)^{S_{n_1} \times S_{n_2}}.\] If we write \[\rho=\bigoplus_{a} \rho_{a,1} \otimes \rho_{a,2}\] where $\rho_{a,i}$ are irreducible representations of $S_{n_i}$, then 
\[ ( sym_{n_1*} K^{\boxtimes n_1} \boxtimes sym_{n_2 *} K^{\boxtimes n_2}  \otimes \rho)^{S_{n_1} \times S_{n_2}}= \bigoplus_{a}  ( sym_{n_1*} K^{\boxtimes n_1}\otimes \rho_{a,1} ) \boxtimes ( sym_{n_1*} K^{\boxtimes n_1}\otimes \rho_{a,2} ) .\]

The characteristic cycle of this complex is \cite[Theorem 2.2.2]{saito2}
\[ \sum_a CC( sym_{n_1*} K^{\boxtimes n_1}\otimes \rho_{a,1} ) \boxtimes CC( sym_{n_1*} K^{\boxtimes n_1}\otimes \rho_{a,2} ).\]
By our assumption about Theorem \ref{characteristic-cycle-sym}, the multiplicity of $B_{(e_{x,1}), (w_{k,1})} \times B_{(e_{x,2}), (w_{k,2})}$  inside this characteristic cycle is \[ \sum_a M_{K, \rho_{a,1}} ((e_{x,1}),(w_{k,1}))   M_{K, \rho_{a,2}} ((e_{x,2}),(w_{k,2})) .\]

So it remains to check that
\[ M_{K, \rho} ((e_{x}),(w_k)) = \sum_a M_{K, \rho_{a,1}} ((e_{x,1}),(w_{k,1}))   M_{K, \rho_{a,2}} ((e_{x,2}),(w_{k,2})) .\]
To do this, we recall the definition 
\[M_{ K, \rho} ( (e_x), (w_k)) = 
\dim \left (  \bigotimes_{x \in \Sing }  ( \mathbb C^{c_{x}})^{\otimes e_{x} }  \otimes  (\mathbb C^{\operatorname{rank}} ) ^{ \otimes \sum_{k = 1 }^{\infty}  k w_{k }  }    \otimes \rho \right)^{ \prod_{x \in \Sing}  S_{e_{x} } \times  \prod_{k =1}^{\infty}  S_{k  }^{w_{k } } } .   \] 

Next note that everything in this formula splits, i.e.
\[ \prod_{x \in \Sing} S_{e_x} \times \prod_{k=1}^{\infty} S_k^{w_k} =  \left( \prod_{x \in \Sing} S_{e_{x,1} } \times \prod_{k=1}^{\infty} S_k^{w_{k,1}}\right) \times \left( \prod_{x \in \Sing} S_{e_{x,2} } \times \prod_{k=1}^{\infty} S_k^{w_{k,2}}\right)\]
and
\[ (\mathbb C^{c_{x}})^{\otimes e_{x} }  \otimes  ((\mathbb C^{\operatorname{rank}} ) ^{ \otimes \sum_{k = 1 }^{\infty}  k w_{k }  }    = \left( (\mathbb C^{c_{x}})^{\otimes e_{x,1 } }  \otimes  (\mathbb C^{\operatorname{rank}} ) ^{ \otimes \sum_{k = 1 }^{\infty}  k w_{k,1  }  }    \right) \otimes \left( \mathbb C^{c_{x}})^{\otimes e_{x,2 } }  \otimes  (\mathbb C^{\operatorname{rank}} ) ^{ \otimes \sum_{k = 1 }^{\infty}  k w_{k,2  }  }    \right).  \]
compatibly with the actions of these groups on these vector spaces. Furthermore, note that the splitting of groups is compatible with the embedding of $\prod_{x \in \Sing} S_{e_x} \times \prod_{k=1}^{\infty} S_k^{w_k} $ in $S_n$ and $\prod_{x \in \Sing} S_{e_{x,i}} \times \prod_{k=1}^{\infty} S_k^{w_{k,i}} $ in $S_{n_i}$ in the sense that it commutes with the natural embedding $S_{n_1} \times S_{n_2} \subset  S_n$. 

Because these splittings are compatible with this embedding,

\[M_{ K, \rho} ( (e_x), (w_k)) = 
\dim \left (  \bigotimes_{x \in \Sing }  ( \mathbb C^{c_{x}})^{\otimes e_{x} }  \otimes  (\mathbb C^{\operatorname{rank}} ) ^{ \otimes \sum_{k = 1 }^{\infty}  k w_{k }  }    \otimes \rho \right)^{ \prod_{x \in \Sing}  S_{e_{x} } \times  \prod_{k =1}^{\infty}  S_{k  }^{w_{k } } }\]

\[= \dim \left ( \bigotimes_{i \in \{1,2\}} \left( (\mathbb C^{c_{x}})^{\otimes e_{x,i } }  \otimes  (\mathbb C^{\operatorname{rank}} ) ^{ \otimes \sum_{k = 1 }^{\infty}  k w_{k,i  }  }    \right)  \otimes \rho \right)^{  \prod_{i \in \{1,2\} } \left( \prod_{x \in \Sing} S_{e_{x,i} } \times \prod_{k=1}^{\infty} S_k^{w_{k,i}}\right) }  \]

\[= \dim \bigoplus_a  \left ( \bigotimes_{i \in \{1,2\}} \left( (\mathbb C^{c_{x}})^{\otimes e_{x,i } }  \otimes  (\mathbb C^{\operatorname{rank}} ) ^{ \otimes \sum_{k = 1 }^{\infty}  k w_{k,i  }  }    \right)  \otimes \rho_{a,1} \otimes \rho_{a,2} \right)^{  \prod_{i \in \{1,2\} } \left( \prod_{x \in \Sing} S_{e_{x,i} } \times \prod_{k=1}^{\infty} S_k^{w_{k,i}}\right) }  \]

\[= \dim \bigoplus_a  \left ( \bigotimes_{i \in \{1,2\}} \left( (\mathbb C^{c_{x}})^{\otimes e_{x,i } }  \otimes  (\mathbb C^{\operatorname{rank}} ) ^{ \otimes \sum_{k = 1 }^{\infty}  k w_{k,i  }  }   \otimes \rho_{a,i}  \right)  \right)^{  \prod_{i \in \{1,2\} } \left( \prod_{x \in \Sing} S_{e_{x,i} } \times \prod_{k=1}^{\infty} S_k^{w_{k,i}}\right) }  \]

\[= \dim \bigoplus_a  \bigotimes_{i \in \{1,2\}} \left( (\mathbb C^{c_{x}})^{\otimes e_{x,i } }  \otimes  (\mathbb C^{\operatorname{rank}} ) ^{ \otimes \sum_{k = 1 }^{\infty}  k w_{k,i  }  }   \otimes \rho_{a,i}  \right) ^{  \prod_{x \in \Sing} S_{e_{x,i} } \times \prod_{k=1}^{\infty} S_k^{w_{k,i}} }  \]

\[ =\sum_a \prod_{i\in \{1,2\}} \dim \left( (\mathbb C^{c_{x}})^{\otimes e_{x,i } }  \otimes  (\mathbb C^{\operatorname{rank}} ) ^{ \otimes \sum_{k = 1 }^{\infty}  k w_{k,i  }  }   \otimes \rho_{a,i}  \right) ^{  \prod_{x \in \Sing} S_{e_{x,i} } \times \prod_{k=1}^{\infty} S_k^{w_{k,i}} }\]

\[ = \sum_a M_{K, \rho_{a,1}} ((e_{x,1}),(w_{k,1}))   M_{K, \rho_{a,2}} ((e_{x,2}),(w_{k,2}))\]

as desired.

\end{proof}

\begin{lemma}\label{basic-index-count} We have
\[ ( B_{(e_x),(w_k)}, C^{(n)})_{ T^* C^{(n)}}  = \frac{ (2g-2)! } { (2g-2 - \sum_{k=1}^{\infty} w_k)! \prod_{k=1}^{\infty} w_k! }  = \frac{\prod_{i=0}^{-1+ \sum_{k=1}^{\infty} w_k} (2g-2-i) }{ \prod_{k=1}^{\infty}w_k! } .\]

Here the second formula is to be used to ensure that the expression is well-defined in the case that $2g-2 - \sum_{k=1}^{\infty} w_k$ is negative. \end{lemma}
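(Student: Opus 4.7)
The plan is to use the excess intersection formula to reduce the intersection number to a top Chern class integral on $\prod_k C^{(w_k)}$, and then evaluate that integral using Macdonald's formula for the Euler characteristic of the symmetric power of a curve.

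First I would observe that the map $p \colon W_{(e_x),(w_k)} \to T^*C^{(n)}$ used to define $B_{(e_x),(w_k)}$ has set-theoretic preimage of the zero section $C^{(n)} \subset T^*C^{(n)}$ equal to the zero section $0_W$ of $W_{(e_x),(w_k)}$, since $W_{(e_x),(w_k)}$ injects fiberwise into $ev_{(e_x),(w_k)}^* T^* C^{(n)}$. The codimensions do not match: $C^{(n)} \subset T^*C^{(n)}$ has codimension $n$, while $0_W \subset W_{(e_x),(w_k)}$ has codimension $\rank W_{(e_x),(w_k)} = n - \sum_k w_k$, leaving excess $\sum_k w_k$. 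The excess bundle is the cokernel $ev_{(e_x),(w_k)}^* T^* C^{(n)}/W_{(e_x),(w_k)}$, which by the defining short exact sequence of $W_{(e_x),(w_k)}$ is $H^0(\mathcal{O}_C(\sum_k D_k)/\mathcal{O}_C)^\vee$, a rank-$\sum_k w_k$ bundle on $\prod_k C^{(w_k)}$. Combining the excess intersection formula with the projection formula then identifies the intersection number with
\[ \int_{\prod_k C^{(w_k)}} c_{\sum_k w_k}\!\left(H^0(\mathcal{O}_C(\textstyle\sum_k D_k)/\mathcal{O}_C)^\vee\right), \]
and in particular this expression is manifestly independent of $(e_x)$, as the statement of the lemma requires.

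Next I would identify this bundle as a pullback of the cotangent bundle of a symmetric power. Setting $m = \sum_k w_k$ and letting $s \colon \prod_k C^{(w_k)} \to C^{(m)}$ be the divisor-addition map $(D_k) \mapsto \sum_k D_k$, the identification $T^*_D C^{(m)} = H^0(\mathcal{O}_C(D)/\mathcal{O}_C)^\vee$ gives $H^0(\mathcal{O}_C(\sum_k D_k)/\mathcal{O}_C)^\vee = s^* T^* C^{(m)}$. The map $s$ is finite of degree $m!/\prod_k w_k!$, since a degree-$m$ divisor with distinct support admits this many ordered partitions into blocks of sizes $w_1, w_2, \dots$. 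The projection formula then yields
\[ \frac{m!}{\prod_k w_k!} \int_{C^{(m)}} c_m(T^*C^{(m)}) = \frac{m!}{\prod_k w_k!} \cdot (-1)^m \chi(C^{(m)}) = \frac{m!}{\prod_k w_k!} \binom{2g-2}{m}, \]
using Macdonald's formula $\chi(C^{(m)}) = (-1)^m \binom{2g-2}{m}$ with the generalized binomial coefficient. Rearranging gives the first claimed expression $(2g-2)!/((2g-2-m)!\prod_k w_k!)$; the second is just this binomial coefficient rewritten as a falling product, ensuring the formula remains meaningful when $2g-2 < m$.

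The main care point will be that neither $T^*C^{(n)}$ nor $W_{(e_x),(w_k)}$ is proper, so the projection formula requires justification; I would handle this by noting that the set-theoretic intersection lies over the closed, hence proper, subset $A_{(e_x),(w_k)} \subset C^{(n)}$, allowing the calculation to be carried out in Fulton's refined intersection theory.
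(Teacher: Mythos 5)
Your proof is correct and follows essentially the same route as the paper: both reduce, via the projection formula, to the degree of the top Chern class of $H^0(\mathcal O_C(\sum_k D_k)/\mathcal O_C)^\vee$ on $\prod_k C^{(w_k)}$ (your excess-intersection step is exactly the content of the sub-bundle/zero-section computation the paper cites from Fulton, Example 6.3.5), then identify this bundle with the pullback of $T^*C^{(m)}$ along the degree $m!/\prod_k w_k!$ addition map and evaluate by the Euler characteristic of $C^{(m)}$, with the generalized binomial coefficient covering the case $2g-2<\sum_k w_k$.
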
 

\begin{proof} %The inverse image of $T^*C^{(n)} $ on $\prod_{k=1}^{\infty}C^{(w_k)} $ under the map that sends a tuple of divisors $D_k$ of degree $k$ to the divisor $\sum_x e_x [x] + \sum_k k D_k$ is the vector bundle \[ H^0\left(  \mathcal O_C \left( \sum_x e_x [x] + \sum_k k D_k\right) /  \mathcal O_C\right) ^\vee .\] We have a surjective map \[ H^0\left(  \mathcal O_C \left( \sum_x e_x [x] + \sum_k k D_k\right) /  \mathcal O_C\right) ^\vee \to H^0\left(  \mathcal O_C \left(  \sum_k  D_k\right) /  \mathcal O_C\right)^\vee .\] Let $W$ be the kernel, forming a short exact sequence.
By definition, $B_{(e_x),(w_k)}$ is the pushforward of the class of $W_{ (e_x), (w_k)} $ to $ T^* C^{(n)}$. Hence by the projection formula \[ ( B_{(e_x),(w_k)}, C^{(n)})_{ T^* C^{(n)}} = (W_{ (e_x), (w_k)} ,  \prod_{k=1}^{\infty} C^{(w_k)})_{ H^0(  \mathcal O_C ( \sum_x e_x [x] + \sum_k k D_k) /  \mathcal O_C) ^\vee}. \]

By \cite[Example 6.3.5]{Fulton}, the class of $W_{ (e_x), (w_k)} $ is simply the top Chern class of $H^0(  \mathcal O_C (  \sum_k  D_k) /  \mathcal O_C)^\vee$. So it suffices to show that degree of the top Chern class of the vector bundle $H^0(  \mathcal O_C (  \sum_k  D_k) /  \mathcal O_C)^\vee$ on $\prod_{k=1}^\infty C^{(w_k)}$ is $ \frac{ (2g-2)! } { (2g-2 - \sum_{k=1}^{\infty} w_k)! \prod_{k=1}^{\infty} w_k! } .$

To do this, note that this vector bundle is the pull back of $H^0 (\mathcal O_C(D) /\mathcal O_C)^\vee$ from $C^{ (\sum_{k=1}^{\infty} w_k )} $ under the map $\prod_{k=1}^\infty C^{(w_k)} \to C^{ (\sum_{k=1}^{\infty} w_k)}$ that sends $(D_k)_{k=1}^{\infty}$ to $\sum_{k=1}^{\infty} D_k$, where $D$ denotes the universal divisor over $C^{ (\sum_{k=1}^{\infty} w_k)}$. Because this is the pullback of a map of degree $\frac{ (\sum_{k=1}^{\infty} w_k)!}{ \prod_{k=1}^{\infty} w_k!}$, it suffices to show that the top Chern class of this vector bundle on $C^{ (\sum_{k=1}^{\infty} w_k )} $ is \[ \binom{ 2g-2 }{  \sum_{k=1}^{\infty} w_k} .\]

This vector bundle is simply the cotangent bundle of $C^{ (\sum_{k=1}^{\infty} w_k )} $, so its top Chern class is $(-1)^{\sum_{k=1}^{\infty} w_k}$ times the topological Euler characteristic, which is indeed  $\binom{2g-2 }{  \sum_{k=1}^{\infty} w_k}$. We can view this binomial coefficient as a polynomial in $g$, in which case it will still correctly calculate the Euler characteristic outside the range where the binomial coefficient is usually defined. Thus our formula will hold in general as well. \end{proof}

\begin{lemma}\label{advanced-index} We have

\[ \left( CC \left( ( sym_* K^{\boxtimes n} \otimes \rho)^{S_{n }} \right), C^{(n)}\right)  =\left(  \sum_{\substack{( e_{x}) :  \Sing \to \mathbb N \\ (w_{k} ): \mathbb N^+ \to \mathbb N \\  \sum_{ x\in \Sing} e_{x} + \sum_{k =1}^{\infty} k w_{k} = n   }}M_{ K, \rho} ( (e_x), (w_k))    [ B_{(e_x),(w_k)} ] , C^{(n)}\right)   . \]
\end{lemma}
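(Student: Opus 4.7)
The plan is to show that both sides of the identity equal the Euler characteristic $\chi\bigl(C^{(n)},\,(sym_*K^{\boxtimes n}\otimes \rho)^{S_n}\bigr)$. For the left-hand side, this follows from Saito's index formula, which asserts that $(CC(\mathcal{F}),[X])_{T^*X}=\chi(X,\mathcal{F})$ for any constructible complex $\mathcal{F}$ on a proper smooth variety $X$; this is a standard consequence of the defining local property of $CC$ applied to the structure map $X \to \Spec k$. For the right-hand side, I first substitute the explicit formula from Lemma \ref{basic-index-count} for each intersection number $(B_{(e_x),(w_k)},C^{(n)})$, and then match with the same Euler characteristic via a second, combinatorial computation.

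For the combinatorial computation, since $sym$ is finite and $S_n$-invariants are exact in characteristic zero,
\[
\chi\bigl(C^{(n)},\,(sym_*K^{\boxtimes n}\otimes\rho)^{S_n}\bigr)=\frac{1}{n!}\sum_{\sigma\in S_n}\chi_\rho(\sigma)\sum_i(-1)^i\tr\bigl(\sigma,\,H^i(C^n,K^{\boxtimes n})\bigr).
\]
I compute the inner super-trace for each $\sigma$ by applying the Lefschetz--Verdier trace formula to the $\sigma$-action on $C^n$: the fixed locus is the diagonal copy $C^{\ell(\lambda)}\subset C^n$ (one copy of $C$ per cycle of $\sigma$, where $\lambda$ is the cycle type), and the local trace of the cyclic permutation on the stalk $K_x^{\otimes\lambda_j}$ at a coordinate $x$ assigned to a cycle of length $\lambda_j$ yields, after integrating over the diagonal, a factor encoding the generic rank $\rank$ for coordinates at generic points and a factor encoding the Artin conductor $c_x$ at each singular point $x$ — the latter being the full Grothendieck--Ogg--Shafarevich contribution at $x$. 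The containment of the singular support from Lemma \ref{singular-support} ensures that length-$>1$ cycles at singular points contribute nothing, so only length-$1$ cycles appear at $x\in\Sing$.

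Grouping the sum over $\sigma$ by the data $(e_x),(w_k)$ — where $e_x$ is the number of length-$1$ cycles landing at each $x\in\Sing$ and $w_k$ is the number of points of $C\setminus\Sing$ supporting a total of $k$ cycle-elements — the stabilizer of such data in $S_n$ is precisely $\prod_x S_{e_x}\times\prod_k S_k^{w_k}$, and the Euler characteristic of the configuration space of $\sum_k w_k$ distinct generic points of $C$ produces the factor $\binom{2g-2}{\sum_k w_k}$ together with the multinomial $\frac{(\sum_k w_k)!}{\prod_k w_k!}$, matching the formula from Lemma \ref{basic-index-count}. Taking $\prod_x S_{e_x}\times\prod_k S_k^{w_k}$-invariants of $\bigotimes_x(\mathbb{C}^{c_x})^{\otimes e_x}\otimes(\mathbb{C}^{\rank})^{\otimes\sum_k kw_k}\otimes\rho$ reproduces $M_{K,\rho}((e_x),(w_k))$, completing the term-by-term match. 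The main obstacle I anticipate is justifying that the correct effective local datum at a singular point is the $c_x$-dimensional factor (encoding both the rank drop and the Swan part of the Artin conductor) rather than the literal stalk $K_x$; this requires carefully combining the cyclic-trace computation with Grothendieck--Ogg--Shafarevich applied stratum by stratum, together with the containment of singular support established in Lemma \ref{singular-support}.
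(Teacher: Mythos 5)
Your overall frame is the same as the paper's: the left side equals $\chi\bigl(C^{(n)},(sym_*K^{\boxtimes n}\otimes\rho)^{S_n}\bigr)$ by Saito's index formula, and the right side is evaluated by substituting Lemma \ref{basic-index-count}. The gap is exactly where you anticipated it: your computation of the equivariant Euler characteristic via a Lefschetz--Verdier fixed-point argument on $C^n$, with a ``local trace'' at each singular point producing a $c_x$-dimensional factor, does not work as stated. The Artin conductor $c_x$ contains the Swan conductor, which is invisible to stalks: no trace of a cyclic permutation on $K_x^{\otimes\lambda_j}$, nor any naive local term at a fixed point, can produce it. (In positive characteristic the genuine Lefschetz--Verdier local terms at wildly ramified points are themselves delicate objects, not stalk traces.) Moreover, the appeal to Lemma \ref{singular-support} to argue that cycles of length $>1$ ``at singular points contribute nothing'' is a non sequitur; singular-support containment says nothing about such trace contributions, and they need not vanish. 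A smaller but real issue in the final matching step: the Euler characteristic of the unordered configuration space of $m$ points of $C$ is $\binom{2-2g}{m}$, not $\binom{2g-2}{m}$; in the paper the latter arises as the degree of the top Chern class of the cotangent bundle of $C^{(m)}$ (i.e.\ $(-1)^m$ times the topological Euler characteristic), so your sign bookkeeping would also need repair.

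The paper's route avoids all local analysis. By K\"unneth, $H^*(C^n,K^{\boxtimes n})\cong H^*(C,K)^{\otimes n}$ with $\sigma$ permuting the factors, and the supertrace of a permutation on a tensor power is, by pure linear algebra, $\chi(C,K)^{\#\text{orbits of }\sigma}$; Grothendieck--Ogg--Shafarevich then inputs $\chi(C,K)=(2g-2)\operatorname{rank}+\sum_x c_x$. At that point both sides of the lemma are polynomials in $g$ (using Lemma \ref{basic-index-count} on the right), so one may assume $g>0$, twist by a rank-one lisse sheaf so that $H^*(C,K)$ sits in a single degree, and verify the identity by an elementary count of invariants in $H^0(C,K)^{\otimes n}\otimes\rho$: partitioning a basis of $H^0(C,K)$ into $2g-2$ blocks of size $\operatorname{rank}$ and one block of size $c_x$ per singular point makes $M_{K,\rho}((e_x),(w_k))$ and the multiplicities of Lemma \ref{basic-index-count} appear directly. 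The $(\mathbb C^{c_x})^{\otimes e_x}$ factors are thus a purely formal bookkeeping device after the problem has been reduced to a numerical identity, not a local invariant you need to extract geometrically. If you replace your fixed-point computation by the algebraic trace identity and this polynomiality/specialization argument, the rest of your outline goes through.
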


\begin{proof}
For the left side, by \cite[Theorem 6.13]{saito1} we have
\[ \left(CC \left( ( sym_* K^{\boxtimes n} \otimes \rho)^{S_{n }} \right), C^{(n)}\right)   = \chi ( C^{(n)}, (sym_* K^{\boxtimes n} \otimes \rho)^{S_{n }} ) = \chi \left(  \left( H^* ( C, K)^{\otimes n} \otimes \rho \right)^{S_n} \right).\]

Now $\chi(C,K) = (2g-2) \rank + \sum_{x\in \Sing} c_x$ by the Grothendieck-Ogg-Shafarevich formula.

We have \[   \chi \left(  \left( H^* ( C, K)^{\otimes n} \otimes \rho \right)^{S_n} \right)= \frac{1}{n!} \sum_{\sigma \in S_n} \tr(\sigma, \rho) \tr \left( \sigma, H^* ( C, K)^{\otimes n}\right)   =  \frac{1}{n!} \sum_{\sigma \in S_n} \ \tr(\sigma, \rho)  \chi(C,K)^{\#\textrm{ of orbits of }\sigma} \]  is a polynomial in $\chi(C,K)$, thus a polynomial in $g$. By Lemma \ref{basic-index-count}, $ (B_{(e_x),(w_k)}, C^{(n)})$ is a polynomial in $g$. Thus the identity to prove is an identity between two polynomials in $g$. Hence we may assume that $g>0$. Because $g>0$, and because we may freely twist by a rank one lisse sheaf, we may assume that $H^*(C,K)$ is supported in degree zero, so that \[ \chi \left(  \left( H^* ( C, K)^{\otimes n} \otimes \rho \right)^{S_n} \right) = \dim \left( H^0(C,K)^{\otimes n}  \otimes \rho\right)^{S_n}.\]

Then $H^0(C,K)$ is a vector space of dimension $(2g-2) \rank + \sum_{x \in \Sing}  c_x$. We can partition a basis for $H^0(C,K)$ into $2g-2$ parts of cardinality $\rank$ and one part of cardinality $c_x$ for each $x \in \Sing$. Having done this, we obtain a basis for $H^0( C, K)^{\otimes n}$ by tensor products $v$ of ordered tuples of $n$ basis vectors. For each tensor product of $n$ basis vectors $v$, let $k_i(v)$ be the number of basis vectors in the $i$th part of size $\rank$ and let $e_x(v)$ be the number of basis vectors in the part of size $c_x$.

For each pair of tuples $(k_i), (e_x)$, the tensor products of $n$ vectors with $(k_i(v))=(k_i)$ and $(e_x(v))=(e_x)$ generate a $S_n$-stable subspace of $H^0(C,K)^{\otimes n}$. Because each basis vector lies in exactly one of these generating sets, the sum of all these spaces is exactly $H^0(C,K)^{\otimes n}$. Thus we can write $\dim  \left( H^0 ( C, K)^{\otimes n} \otimes \rho \right)^{S_n}$ as a sum over tuples $((k_i),(e_x))$ of the contribution to the dimension from the corresponding subspace $H^0 ( C, K)^{\otimes n}$.

As a representation of $S_n$, the subspace of $H^0 ( C, K)^{\otimes n} $ corresponding to $((k_i), (e_x))$ is \[ \Ind_{ \prod_{x\in \Sing} S_{e_x} \times \prod_{i=1}^{2g-2} S_{k_i}   }^{S_n} \bigotimes_{x\in \Sing} (\mathbb C^{c_{x}})^{\otimes e_{x} }  \otimes  (\mathbb C^{\operatorname{rank}} ) ^{ \otimes \sum_{i=1 }^{2g-2}  k_i  }.\]

Hence tensoring with $\rho$ and taking $S_n$-invariants, the contribution of this subspace to  $\dim  \left( H^0 ( C, K)^{\otimes n} \otimes \rho \right)^{S_n}$ is
\[ \left( \bigotimes_{x\in \Sing}( \mathbb C^{c_{x}})^{\otimes e_{x} }  \otimes  (\mathbb C^{\operatorname{rank}} ) ^{ \otimes \sum_{i=1 }^{2g-2}  k_i  } \otimes \rho \right)^{ \prod_{x\in \Sing} S_{e_x} \times \prod_{i=1}^{2g-2} S_{k_i}   } \]
\[ = M_{K,\rho}((e_x), (w_k)) \] where $w_k$ is the number of $i$ with $k_i=k$. 

The number of tuples $(k_i)$ that produce a fixed sequence $(w_k)$ is  \[  \frac{\prod_{i=0}^{-1+ \sum_{k=1}^{\infty} w_k} (2g-2-i) }{ \prod_{k=1}^{\infty}w_k! } \] so by Lemma \ref{basic-index-count} the contribution of all these tuples to $\dim  \left( H^0 ( C, K)^{\otimes n} \otimes \rho \right)^{S_n}$  is exactly equal to \[ M_{ K, \rho} ( e_x, w_k)    ( [B_{(e_x),(w_k)} ], C^{(n)})_{T^* C^{(n)}}.\] Summing, we get 

\[  \left(CC \left( ( sym_* K^{\boxtimes n} \otimes \rho)^{S_{n }} \right), C^{(n)}\right)   =\dim \left( H^0(C,K)^{\otimes n}  \otimes \rho\right)^{S_n} \] \[= \sum_{\substack{( e_{x}) :  \Sing \to \mathbb N \\ (w_{k} ): \mathbb N^+ \to \mathbb N \\  \sum_{ x\in \Sing} e_{x} + \sum_{k =1}^{\infty} k w_{k} = n   }}\left(  M_{ K, \rho} ( (e_x), (w_k))    [ B_{(e_x),(w_k)} ] , C^{(n)}\right)    \] \[=\left(  \sum_{\substack{( e_{x}) :  \Sing \to \mathbb N \\ (w_{k} ): \mathbb N^+ \to \mathbb N \\  \sum_{ x\in \Sing} e_{x} + \sum_{k =1}^{\infty} k w_{k} = n   }}M_{ K, \rho} ( (e_x), (w_k))    [ B_{(e_x),(w_k)} ] , C^{(n)}\right)   . \] 

\end{proof} 

\begin{lemma}\label{sym-induction-step} Assume Theorem \ref{characteristic-cycle-sym} holds for $n'< n$. Then for any $e_x, w_k$ with $\sum_{x\in \Sing} e_x + \sum_{k=1}^{\infty} k w_k=n$, the multiplicity of $B_{(e_x),(w_k)}$ inside \[ CC \left( ( sym_* K^{\boxtimes n} \otimes \rho)^{S_{n }} \right) \] is equal to \[M_{K,\rho}((e_x), (w_k)) .\]

\end{lemma}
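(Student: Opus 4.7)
The plan is to complete the inductive step by pinning down, for each exceptional tuple, the corresponding multiplicity in $CC((\mathrm{sym}_*K^{\boxtimes n}\otimes\rho)^{S_n})$. By Lemma \ref{splitting-induction}, under the inductive hypothesis the multiplicity of $B_{(e_x),(w_k)}$ equals $M_{K,\rho}((e_x),(w_k))$ except when the tuple is \emph{exceptional}, i.e.\ has $e_{x_0}=n$ for some $x_0\in\Sing$ or $w_n=1$. These $|\Sing|+1$ exceptional tuples must be handled one at a time, using the index identity of Lemma \ref{advanced-index} and the \'etale-local nature of the characteristic cycle.

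I would first treat $w_n=1$. By the compatibility of CC with \'etale morphisms, the multiplicity $m_{(0),(w_n=1)}$ depends only on the generic rank of $K$ and on $\rho$, and in particular not on the singular structure of $K$ nor on the underlying curve. I would therefore compute it on an auxiliary smooth projective curve $C'$ (of any convenient genus) using any lisse sheaf $K'$ of the same generic rank as $K$. On $(C',K')$ the singular locus is empty, so $w_n=1$ is the only exceptional tuple, and Lemma \ref{advanced-index} combined with Lemma \ref{basic-index-count} becomes a single linear equation in a single unknown. Solving it gives $m_{(0),(w_n=1)}=M_{K,\rho}((0),(w_n=1))$, which then transfers back to $(C,K)$.

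Next, fix $x_0\in\Sing$ and consider the tuple $e_{x_0}=n$. The support of $B_{(e_{x_0}=n),(0)}$ is the single point $n[x_0]\in C^{(n)}$, so by \'etale-local nature $m_{(e_{x_0}=n),(0)}$ depends only on the germ of $K$ at $x_0$. I would replace $(C,K)$ by $(C',K')$ in which $K'$ agrees with $K$ on an \'etale neighborhood of $x_0$ but is lisse away from a single point sitting over $x_0$ (for instance, by a middle extension construction on an auxiliary curve chosen to avoid any accidental vanishing of intersection numbers). In the modified setup the exceptional tuples are reduced to $e_{x_0}=n$ and $w_n=1$; the latter multiplicity is already determined by the previous step, so Lemma \ref{advanced-index} yields one linear equation in the one remaining unknown, fixing $m_{(e_{x_0}=n),(0)}=M_{K,\rho}((e_{x_0}=n),(0))$.

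The main obstacle is carrying out the \'etale-local transfer cleanly: one must verify that the multiplicity of a given $B_{(e_x),(w_k)}$ in the CC of $(\mathrm{sym}_*K^{\boxtimes n}\otimes\rho)^{S_n}$ really depends only on the germ of $K$ along the support of the relevant cycle, and that the auxiliary $(C',K')$ can be constructed so as to preserve this germ together with the rank and the conductor $c_{x_0}$. This will rest on the compatibility of CC with \'etale morphisms and on the explicit description of the cycles $B_{(e_x),(w_k)}$ via the evaluation map given in Section \ref{symmetric-powers}; neither point is deep, but the bookkeeping between the original and the auxiliary setup is the most delicate part of the argument.
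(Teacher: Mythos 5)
Your handling of the $w_n=1$ tuple is essentially the paper's own argument (work \'etale-locally at a general point of the diagonal, where $K$ is lisse, so that the multiplicity depends only on the rank and $\rho$, then transfer to an auxiliary situation with no singular points and solve the single remaining equation from Lemma \ref{advanced-index}; one does need the auxiliary genus to satisfy $2g-2\neq 0$, which is what "convenient" must mean, since by Lemma \ref{basic-index-count} the relevant intersection number is $2g-2$).

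The $e_{x_0}=n$ step, however, has a genuine gap: the auxiliary pair $(C',K')$ you want --- a sheaf agreeing with $K$ near $x_0$ but lisse away from a \emph{single} point --- does not exist in general. The local monodromy at the unique puncture of a smooth projective curve is, in the tame quotient, a product of commutators, so for instance any germ whose tame local monodromy has nontrivial determinant character (e.g. local monodromy $\chi\oplus 1$ with $\chi$ a nontrivial tame character, which occurs for the very sheaves this paper applies the theorem to) cannot be realized by a middle extension with only one singular point on any curve, of any genus. The obstruction is not an "accidental vanishing of intersection numbers" but the nonexistence of the one-singularity globalization itself. This is exactly why the paper proceeds differently: it uses the Katz--Gabber canonical extension to realize the germ at $x_0$ on $\mathbb P^1$ as a sheaf lisse on $\mathbb G_m$ but with an unavoidable \emph{second} singular point at $\infty$, which is however tame and a middle extension there. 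That forces an intermediate case (tame middle-extension germs, $e_x=n$) in which the index identity has \emph{two} unknown multiplicities, at $0$ and at $\infty$; the paper resolves this not by producing a second equation but by a duality argument --- on $\mathbb G_m$ the germ at $0$ is geometrically dual to the germ at $\infty$, so the two unknown multiplicities coincide (CC is preserved by duality) and the two target values $M_{K,\rho}$ coincide, allowing one to divide by $2$. Only after this tame case is settled can the general germ be handled via Katz--Gabber, since then the extra singularity at $\infty$ contributes a term that is already known. Your proposal is missing both of these ingredients (the canonical extension and the duality/symmetry step), and without a substitute for them the reduction to "one linear equation in one unknown" cannot be carried out.
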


\begin{proof} Let $\operatorname{mult}_{K,\rho}((e_x),(w_k))$ be the multiplicity of $B_{(e_x),(w_k)}$ inside $CC \left( ( sym_* K^{\boxtimes n} \otimes \rho)^{S_{n }} \right) $. We aim to show that
\[\operatorname{mult}_{K,\rho}((e_x),(w_k))=  M_{K,\rho}((e_x), (w_k)) \]
for all tuples $(e_x), (w_k)$. Our main tools will be Lemma \ref{splitting-induction}, which shows that this holds unless $w_n=1$ or $e_x=1$ for some $n$, and Lemma \ref{advanced-index}, which implies that
\[  \sum_{\substack{( e_{x}) :  \Sing \to \mathbb N \\ (w_{k} ): \mathbb N^+ \to \mathbb N \\  \sum_{ x\in \Sing} e_{x} + \sum_{k =1}^{\infty} k w_{k} = n   }}\operatorname{mult}_{ K, \rho} ( (e_x), (w_k))   \left(  [ B_{(e_x),(w_k)} ] , C^{(n)} \right)    \] \[  =   \sum_{\substack{( e_{x}) :  \Sing \to \mathbb N \\ (w_{k} ): \mathbb N^+ \to \mathbb N \\  \sum_{ x\in \Sing} e_{x} + \sum_{k =1}^{\infty} k w_{k} = n   }}M_{ K, \rho} ( (e_x), (w_k))   \left(  [ B_{(e_x),(w_k)} ] , C^{(n)} \right)     . \] This uses the fact, from Lemma \ref{singular-support}, that $ [ B_{(e_x),(w_k)} ] $ are the only irreducible components that appear in $CC \left( ( sym_* K^{\boxtimes n} \otimes \rho)^{S_{n }} \right) $.

We can cancel from the left side and right side terms where we already know that $\operatorname{mult}_{K,\rho}((e_x),(w_k))=  M_{K,\rho}(e_x, w_k)$. Our goal will be to get to a situation where only one term remains, and the intersection number $\left(  [ B_{(e_x),(w_k)} ] , C^{(n)} \right)$ is nonzero, so we can divide by $\left(  [ B_{(e_x),(w_k)} ] , C^{(n)} \right)$ and get our desired identity for that term. 

We now turn to the proof.

Let us first handle the case that $w_n=1$.  Because $\sum_{x\in \Sing} e_x + \sum_{k=1}^{\infty} k w_k=n$, we must have $e_x=0$ for all $x$ and $w_k=0$ for all $k \neq n$. Thus $B_{(e_x),(w_k)}$ is supported over the diagonal curve $C \subseteq C^{(n)}$. Because the characteristic cycle is preserved by \'{e}tale pullbacks, as is $M_{K, \rho}$, we may work in an \'{e}tale-neighborhood of a general point of that curve, which we take to be $\Sym^n$ of an \'{e}tale neighborhood of a general point of $C$. By working \'{e}tale-locally at a generic point, we may assume that $K$ is lisse, so that $\Sing$ is empty, and further assume that $g \neq 1$. It follows from Lemma \ref{splitting-induction} that the multiplicity of $B_{\emptyset,(w_k')}$ in the characteristic cycle is $M_{K,\rho} (\emptyset,(w_k'))$ for all $w_k'\neq w_k$.  Hence we can cancel all terms from Lemma \ref{advanced-index} except the contribution of $(w_k)$, obtaining 
\[ \operatorname{mult}_{ K, \rho} ( \emptyset, (w_k))   \left(  [ B_{\emptyset,(w_k)} ] , C^{(n)} \right)      =   M_{ K, \rho} ( \emptyset, (w_k))   \left(  [ B_{\emptyset, (w_k)} ] , C^{(n)} \right)     . \] 

By Lemma \ref{basic-index-count} $\left(  [ B_{(e_x),(w_k)} ] , C^{(n)} \right) = 2g-2$, and we assumed $g\neq 1$, so $\left(  [ B_{(e_x),(w_k)} ] , C^{(n)} \right)\neq 0$. Hence we may divide by it, obtaining   \[\operatorname{mult}_{K, \rho}(\emptyset,(w_k))=  M_{K, \rho}(\emptyset, (w_k)) \] and thus finishing the case $w_n=1$.

Next let us handle the case when $e_x=n$, $K$ has tame ramification around $x$, and $K$ is locally around $x$ a middle extension from the open set where it is lisse.  Because $\sum_{x\in \Sing} e_x + \sum_{k=1}^{\infty} k w_k=n$, we must have $e_{x'}=0$ for all $x' \neq x$ and $w_k=0$ for all $k$. Thus $B_{(e_x),(w_k)}$ is supported at the point $n[x]$ in $C^{(n)}$. Because this identity is an \'{e}tale-local question on $C$ in a neighborhood of $x$, we may assume that $C = \mathbb P^1$, $x=0$, and $K$ is a lisse sheaf on $\mathbb G_m$ with tame ramification at $0$ and $\infty$, placed in degree $-1$ and middle extended from $\mathbb G_m$ to $\mathbb P^1$. (This is because of the equivalence of categories between tame lisse sheaves on $\mathbb G_m$ and tame lisse sheaves on the punctured local ring of $\mathbb P^1$ at $0$.) By Lemma \ref{splitting-induction} and the previous case, we have \[\operatorname{mult}_{K,\rho}((e_x'),(w_k'))=  M_{K,\rho}((e_x'), (w_k')) \] for all tuples $(e_x'),(w_k')$ except for those with $w_k'=0$ and $e_0'=n, e_\infty'=0$ or $e_0'=0, e_\infty'=n$. There are two tuples with this property. Call them $((e_{x,1}),(w_{k,1}))$ and $((e_{x,2}),(w_{k,2}))$. Lemma \ref{advanced-index} now gives
\[ \operatorname{mult}_{ K, \rho} ( (e_{x,1}) , (w_{k,1}))   \left(  [ B_{(e_{x,1}) , (w_{k,1})} ] , C^{(n)} \right)  + \operatorname{mult}_{ K, \rho} ( (e_{x,2}) , (w_{k,2}))   \left(  [ B_{(e_{x,2}) , (w_{k,2})} ] , C^{(n)} \right) \] \[=M_{ K, \rho} ( (e_{x,1}) , (w_{k,1}))   \left(  [ B_{(e_{x,1}) , (w_{k,1})} ] , C^{(n)} \right)  + M_{ K, \rho} ( (e_{x,2}) , (w_{k,2}))   \left(  [ B_{(e_{x,2}) , (w_{k,2})} ] , C^{(n)} \right) .\]

Because $(w_{k,i})=0$ for $i=1,2$, $B_{(e_{x,i}),( w_{k,i})}$ is simply the fiber of the cotangent bundle over a point so \[ \left(  [ B_{(e_{x,1}) , (w_{k,1})} ] , C^{(n)} \right)  =  \left(  [ B_{(e_{x,2}) , (w_{k,2})} ] , C^{(n)} \right) =1\]
 and thus 
\[ \operatorname{mult}_{ K, \rho} ( (e_{x,1}) , (w_{k,1}))  + \operatorname{mult}_{ K, \rho} ( (e_{x,2}) , (w_{k,2})) \] \[ =M_{ K, \rho} ( (e_{x,1}) , (w_{k,1}))    + M_{ K, \rho} ( (e_{x,2}) , (w_{k,2})) .\]

Next we can check that \[\operatorname{mult}_{ K, \rho} ( (e_{x,1}) , (w_{k,1})) =  \operatorname{mult}_{ K, \rho} ( (e_{x,2}) , (w_{k,2})).\] By the classification of lisse sheaves on $\mathbb G_m$, $K$ in a neighborhood of $0$ is geometrically isomorphic to the dual of $K$ in a neighborhood of $\infty$, so $ (K^{\boxtimes n} \otimes \rho)^{S_n}$ in a neighborhood of $0$ is isomorphic to the dual of $ (K^{\boxtimes n} \otimes \rho)^{S_n}$ in a neighborhood of $\infty$. Because they are dual, they have the same characteristic cycle \cite[Lemma 4.13.4]{saito1}.

Furthermore, for both these $e_x',w_k'$, \[ M_{K,\rho} (e_{x'}, w_k')=  \left( \left( \mathbb C^{c_0} \right)^{\otimes n} \otimes \rho \right)^{S_n} = \left( \left( \mathbb C^{c_0} \right)^{\otimes n} \otimes \rho \right)^{S_\infty}.\]  so we have \[ M_{ K, \rho} ( (e_{x,1}) , (w_{k,1}))    = M_{ K, \rho} ( (e_{x,2}) , (w_{k,2})) .\]

It now follows by division by $2$ that \[ \operatorname{mult}_{ K, \rho} ( (e_{x,1}) , (w_{k,1}))    =M_{ K, \rho} ( (e_{x,1}) , (w_{k,1}))  .\] This completes the case that $e_x=n$ for some $x$ where $K$ has tame ramification and is a middle extension sheaf.

Let us finally handle the general case when $e_x=n$. Again we may pass to an \'{e}tale neighborhood of $x$. Katz and Gabber showed that any lisse sheaf on the punctured spectrum of the \'{e}tale local ring at $0$ of $\mathbb P^1$ can be extended to a lisse sheaf on $\mathbb G_m$ with tame ramification at $\infty$ \cite[Theorem 1.4.1]{KatzCanonical}. It follows that any perverse sheaf on the spectrum of the \'{e}tale local ring at $0$ can be extended to a sheaf on $\mathbb A^1$, lisse on $\mathbb G_m$, and with tame ramification at $\infty$. We can furthermore middle extend from $\mathbb A^1$ to $\mathbb P^1$. Working locally, we may assume $C$ is $\mathbb P^1$ and $K$ has this form.

We now argue as before. By Lemma \ref{splitting-induction} and the previous cases, the multiplicity  of $B_{ (e_{x}'), (w_k')}$ in the characteristic cycle is equal to $M_{K,\rho} (e_{x}',w_k')$ for all $e_{x}',w_k'$ except the one whose multiplicity we would like to compute. Because the intersection number of $B_{ (e_{x}'), (w_k')}$ with the zero-section is $1$, we can cancel all the other terms in Lemma \ref{advanced-index} and extract our desired identity.

\end{proof}

Theorem \ref{characteristic-cycle-sym} now follows by induction on $n$, with Lemma \ref{sym-induction-step} as the induction step and either $n=0$ or $n=1$, depending on preference, as the base case.

\section{Geometric setup}\label{geometric-setup}

We now specialize to the genus zero case. Let $C = \mathbb P^1$. Then $C^{(n)} = \mathbb P^n$. Specifically, this isomorphism comes from viewing $\mathbb P^n$ as the projectivization of the vector space $H^0( \mathbb P^1, \mathcal O(n))$. Let $\mathbb P^\vee$ be the projectivization of the dual vector space. Let $Y \subseteq \mathbb P^\vee \times C^{(n)}$ be the graph of the universal family of hyperplanes, with projection $p_1$ to $\mathbb P^\vee$ and $p_2$ to $C^{(n)}$.

For a complex $K$ of $\ell$-adic sheaves on $\mathbb P^n$, define the \emph{Radon transform} of $K$ to be $Rp_{1*} p_2^{*} K [n-1]$.

Let $\mathcal F$ be a rank two middle extension sheaf on $C$, pure of weight $0$. For each geometric point $x$ in the singular locus $\Sing$ of $\mathcal F$, let $c_x$ be the Artin conductor of $\mathcal F$ at $x$, and let $N = \sum_x c_x[x]$ be the associated divisor. Let $K_n$ be the Radon transform of $( sym_* \mathcal F^{\boxtimes n})^{S_n} [n]$, or in other words
\[ K_n= R p_{1*} p_2^* ( sym_* \mathcal F^{\boxtimes n})^{S_n} [2n-1] .\]

For $\alpha$ a linear form on $H^0( \mathbb P^1, \mathcal O(n))$, let $P(\alpha)$ be the set of effective divisors of degree $n$ on $\mathbb P^1_{\mathbb F_q}$ such that $\alpha(f)=0$ for all $f \in H^0(\mathbb P^1, \mathcal O(n))$ with $\div(f)=D$. In other words, $P(\alpha) $ is the image under $p_2$ of $p_1^{-1}(\alpha) (\mathbb F_q)$.

\begin{lemma}\label{Radon-trace-function} The trace of $\Frob_q$ on the stalk of $K_n$ at a  $\mathbb F_q$-point of $\mathbb P^\vee$ corresponding to a linear form $\alpha$ on $H^0(\mathbb P^1, \mathcal O(n))$ is equal to
\[ - \sum_{ D \in P(\alpha) }   r_{\mathcal F}(D) .\] \end{lemma}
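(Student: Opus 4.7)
The plan is to compute $(K_n)_\alpha$ via proper base change, apply the Grothendieck--Lefschetz trace formula to the resulting hyperplane, and identify the pointwise Frobenius traces of $(sym_* \mathcal F^{\boxtimes n})^{S_n}$ with the values of $r_{\mathcal F}$.

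First I would observe that $Y \subseteq \mathbb P^\vee \times C^{(n)}$ is closed in a product of projective varieties, so $p_1$ is proper. Proper base change along the inclusion $\{\alpha\} \hookrightarrow \mathbb P^\vee$ then gives
\[ (K_n)_\alpha \;=\; R\Gamma\!\left(H_\alpha,\; (sym_* \mathcal F^{\boxtimes n})^{S_n}|_{H_\alpha}\right)[2n-1], \]
where $H_\alpha := p_1^{-1}(\alpha) \subseteq C^{(n)}$ is the hyperplane cut out by $\alpha$, and I have identified $p_2|_{H_\alpha}$ with the inclusion $H_\alpha \hookrightarrow C^{(n)}$. Since $H_\alpha$ is proper, the Grothendieck--Lefschetz trace formula yields
\[ \sum_i (-1)^i \tr\!\left(\Frob_q,\, H^i(H_\alpha, (sym_* \mathcal F^{\boxtimes n})^{S_n}|_{H_\alpha})\right) \;=\; \sum_{D \in H_\alpha(\mathbb F_q)} \tr\!\left(\Frob_q,\, ((sym_* \mathcal F^{\boxtimes n})^{S_n})_D\right). \]
By construction $H_\alpha(\mathbb F_q) = P(\alpha)$, and the cohomological shift $[2n-1]$ multiplies the alternating sum by $(-1)^{2n-1} = -1$, which accounts for the overall minus sign in the claimed formula.

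It then remains to show $\tr(\Frob_q,\, ((sym_* \mathcal F^{\boxtimes n})^{S_n})_D) = r_{\mathcal F}(D)$ for every $\mathbb F_q$-rational effective divisor $D$ of degree $n$. Writing $D = \sum_v m_v [v]$ over closed points $v$ of residue degree $d_v$, the $\overline{\mathbb F_q}$-points of $sym^{-1}(D)$ form the $S_n$-orbit of any $n$-tuple in which each of the $d_v$ geometric points above $v$ appears with multiplicity $m_v$. This orbit is Frobenius-stably partitioned according to the closed points in the support of $D$, so the Frobenius trace on the stalk factors as a product over $v$ of a local contribution. A direct unwinding of definitions identifies the factor at $v$ with $\tr(\Sym^{m_v}\Frob_{|\kappa_v|},\, \mathcal F_v)$, which is the coefficient of $u^{m_v}$ in $1/\det(1 - u\Frob_{|\kappa_v|},\, \mathcal F_v)$, i.e.\ with $r_{\mathcal F}(m_v[v])$. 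Multiplicativity of $r_{\mathcal F}$ over coprime divisors then yields $r_{\mathcal F}(D)$, and combined with the trace formula above this gives the claim.

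The main obstacle will be the pointwise identification in the last step: when $d_v > 1$ one must carefully track how the $S_n$-invariants interact with the nontrivial Frobenius action permuting the $d_v$ geometric points above $v$, in order to recover the local $L$-factor coefficient. This is the standard cohomological interpretation of the Dirichlet series $\sum_D r_{\mathcal F}(D) u^{\deg D}$ on a curve, so while the bookkeeping is a little involved it should be entirely routine.
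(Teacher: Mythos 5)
Your proposal is correct and follows essentially the same route as the paper: proper base change plus the Lefschetz trace formula on the fiber $p_1^{-1}(\alpha)$ (with the odd shift $[2n-1]$ producing the minus sign), reduced to the stalkwise identity $\tr(\Frob_q, ((sym_*\mathcal F^{\boxtimes n})^{S_n})_D) = r_{\mathcal F}(D)$. The bookkeeping you defer for closed points of degree $d_v>1$ is exactly what the paper carries out, by writing the stalk as an induced representation, identifying the $S_n$-invariants with $\bigotimes_i \Sym^{m_i}\mathcal F_{x_i}$, and using that the trace of $\Frob_q$ on a Frobenius-cyclic tensor factor of length $d_v$ equals the trace of $\Frob_{|\kappa_v|}$ on a single factor, matching the local $L$-factor coefficient.
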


\begin{proof} By the Lefschetz fixed point formula, the trace of $\Frob_q$ on the stalk of $K_n$ at $\alpha$ is minus the sum of the trace of $\Frob_q$ on the stalk of $p_2^* ( sym_* \mathcal F^{\boxtimes n})^{S_n}$ over $p_1^{-1}(\alpha) (\mathbb F_q)$. Because the $\mathbb F_q$-points of $p_1^{-1}(\alpha)$ are exactly the divisors of $P(\alpha)$, it suffices to prove that the trace of Frobenius on the stalk of $( sym_* \mathcal F^{\boxtimes n})^{S_n}$ at a divisor $D$ is $r_{\mathcal F}(D)$.

If we write $D = \sum_i m_i x_i$ for distinct geometric points $x_i$ with multiplicities $m_i$, \[ \left( sym_* \mathcal F^{\boxtimes n}\right)_D = \bigoplus_{ \substack{ (y_1,\dots, y_n) \in \mathbb P^1(\overline{\mathbb F}_q) \\  | \{ j | y_j = x_i \}  |= m_i \textrm{ for all }i}}  \bigotimes_{j=1}^n \mathcal F_{y_j} .\]

The group $S_n$ acts transitively on the set of such tuples $y_j$, with stabilizer $\prod_i S_{m_i}$, so we can write this sum as an induced representation 
\[ \left( sym_* \mathcal F^{\boxtimes n}\right)_D = \operatorname{Ind}_{\prod_i S_{m_i}}^{S_n} \bigotimes_{i} \mathcal F_{x_i}^{\otimes m_i} \]
and thus
\[ \left( sym_* \mathcal F^{\boxtimes n}\right)_D^{S_n}  = \left( \bigotimes_{i} \mathcal F_{x_i}^{\otimes m_i} \right)^{ \prod_{i} S_{m_i}} = \bigotimes_i  \Sym^{m_i} \mathcal F_{x_i}  \]

Now $\Frob_q$ acts on $\{x_i\}$ with one orbit of size $d$ for each closed point in $D$ of degree $d$. Thus the trace of $\Frob_q$ on $\bigotimes_i  \Sym^{m_i} \mathcal F_{x_i}$ is the product over the set of orbits of the trace of $\Frob_q$ on the tensor product of $\Sym^{m_i} \mathcal F_{x_i}$ for $x_i$ in that orbit. For an orbit of degree $d$, the trace of $\Frob_q$ on the tensor product of the $d$ terms in the orbit is simply the trace of $\Frob_q^d$ on a single term $\Sym^{m_i} \mathcal F_{x_i}$. This orbit corresponds to a closed point $v$ of degree $d$ with multiplicity $m_i$ in $D$, and the local contribution is \[ \tr (\Frob_{|\kappa_v|}, \Sym^{m_i} \mathcal F_v) = r_{\mathcal F} (m_v [v] )\] by the generating function identity \[ \frac{1}{ \det( 1- u \Frob_{|\kappa_v | }, \mathcal F_v)}= \sum_{m=0}^{\infty}  \tr (\Frob_{|\kappa_v|}, \Sym^{m_i} \mathcal F_v) u^m.\] Thus the total trace at $D$ is \[ \prod_v r_{\mathcal F} (m_v [v] ) = r_{\mathcal F} ( \sum_v m_v[v]) = r_{\mathcal F}(D),\] as desired. \end{proof}

\begin{lemma}\label{Radon-perverse-pure}  The complex $K_n$ is perverse and pure of weight $2n-1$. \end{lemma}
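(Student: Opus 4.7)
The plan is to establish purity and perversity in stages, with perversity being the main obstacle.

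I first verify that $L := (sym_*\mathcal F^{\boxtimes n})^{S_n}[n]$ is perverse and pure of weight $n$ on $C^{(n)} = \mathbb P^n$. Since $\mathcal F$ is a middle-extension sheaf pure of weight $0$, the shift $\mathcal F[1]$ is perverse and pure of weight $1$ on $C$. The external product $\mathcal F[1]^{\boxtimes n} = \mathcal F^{\boxtimes n}[n]$ is then perverse and pure of weight $n$ on $C^n$. Because $sym\colon C^n \to C^{(n)}$ is finite, $sym_*$ is $t$-exact and preserves purity, and its $S_n$-invariant direct summand $L$ inherits the same properties.

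Next I would pull back to $Y$. Since $p_2\colon Y \to \mathbb P^n$ is a smooth proper $\mathbb P^{n-1}$-bundle, the functor $p_2^*[n-1]$ is perverse $t$-exact and raises weights by $n-1$, so $p_2^* L[n-1] = p_2^*(sym_*\mathcal F^{\boxtimes n})^{S_n}[2n-1]$ is perverse and pure of weight $2n-1$ on $Y$. Purity of $K_n = Rp_{1*}(p_2^* L[n-1])$ now follows at once from Deligne's stability of purity under proper pushforward, since $p_1$ is proper ($Y$ being a closed subscheme of the projective variety $\mathbb P^\vee \times \mathbb P^n$).

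The main difficulty is perversity of $K_n$, because $Rp_{1*}$ is \emph{not} perverse $t$-exact for smooth proper maps with fibers of positive dimension; applied to $\overline{\mathbb Q}_\ell[2n-1]$ on $Y$, Hard Lefschetz spreads the output across the perverse degrees $-n+1,-n+3,\ldots,n-1$. To handle this I would invoke Brylinski's theorem on the Radon transform, which says that $R(-) = Rp_{1*} p_2^*(-)[n-1]$ is perverse $t$-exact on the subcategory of $\mathrm{Perv}(\mathbb P^n)$ consisting of perverse sheaves with no direct summand isomorphic to the shifted constant sheaf $\overline{\mathbb Q}_\ell[n]$, and then verify this hypothesis for $L$ using the irreducibility and nontriviality of $\mathcal F$ together with the Schur-theoretic decomposition of $sym_*\mathcal F^{\boxtimes n}$ into irreducible perverse summands indexed by representations of $S_n$, none of which can be the constant sheaf. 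Alternatively, one can realize the Radon transform as a Fourier--Deligne transform on the affine cone $V = H^0(\mathbb P^1, \mathcal O(n))$ -- lift $L$ along the $\mathbb G_m$-torsor $V \setminus 0 \to \mathbb P(V)$, extend to $V$, apply the Fourier transform (which is perverse $t$-exact), restrict to $V^\vee \setminus 0$ and descend to $\mathbb P^\vee$ -- from which perversity of $K_n$ is immediate.
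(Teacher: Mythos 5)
Your purity argument is fine and matches the paper's. The gap is in the perversity step. The form of Brylinski's theorem you invoke — that the Radon transform is perverse $t$-exact on perverse sheaves having no direct summand isomorphic to $\overline{\mathbb Q}_\ell[n]$ — is not a true statement. The correct result is only an equivalence modulo (geometrically) constant complexes: for any perverse $K$ on $\mathbb P^n$, the non-middle perverse cohomologies ${}^p\mathcal H^i(RK)$, $i\neq 0$, are constant, and they are identified with the global cohomology groups $H^{i+1}(\mathbb P^n,K)$ (for $i>0$) and $H^{i-1}(\mathbb P^n,K)$ (for $i<0$); this is exactly the formula from Kiehl--Weissauer that the paper uses. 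Having no constant summand (or even no constant constituent) does not force these groups to vanish. Concretely, take $K=\overline{\mathbb Q}_\ell_{Z}[2]$ for a plane $Z=\mathbb P^2\subset\mathbb P^3$: it is a simple perverse sheaf with no constant summand, yet $H^{\pm 2}(\mathbb P^3,K)\neq 0$ and its Radon transform has generic stalk cohomology in degree $-4$ on the three-dimensional dual space, hence is not perverse. So verifying, via the Schur decomposition of $sym_*\mathcal F^{\boxtimes n}$, that no summand is constant does not yield perversity of $K_n$. What must actually be checked is the vanishing of $H^j\bigl(C^{(n)}_{\overline{\mathbb F}_q},(sym_*\mathcal F^{\boxtimes n})^{S_n}[n]\bigr)$ for $j\neq 0$ (at least for $|j|\geq 2$), and this is where the hypotheses on $\mathcal F$ enter: since $\mathcal F$ is geometrically irreducible of rank two, $H^i(C_{\overline{\mathbb F}_q},\mathcal F)$ is concentrated in degree $1$, and the K\"unneth formula then kills all $j\neq 0$. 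This verification is the actual content of the paper's proof and is absent from yours.

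The alternative Fourier--Deligne route has the same defect. The Fourier transform on the affine cone $V$ is indeed perverse $t$-exact, but the comparison with the Radon transform depends on a choice of extension of $\pi^*L[1]$ across the origin ($j_!$, $j_{!*}$, or $Rj_*$); the different choices differ by complexes supported at $0$, whose Fourier transforms are constant complexes on $V^\vee$, and these are precisely the non-perverse constant contributions above, again controlled by $H^*(\mathbb P^n,L)$. So perversity of $K_n$ is not ``immediate'' from this construction either; it requires the same global cohomology vanishing, i.e.\ the same use of the geometric irreducibility of $\mathcal F$.
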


\begin{proof} We will first verify that $( sym_* \mathcal F^{\boxtimes n})^{S_n} [n]$ is perverse and pure of weight $n$.

Because $\mathcal F$ is a middle extension sheaf pure of weight $0$, $\mathcal F[1]$ is perverse and pure of weight $1$, because middle extension sheaves are perverse middle extensions up to shift \cite[Example on p. 153]{KiehlWeissauer} and shifting the degree shifts the weights by definition. Thus $\mathcal F^{\boxtimes n}[n]$ is perverse and pure of weight $n$ by $n-1$ applications of \cite[Proposition 4.2.8 and Stability 5.1.14.1 and 5.14.1*]{bbdg}. Then  $ sym_* \mathcal F^{\boxtimes n}[n]$ is perverse and pure of weight $n$ by \cite[Corollary 4.1.3 and Stability 5.1.14(i,i*)]{bbdg} because $sym$ is finite so $sym_* = sym_!$.  Finally, passing to $S_n$-invariants is taking a summand, and the definitions of both purity and perversity are manifestly preserved by summands.

We now apply properties of the Radon transform. 

The Radon transform sends complexes pure of weight $w$ to complexes pure of weight $w+n-1$ because pullback along the smooth morphism $p_2$ and pushforward along the smooth morphism $p_1$ both preserve weights by \cite[Stability 5.1.14(i,i*)]{bbdg}, since $p_{1*} = p_{1!}$ and $p_{2}^! = p_2^* [ 2 (n-1) ] (n-1)$, while shifting the degree by $n-1$ shifts weights by $n-1$. Thus $K_n$ is pure of weight $2n-1$.

To check that $K_n$ is perverse, it suffices to show that its perverse homology sheaves in degrees other than $0$ vanish. Let $d: \mathbb P^n \to \operatorname{Spec} \mathbb F_q$ be the unique map to a point.  We can view the cohomology group  $H^j (C^{(n)}_{\overline{\mathbb F}_q}, ( sym_* \mathcal F^{\boxtimes n})^{S_n} [n]) $ as a sheaf on $ \operatorname{Spec} \mathbb F_q$ (because it admits an action of the Galois group of $\mathbb F_q$).

 By \cite[IV, Lemma 2.2]{KiehlWeissauer}, we have a formula for the perverse homology sheaf if $i \neq 0$. More precisely, we have
 \[ {}^p \mathcal H^i ( K_n) = d^* H^{i+1} (C^{(n)}_{\overline{\mathbb F}_q}, ( sym_* \mathcal F^{\boxtimes n})^{S_n} [n]) ]   [n] \] if $i>0$ and
  \[ {}^p \mathcal H^i ( K_n) = d^* H^{i-1} (C^{(n)}_{\overline{\mathbb F}_q}, ( sym_* \mathcal F^{\boxtimes n})^{S_n} [n]) ]   [n] \] if $i<0$.
  
 Thus, to show $K_n$ is perverse, it suffices to show that $H^{j} (C^{(n)}_{\overline{\mathbb F}_q}, ( sym_* \mathcal F^{\boxtimes n})^{S_n} [n]) =0 $ where $j= i+1$ for $j >0$ and $j= i-1$ for $j<0$. In fact, we will show this for $j \neq 0$, which is a stronger statement.
To do this, note that

\[ H^{j} (C^{(n)}_{\overline{\mathbb F}_q}, ( sym_* \mathcal F^{\boxtimes n})^{S_n} [n])  = H^{j+n} (C^{(n)}_{\overline{\mathbb F}_q}, ( sym_* \mathcal F^{\boxtimes n})^{S_n} ) \] is a summand of 
\[ H^{j+n} (C^{(n)}_{\overline{\mathbb F}_q}, ( sym_* \mathcal F^{\boxtimes n})= H^{j+n} ( C^n _{\overline{\mathbb F}_q}, \mathcal F^{\boxtimes n} ) .\]

Because $\mathcal F$ is geometrically irreducible of rank $2$, it has no global monodromy invariants or coinvariants. Thus $H^i ( C _{\overline{\mathbb F}_q}, \mathcal F )$ vanishes for $i \neq 1$. It follows from the K\"{u}nneth formula that its $n$-fold tensor product $ H^{j+n} ( C^n _{\overline{\mathbb F}_q}, \mathcal F^{\boxtimes n} ) $ vanishes for $j+n \neq n$, and thus for $j \neq 0$. Because that cohomology group vanishes, its summand  $H^{j} (C^{(n)}_{\overline{\mathbb F}_q}, ( sym_* \mathcal F^{\boxtimes n})^{S_n} [n]) $ also vanishes, as desired.
\end{proof}

\section{Calculating the characteristic cycle}\label{calculating-characteristic-cycle}

In this section, we will calculate the characteristic cycle of the complex $K_n$  defined in Section \ref{geometric-setup}. The most difficult part is calculating the multiplicity of the zero section, which we can express as the rank (equivalently, the Euler characteristic) at the generic point. We calculate this by first determining the Euler characteristic of a simpler model sheaf in Lemma \ref{constant-sheaf-generic} and then relating this model sheaf to $K_n$ in Lemma \ref{characteristic-cycle-Whittaker}. The other main ingredient of Lemma \ref{characteristic-cycle-Whittaker} is Lemma \ref{cycle-rank-two}, which is a special case of Theorem \ref{characteristic-cycle-sym}.

It will be helpful to reinterpret some of the concepts of Section \ref{symmetric-powers} in the special case where $C = \mathbb P^1$.

Viewing points of $C^{(n)}$ as nonzero sections of $H^0( C, \mathcal O(n))$ up to scaling, we can view $A_{(e_x),(w_k)}$ as the set of sections of the form $\left( \prod_{x \in \Sing} l_x^{e_x} \right) \prod_{k=1}^{\infty} f_k^k$ where $l_x$ is a fixed section of $\mathcal O(1)$ vanishing at $x$ and $f_k$ is an arbitrary section of $\mathcal O(w_k)$.

We can view the tangent space at a point of $C^{(n)}$ corresponding to a section $g$ as the space of sections of $\mathcal O(n)$ modulo $g$. The isomorphism to our earlier description of the tangent space as $H^0 ( \mathcal I^\vee/\mathcal O)$ is given by dividing a degree $n$ polynomial by $g$, producing a section of the dual of the ideal sheaf generated by $g$, modulo $\mathcal O$.

Over the point  $\left( \prod_{x \in \Sing} l_x^{e_x} \right) \prod_{k=1}^{\infty} f_k^k$, the closed set $B_{(e_x),(w_k)}$ is the set of linear forms on the tangent space that vanish on all elements whose divisor of poles is at most the divisor of $\prod_k f_k$, which is the set of linear forms vanishing on the image of the multiplication-by-$\left( \prod_{x \in \Sing} l_x^{e_x} \right) \prod_{k=1}^{\infty} f_k^{k-1}$ map from $H^0( \mathbb P^1, \mathcal O(\sum_k w_k)) $ to $ H^0( \mathbb P^1, \mathcal O(n)) .$

Recall that for $\mathbb P^\vee$ the space of nonzero linear forms on $H^0(C, \mathcal O(n))$ up to scaling, i.e. the projective dual space to $C^{(n)}$, we defined $Y \subseteq \mathbb P^\vee \times C^{(n)}$ the graph of the universal family of hyperplanes (i.e. the locus of pairs of a section and a linear form vanishing on that section) and $p_1: Y \to \mathbb P^\vee , p_2: Y \to  C^{(n)}$ the projection maps.

For a middle extension sheaf $\mathcal F$, the conductor $N$ of $\mathcal F$ is defined as  $ \sum_{ x\in \Sing} c_x [x]$. In particular, the support $|N|$ of $N$ is $\Sing$. 

\begin{lemma}\label{constant-sheaf-generic} Let $\mathcal  F'  =\mathbb Q_\ell^2 \oplus \bigoplus_{x \in \Sing} \delta_x^{c_x}[-1]$. The Euler characteristic of the stalk at the generic point of 
\[  R p_{1*} p_2^* ( sym_*\mathcal F^{'\boxtimes n})^{S_n} [2n-1]   \]
is the coefficient of $u^n$ in the generating series $-\frac{2u (1-u)^{\sum_{x\in \Sing} c_x} } {(1-u)^4 (1+u)}$. \end{lemma}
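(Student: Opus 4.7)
By proper base change applied to $p_1$, the stalk of $K_n' := R p_{1*} p_2^* K[2n-1]$ (where $K := (sym_*\mathcal{F}'^{\boxtimes n})^{S_n}$) at a generic $\alpha \in \mathbb{P}^\vee$ is $R\Gamma(H_\alpha, K|_{H_\alpha})[2n-1]$, with $H_\alpha \cong \mathbb{P}^{n-1}$ the hyperplane of $C^{(n)} = \mathbb{P}^n$ cut out by $\alpha$. Since $2n-1$ is odd, the Euler characteristic of this stalk equals $-\chi(H_{\mathrm{gen}}, K|_{H_{\mathrm{gen}}})$, so it suffices to show that the generating function $Z(u) := \sum_{n \ge 0}\chi(H_{\mathrm{gen}}, K_n|_{H_{\mathrm{gen}}})\, u^n$ equals $\tfrac{2u(1-u)^{\sum c_x}}{(1-u)^4(1+u)}$.

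The next step is to exploit the direct-sum decomposition $\mathcal{F}' = \overline{\mathbb{Q}}_\ell^2 \oplus \bigoplus_{x \in \Sing} \delta_x^{c_x}[-1]$. Expanding $\mathcal{F}'^{\boxtimes n}$ and grouping summands by the ``type'' $(a, (b_x))$ with $a + \sum_x b_x = n$ (recording how many of the $n$ factors are the constant piece versus the skyscraper at each $x$), one obtains a direct-sum decomposition
\[ K_n \;\simeq\; \bigoplus_{(a,(b_x))} \mu_{a,(b_x),*}\bigl(L_a \otimes W_{(b_x)}\bigr),\]
where $\mu_{a,(b_x)}: C^{(a)} = \mathbb{P}^a \hookrightarrow \mathbb{P}^n = C^{(n)}$ is the linear embedding $D \mapsto D + \sum_x b_x[x]$, the sheaf $L_a := (sym_*(\overline{\mathbb{Q}}_\ell^2)^{\boxtimes a})^{S_a}$ is the $\Sing=\emptyset$ ``model'', and the graded vector space $W_{(b_x)} := \bigotimes_x \bigwedge^{b_x}(\overline{\mathbb{Q}}_\ell^{c_x})$ is concentrated in cohomological degree $\sum_x b_x$. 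The exterior (rather than symmetric) power appears because $\delta_x^{c_x}[-1]$ sits in \emph{odd} cohomological degree, so the Koszul sign rule makes the $S_{b_x}$-action on $(\delta_x^{c_x}[-1])^{\boxtimes b_x}$ a signed permutation, whose invariants are alternating. Since $\mu_{a,(b_x)}$ is linear, a generic hyperplane of $\mathbb{P}^n$ restricts to a generic hyperplane of $\mathbb{P}^a$; tensoring with a graded vector space $W$ multiplies the Euler characteristic by $\chi(W)$; hence $\chi(H_{\mathrm{gen}}, K_n|_H) = \sum\chi(W_{(b_x)})\,\chi(H_{\mathrm{gen}}^{\mathbb{P}^a}, L_a|_{H_{\mathrm{gen}}^{\mathbb{P}^a}})$, and the generating function factors as $Z(u) = A(u)\cdot Z^{(c=0)}(u)$, with
\[ A(u) \;=\; \prod_{x \in \Sing}\sum_{b_x \ge 0}(-u)^{b_x}\binom{c_x}{b_x} \;=\; (1-u)^{\sum c_x}, \qquad Z^{(c=0)}(u) \;=\; \sum_{a \ge 0} u^a \chi(H_{\mathrm{gen}}^{\mathbb{P}^a}, L_a|_{H_{\mathrm{gen}}^{\mathbb{P}^a}}).\]

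The main technical step, and the principal obstacle, is to prove $Z^{(c=0)}(u) = \tfrac{2u}{(1-u)^4(1+u)}$ --- i.e.\ the lemma in the unramified case $\Sing = \emptyset$. My approach would be to stratify $C^{(a)}$ according to divisor-type $(w_k)$ (the number $w_k$ of points of multiplicity $k$, with $\sum_k k w_k = a$); on each open stratum $A^\circ_{(w_k)}$, $L_a$ is locally constant of rank $\prod_k (k+1)^{w_k}$, reducing the problem to computing $\chi(H_{\mathrm{gen}} \cap A^\circ_{(w_k)})$ for each $(w_k)$. This in turn I would attack via the birational parameterization $ev_{(w_k)}: \prod_k \mathbb{P}^{w_k} \to A_{(w_k)}$, which satisfies $ev^*\mathcal{O}(1) \simeq \bigotimes_k \mathcal{O}_{\mathbb{P}^{w_k}}(k)$, so $ev^{-1}(H_{\mathrm{gen}})$ is a smooth hypersurface in this line-bundle class on $\prod_k \mathbb{P}^{w_k}$ whose Euler characteristic is computable by an adjunction/Chern-class calculation; combined with the configuration-space Euler characteristic formula $\chi(\prod_k \mathbb{P}^{w_k,\circ}) = \chi(\mathbb{P}^1)^{\underline{\sum w_k}}/\prod_k w_k!$ (which vanishes for $\sum w_k \ge 3$ on $\mathbb{P}^1$, so only finitely many partitions contribute in a geometrically interesting way), one then assembles the stratum contributions into a generating function and verifies the stated closed form. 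Finally, multiplying yields $Z(u) = (1-u)^{\sum c_x}\cdot \tfrac{2u}{(1-u)^4(1+u)}$, and negating for the $[2n-1]$ shift gives the claimed generating function for the generic-stalk Euler characteristic.
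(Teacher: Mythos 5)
Your set-up is sound and, up to a point, coincides with the paper's: reducing the generic stalk of the Radon transform to the cohomology of a generic hyperplane section, decomposing $\mathcal F'^{\boxtimes n}$ by type, and identifying the skyscraper contribution via the Koszul sign as $\bigotimes_x \wedge^{b_x}(\overline{\mathbb Q}_\ell^{c_x})$ placed in degree $\sum_x b_x$ is exactly how the paper produces the multiplicities $\prod_x \binom{c_x}{e_x}$ and the factor $(1-u)^{\sum_x c_x}$ (its Equation (5.1), ``pushforward-constant-sheaf''). So the reduction to the unramified generating series $Z^{(c=0)}(u)$ is correct.

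The gap is that the heart of the lemma --- the identity $Z^{(c=0)}(u)=\frac{2u}{(1-u)^4(1+u)}$ --- is asserted rather than proved, and the route you sketch for it is both harder than necessary and problematic in characteristic $p$. Your plan keeps $L_a=(sym_*(\overline{\mathbb Q}_\ell^2)^{\boxtimes a})^{S_a}$ intact and stratifies $\mathbb P^a$ by multiplicity type $(w_k)$, computing $\chi$ of each stratum cut by a generic hyperplane via the parameterization $ev_{(w_k)}\colon \prod_k\mathbb P^{w_k}\to A_{(w_k)}$, smoothness of the pulled-back hyperplane, and adjunction/Chern classes. But when $p\mid k$ the map $D\mapsto kD$ is purely inseparable (it factors through Frobenius), so the pullback of \emph{every} hyperplane along that factor is nowhere reduced and no member of the linear system $ev^*|\mathcal O(1)|$ is smooth; the adjunction computation is unavailable and would in any case give the wrong (characteristic-zero) stratum count. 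Likewise, replacing $\chi$ of a stratum with (rank of $L_a$ on the stratum)$\times\chi$(stratum), and the formula $\chi(\mathbb P^1)^{\underline{\sum w_k}}/\prod_k w_k!$ for the open strata, need tameness/GOS-type justifications that are not automatic in characteristic $p$; and you never explain how the stratum contributions resum to the closed form. The paper sidesteps all of this by splitting the rank-two constant sheaf as $\mathbb Q_\ell\oplus\mathbb Q_\ell$, so that each summand of $(sym_*\mathcal F'^{\boxtimes n})^{S_n}$ is the pushforward of the \emph{constant} sheaf along a finite map $f_{a,b,(e_x)}\colon \mathbb P^a\times\mathbb P^b\to\mathbb P^n$; the generic hyperplane preimage is then a general $(1,1)$-hypersurface in $\mathbb P^a\times\mathbb P^b$, whose Euler characteristic is the elementary quantity $ab+\max(a,b)$ (valid in any characteristic, since it is a projective-subbundle argument, not an adjunction formula), and the generating-function identity for $F(u,v)=\sum(ab+\max(a,b))u^av^b$ yields $\frac{2u}{(1-u)^4(1+u)}$. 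If you refine your decomposition one step further in this way, your argument closes; as written, the key computation is missing and the proposed substitute does not go through in positive characteristic.
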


\begin{proof}
For $a,b$ natural numbers with $a+b + \sum_{x \in |N| } e_x = n$, let $ f_{a,b,(e_x)}$ be the map $C^{(a)} \times C^{(b)} \to C^{(n)}$ given by adding the two divisors together and then adding $\sum_x e_x [x]$.

First we check that \begin{equation}\label{pushforward-constant-sheaf}( sym_* \mathcal F^{'\boxtimes n})^{S_n}[n]  =  \bigoplus_ {\substack{( e_x) :  \Sing \to \mathbb N \\  e_x \leq c_x \\ a, b \in \mathbb N \\a+b+ \sum_{ x \in |N|} e_x   =n } }   \left(  f_{a,b,(e_x) *} \mathbb Q_\ell [a+b] \right)^{\prod_{x \in \Sing } \binom{c_x }{ e_x} }  .\end{equation}
To do this, write $\mathcal F' = \bigoplus_{i=1}^{ 2+ \sum_{x \in |N| } c_x}  \mathcal F_i$ where $\mathcal F_1 = \mathcal F_2= \mathbb Q_\ell$ and the remaining summands are skyscraper sheaves supported at the points of $x$. Then $\mathcal F^{' \boxtimes n}$ is a sum over $n$-tuples $t_i$ of numbers from $1$ to $ 2+ \sum_{x \in |N| } c_x$ of $\boxtimes_{i=1}^n \mathcal F_{t_i}$. Thus $sym_* \mathcal F^{'\boxtimes n}$ is the sum over these $n$-tuples $t_i$ of $( sym_* \boxtimes_{i=1}^n \mathcal F_{t_i})$.  Then $S_n$ acts by permuting the $n$-tuples, so the $S_n$-invariants of $sym_* \mathcal F^{'\boxtimes n}$ can be viewed as a sum over unordered $n$-tuples of the $S_n$-invariants of the sum of $sym_* \boxtimes_{i=1}^n \mathcal F_{t_i}$ for all orderings $t_i$ of that unordered tuple. Viewing this as an induced representation, the $S_n$-invariants of this sum will equal the invariants of $sym_* \boxtimes_{i=1}^n \mathcal F_{t_i}$ under the stabilizer in $S_n$ of this tuple $t_i$.

If any number greater than two occurs at least twice among the $t_i$, a transposition swapping two occurances will act as $-1$ on $sym_* \boxtimes_{i=1}^n \mathcal F_{t_i}$, because the tensor product of two skyscraper sheaves is a single skyscraper sheaf so the action is by the Koszul sign for a tensor product of complexes, which is $-1$ because the skyscraper sheaves are in degree $1$. Hence there are no invariants under the stabilizer unless each $t_i$ greater than $2$ occurs at most once. For each unordered tuple, let $a$ be the number of $i$ with $t_i=1$, $b$ be the number of $i$ with $t_i=2$, and $e_x$ be the number of $i$ with $\mathcal F_{t_i} = \delta_x[-1]$. Then the number of unordered tuples attaining $(a,b,(e_x))$ is $\prod_{x \in \Sing } \binom{c_x }{ e_x}$, and each tuple produces the $S^a \times S^b$ invariants of the pushforward from $C^a \times C^b$ of $\mathbb Q_\ell$, which is the pushforward from $C^{(a)} \times C^{(b)}$ along $ f_{a,b,(e_x)}$ of $\mathbb Q_\ell$.

Having verified Equation \eqref{pushforward-constant-sheaf}, we next observe that the Euler characteristic of the Radon transform of  $f_{a,b,(e_x) *} \mathbb Q_\ell [a+b] $ is $(-1)^{n-1+a+b}$ times the Euler characteristic of the inverse image under $f_{a,b,(e_x)}$ of a general hyperplane. The inverse image of a general hyperplane is a $(1,1)$-hypersurface in $C^{(a)} \times C^{(b)} = \mathbb P^a \times \mathbb P^b$. Viewing this as a $\mathbb P^{\max(a,b)}$-bundle on $\mathbb P^{\min(a,b)}$, we see that if the hypersurface does not contain any fiber, then it is a $\mathbb P^{\max(a,b)-1}$ bundle on $\mathbb P^{\min(a,b)}$ and hence has Euler characteristic $(\min(a,b)+1) (\max(a,b)) =ab+ \max(a,b)$. 

To check that the inverse image of a general hyperplane does not contain any fiber, we must check that for a generic linear form on polynomials of degree $n$, there is no polynomial $f$ of degree $\min(a,b)$ such that the linear form vanishes on all multiples of $f \prod_{x} l_x^{e_x}$ by polynomials of degree $\max(a,b)$. The space of such linear forms has dimension $\min(a,b) + \sum_x e_x$ and the choices of polynomials, up to scaling, are $\min(a,b)$-dimensional, so dimension of the space of linear forms is $2 \min(a,b) + \sum_x e_x \leq a+b  + \sum_x e_x =n$, which is less than the $n+1$-dimensional space of all linear forms, so indeed a generic linear form does not vanish in this way, and the general Euler characteristic is $ab+ \max(a,b)$, so the contribution to the total Euler characteristic is $(-1)^{n-1+a+b} (ab+ \max(a,b))$. Hence the total Euler characteristic is
\[ \sum_{\substack{ (e_x) :  \Sing \to \mathbb N \\  e_x \leq c_x \\ a, b \in \mathbb N \\ a+b +\sum_{x \in |N|} e_x   =n } } (-1)^{n-1+a+b} \left (\prod_{x \in \Sing } \binom{c_x }{ e_x} \right)   (ab+ \max(a,b)) \]

Using the generating series \[ F(u,v)= \sum_{a,b\in \mathbb N}  (ab+ \max(a,b)) u^{a} v^{b},\] this is the coefficient of $u^n$ in $ - (1-u )^{\sum_x c_x} F(u,u)$. Because $ab+\max(a,b)=0$ if $\min(a,b)=-1$, we have
\[ (1-uv) F(u,v) =  \sum_{a,b\in \mathbb N}  (ab+ \max(a,b) - (a-1) (b-1) - \max(a-1,b-1) ) u^{a} v^{b}= \sum_{a,b \in \mathbb N} (a+b-1+1) u^a v^b\] \[ =\sum_{a,b\in \mathbb N } a u^a v^b + \sum_{a,b,\in \mathbb N} b u^a v^b =  \frac{u}{ (1-u)^2 (1-v)} +\frac{v}{(1-v)^2(1-u)} \] so \[F(u,u) = \frac{ 2u}{ (1-u)^3 (1-u^2)} = \frac{ 2u}{ (1-u)^4 (1+u)} .\] Plugging in,  we see that the Euler characteristic is the coefficient of $u^n$ in $-\frac{2u (1-u)^{\sum_{x\in \Sing} c_x} } {(1-u)^4 (1+u)} $. \end{proof}

\begin{lemma}\label{cycle-rank-two} The characteristic cycle of $  ( sym_* \mathcal F^{\boxtimes n})^{S_n}[n]$ on $C^{(n)}$ is \[ \sum_{\substack{ e_x :  \Sing \to \mathbb N \\  e_x \leq c_x \\ w_k : \{1,2 \} \to \mathbb N \\ \sum_{x \in |N|} e_x + w_1 + 2w_2 =n } }2^{w_1} \left (\prod_{x \in \Sing } \binom{c_x }{ e_x} \right)  [ B_{(e_x),(w_k)} ]  .\] \end{lemma}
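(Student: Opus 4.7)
The plan is to deduce Lemma \ref{cycle-rank-two} from Theorem \ref{characteristic-cycle-sym}. The sheaf $\mathcal{F}$ itself is not perverse (it is a middle-extension sheaf of weight $0$ sitting in cohomological degree zero), but its shift $K := \mathcal F[1]$ is perverse, and all of the data required to apply Theorem \ref{characteristic-cycle-sym} matches the data already assigned to $\mathcal F$ in this section: the singular locus of $K$ equals $\Sing$, the generic rank is $\rank = 2$, and the multiplicity of the cotangent fibre $T^*_x C$ in $CC(K)$ is precisely the Artin conductor $c_x$.

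The first step is to identify the object $(sym_* \mathcal F^{\boxtimes n})^{S_n}[n]$ whose characteristic cycle we want with an object to which Theorem \ref{characteristic-cycle-sym} directly applies. There is a canonical isomorphism of complexes $\mathcal F^{\boxtimes n}[n] \cong K^{\boxtimes n}$, but the natural $S_n$-actions on the two sides differ. On the left, the action is the honest permutation action (no signs, since $\mathcal F$ sits in degree $0$). On the right, the action inherited from the symmetric monoidal structure on the derived category carries a Koszul sign of $-1$ for each transposition, because each factor $K = \mathcal F[1]$ lies in cohomological degree $-1$. Hence the two actions differ by the sign character, and so
\[
(sym_* \mathcal F^{\boxtimes n})^{S_n}[n] \;\cong\; (sym_* K^{\boxtimes n} \otimes \sgn)^{S_n}.
\]

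With this identification in hand, Theorem \ref{characteristic-cycle-sym} applied with $\rho = \sgn$ reduces the lemma to computing the coefficients $M_{K, \sgn}((e_x),(w_k))$. Restricted to $\prod_{x\in \Sing} S_{e_x} \times \prod_k S_k^{w_k}$, the sign character of $S_n$ decomposes as $\bigotimes_x \sgn_{e_x} \otimes \bigotimes_k (\sgn_k)^{w_k}$, so the desired isotypic subspace factors as
\[
\bigotimes_{x\in\Sing} \wedge^{e_x}\!\bigl(\mathbb C^{c_x}\bigr) \;\otimes\; \bigotimes_{k\geq 1} \bigl(\wedge^k(\mathbb C^{2})\bigr)^{\otimes w_k}.
\]
The first group of factors contributes $\prod_x \binom{c_x}{e_x}$, vanishing unless $e_x \leq c_x$ for every $x$. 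Since $\dim \wedge^1 \mathbb C^2 = 2$, $\dim \wedge^2 \mathbb C^2 = 1$, and $\wedge^k \mathbb C^2 = 0$ for $k\geq 3$, the second group contributes $2^{w_1}$ when $w_k=0$ for all $k\geq 3$ and vanishes otherwise. Multiplying these yields exactly the coefficient $2^{w_1}\prod_x \binom{c_x}{e_x}$ on the index set $\{(e_x),(w_k) : e_x\le c_x,\ w_k=0 \text{ for } k\ge 3\}$ stated in the lemma.

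The principal subtlety is the sign-twist identification in the first step; once that is pinned down correctly, the rest is bookkeeping of exterior-power dimensions. I therefore expect the main care in writing out the proof to go into verifying the Koszul sign calculation and confirming that the numerical invariants ($\rank$ and the conductors $c_x$) transport correctly from $\mathcal F$ to its perverse shift $K = \mathcal F[1]$.
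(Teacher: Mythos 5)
Your proposal is correct and follows essentially the same route as the paper: rewrite $(sym_*\mathcal F^{\boxtimes n})^{S_n}[n]$ as $(sym_*(\mathcal F[1])^{\boxtimes n}\otimes \sgn)^{S_n}$, apply Theorem \ref{characteristic-cycle-sym} with $K=\mathcal F[1]$, $\rho=\sgn$, and evaluate $M_{K,\sgn}$ as a product of exterior-power dimensions, giving $\prod_x\binom{c_x}{e_x}\cdot 2^{w_1}$ with vanishing when $e_x>c_x$ or $w_k>0$ for $k\geq 3$. The only difference is that you spell out the Koszul-sign justification for the $\sgn$ twist, which the paper states without comment.
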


\begin{proof} To prove this, we apply Theorem \ref{characteristic-cycle-sym}.

We have  \[( sym_* \mathcal F^{\boxtimes n})^{S_n}[n] = ( sym_* (\mathcal F[1])^{\boxtimes n} \otimes \sgn )^{S_n} \] so we may take $K= \mathcal F[1]$ and $\rho=\sgn$. Then Theorem \ref{characteristic-cycle-sym} guarantees that the characteristic cycle of this complex is \[ \sum_{\substack{ e_{x} :  \Sing \to \mathbb N \\ w_{k} : \mathbb N^+ \to \mathbb N \\  \sum_{ x\in \Sing} e_{x} + \sum_{k =1}^{\infty} k w_{k} = n   }}M_{ K, \rho} ( (e_x), (w_k))    [ B_{(e_x),(w_k)} ]     \] so we must check that $M_{K, \rho}$ vanishes if $e_x > c_x$ or $w_k >0$ for $k>2$ and it equal to $2^{w_1} \left (\prod_{x \in \Sing } \binom{c_x }{ e_x} \right) $ otherwise. By definition,

\[M_{ K, \rho} ( (e_x), (w_k)) = 
\dim \left (  \bigotimes_{x \in \Sing }  ( \mathbb C^{c_{x}})^{\otimes e_{x} }  \otimes  (\mathbb C^{\operatorname{rank}} ) ^{ \otimes \sum_{k = 1 }^{\infty}  k w_{k }  }    \otimes \rho \right)^{ \prod_{x \in \Sing}  S_{e_{x} } \times  \prod_{k =1}^{\infty}  S_{k  }^{w_{k } } } .   \] 

Tensoring with $\sgn$ and taking symmetric group invariants is the same as taking a wedge power, and $\rank=2$, so this is
\[ \dim \left (  \bigotimes_{x \in \Sing} \wedge^{e_x}  ( \mathbb C^{c_{x}}) \otimes \bigotimes_{k=1}^{\infty}  \left( \wedge^k ( \mathbb C^{2} )\right)^{\otimes w_k} \right)\]
\[ =\left( \prod_{x \in \Sing} \binom{c_x }{ e_x}  \right) 2^{w_1} 1^{w_2} \prod_{k=2}^{\infty} 0^{w_k} \]
as desired. 
\end{proof}

We can view the tangent space of $\mathbb P^\vee$ at a point corresponding to a linear form $l$ on polynomials as the space of linear forms on polynomials modulo $l$. Hence we can view the cotangent space at this point as the space of polynomials which $l$ vanishes on.

Let $B_{(e_x),(w_k)}^\vee$ be the closed subset of $T^* \mathbb P^\vee$ defined as the set of pairs of a linear form vanishing on all polynomial multiples of $\left( \prod_{x \in \Sing} l_x^{e_x} \right) \prod_{k=1}^{\infty} f_k^{k-1}$ with a cotangent vector that is a scalar multiple of $\left( \prod_{x \in \Sing} l_x^{e_x} \right) \prod_{k=1}^{\infty} f_k^{k}$, for $f_k$ a polynomial of degree $w_k$.

\begin{lemma}\label{characteristic-cycle-Whittaker} The characteristic cycle of $ K_n$ is \[ \left( 2\sum_{k=0}^{n-1}  \binom{ \deg N -4 }{ k} \right) [\mathbb P^\vee ] + \sum_{\substack{ e_x :  \Sing \to \mathbb N \\  e_x \leq c_x \\ w_k : \{1,2 \} \to \mathbb N \\ \sum_{x \in |N|} e_x + w_1 + 2w_2 =n } }2^{w_1} \left (\prod_{x \in \Sing } \binom{c_x }{ e_x} \right)  [ B_{(e_x),(w_k)}^\vee ] .\]

\end{lemma}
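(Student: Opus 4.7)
My plan is to split the calculation of $CC(K_n)$ into two parts: the non-zero-section components, which I transport through the Radon correspondence using Lemma \ref{cycle-rank-two}, and the multiplicity of the zero section $[\mathbb{P}^\vee]$, which I pin down numerically using Lemma \ref{constant-sheaf-generic}.

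For the first part, the Radon transform $K \mapsto Rp_{1*}p_2^* K[n-1]$ is built from the correspondence $\mathbb{P}^\vee \xleftarrow{p_1} Y \xrightarrow{p_2} \mathbb{P}^n$, so by Saito's pullback formula (along the smooth $p_2$) and proper pushforward formula (along $p_1$) for characteristic cycles, the irreducible components of $CC(K_n)$ other than the zero section are the images of the irreducible components of $CC((sym_*\mathcal F^{\boxtimes n})^{S_n}[n])$ under the Lagrangian correspondence $p_{1\circ}p_2^\circ$, provided generic transversality holds. I claim this correspondence sends each $B_{(e_x),(w_k)}$ to $B_{(e_x),(w_k)}^\vee$ with multiplicity one, and this follows by direct inspection: both cycles are parameterized by the same data $(f_k)$, and their descriptions interchange the roles of ``base point equal to $\prod l_x^{e_x}\prod f_k^k$'' and ``cotangent direction proportional to $\prod l_x^{e_x}\prod f_k^k$'' while keeping the condition of vanishing on multiples of $\prod l_x^{e_x}\prod f_k^{k-1}$ — exactly the Legendre-type duality encoded by the conormal bundle of $Y$. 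Combined with Lemma \ref{cycle-rank-two}, this gives the non-zero-section part of $CC(K_n)$ as $\sum 2^{w_1}\prod_x \binom{c_x}{e_x}[B_{(e_x),(w_k)}^\vee]$ and matches the stated coefficients.

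For the second part, since $K_n$ is perverse of dimension $n$ by Lemma \ref{Radon-perverse-pure}, the multiplicity of $[\mathbb{P}^\vee]$ equals $(-1)^n$ times the Euler characteristic of the stalk of $K_n$ at a generic $\alpha\in\mathbb{P}^\vee$. By the index formula this generic Euler characteristic depends only on the characteristic cycle of $(sym_*\mathcal F^{\boxtimes n})^{S_n}[n]$ on $\mathbb{P}^n$; in particular it is unchanged when $\mathcal F$ is replaced by the model $\mathcal F'=\mathbb{Q}_\ell^2 \oplus \bigoplus_x \delta_x^{c_x}[-1]$, which has the same generic rank and Artin conductors as $\mathcal F$ and hence, by Theorem \ref{characteristic-cycle-sym}, the same characteristic cycle. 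Lemma \ref{constant-sheaf-generic} then evaluates the generic Euler characteristic as the coefficient of $u^n$ in
\[ -\frac{2u(1-u)^{\deg N}}{(1-u)^4(1+u)} = \frac{-2u(1-u)^{\deg N - 4}}{1+u}. \]
Expanding $(1-u)^{\deg N - 4}/(1+u) = \sum_m (-1)^m\bigl(\sum_{k=0}^m \binom{\deg N - 4}{k}\bigr)u^m$ and extracting coefficients gives $2(-1)^n\sum_{k=0}^{n-1}\binom{\deg N - 4}{k}$, and multiplying by $(-1)^n$ converts this to the claimed multiplicity $2\sum_{k=0}^{n-1}\binom{\deg N - 4}{k}$.

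The main obstacle I anticipate is the microlocal transport in the first part: one must verify the generic transversality hypotheses that allow applying Saito's pullback and proper-pushforward formulas to each $B_{(e_x),(w_k)}$, confirm that the resulting cycle is precisely $B_{(e_x),(w_k)}^\vee$ rather than some other Lagrangian, and rule out the appearance of any additional irreducible component of $SS(K_n)$ beyond the $B_{(e_x),(w_k)}^\vee$ and the zero section. This last point should follow from Beilinson's containment $SS(Rp_{1*}p_2^*L)\subseteq p_{1\circ}p_2^\circ SS(L)$ applied to Lemma \ref{singular-support}, which already bounds $SS((sym_*\mathcal F^{\boxtimes n})^{S_n}[n])$ by the union of the $B_{(e_x),(w_k)}$.
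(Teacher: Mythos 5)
Your proposal is correct and follows essentially the same route as the paper: the zero-section multiplicity is computed exactly as in the paper (replace $\mathcal F$ by the model $\mathcal F'$, which has the same characteristic cycle by Theorem \ref{characteristic-cycle-sym}, and extract the coefficient from Lemma \ref{constant-sheaf-generic}), and the non-zero-section components are transported through the Radon correspondence. The ``microlocal transport'' step you flag as the main obstacle is precisely what the paper disposes of by citation: \cite[Corollary 6.12]{saito1} gives that $CC$ of the Radon transform is the Legendre transform of $CC$ of the input, and \cite[Corollary 1.2.4]{saito-notes} gives that the Legendre transform of $[B_{(e_x),(w_k)}]$ is $[B_{(e_x),(w_k)}^\vee]$ plus a multiple of the zero section, so no separate transversality verification is needed.
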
 

\begin{proof} This follows from \cite[Corollary 6.12]{saito1}, which says that the characteristic cycle of the Radon transform of a complex is the Legendre transform of the characteristic cycle of that complex, where the Legendre transform of a cycle defined in \cite[(6.16)]{saito1}. By definition, $K_n$ is the Radon transform of $( sym_* \mathcal F^{\boxtimes n})^{S_n}[n]$, so by Lemma \ref{cycle-rank-two} it suffices to compute the Legendre transform of  \[ \sum_{\substack{ e_x :  \Sing \to \mathbb N \\  e_x \leq c_x \\ w_k : \{1,2 \} \to \mathbb N \\ \sum_{x \in |N|} e_x + w_1 + 2w_2 =n } }2^{w_1} \left (\prod_{x \in \Sing } \binom{c_x }{ e_x} \right)  [ B_{(e_x),(w_k)} ]  .\]

We will do this in two parts. We will first check that the Legendre transform of $[ B_{(e_x),(w_k)} ]$ is $[ B_{(e_x),(w_k)}^\vee ] $ plus some multiple of the zero-section, and we will then compute the multiple of the zero-section. The first part is \cite[Corollary 1.2.4]{saito-notes}.

%First, we note that $(-1)^{n-1} p_{1!} p^{2!}$ is the following series of pullbacks and pushforwards: First, pull back from $T^* C^{(n)}$ to $T^* C^{(n)}\times_{C^{(n)}}Y$, next push forward to $T^* Y$, then pull back to $T^* \mathbb P^{\vee} \times_{ \mathbb P^\vee} Y$, then push forward to $T^* \mathbb P^{\vee}$.  By the explicit description of the cotangent space, a closed conical subset of $T^* C^{(n)}$ is a set of pairs of nonzero polynomials (up to scaling) and linear forms on polynomials that vanish on that polynomial. The first pullback consists of adding a nonzero linear form that vanishes on the chosen polynomial, up to scaling. 

For the second part, we observe that the multiplicity of the zero section is given by some intersection-theoretic formula involving the characteristic cycle. We can therefore replace $( sym_* \mathcal F^{\boxtimes n})^{S_n}[n]$ by any complex which has the same characteristic cycle. Letting $\mathcal F'  =\mathbb Q_\ell^2 + \sum_{x \in \Sing} \delta_x^{c_x}[-1]$, we observe that $\mathcal F'[1]$ is a perverse sheaf and has the same rank and conductors as $K[1]$, so by Theorem \ref{characteristic-cycle-sym}  \[ CC ( ( sym_* \mathcal F^{\boxtimes n})^{S_n}[n] ) = CC ( ( sym_* \mathcal F^{'\boxtimes n})^{S_n}[n] ).\]

Now the multiplicity of the zero-section in the characteristic cycle of the Radon transform of  $(sym_* \mathcal F^{'\boxtimes n})^{S_n}[n]$ is the $(-1)^n$ times the generic Euler characteristic of that Radon transform.   Hence by Lemma \ref{constant-sheaf-generic} the multiplicity of the zero-section is $(-1)^n$ times the coefficient of $u^n$ in \[   -2u  (1-u)^{ \deg N -4} / (1+u) \] which is the coefficient of $u^n$ in $2u (1+u)^{ \deg N- 4} / (1-u)$ which by the power series of $1/(1-u)$ and the binomial theorem is
\[ 2\sum_{k=0}^{n-1}  \binom{ \deg N -4 }{ k} .\]

\end{proof} 

\section{Calculating the polar multiplicities}\label{calculating-polar-multiplicities}

In this section, we calculate the polar multiplicities of $K_n$. First, we recall the definition of the polar multiplicities from \cite[\S3]{mypaper}.

\begin{defi} Let $Y$ be a smooth variety with a map $f$ to a variety $X$ (which may be the identity), and let $x$ be a point on $X$. Let $C_1, C_2$ be algebraic cycles on $Y$ such that $\dim C_1 + \dim C_2 = \dim Y$ and $C_1 \cap C_2 \cap f^{-1}(x)$ is proper. Assume that all connected components of $C_1 \cap C_2$ are either contained in $f^{-1}(x)$ and proper or disjoint from $f^{-1}(x) $. We define their intersection number locally at $x$  \[ (C_1,C_2)_{Y, x} \] to be the sum of the degrees of the refined intersection $C_1 \cdot C_2$ \cite[p. 131]{Fulton} on all connected components of $C_1 \cap C_2$ contained in $f^{-1}(x)$. \end{defi}

\begin{defi}\label{polar-multiplicity-2} Let $X$ be a smooth variety. Let $C$ be a $\mathbb G_m$-invariant cycle on the cotangent bundle $T^* X$ of $X$ of dimension $\dim X$ and let $x$ be a point on $X$.

For $0 \leq i< \dim X$, let $Y$ be a sufficiently general smooth subvariety of $X$ of codimension $i$ passing through $x$ and let $V$ be a sufficiently general sub-bundle of $T^* X$ over $Y$ with rank $i+1$. Define the $i$th polar multiplicity of $C$ at $x$ to be the intersection number \[ ( \mathbb P(C) , \mathbb P(V))_{ \mathbb P(T^* X), x} \] where $\mathbb P(T^* X)$ is the projectivization of the vector bundle $T^* X$.

Here ``sufficiently general" means that the strict transform of $\mathbb P(V)$ in the blowup of $\mathbb P(T^* X)$ at the fiber over $x$ does not intersect the strict transform of $\mathbb P(C)$ in that same blowup within the fiber over $x$.

For $i=\dim X$, define the $i$th polar multiplicity of $C$ at $x$ to be the multiplicity of the zero section in $C$. \end{defi}

\vspace{10pt}

To calculate the polar multiplicities, we will pass to a local model in which they are easier to compute. In fact the local model will be an affine space $\mathbb A^{d+1}$, and the cycle on $T^* \mathbb A^{d+1}$ which we compute the multiplicities of will be invariant under the scaling map of $\mathbb A^{d+1}$. Using this scale-invariance, we reduce the local intersection theory problem from Definition \ref{polar-multiplicity-2} to a global intersection theory problem on projective space, which reduces in Lemma \ref{local-polar-multiplicity} to a straightforward calculation with the Chern classes of vector bundles. In Lemmas \ref{local-model-schematic-smooth} through \ref{local-model-level} we set up this local model and explain its relationship to the polar multiplicities of $CC(K_n)$. In Lemmas \ref{local-polar-generating-formula} through Lemma \ref{final-Radon-bound} we use this relationship to turn Lemma \ref{local-polar-multiplicity} into a formula for the original polar multiplicities, and finally a bound for the trace function of $K_n$ (in Lemma \ref{final-Radon-bound}) and thus the function $f$ (in Lemma \ref{first-newform-bound}). This requires transforming the bound to a combinatorially more convenient form, which we do by defining an appropriate generating function.

\vspace{5pt}
Now let us explain the local model. View $\mathbb A^{d+1}$ as the space of linear forms on $H^0(\mathbb P^1, \mathcal O(d))$, and view the cotangent space at $\mathbb A^{d+1}$ as $H^0(\mathbb P^1, \mathcal O(d))$.

We will define a cycle $B_{d,r}'$ in the cotangent bundle of $\mathbb A^{d+1}$ for each $0 \leq r \leq d/2$. To define this cycle, consider the space $\mathbb P^{r} \times \mathbb P^{d-2r}$ parameterizing pairs of $ f_1 \in H^0(\mathbb P^1, \mathcal O(d-2r))$ and $ f_2 \in H^0( \mathbb P^1, \mathcal O(r)) $, both nonzero and up to scaling. On this vector space, consider the vector bundle \[ \left( H^0 ( \mathbb P^1 , \mathcal O(d) )  / \left( f_2 \cdot H^0( \mathbb P^1, \mathcal O(d-r)) \right) \right) ^\vee \oplus  \mathcal O_{\mathbb P^r \times \mathbb P^{d-2r}} (1,2)\] where $f_2 \cdot H^0( \mathbb P^1, \mathcal O(d-r)) $ is the subspace of $H^0 ( \mathbb P^1 , \mathcal O(d) )$ consisting of products of $f_2$ with an element of $H^0( \mathbb P^1, \mathcal O(d-r))$ and $ \mathcal O_{\mathbb P^r \times \mathbb P^{d-2r}} (1,2)$ is the subspace of $H^0(\mathbb P^1, \mathcal O(d))$ consisting of scalar multiples of $f_1 f_2^2$. We define $B_{d,r}'$ as the cycle on \[T^* \mathbb A^{d+1} = H^0(\mathbb P^1, \mathcal O(d))^\vee \times H^0(\mathbb P^1, \mathcal O(d)) \] given by the pushforward of this vector bundle to $H^0(\mathbb P^1, \mathcal O(d))^\vee \times H^0(\mathbb P^1, \mathcal O(d))$ under the product of the natural maps \[ \left( H^0 ( \mathbb P^1 , \mathcal O(d) )  / \left( f_2 \cdot H^0( \mathbb P^1, \mathcal O(d-r) ) \right) \right) ^\vee  \to  H^0 ( \mathbb P^1 , \mathcal O(d) )^\vee \]  and \[  \mathcal O_{\mathbb P^r \times \mathbb P^{d-2r}} (1,2) \to H^0 ( \mathbb P^1, \mathcal O(d)).\]

Because $B_{d,r}'$ is the pushforward of a vector bundle on an irreducible variety, $B_{d,r}'$ is irreducible.

Consider the map $loc_d$ from $\mathbb A^{d+1}$ to the moduli stack $\Bun_2$ of vector bundles on $\mathbb P^1$ that sends a linear form on $H^0 ( \mathbb P^1, \mathcal O(d))$ to the extension $0 \to \mathcal O \to V \to \mathcal O( d+2) \to 0$ arising from the corresponding class in \[ H^0 ( \mathbb P^1, \mathcal O(d))^\vee = H^1(\mathbb P^1, \mathcal O(-d-2)) = \operatorname{Ext}^1 ( \mathcal O(d+2), \mathcal O).\]

\begin{lemma}\label{local-model-schematic-smooth} The map $loc_d$ is schematic, locally of finite type, and smooth. \end{lemma}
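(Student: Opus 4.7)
The plan is to identify $\mathbb{A}^{d+1}$ with the moduli of extension data $(V, i, p)$ on $\mathbb{P}^1$, where $V$ is a rank-$2$ bundle, $i:\mathcal{O} \hookrightarrow V$ is a subbundle inclusion, and $p: V \twoheadrightarrow \mathcal{O}(d+2)$ is a surjection satisfying $p\circ i = 0$. Under this identification, $loc_d$ becomes the forgetful map $(V, i, p) \mapsto V$, and all three properties reduce to familiar deformation-theoretic computations. The key input is that the various obstruction groups all live in $H^1$ of line bundles of nonnegative degree on $\mathbb{P}^1$, hence vanish.

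For the schematic and locally-of-finite-type assertions, I would take a test scheme $T \to \Bun_2$ corresponding to a rank-$2$ bundle $V_T$ on $\mathbb{P}^1_T$, and describe the fiber product $\mathbb{A}^{d+1} \times_{\Bun_2} T$ as the moduli of pairs $(i, p)$ on $V_T$ satisfying the extension conditions. The Hom spaces $\underline{\operatorname{Hom}}(\mathcal{O}, V_T)$ and $\underline{\operatorname{Hom}}(V_T, \mathcal{O}(d+2))$ relative to $T$ are representable by affine $T$-schemes of finite type (this is the standard Hom-scheme construction for coherent sheaves along the proper morphism $\mathbb{P}^1_T \to T$). The condition $p \circ i = 0$ cuts out a closed subscheme, and the conditions that $i$ be fiberwise injective with locally free cokernel and that $p$ be fiberwise surjective are open, so the fiber product is a scheme of finite type over $T$.

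For smoothness I would verify the infinitesimal lifting criterion. Fix $A$ with a square-zero ideal $I$ and quotient $A_0 = A/I$, an extension $0 \to \mathcal{O} \to V_0 \to \mathcal{O}(d+2) \to 0$ on $\mathbb{P}^1_{A_0}$, and a lift $V$ of $V_0$ to a rank-$2$ bundle on $\mathbb{P}^1_A$; I need to lift the data $(i_0, p_0)$. The obstructions to lifting $i_0$ and $p_0$ separately lie in $H^1(\mathbb{P}^1, V_0 \otimes I)$ and $H^1(\mathbb{P}^1, V_0^\vee(d+2) \otimes I)$ respectively; in both cases the sheaf is an extension of line bundles of degrees $0$ and $d+2$, so the $H^1$ vanishes since $d+2 \geq 2$. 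Given lifts $i, p$, the defect $p \circ i$ lies in $I \cdot H^0(\mathbb{P}^1, \mathcal{O}(d+2))$, and can be killed by modifying $p$ by an element of $\operatorname{Hom}(V, \mathcal{O}(d+2) \otimes I)$ that restricts via $i$ to $-p \circ i$. The obstruction to such a modification lies in $\operatorname{Ext}^1(V/\mathcal{O}, \mathcal{O}(d+2) \otimes I) = \operatorname{Ext}^1(\mathcal{O}(d+2), \mathcal{O}(d+2) \otimes I) = H^1(\mathbb{P}^1, \mathcal{O} \otimes I) = 0$. After this adjustment, $\tilde p$ is still surjective and $i$ is still a subbundle inclusion by Nakayama (openness of these conditions and the fact that they hold modulo $I$).

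The principal technical obstacle is simply organizing the two-stage lifting (first $i$ and $p$, then the compatibility $\tilde p \circ i = 0$) and verifying that the intermediate quotient $V/\mathcal{O}$ is indeed isomorphic to $\mathcal{O}(d+2)$ rather than a nontrivial twist, which for $A$ Artinian local follows from $\operatorname{Pic}(\operatorname{Spec} A) = 0$ and the structure of $\operatorname{Pic}(\mathbb{P}^1_A)$. Once this bookkeeping is done, every obstruction group reduces to $H^1$ of a sheaf whose simple constituents are $\mathcal{O}(k) \otimes I$ with $k \geq 0$, and smoothness follows from the triviality $H^1(\mathbb{P}^1, \mathcal{O}(k)) = 0$ for $k \geq -1$.
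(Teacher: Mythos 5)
Your argument is correct, but it reaches the lemma by a different route than the paper. For the schematic and locally-of-finite-type claims the paper simply invokes the generality that a morphism from a finite-type scheme to an Artin stack has these properties, whereas you rebuild the fiber product over a test scheme $T \to \Bun_2$ by hand from the section functors of $V_T$ and $V_T^\vee(d+2)$, the closed condition $p \circ i = 0$, and the open subbundle/surjectivity conditions; this is more work but makes representability explicit. For smoothness the paper stays at the level of tangent spaces: it identifies the differential with the map $H^1(\mathbb{P}^1, L_1 \otimes L_2^{-1}) \to H^1(\mathbb{P}^1, V \otimes V^\vee)$ and deduces surjectivity from the vanishing of $H^1$ of the quotient $(V \otimes V^\vee)/(L_1 \otimes L_2^{-1})$, implicitly using that source and target are smooth so that pointwise surjectivity of the differential suffices; you instead verify the infinitesimal lifting criterion directly over a square-zero thickening, lifting $i_0$, then $p_0$, then correcting $p$ to restore $p \circ i = 0$. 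The three obstruction groups you use, $H^1(V_0 \otimes I)$, $H^1(V_0^\vee(d+2) \otimes I)$ and $\operatorname{Ext}^1(\mathcal{O}(d+2), \mathcal{O}(d+2) \otimes I) = H^1(\mathcal{O} \otimes I)$, have exactly the same simple constituents ($\mathcal{O}$ and $\mathcal{O}(d+2)$) as the paper's quotient sheaf, so the cohomological input is identical; what your version buys is self-containedness (no appeal to the differential criterion or to smoothness of $\Bun_2$), at the cost of bookkeeping you partly elide: identifying these obstruction groups over a non-reduced base $A_0$ needs cohomology and base change (fiberwise $H^1$-vanishing plus flatness, giving $H^1(E \otimes \pi^* I) = 0$), and your concern about whether $V/\mathcal{O} \cong \mathcal{O}(d+2)$ over $A$ is unnecessary, since any correction $\delta: V \to \mathcal{O}(d+2) \otimes I$ automatically kills $I \cdot V$ and hence factors through $V_0$, so the relevant Ext group is computed from the exact sequence over $A_0$; persistence of the subbundle and surjectivity conditions across the thickening is immediate because the thickening is a homeomorphism, as you indicate.
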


\begin{proof} The map $loc_d$ is schematic and locally of finite type because it is a map from a scheme of finite type to an Artin stack. 

Let $L_1 = \mathcal O$ and let $L_2 = \mathcal O(d+1)$. For a vector bundle $V$, the tangent space to $V$ in $\Bun_2$ is given by $H^1( \mathbb P^1, V \otimes V^\vee )$. If we write $V$ as an extension $0 \to L_1 \to V \to L_2 \to 0$, the tangent space to the space of extensions of $L_2$ by $L_1$  is $\operatorname{Ext}^1 ( L_2, L_1) = H^1 ( \mathbb P^1, L_1 \otimes L_2^{-1} )$. Furthermore, the derivative at $V$ of the map from the space of the space of extensions to $\Bun_2$ is the map $ H^1 ( \mathbb P^1, L_1 \otimes L_2^{-1} ) \to H^1( \mathbb P^1, V \otimes V^\vee )$ induced by the map $L_1 \otimes L_2^{-1} \to V \otimes V^{\vee}$ given by embedding $L_1$ into $V$ and $L_2^{-1}$ into $V^\vee$. (This can be checked by working with vector bundles over $k[\epsilon]/\epsilon^2$, say.)

Hence the derivative map is surjective as long as \[(H^1 (\mathbb P^1, (V \otimes V^{\vee} ) / L_1 \otimes L_2^{-1} ))=0.\] The quotient \[(V \otimes V^{\vee} ) /( L_1 \otimes L_2^{-1} )\] is the extension of $L_2 \otimes L_1^{-1}$ by $L_1 \otimes L_1^{-1} + L_2 \otimes L_2^{-1}$, and so this cohomology group vanishes as soon as $\deg L_2 - \deg L_1 > -2$, which is automatic in our case as $\deg L_2 = d+2$ and $\deg L_1 =0$. \end{proof}

\begin{defi}\label{Wr-notation} Let $W_r$ be the space of linear forms on $H^0(\mathbb P^1, \mathcal O(d))$ such that there exists a nonzero $f_2$ in $H^0 (\mathbb P^1, \mathcal O(r))$ where the linear form vanishes on all multiples of $f_2$.\end{defi}

 Because $W_r$ is  the projection from $ \mathbb P ( H^0 ( \mathbb P^1, \mathcal O(r)) )\times \mathbb A^{d+1}$ to $\mathbb A^{d+1}$ of a closed set, $W_r$ is closed.

%s the set of nonzero elements in $H^0 (\mathbb P^1, \mathcal O(r))$, up to scaling, is a projective space, $W_r$ is a closed set because projective spaces are universally closed.

\begin{lemma}\label{local-model-conormal} The closed set $B_{d,r}'$ is the conormal bundle to $W_r$. \end{lemma}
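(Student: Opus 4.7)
The plan is to identify a smooth open dense subset of $W_r$, compute its pointwise conormal explicitly, and then match it against the fibers of $B_{d,r}'$; irreducibility and a dimension count will then force equality of cycles in $T^*\mathbb A^{d+1}$.

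First, I will parameterize $W_r$ by the morphism $\Phi \colon E \to \mathbb A^{d+1}$, where $E \to \mathbb P(H^0(\mathbb P^1, \mathcal O(r)))$ is the rank-$r$ vector bundle with fiber $(H^0(\mathbb P^1, \mathcal O(d))/f_2 H^0(\mathbb P^1, \mathcal O(d-r)))^\vee$ over $f_2$, sending $(f_2,\tilde\alpha)$ to the composite linear form $H^0(\mathbb P^1, \mathcal O(d)) \twoheadrightarrow H^0/f_2 H^0 \xrightarrow{\tilde\alpha} k$. The image of $\Phi$ equals $W_r$ by Definition \ref{Wr-notation}. To see $\Phi$ is generically injective, I will observe that for generic coprime $f_2, f_2' \in H^0(\mathbb P^1, \mathcal O(r))$ the subspaces $f_2 H^0(\mathbb P^1, \mathcal O(d-r))$ and $f_2' H^0(\mathbb P^1, \mathcal O(d-r))$ of $H^0(\mathbb P^1, \mathcal O(d))$ intersect in $f_2 f_2' H^0(\mathbb P^1, \mathcal O(d-2r))$, so by dimension count they together span the full space; hence no nonzero $\alpha$ can annihilate both, and $f_2$ is determined by a generic $\alpha \in W_r$ up to scalar. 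Consequently $W_r$ is irreducible of dimension $2r$ with a nonempty smooth open locus $W_r^{\mathrm{sm}}$, on which $\Phi$ restricts to an isomorphism from an open subset of $E$.

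For $\alpha \in W_r^{\mathrm{sm}}$ with associated $f_2$, I will compute $T_\alpha W_r$ by differentiating the defining relation $\alpha_t\bigl((f_2 + t\,\delta f_2)g\bigr) = 0$ along curves in $E$: the result is that $\delta\alpha \in T_\alpha W_r$ iff there exists $\delta f_2 \in H^0(\mathbb P^1, \mathcal O(r))$ with $\delta\alpha(f_2 g) = -\alpha(\delta f_2 \cdot g)$ for every $g \in H^0(\mathbb P^1, \mathcal O(d-r))$. I will then check that $f_2^2 \cdot H^0(\mathbb P^1, \mathcal O(d-2r)) \subseteq N^*_\alpha W_r$: for $\beta = f_2^2 h$, applying the tangent equation with $g := f_2 h \in H^0(\mathbb P^1, \mathcal O(d-r))$ gives
\[ \delta\alpha(\beta) = \delta\alpha(f_2 g) = -\alpha(\delta f_2 \cdot f_2 \cdot h) = -\alpha\bigl(f_2 \cdot (\delta f_2 \cdot h)\bigr) = 0, \]
since $\alpha$ annihilates $f_2 H^0(\mathbb P^1, \mathcal O(d-r))$.

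Both sides of the above inclusion have dimension $d - 2r + 1 = \operatorname{codim}_{\mathbb A^{d+1}} W_r$, so the inclusion is an equality. On the other hand, the cotangent-direction fiber of $B_{d,r}'$ over $\alpha$ is $\{c\, f_1 f_2^2 : c \in k,\ f_1 \in H^0(\mathbb P^1, \mathcal O(d-2r))\} = f_2^2 \cdot H^0(\mathbb P^1, \mathcal O(d-2r))$, since $f_2$ is determined by $\alpha$ up to a scalar absorbed into $c$. Thus $B_{d,r}'$ and the closure of $N^* W_r^{\mathrm{sm}}$ share an open dense subset. Since $B_{d,r}'$ is irreducible of dimension $d+1$ (being the pushforward of a vector bundle on an irreducible base, with the pushforward map generically injective because $\beta/f_2^2$ recovers $cf_1$ up to the scaling relation), and the conormal bundle is likewise irreducible of dimension $d+1$, the two coincide as closed subsets of $T^*\mathbb A^{d+1}$. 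The main delicate point is the tangent space calculation together with the generic injectivity of $\Phi$; once those are in hand, everything else reduces to matching dimensions.
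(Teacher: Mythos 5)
Your proof is correct and follows essentially the same route as the paper: both identify a smooth open locus of $W_r$ via generic uniqueness of $f_2$, compute the tangent space at a generic point by a first-order deformation argument (the paper multiplies by $f_2 - \epsilon f_3$ to kill the correction term, you substitute $g = f_2 h$; these are the same trick), deduce that the conormal direction at a generic point is $f_2^2\,H^0(\mathbb P^1,\mathcal O(d-2r))$ by a dimension count, and then conclude by irreducibility of $B_{d,r}'$. You are somewhat more explicit than the paper about why the parametrizing map is generically injective, but the content is the same.
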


\begin{proof} 
The set $W_r$ is the projection from $ \mathbb P ( H^0 ( \mathbb P^1, \mathcal O(r)) \times \mathbb A^{d+1}$ to $\mathbb A^{d+1}$ of the set $Z_r$ of pairs of a polynomial $f_2$ and a linear form vanishing on multiples of $f_2$. For a point $(f_2, \alpha)$ in this closed set, the projection of the tangent space of $Z_r$ to $\mathbb A^{d+1}$ is the set of coefficients of $\epsilon$ in linear forms  $H^0 (\mathbb P^1, \mathcal O(n)) \otimes k[\epsilon]/(\epsilon^2) \to k[\epsilon]/(\epsilon^2)$ that vanish on multiples of $f_2+ \epsilon f_3$ for some $f_3$ and that are congruent mod $\epsilon$ to $\alpha$. In particular, because these linear forms vanish on all multiples of $f_2 + \epsilon f_3$, they vanish on all multiples of $(f_2+ \epsilon f_3) (f_2 - \epsilon f_3) = f_2^2$ and thus are contained in the space of linear forms vanishing on all multiples of $f_2^2$.

For $\alpha$ a generic point of $W_r$, the map $Z_r \to W_r$ is \'{e}tale over $\alpha$, so the tangent space of $W_r$ is contained in the space of linear forms vanishing on multiples of $f_2^2$. Since $W_r$ is $2r$-dimensional, so the tangent space of $W_r$ is $2r$-dimensional, and the dimension of the space of linear forms vanishing on all multiples of $f_2^2$ is $2r$, the tangent space of $W_r$ at generic point is equal to the space of linear forms vanishing on all multiples of $f_2^2$. Hence the tangent space of $W_r$ at a generic point is the perpendicular space to the fiber of $B_{d,r}'$ over that point. 

%First let us prove that, for any point in $W_r$, the fiber of $B_{d,r}'$ over that point is contained in the (Zariski) tangent space of $W_r$ at that point.
%
% If a linear form $H^0 (\mathbb P^1, \mathcal O(n)) \otimes k[\epsilon]/(\epsilon^2) \to k[\epsilon]/(\epsilon^2)$ vanishes on all multiples of $f_2 + \epsilon f_3$, then it vanishes on all multiples of $(f_2+ \epsilon f_3) (f_2 - \epsilon f_3) = f_2^2$, and since the dimension of $V_r$ is $2r$, and the dimension of linear forms vanishing on all multiples of $f_2^2$ is $2r$, this must be the tangent space at smooth points.
%

The conormal bundle of a singular variety is defined as the closure of the conormal bundle of its smooth locus. Because $B_{d,r}'$ is an irreducible closed variety, it is the closure of any open subset of itself. So because it is equal to the conormal over an open set, it is equal to the conormal bundle everywhere. 
%
%Thus it suffices to calculate the conormal bundle over the smooth locus of $W_r$. %We choose the space where $f_2$ is squarefree and the linear form which vanishes on all multiples of $f_2$, when viewed as a linear form $H^0( \mathbb P^1, \mathcal O(n) / f_2 \mathcal O(n-r)) $, is nonzero on each vanishing point in $f_2$. 
%
%We must prove that for nonzero $ f_2 \in H^0 (\mathbb P^1, \mathcal O(r))$ and a linear form $\alpha$ on $H^0(\mathbb P^1, \mathcal O(n))$ vanishing on all multiples of $f_2$ and satisfying the above nondegeneracy condition, the tangent space of $W_r$ at $\alpha$ is the space of linear forms vanishing on all multiples of $f_2^2$. Then the conormal space will be the space of all possible $f_1 f_2^2$, as desired.
%
%To do this, we use the Zariski tangent space: If a linear form $H^0 (\mathbb P^1, \mathcal O(n)) \otimes k[\epsilon]/(\epsilon^2) \to k[\epsilon]/(\epsilon^2)$ vanishes on all multiples of $f_2 + \epsilon f_3$, then it vanishes on all multiples of $(f_2+ \epsilon f_3) (f_2 - \epsilon f_3) = f_2^2$, and since the dimension of $V_r$ is $2r$, and the dimension of linear forms vanishing on all multiples of $f_2^2$ is $2r$, this must be the tangent space at smooth points. 

\end{proof}

\begin{lemma}\label{pullback-bun-stratification} For $0 \leq r <d/2$, $W_r$ is the inverse image under $loc_d$ of the locus in $\Bun_2$ consisting of line bundles $\mathcal O(a) + \mathcal O(b)$ with \[ a \leq r  \leq 2+d-r \leq b \] \end{lemma}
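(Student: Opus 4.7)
The plan is to show that both sides of the claimed equality are equivalent to the existence of a nonzero map $\mathcal O(d+2-r) \to V$, where $V$ is the extension of $\mathcal O(d+2)$ by $\mathcal O$ classified by $\alpha$.

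First I would handle the $\Bun_2$ side. By Grothendieck's splitting theorem, $V \cong \mathcal O(a) \oplus \mathcal O(b)$ with $a \leq b$ and $a+b = d+2$, so $b \geq (d+2)/2$. A nonzero map $\mathcal O(d+2-r) \to \mathcal O(a) \oplus \mathcal O(b)$ exists iff $a \geq d+2-r$ or $b \geq d+2-r$. Since $r < d/2$, we have $d+2-r > (d+2)/2 \geq a$, so the first alternative is impossible, and the condition is equivalent to $b \geq d+2-r$, or equivalently (using $a+b = d+2$) to $a \leq r$. This is exactly the stated condition $a \leq r \leq d+2-r \leq b$.

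Next I would translate the condition $\alpha \in W_r$ into the same language. Twisting the defining extension $0 \to \mathcal O \to V \to \mathcal O(d+2) \to 0$ by $\mathcal O(r-d-2)$ and taking cohomology, and using $H^0(\mathcal O(r-d-2)) = 0$ (since $r < d+2$), I get an injection
\[ 0 \to H^0(V(r-d-2)) \to H^0(\mathcal O(r)) \xrightarrow{\partial} H^1(\mathcal O(r-d-2)), \]
where $H^0(V(r-d-2)) = \operatorname{Hom}(\mathcal O(d+2-r), V)$ and $\partial$ is cup product with the class $\tilde\alpha \in H^1(\mathcal O(-d-2))$ corresponding to $\alpha$ under Serre duality. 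Composing with the Serre duality isomorphism $H^1(\mathcal O(r-d-2)) \cong H^0(\mathcal O(d-r))^\vee$ and using the compatibility of cup product with multiplication, $\partial$ becomes the map sending $f_2 \in H^0(\mathcal O(r))$ to the functional $g \mapsto \alpha(f_2 g)$ on $H^0(\mathcal O(d-r))$. Hence $f_2 \in \ker \partial$ iff $\alpha$ annihilates the subspace $f_2 \cdot H^0(\mathcal O(d-r))$ of multiples of $f_2$ — which is exactly the defining condition for $\alpha \in W_r$ in Definition \ref{Wr-notation}. Thus $\alpha \in W_r$ iff $\ker \partial \neq 0$ iff there is a nonzero map $\mathcal O(d+2-r) \to V$, and combining with the first paragraph proves the lemma.

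The main technical point is the identification of the connecting map $\partial$ with the functional $g \mapsto \alpha(f_2 g)$ via Serre duality; this is standard but requires checking that the identification $\Ext^1(\mathcal O(d+2), \mathcal O) \cong H^0(\mathcal O(d))^\vee$ implicit in the definition of $loc_d$ agrees (up to a nonzero scalar, which is all that matters for the vanishing statement) with the Serre duality pairing.
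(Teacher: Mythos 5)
Your proof is correct and follows essentially the same route as the paper's: both reduce the statement to the existence of a nonzero map $\mathcal{O}(d+2-r)\to V$, where $V=loc_d(\alpha)$, and then translate the vanishing condition through Serre duality. The only difference is presentational — you organize the Serre-duality step via the long exact cohomology sequence of the twisted extension and identify the connecting homomorphism with multiplication by $f_2$, while the paper pulls back the $\operatorname{Ext}$-class along $\mathcal{O}(d+2-r)\to\mathcal{O}(d+2)$ and applies Serre duality to that pulled-back class; these are the same computation in two standard guises.
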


\begin{proof}  Let $V = loc_d(\alpha)$. Then $V$ lies in an exact sequence $0 \to \mathcal O \to V \to \mathcal O(d+2) \to 0 $, so $V$ has degree $d+2$.  We can write $V$ as $\mathcal O(a) + \mathcal O(b)$ with $ a \leq r  \leq 2+d-r \leq b $ if and only if $V$ admits a nonzero map from $\mathcal O(d-r+2)$. (The if direction because if there is a map from $\mathcal O(d-r+2)$ to $V$, the saturation of its image is a subbundle with degree $\geq d-r+2$, and then the quotient has degree $\leq r < d-r+2$ so the extension splits. The only if direction is because we can map $\mathcal O(d-r+2)$ to $\mathcal O(b)$.)

Composing a map  $\mathcal O(d-r+2)\to V$ with the map $V \to \mathcal O(d+2) $ from the short exact sequence, we get a map $\mathcal O(d-r+2) \to \mathcal O(d+2)$. If this map is zero, we get a nonzero map $\mathcal O(d-r+2) \to \mathcal O$, which is impossible as $d-r+2>0$.  

We can view the set of nonzero maps $\mathcal O(d-r+2) \to \mathcal O(d+2)$ as the set of nonzero sections $f_2$ of $\mathcal O(r)$. Given any such map, we can pull back the class $\alpha$ in $\Ext^1 (\mathcal O(d+2), \mathcal O)$ to obtain a class in $\Ext^1( \mathcal O(d-r+2), \mathcal O)$, which concretely corresponds to the fiber product of $V$ and $\mathcal O(d-r+2)$ over $\mathcal O(d+2)$. This new extension splits if and only if $f_2$ lifts to a map $\mathcal O(d-r+2)\to V$ (because a lift of a map $A\to C$ along a map $B\to C$ is equivalent to a section of the natural map $A \times_C B \to A$ by the universal property.)

By Serre duality, the pulled-back class in $\Ext^1 (\mathcal O(d+2), \mathcal O)$ vanishes if and only if the linear form on $\mathcal O(d-r)$ induced by composing $\alpha$ with multiplication by $f_2$ vanishes, which happens only if $\alpha$ vanishes on all multiples of $f_2$.
\end{proof}

%\begin{lemma}\label{local-model-unramified} Let $\alpha$ be a linear form on polynomials of degree $n$. Let $m_{\alpha} $ be the minimum $m$ such that $\alpha$ vanishes on all polynomial multiples of $f  $ for some $f$ of degree $m$. Then the $i$th polar multiplicity of $B_{(0), (w_k)}^\vee$ at $\alpha$ is equal to the $i-2m+1$st polar multiplicity of $B_{d,r}'$ at $0 \in \mathbb A^{d+1}$, where $d = n-2m$ and $r = w_2 - m$.  In particular it vanishes if $r>0$. \end{lemma}

We introduce a quantity $d_{\alpha, (e_x)} $ associated to a linear form $\alpha$ that will determine which $d$ is appropriate to use to define the local model.

\begin{defi}\label{mdalpha-notation} Let $\alpha$ be a linear form on polynomials of degree $n$. Fix $(e_x): \Sing \to \mathbb N$

 Let $m_{\alpha, ( e_x)} $ be the minimum $m$ such that $\alpha$ vanishes on all multiples of $f \prod_{x\in \Sing } l_x^{e_x} $ by an element of $H^0 (\mathbb P^1, \mathcal O (n- m - \sum_x e_x))$, for some $f$ of degree $m$.
 
 Let $d_{\alpha, (e_x)} = n -2m_{ \alpha, (e_x)} - \sum_x e_x$.
\end{defi} 

\begin{lemma}\label{local-model-level}  The $i$th polar multiplicity of $B_{(e_x),(w_k)}^\vee$ at a linear form $\alpha\in \mathbb P^\vee$ is equal to the $i+1-2m_{\alpha, (e_x)} - \sum_x e_x$th polar multiplicity of $B_{d,r}'$ at $0 \in \mathbb A^{d+1}$, where $d = d_{\alpha, (e_x)} $ and $r = w_2 - m$.  In particular it vanishes if $r<0$. \end{lemma}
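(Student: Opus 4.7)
The strategy is to reduce the computation of polar multiplicities of $B_{(e_x),(w_k)}^\vee$ at $\alpha$ to those of $B_{d,r}'$ at $0$ by identifying projectivized cotangent fibers and matching local intersection numbers. The vanishing when $r<0$ is immediate: if $w_2 < m_{\alpha,(e_x)}$, the definition of $m_{\alpha,(e_x)}$ shows that no $f_2$ of degree $w_2$ can make $\alpha$ vanish on multiples of $(\prod_{x} l_x^{e_x}) f_2$, so $\alpha$ lies outside the projection of the support of $B_{(e_x),(w_k)}^\vee$ to $\mathbb{P}^\vee$ and all polar multiplicities at $\alpha$ vanish.

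For $r \geq 0$, I would choose $f_0 \in H^0(\mathbb{P}^1, \mathcal{O}(m_{\alpha,(e_x)}))$ realizing the minimum in the definition of $m_{\alpha,(e_x)}$. At $\alpha$, a point of $B_{(e_x),(w_k)}^\vee$ corresponds generically to $f_2 = f_0 \tilde f_2$ with $\tilde f_2$ of degree $r$, and cotangent vector $c \cdot (\prod_x l_x^{e_x}) f_1 f_2^2 = c \cdot (\prod_x l_x^{e_x}) f_0^2 f_1 \tilde f_2^2 \in (\prod_x l_x^{e_x}) f_0^2 \cdot H^0(\mathcal{O}(d)) \subset \ker \alpha = T^*_\alpha \mathbb{P}^\vee$. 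Under the identification given by multiplication by $(\prod_x l_x^{e_x}) f_0^2$, the projectivized fiber of $B_{(e_x),(w_k)}^\vee$ over $\alpha$ corresponds exactly to the projectivized fiber of $B_{d,r}'$ over $0$, with $f_1$ of degree $w_1 = d-2r$ and $\tilde f_2$ playing the role of $f_2$ in the local-model parameterization. This identified fiber sits inside the projective subspace $\mathbb{P}((\prod_x l_x^{e_x}) f_0^2 H^0(\mathcal{O}(d))) \subset \mathbb{P}(T^*_\alpha \mathbb{P}^\vee)$, whose codimension is $c := 2 m_{\alpha,(e_x)} + \sum_x e_x - 1 = (n-1)-d$.

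The polar multiplicity at $\alpha$ is the local intersection $(\mathbb{P}(B_{(e_x),(w_k)}^\vee), \mathbb{P}(V))_{\mathbb{P}(T^*\mathbb{P}^\vee), \alpha}$ for a generic rank-$(i+1)$ sub-bundle $V$ over a generic codimension-$i$ slice $Y$ through $\alpha$. For generic $V$, $\mathbb{P}(V|_\alpha)$ is a generic $\mathbb{P}^i$ in $\mathbb{P}(T^*_\alpha \mathbb{P}^\vee)$, meeting the codimension-$c$ subspace $\mathbb{P}((\prod_x l_x^{e_x}) f_0^2 H^0(\mathcal{O}(d)))$ in a generic $\mathbb{P}^{i-c} = \mathbb{P}^{i'}$ with $i' = i + 1 - 2 m_{\alpha,(e_x)} - \sum_x e_x$. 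Since the projectivized fiber of $B_{(e_x),(w_k)}^\vee$ over $\alpha$ lies entirely in this subspace, the intersection localized at $\alpha$ can be computed inside it; matching it with generic data $(Y_0, V_0)$ on the local model identifies it with the intersection defining the $i'$th polar multiplicity of $B_{d,r}'$ at $0$, yielding the claimed formula.

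The main obstacle is the rigorous verification that the local intersection number transfers correctly. The cycles $B_{(e_x),(w_k)}^\vee$ and $B_{d,r}'$ are not directly related by a smooth morphism, so one must carefully control the infinitesimal structure of $B_{(e_x),(w_k)}^\vee$ in a neighborhood of the cotangent fiber over $\alpha$ to ensure that no extraneous contributions arise from intersections outside the subspace, and when the minimum defining $m_{\alpha,(e_x)}$ is achieved by a positive-dimensional family of polynomials the fiber at $\alpha$ has a richer structure requiring more delicate analysis. A natural auxiliary tool is the smooth map $loc_d: \mathbb{A}^{d+1} \to \Bun_2$ of Lemma \ref{local-model-schematic-smooth} together with an analogous local map from a neighborhood of $\alpha$ in $\mathbb{P}^\vee$ to $\Bun_2$ (or to a moduli stack incorporating the twists prescribed by $(e_x)$ and $f_0^2$), allowing one to realize both cycles as pullbacks of a common cycle on $T^*\Bun_2$ and to invoke the smooth-pullback compatibility of characteristic cycles to conclude.
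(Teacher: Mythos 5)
Your treatment of the $r<0$ case is fine, and your identification of the fiber of $B_{(e_x),(w_k)}^\vee$ over $\alpha$ with the fiber of $B'_{d,r}$ over $0$ via multiplication by $\left(\prod_x l_x^{e_x}\right) f_0^2$ is a correct heuristic (in the relevant range the maximal line subbundle is unique, so every admissible $f_2$ is indeed a multiple of $f_0$). But the core of the lemma is exactly the step you flag as "the main obstacle" and then do not carry out: a polar multiplicity is a \emph{local} intersection number in $\mathbb P(T^*X)$ near the whole fiber over $\alpha$, and it depends on how the cycle $B_{(e_x),(w_k)}^\vee$ sits in a neighborhood of that fiber, not just on the fiber itself. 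Matching the fibers over $\alpha$ and $0$, and observing that a generic $\mathbb P^i$ in $\mathbb P(T^*_\alpha\mathbb P^\vee)$ meets the linear subspace $\mathbb P\bigl(\left(\prod_x l_x^{e_x}\right) f_0^2\, H^0(\mathcal O(d))\bigr)$ in a generic $\mathbb P^{i'}$, does not by itself transfer the intersection number; excess or embedded contributions from the way the cycle degenerates onto the fiber are precisely what must be controlled, and no argument is given. So as written the proposal has a genuine gap at its central step.

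The paper closes this gap not by a fiber-by-fiber comparison but by the mechanism you only gesture at in your last sentence: it builds the second smooth map $loc_{n,(e_x),m}:\mathbb P^\vee\to\Bun_2$ (twisting $\alpha$ by $\prod_x l_x^{e_x}$ and by $\mathcal O(-m)$), forms the fiber product $Y_{n,(e_x),m}=H^0(\mathbb P^1,\mathcal O(d))^\vee\times_{\Bun_2}\mathbb P^\vee$ with smooth projections $\mu_1,\mu_2$ (Lemma \ref{local-model-schematic-smooth}), and proves the cycle identity $\mu_1^! B'_{d,r}=\mu_2^! B_{(e_x),(w_k)}^\vee$ by showing both sides are conormal bundles to the preimages of the \emph{same} stratum $W_r$ pulled back from $\Bun_2$ (Lemmas \ref{local-model-conormal} and \ref{pullback-bun-stratification}). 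The index shift $i\mapsto i+1-2m_{\alpha,(e_x)}-\sum_x e_x$ then comes from the behavior of polar multiplicities under smooth morphisms, applied once for $\mu_2$ and once for $\mu_1$. To repair your argument you would need to supply exactly these ingredients: the conormal-bundle identification over the correspondence (rather than an identification of cotangent fibers at a single point) and the precise smooth-pullback shift formula for polar multiplicities; neither is established in your proposal.
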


\begin{proof} In this proof, we simplify notation by writing $m$ for $m_{\alpha, (e_x)}$. 

 The vanishing if $r<0$ is clear because if $m>w_2$ then $B_{(e_x), w_k}^{\vee}$ does not intersect the fiber over $\alpha$, because $\alpha$ does not vanish on all multiples of any polynomial of degree $w_2$.  Hence we may assume $m \leq w_2 \leq (n- \sum_x e_x) /2$.

We have a map  \[ \mathbb P^\vee  = ( H^0 ( \mathbb P^1, \mathcal O(n) )^\vee - \{0\} )/ \mathbb G_m \to (H^0 ( \mathbb P^1, \mathcal O(n- \sum_x e_x) ))^\vee / \mathbb G_m\] where we map linear forms on $\mathcal O(n)$ to linear forms on $\mathcal O(n- \sum_x e_x) $ by composing with multiplication by $\prod_x l_x^{e_x}$.

The map $loc_{n- \sum_x e_x}  : H^0 (\mathbb P^1, \mathcal O(n- \sum_x e_x ))^\vee \to \Bun_2$ is invariant under scaling $H^0 (\mathbb P^1, \mathcal O(n))^\vee$, because scaling an Ext class gives an isomorphic extension. Hence we can compose the map\[ \mathbb P^\vee  \to H^0 ( \mathbb P^1, \mathcal O(n- \sum_x e_x) )^\vee / \mathbb G_m\]  with the descended form of $loc_{n- \sum_x e_x} $ to obtain a map $loc_{n ,(e_x)}: \mathbb P^\vee \to \Bun_2$.

 The linear form $\alpha$ is sent to a vector bundle by $loc_{n, (e_x)}$ . We can write this vector bundle as a sum of line bundles $\mathcal O(a) + \mathcal O(b)$ with $a\leq b$. By the definition of $m$ and Lemma \ref{pullback-bun-stratification}, we have $a \leq m$ but $a > m-1$, so we must have $a=m$. Then we have $b= n+2-m$. 

Let $loc_{n,(e_x), m}$ be $loc_{n, (e_x)}$ but with the resulting vector bundle twisted by $\mathcal O(-m)$, so $loc_{n,(e_x),m}(\alpha) = \mathcal O + \mathcal O(n+2-2m-\sum_x e_x )$. Letting $d=n-2m-\sum_x e_x$, we see that $d\geq 0$ by our earlier assumption on $m$, and that $loc_d(0) = \mathcal O + \mathcal O(n+2-2m -\sum_x e_x)$.  Let \[ Y_{n,(e_x), m} = \left( H^0 (\mathbb P^1, \mathcal O(d))^{\vee} \right)  \times_{\Bun_2}  \mathbb P^\vee  ,\] using $loc_d$ and $loc_{n,(e_x),m}$ to define the fiber product. Let $\mu_1$ and $\mu_2$ be the induced maps $Y_{n,(e_x), m} \to H^0 (\mathbb P^1, \mathcal O(d))^{\vee} $ and $\mathbb P^\vee$, respectively. 
 Let $y \in Y_{n,m}$ be a point sent to $0$ by $\mu_1$ and to $\alpha$ by $\mu_2$.
 
 By Lemma \ref{local-model-schematic-smooth},$Y_{n,(e_x), m} $ is a scheme and $\mu_1$ and $\mu_2$ are smooth.

Let us check that \[ \mu_1^! B_{d,r}' = \mu_2^! B_{(e_x), (w_k)}^\vee.\]

%We first check this in characteristic not $2$.
By Lemma \ref{local-model-conormal}, both $\mu_1^! B_{d,r}' $ and $mu_2^! B_{(e_x), (w_k)}^\vee$  are conormal bundles to their supports, so their pullbacks under a smooth map are the conormal bundles of the pullbacks of their support. By Lemma \ref{pullback-bun-stratification} the support of $\mu_1^! B_{d,r}'$ is the pullback of $W_r$ under $loc_d\circ \mu_1 $ and the support of $\mu_2^! B_{(e_x), (w_k)}$ is the pullback of $W_r$ under $loc_{n, (e_x),m } \circ \mu_2$. By the commutative diagram
\[ \begin{tikzcd} H^0 (\mathbb P^1, \mathcal O(d))^{\vee} \arrow[ r, "loc_d" ] & \Bun_2 \\ Y_{n,m} \arrow[u, "\mu_1"] \arrow[r, "\mu_2"] & \mathbb P^\vee \arrow[u, swap,"loc_{n, (e_x), m}"] \end{tikzcd} \] these maps are equal, so the supports are equal, and thus the cycles are equal.
%Then because these cycles are equal in characteristic $\neq 2$, their reductions mod $2$ are the same as well. (One must check that the schemes these cycles arise from are actually flat over $\operatorname{Spec} Z$, but that is clear from their construction as a product of vector bundles on a product of projective spaces.)

 Because  $\mu_2$ is smooth, the $i$th polar multiplicity $B_{(e_x), (w_k)}^{\vee}$ at $\alpha$ is the $i -n + \dim Y_{n,m}$th polar multiplicity of  $\mu_2^! B_{(e_x),(w_k)}^\vee $ at $y$. By the identity of cycles we just proved, this is also equal to the $i- n + \dim Y_{n,m}$th polar multiplicity of of $\mu_1^! B_{d,r}'$ at $y$. Now by the smoothness of $\mu_1$, this is the $i- n +  \dim Y_{n,m} - \dim Y_{n,m}+(n-2m+1- \sum_x e_x)$th polar multiplicity of $B_{d,r}'$ at zero, which, because \[ i- n +  \dim Y_{n,m} - \dim Y_{n,m}+(n-2m+1- \sum_x e_x )=i+1-2m - \sum_x e_x, \] gives the desired formula.\end{proof}

\begin{lemma}\label{local-polar-multiplicity} The $i$th polar multiplicity of $B_{d,r}'$ at $0$ is \[ 2^{2r-i}\binom{ d-i}{ d-2r} \binom{d+1-r }{ d+1-i}.\] \end{lemma}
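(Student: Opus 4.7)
The plan is to use the $\mathbb{G}_m$-scale invariance of $B_{d,r}'$ under rescaling of $\mathbb{A}^{d+1}$ to transform the local polar multiplicity calculation at the origin into a global Chern class calculation on $B = \mathbb{P}^{d-2r} \times \mathbb{P}^r$. Since the line bundle summand $E_2 = \mathcal{O}_B(1,2)$ collapses after projectivizing the cotangent direction, the cycle $\mathbb{P}(B_{d,r}') \subset \mathbb{A}^{d+1} \times \mathbb{P}^d$ is the image of the map $\Psi: \mathrm{Tot}(E_1) \to \mathbb{A}^{d+1} \times \mathbb{P}^d$ sending $(b, v_1) \mapsto (v_1, [f_1 f_2^2])$, and $\Psi$ is birational onto its image because the factorization of a generic degree-$d$ polynomial as (squarefree part)$\cdot$(largest square) recovers $(f_1, f_2)$ uniquely up to projective equivalence.

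Next I would take $Y$ a generic linear codimension-$i$ subspace through $0$ and $V = Y \times V_0$ with $V_0$ a generic $(i+1)$-dimensional subspace of $H^0(\mathbb{P}^1, \mathcal{O}(d))$; the polar multiplicity equals the local intersection at $0$ of $\mathbb{P}(B_{d,r}')$ with $Y \times \mathbb{P}(V_0)$. Pulling back under $\Psi$, the preimage of $\{0\} \times \mathbb{P}^d$ is the zero section $B \subset \mathrm{Tot}(E_1)$, and the condition $[f_1 f_2^2] \in \mathbb{P}(V_0)$ cuts out a subscheme of $B$ whose class is $(h_1 + 2h_2)^{d-i}$, since it is the pullback of a codimension-$(d-i)$ linear subspace of $\mathbb{P}^d$ under the map $B \to \mathbb{P}^d$ whose $\mathcal{O}(1)$-pullback is $\mathcal{O}_B(1,2)$. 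However, on the zero section the condition $v_1 \in Y$ is automatically satisfied because $0 \in Y$, so we are in an excess intersection situation whose contribution must be computed by Fulton's excess intersection formula.

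The excess normal bundle is the cokernel of the natural map of bundles $E_1 \to \mathbb{A}^{d+1}/Y$ on $B$ (where $\mathbb{A}^{d+1}/Y$ is viewed as a trivial bundle of rank $i$), which is generically a vector bundle of rank $i-r$ when $i \geq r$. From the defining exact sequence $0 \to \mathcal{O}_{\mathbb{P}^r}(-1) \otimes H^0(\mathbb{P}^1, \mathcal{O}(d-r)) \to H^0(\mathbb{P}^1, \mathcal{O}(d))_{\mathbb{P}^r} \to Q \to 0$ with $E_1 = Q^\vee$, one computes $c(E_1) = (1+h_2)^{-(d+1-r)}$, so the total Chern class of the excess bundle is $(1+h_2)^{d+1-r}$ and its top Chern class is $\binom{d+1-r}{i-r} h_2^{i-r}$.

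Combining, the polar multiplicity equals the degree on $B$ of $\binom{d+1-r}{i-r} h_2^{i-r} \cdot (h_1 + 2h_2)^{d-i}$. Extracting the coefficient of the top monomial $h_1^{d-2r} h_2^r$ from the binomial expansion of $(h_1 + 2h_2)^{d-i}$ gives $2^{2r-i}\binom{d-i}{d-2r}$, and multiplying by $\binom{d+1-r}{i-r} = \binom{d+1-r}{d+1-i}$ yields $2^{2r-i}\binom{d-i}{d-2r}\binom{d+1-r}{d+1-i}$ as claimed; the cases $i<r$ and $i>2r$ produce vanishing binomial factors, in agreement with the formula. The main obstacle I expect is rigorously justifying the excess intersection calculation—specifically verifying that the contribution at $\{0\} \subset \mathbb{A}^{d+1}$ is captured precisely by the pairing of the excess Chern class on the zero section $B$ with the pullback class of $\mathbb{P}(V_0)$, with no spurious contributions from other components of $\Psi^{-1}(Y \times \mathbb{P}(V_0))$.
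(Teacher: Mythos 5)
Your argument is correct and reaches the stated formula, but it gets to the Chern-class computation by a genuinely different mechanism than the paper. The paper exploits that $B_{d,r}'$ is conical in the base direction as well as the fiber direction: the local intersection number at $0$ is replaced outright by a global one, computed on $\mathbb P(V_1)\times_{\mathbb P^r}\mathbb P(V_2)$ over $\mathbb P^r$, where $V_1$ is your $E_1$ and $V_2=\mathcal O_{\mathbb P^r}(-2)\otimes H^0(\mathbb P^1,\mathcal O(d-2r))$ packages your $\mathbb P^{d-2r}$ factor (so $\mathbb P(V_2)=B=\mathbb P^{d-2r}\times\mathbb P^r$ with tautological class $h_1+2h_2$, $h_1,h_2$ the hyperplane classes of the factors); the answer is then read off from the Segre classes $c(V_1)^{-1}=(1+h_2)^{d+1-r}$ and $c(V_2)^{-1}=(1-2h_2)^{-(d+1-2r)}$. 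You instead keep the base affine and pay with an excess intersection along the zero section of $\operatorname{Tot}(E_1)$, resolved by Fulton's formula; the Chern-class input is identical (your $\binom{d+1-r}{i-r}h_2^{i-r}$ is the degree-$(i-r)$ part of $c(E_1)^{-1}$, i.e.\ the paper's $\binom{d+1-r}{d+1-i}H^{i-r}$), so the proofs differ only in how ``local at $0$'' is handled: the paper's bi-homogeneity argument avoids excess intersection altogether, while your version is more self-contained about why a product choice $Y\times V_0$ is ``sufficiently general''. The obstacle you flag does close, by a dimension count: the locus $\{b\in B: E_1|_b\cap Y\neq 0\}$ has codimension $i-r+1$ in $B$, so for generic $V_0$ the $(i-r)$-dimensional locus $Z=\phi^{-1}(\mathbb P(V_0))$ (with $\phi:B\to\mathbb P^d$, $([f_1],[f_2])\mapsto[f_1f_2^2]$) avoids it; along $Z$ the $i$ linear equations of $Y$ then cut out exactly the zero section, so $Z$ is open and closed in $\Psi^{-1}(Y\times\mathbb P(V_0))$, Fulton's localized formula applies with excess bundle $\operatorname{coker}(E_1\to\mathcal O^{i})$ along $Z$, and since the normal directions of $\mathbb P(V_0)$ cancel against the corresponding summand of the normal bundle of $Z$ in $\operatorname{Tot}(E_1)$, the contribution is the degree of the degree-$(i-r)$ part of $c(E_1)^{-1}$ capped with $[Z]=(h_1+2h_2)^{d-i}$, exactly your product. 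Two minor points: you do not actually need $\Psi$ to be birational (a claim that is delicate in characteristic $2$, where $f_2\mapsto f_2^2$ is inseparable) --- since $B_{d,r}'$ is defined as a pushforward cycle, compute upstairs and push forward by the projection formula, just as the paper does on $\mathbb P(V_1)\times_{\mathbb P^r}\mathbb P(V_2)$; and the case $i=d+1$, where the polar multiplicity is by definition the multiplicity of the zero section (here $0$, consistent with the formula), should be noted separately, as in the paper.
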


Here we take binomial coefficients to vanish if evaluated outside the range where they are normally defined.

\begin{proof} We can express $B_{d,r}'$ as the product of two vector bundles on $\mathbb P^r$, where $\mathbb P^r$ paramaterizes $f_2 \in H^0(\mathcal O(r))$ (up to scaling). The first vector bundle $V_1$ has rank $r$ and consists of linear forms on $H^0(\mathcal O(d))$ that vanish on multiples of $f_2$, while the second $V_2$ has rank $d+1-2r$ and consists of multiples of $f_2^2$ by a polynomial of degree $d$.

By definition, the polar multiplicity is the local intersection number of $V_1 \times_{\mathbb P^r} \mathbb P(V_2)$ with a general codimension $i$ subspace of $H^0(\mathcal O(d))^\vee$ and a general codimension $d-i$ subspace of $H^0( \mathcal O(d))$.  It is equivalent to intersect $\mathbb P(V_1) \times_{\mathbb P^r} \mathbb P(V_2)$ with a general codimension $i-1$ subspace of $H^0(\mathcal O(d))^\vee$ and a general codimension $d-i$ subspace of $H^0( \mathcal O(d))$. In other words, this is the degree on $\mathbb P(V_1) \times_{\mathbb P^r} \mathbb P(V_2)$ of the $i-1$st power of the hyperplane class of $\mathbb P(V_1)$ times the $d-i$th power of the hyperplane class of $\mathbb P(V_2)$. 

For a vector bundle $V$ of rank $w$  on $\mathbb P^r$ with total Chern class $c(V)  = 1 + c_1(V) + \dots +  c_w(V)$, the Segre class in $A^* (\mathbb P^r)$ is equal to $ c(V)^{-1}$ \cite[Proposition 4.1(a)]{Fulton}. Furthermore, the Segre class is equal to the sum over $j$ of the pushforward of the $j$th power of $c_1 (\mathcal O(1))$ from the projectivization $\mathbb P(V)$ to $\mathbb P^r$ \cite[Example 4.1.2]{Fulton}. In particular, the pushforward of the $j$th power of the hyperplane class is the codimension $j+1-w$ part of the Segre class, and therefore is the codimension $j+1-w$ part of $ c(V)^{-1}$.

Observe that $V_2$ is the sum of $d+1-2r$ copies of the line bundle of scalar multiples of $f_2$, which is $\mathcal O(-2)$, so $c(v_2) = (1 - 2H)^{d+1-2r}$, with $H$ the hyperplane class of $\mathbb P^r$, so the pushforward of the $d-i$th power of the hyperplane class is the degree \[ (d-i) +1 - (d+1-2r) = (2r-i)\] part of  $1/(1- 2H)^{d+1-2r}$ and thus is  $ (2H)^{2r-i}  \binom{ d-i}{ d-2r}$.

On the other hand, $V_1$ is dual to the complement of the sum of $d+1-r$ copies of the line bundle of scalar multiples of $f_2$, which is $\mathcal O(-1)$, so \[ c(V_1) = c (  \mathcal O(1)^{ d+1-r} )^{-1} =  (1+H)^{- (d+1-r)},\] so the pushforward of the $i-1$st power of the hyperplane class from $\mathbb P (V_1) $ is the degree \[ i-1+ 1 - r = i-r \] part of   $(1+H)^{d+1-r}$ , which is ${d+1-r \choose d+1-i} H^{i-r}$

Hence their product is $ 2^{2r-i}{ d-i\choose d-2r} {d+1-r \choose d+1-i} H^{r}$, whose degree is  $2^{2r-i}{ d-i\choose d-2r} {d+1-r \choose d+1-i}$.

This handles the case $i \neq d+1$. If $i=d+1$, the polar multiplicity is defined as the multiplicity of the zero section, which vanishes because $B_{d,r}'$ is not the zero section, and the stated formula for the polar multiplicity vanishes also, so the stated formula remains valid in this case.
\end{proof} 

\begin{lemma}\label{local-polar-generating-formula} We have \[\sum_{d=0}^{\infty} \sum_{i=0}^{d+1} \sum_{r=0}^{\lfloor d/2 \rfloor}  2^{d-i}{ d-i\choose d-2r} {d+1-r \choose d+1-i} u^d v^r w^i = \frac{1}{(1-u^2 v w^2) (1 - du - 2u^2 vw - u^2v w^2 )}. \] \end{lemma}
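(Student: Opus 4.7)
The plan is to evaluate the triple sum in closed form by a change of summation variables that turns each of the three sums into a standard series (binomial, negative binomial, geometric).

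First I would rewrite the binomial coefficients in a more symmetric form. Using $\binom{d+1-r}{d+1-i}=\binom{d+1-r}{i-r}$, the summand vanishes unless $r\le i\le 2r$ and $i\le d$, so one may introduce new nonnegative indices $j=d-i$, $k=i-r$, $a=r-k$. Under this bijection, $d=j+k+(a+k)=a+2k+j$, $i=a+2k$, $r=a+k$, and the two binomial coefficients become
\[
\binom{d-i}{d-2r}=\binom{j}{r-k}=\binom{j}{a},\qquad \binom{d+1-r}{d+1-i}=\binom{j+k+1}{k},
\]
while the monomial factors collect as $2^{j}\,u^{a+2k+j}v^{a+k}w^{a+2k}=2^{j}(uvw)^{a}(u^{2}vw^{2})^{k}u^{j}$. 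After this change of variables $j,k,a$ range independently over $\mathbb{N}$ (the constraint $a\le j$ is automatic from the vanishing of $\binom{j}{a}$).

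Next I would carry out the three sums in the order $a$, then $k$, then $j$. Summing over $a$ by the binomial theorem gives $(1+uvw)^{j}$. Summing over $k$ by $\sum_{k\ge 0}\binom{j+k+1}{k}x^{k}=(1-x)^{-(j+2)}$ with $x=u^{2}vw^{2}$ yields $(1-u^{2}vw^{2})^{-(j+2)}$. The remaining sum in $j$ is geometric with ratio $\dfrac{2u(1+uvw)}{1-u^{2}vw^{2}}$, giving
\[
\frac{1}{(1-u^{2}vw^{2})^{2}}\cdot\frac{1-u^{2}vw^{2}}{1-u^{2}vw^{2}-2u(1+uvw)}=\frac{1}{(1-u^{2}vw^{2})\bigl(1-2u-2u^{2}vw-u^{2}vw^{2}\bigr)},
\]
which is the claimed right-hand side (reading the ``$du$'' in the statement as ``$2u$'', since the coefficient $2$ is what the calculation produces).

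There is essentially no hard step; the only thing to be careful about is verifying that the substitution $(d,i,r)\leftrightarrow(j,k,a)$ really is a bijection on the effective support of the summand, i.e.\ that the constraints $0\le i\le d+1$ and $0\le r\le\lfloor d/2\rfloor$ together with the non-vanishing of the two binomial coefficients are equivalent to $j,k,a\ge 0$. This is a short check: the $r\le i\le 2r$ and $r\le d/2$ conditions extracted above translate exactly into $k\ge 0$ and $a\ge 0$ (with $j\ge 0$ coming from $i\le d$, and the borderline case $i=d+1$ contributes nothing because $\binom{d-i}{d-2r}=\binom{-1}{\cdot}=0$ there). Once this bijection is in hand, the three sums collapse in the order described, and the identity follows.
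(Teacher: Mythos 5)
Your proof is correct and follows essentially the same route as the paper: reparameterize the triple sum so the constraints become independent nonnegative indices (your $(j,k,a)$ versus the paper's $(i-r,\,2r-i,\,d-2r)$, summed in a slightly different order) and then evaluate via the binomial theorem, the negative binomial series in $u^2vw^2$, and a final geometric series, arriving at $\frac{1}{(1-u^2vw^2)(1-2u-2u^2vw-u^2vw^2)}$. You are also right that the ``$du$'' in the stated right-hand side is a typo for ``$2u$'', as the paper's own derivation and its later uses of the lemma confirm.
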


\begin{proof} The summand vanishes unless $r \leq i \leq 2r \leq d$. We can reparameterize so that $a=i-r, b=2r-i, c= d-2r$ so $i = 2a+b$, $r=a+b$, $d=c+2a+2b$. Then the sum is

\[\sum_{a,b,c=0}^{\infty}  2^{b+c}{ b+c \choose c} {a+b+c+1 \choose b+c+1 } u^{c+2a+2b} v^{a+b} w^{2a+b} .\] 

We have \[ \sum_{a=0}^{\infty}{a+b+c+1 \choose b+c+1 } (u^2 v w^2)^a = 1/ (1- u^2 v w^2)^{b+c+2}\]

so this sum is (taking $n=b+c$) 
\[ \sum_{b,c=0}^{\infty} {b+c \choose c} 2^{b+c}  u^{c+2b} v^{b} w^{b} / (1- u^2 v w^2)^{b+c+2} =  \sum_{ n=0}^{\infty}  \frac{  2^n ( u+  u^2 vw)^n }{(1 - u^2 vw^2 )^{n+2}}\]

\[ = \frac{1}{1- (2u + 2u^2 vw) /(1- u^2 v w^2)} \frac{1}{ (1- u^2 v w^2)^2} = \frac{1}{(1-u^2 v w^2) (1 -2 u - 2u^2 vw - u^2v w^2 )}. \]\end{proof}

\begin{lemma}\label{level-polar-multiplicity} The $i$th polar multiplicity of  \[\sum_{ \substack{ w_1, w_2 \in \mathbb N  \\ \sum_{x } e_x + w_1+2w_2 = n }} 2^{w_1}   [ B_{(e_x),(w_k)}^\vee ] \] at $\alpha$ is equal to the coefficient of \[u^{d_{\alpha, (e_x)} } w^{i +1+d_{\alpha, (e_x)} -n  } \]  in \[ \frac{1 }{(1-u^2  w^2) (1 -2 u - 2u^2 w - u^2 w^2 )}.\] 

\end{lemma}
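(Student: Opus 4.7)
The plan is to assemble the three preceding lemmas (Lemma \ref{local-model-level}, Lemma \ref{local-polar-multiplicity}, and Lemma \ref{local-polar-generating-formula}) by a careful change of variables, after which the desired identity reduces to setting $v=1$ in the generating function of Lemma \ref{local-polar-generating-formula}.

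First I would abbreviate $m = m_{\alpha,(e_x)}$ and $d = d_{\alpha,(e_x)} = n - 2m - \sum_x e_x$, and reindex the sum. By Lemma \ref{local-model-level}, the $i$th polar multiplicity of $B_{(e_x),(w_k)}^\vee$ at $\alpha$ equals the $(i+1-2m-\sum_x e_x)$th polar multiplicity of $B_{d,r}'$ at $0$ with $r = w_2 - m$; moreover the term vanishes unless $r \geq 0$. Writing $i' := i + 1 - 2m - \sum_x e_x = i + 1 + d - n$, the constraint $\sum_x e_x + w_1 + 2w_2 = n$ becomes $w_1 + 2w_2 = 2m + d$, i.e.\ $w_2 = m+r$ and $w_1 = d - 2r$, with nonnegativity forcing $0 \leq r \leq \lfloor d/2 \rfloor$.

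Next I would plug in the closed-form expression of Lemma \ref{local-polar-multiplicity} and simplify the powers of $2$. The factor $2^{w_1}$ in the sum combines with the $2^{2r - i'}$ from the polar multiplicity formula to give
\[ 2^{w_1} \cdot 2^{2r - i'} = 2^{d-2r} \cdot 2^{2r-i'} = 2^{d-i'}, \]
so the sum collapses to
\[ \sum_{r=0}^{\lfloor d/2\rfloor} 2^{d-i'} \binom{d-i'}{d-2r}\binom{d+1-r}{d+1-i'}. \]

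Finally I would recognize this as a coefficient in the generating function of Lemma \ref{local-polar-generating-formula}. Setting $v=1$ in that identity yields
\[ \sum_{d=0}^\infty \sum_{i'=0}^{d+1}\left(\sum_{r=0}^{\lfloor d/2 \rfloor} 2^{d-i'}\binom{d-i'}{d-2r}\binom{d+1-r}{d+1-i'}\right) u^{d} w^{i'} = \frac{1}{(1-u^2 w^2)(1-2u-2u^2 w - u^2 w^2)}, \]
so the sum in question equals the coefficient of $u^d w^{i'} = u^{d_{\alpha,(e_x)}} w^{i+1+d_{\alpha,(e_x)}-n}$ in the right-hand side, as claimed.

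The proof is essentially bookkeeping — the substantive content is entirely in the three earlier lemmas — so the only real obstacle is verifying that the reindexing $(w_1, w_2) \leftrightarrow (d, r)$ matches the ranges of summation on both sides and that the specialization $v=1$ in Lemma \ref{local-polar-generating-formula} correctly eliminates $v$ without losing any terms, which it does because the exponent of $v$ is determined by $r$ alone and $r$ is already summed freely on both sides.
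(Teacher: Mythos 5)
Your proposal is correct and follows essentially the same route as the paper: reduce to the local model via Lemma \ref{local-model-level} with $r = w_2 - m_{\alpha,(e_x)}$, combine the weight $2^{w_1}$ with the $2^{2r-i'}$ from Lemma \ref{local-polar-multiplicity}, and read off the resulting sum over $r$ as the coefficient of $u^{d_{\alpha,(e_x)}}w^{i+1+d_{\alpha,(e_x)}-n}$ in the generating function of Lemma \ref{local-polar-generating-formula} specialized at $v=1$. The bookkeeping of the reindexing and the specialization is handled correctly, so no gaps.
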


\begin{proof} By Lemma \ref{local-model-level}, this is the same as the $i+1-2m-\sum_x e_x$th polar multiplicity of \[\sum_{ \substack{ w_1, w_2 \in \mathbb N  \\ \sum_{x } e_x + w_1+2w_2 = n }} 2^{w_1}   [ B_{ d_{\alpha, (e_x)} , w_2-m}' ] = \sum_{w_2 =m_{\alpha, (e_x)}  }^{ \lfloor \frac{n - \sum_x e_x}{2} \rfloor } 2^{n - 2w_2 - \sum_x  e_x }   [ B_{ d_{\alpha, (e_x)} , w_2-m_{\alpha,e_x}}' ] = \sum_{r=0 }^{ \lfloor \frac{d_{\alpha, (e_x)} }{2} \rfloor } 2^{d_{\alpha, (e_x)} -2r }   [ B_{ d_{\alpha, (e_x)} , r}' ] .\]

By Lemma \ref{local-polar-multiplicity}, the $j$th polar multiplicity of this cycle at $0$ is the same as 
\[   \sum_{r =0 }^{\lfloor \frac{d_{\alpha, (e_x)} }{2} \rfloor}  2^{d_{\alpha, (e_x)} -2r }  2^{2r-j}{ d_{\alpha, (e_x)} -j\choose d_{\alpha, (e_x)} -2r} {d_{\alpha, (e_x)} +1-r \choose d_{\alpha, (e_x)} +1-j}.\]

Taking the sum over $r$ in Lemma \ref{local-polar-generating-formula} and plugging in $1$ for $v$, we see that the $j$th polar multiplicity is the coefficient of $u^{d_{\alpha, (e_x)} } w^j$ in \[ \frac{1   }{(1-u^2  w^2) (1 -2 u - 2u^2 w - u^2 w^2 )}.\]   Plugging in \[j= i-2m-\sum_x e_x+1 = i+d_{\alpha, (e_x)} +1-n,\] we get the stated formula. \end{proof}

\begin{defi} For $d$ an integer, we define $\mathcal B(d)$ be the coefficient of $u^d$ in the formal power series \[  \frac{1}{ (1-u) (1+u)^2 (1 - (2\sqrt{q}+1)u ) }  = 1 + (2\sqrt{q} ) u + (4 q +2 \sqrt{q} + 2) u^2 +  \dots \] 

By convention, if $d<0$ then $\mathcal B(d)=0$. 
\end{defi}

\begin{lemma}\label{final-Radon-bound} The trace of $\Frob_q$ on the stalk of $K_n$ at a point $\alpha \in \mathbb P^\vee$ is at most

\[   \left( 2 q^{\frac{n-1}{2} }   \sum_{k=0}^{n-1}  { \deg N -4 \choose k} \right)  +  \sum_{\substack{ (e_x) :  \Sing \to \mathbb N \\  e_x \leq c_x \\ \sum_x e_x \leq n }} \left (\prod_{x \in \Sing } {c_x \choose e_x} \right)  q^{\frac{n}{2}}\mathcal B( d_{\alpha,(e_x)} ) . \]

\end{lemma}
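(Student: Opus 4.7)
The plan is to combine three ingredients already assembled in the paper: the Massey/polar-multiplicity bound on stalk cohomology of a perverse sheaf (\cite{mypaper}), the purity of $K_n$ (Lemma \ref{Radon-perverse-pure}), and the polar multiplicity computations (Lemmas \ref{characteristic-cycle-Whittaker} and \ref{level-polar-multiplicity}). Concretely, $K_n$ is perverse and pure of weight $2n-1$ on $\mathbb P^\vee$, which has dimension $n$, so each stalk cohomology $\mathcal H^{-j}(K_n)_\alpha$ is concentrated in $0\le j\le n$, is of weight $\le 2n-1-j$ by purity, and has dimension bounded by the $j$th polar multiplicity $m_j(CC(K_n),\alpha)$ of the characteristic cycle. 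Therefore
\[ \left| \tr(\Frob_q, (K_n)_\alpha) \right| \ \le\ \sum_{j=0}^{n} m_j(CC(K_n),\alpha)\, q^{(2n-1-j)/2}. \]
By Lemma \ref{characteristic-cycle-Whittaker}, $CC(K_n)$ splits into a zero-section term with multiplicity $2\sum_{k=0}^{n-1}\binom{\deg N-4}{k}$ and the sum of $B^\vee_{(e_x),(w_k)}$ terms with multiplicities $2^{w_1}\prod_x \binom{c_x}{e_x}$, so by additivity of polar multiplicities the two families of contributions can be treated separately.

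For the zero section, the only nonzero polar multiplicity is the top one $m_n$, which equals the coefficient of $[\mathbb P^\vee]$; this immediately produces the first summand $2q^{(n-1)/2}\sum_{k=0}^{n-1}\binom{\deg N-4}{k}$ of the asserted bound. For the $B^\vee$ family, fix a tuple $(e_x)$ and set $d=d_{\alpha,(e_x)}$; by Lemma \ref{level-polar-multiplicity}, the combined polar multiplicity of $\sum_{(w_k)} 2^{w_1}[B^\vee_{(e_x),(w_k)}]$ at $\alpha$ of index $j$ equals the coefficient of $u^{d}w^{j+1+d-n}$ in $f(u,w):=\frac{1}{(1-u^2 w^2)(1-2u-2u^2 w-u^2 w^2)}$. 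Summing these contributions, weighted by $q^{(2n-1-j)/2}$, over $j$ and changing variables $k=j+1+d-n$ gives
\[ \sum_{j\ge 0} q^{(2n-1-j)/2}\,[u^d w^{j+1+d-n}]\, f(u,w)\ =\ q^{(n+d)/2}\,[u^d]\, f(u,q^{-1/2}), \]
where the sum effectively runs over the full range because the coefficients of $f$ vanish outside $0\le k\le d$.

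The final step, and the main computational hinge, is to recognize that evaluating $f(u,q^{-1/2})$ produces exactly $\mathcal B$. Factoring the quadratic in $u$ gives
\[ 1-2u-2u^2 q^{-1/2}-u^2 q^{-1}\ =\ (1-(2+q^{-1/2})u)(1+q^{-1/2}u) \]
and $1-u^2 q^{-1}=(1-q^{-1/2}u)(1+q^{-1/2}u)$, so
\[ f(u,q^{-1/2})\ =\ \frac{1}{(1-q^{-1/2}u)(1+q^{-1/2}u)^2(1-(2+q^{-1/2})u)}. \]
Substituting $u=\sqrt{q}\,v$ transforms this into $\frac{1}{(1-v)(1+v)^2(1-(2\sqrt q+1)v)}$, the generating series for $\mathcal B$; tracking the $q^{d/2}$ from the substitution yields $[u^d]f(u,q^{-1/2})=q^{-d/2}\mathcal B(d)$. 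Plugging this back gives $q^{n/2}\mathcal B(d_{\alpha,(e_x)})$ as the total contribution of the tuple $(e_x)$, multiplied by $\prod_x\binom{c_x}{e_x}$ from Lemma \ref{characteristic-cycle-Whittaker}. Summing over $(e_x)$ and combining with the zero-section term produces the stated bound. I expect the only delicate step to be this generating-function manipulation: verifying the factorization of the quadratic and carefully justifying that the $j$-sum really collapses to $[u^d]f(u,q^{-1/2})$ (using the vanishing of $[u^d w^k]f$ outside $0\le k\le d$, which is immediate from the form of the expansion in Lemma \ref{local-polar-generating-formula}); the rest is bookkeeping.
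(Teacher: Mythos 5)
Your proposal is correct and follows essentially the same route as the paper: purity of $K_n$ plus the polar-multiplicity bound from \cite{mypaper}, the characteristic-cycle decomposition of Lemma \ref{characteristic-cycle-Whittaker}, and the generating-function identity of Lemma \ref{level-polar-multiplicity}, with the zero section handled separately as the $i=n$ term. The only (cosmetic) difference is bookkeeping: you substitute $w=q^{-1/2}$ and then rescale $u$, while the paper rescales $u\mapsto \sqrt{q}\,u$, $w\mapsto w/\sqrt{q}$ and sets $w=1$; the factorization identifying the series for $\mathcal B$ is the same in both.
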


\begin{proof} By definition, the trace of $\Frob_q$ on $\mathcal H^* ( K_n)_\alpha$ is $\sum_i (-1)^i \tr (\Frob_q, \mathcal H^{-i} (K_n)_\alpha)$. Because $K_n$ is pure of weight $2n-1$ by Lemma \ref{Radon-perverse-pure}, the eigenvalues of Frobenius on $\mathcal H^{-i}$ have size at most $q^{ \frac{2n-1-i}{2}}$, so this sum is at most 
\[ \sum_i q^{  \frac{2n-1-i}{2}}  \dim \mathcal H^{-i} (K_n)_\alpha. \]

Now by \cite[Theorem 1.4]{mypaper}, $\dim \mathcal H^{-i} (K_n)_\alpha$ is at most the $i$th polar multiplicity of $CC(K_n)$ at $\alpha$. By Lemma \ref{characteristic-cycle-Whittaker}, the characteristic cycle of $K_n$ is \[ \left( 2\sum_{k=0}^{n-1}  { \deg N -4 \choose k} \right) [\mathbb P^\vee ] + \sum_{\substack{ e_x :  \Sing \to \mathbb N \\  e_x \leq c_x \\ w_k : \{1,2 \} \to \mathbb N \\ \sum_{x \in |N|} e_x + w_1 + 2w_2 =n } }2^{w_1} \left (\prod_{x \in \Sing } {c_x \choose e_x} \right)  [ B_{(e_x),(w_k)}^\vee ] .\]

For $i=n$, the polar multiplicity is simply the multiplicity of the zero-section, which gives the first term. For all other $i$, we observe that the polar multiplicity is the sum over $e_x$ of $\left (\prod_{x \in \Sing } {c_x \choose e_x} \right) $ times the $i$th polar multiplicity of \[\sum_{ \substack{ w_1, w_2 \in \mathbb N  \\ \sum_{x } e_x + w_1+2w_2 = n }} 2^{w_1}   [ B_{(e_x),(w_k)}^\vee ]. \] By Lemma \ref{level-polar-multiplicity}, the term corresponding to $(e_x)$ in the $i$th polar multiplicity is the coefficient of \[u^{d_{\alpha, (e_x)} } w^{i +d_{\alpha, (e_x)} +1 -n  } \]  in \[ \frac{1   }{(1-u^2  w^2) (1 -2 u - 2u^2 w - u^2 w^2 )}. \]

Hence the term corresponding to $(e_x),i$ in our bound for the trace is this coefficient times $q^{ \frac{2n-1-i}{2}} = q^{ - \frac{i + d_{\alpha, (e_x)} +1-n}{2} } q^{ \frac{ d_{\alpha, (e_x)} }{2}} q^{n/2}$. Thus the term corresponding to $(e_x),i$ is $q^{n/2 }$ times the coefficient of  \[u^{d_{\alpha, (e_x)} } w^{i +d_{\alpha, (e_x)} +1-n  } \] in \[ \frac{1   }{(1-u^2  w^2) (1 -2 \sqrt{q} u - 2\sqrt{q} u^2 w - u^2 w^2 )}, \] having multiplied $u$ by $\sqrt{q}$ and divided $w$ by $\sqrt{q}$. Summing over all values of $i$ is equivalent to summing over all powers of $w$, which is the same as substituting $w=1$. So the term corresponding to $(e_x)$ in our bound for the trace is $q^{n/2}$ times the coefficient of $u^{d_{\alpha,j}}$ in  \[ \frac{1}{(1-u^2) (1 - 2 \sqrt{q} u - (2\sqrt{q}+1) u^2)}  = \frac{1}{ (1-u) (1+u)^2 (1 - (2\sqrt{q}+1)u ) }.\]  By definition, this is exactly $q^{n/2}$ times $\mathcal B(d_{\alpha,(e_x)})$. Summing over all values of $e_x$, we get the stated formula.

\end{proof}

For $a,z,b \in \mathbb A_F$, let $n$ be $\deg (a/b)  - 2$, so that $\operatorname{div}( \omega_0a/b)$ has degree $n$ and thus there exists an isomorphism $\mathcal O( \operatorname{div}( \omega_0a/b)) \cong \mathcal O(n)$. Fix one such isomorphism.

By definition, $\mathcal O ( \div(\omega_0 a/b))$ is the sheaf whose nonzero sections on an open set $U$ consist of those $w \in F^\times$ where  $  \operatorname{div} (w) + \operatorname{div} (\omega_0a/b) \geq 0$, restricted to $U$, is effective.  Hence the nonzero global sections of $\mathcal O ( \div(\omega_0 a/b))  \cong \mathcal O(n)$ are in natural bijection with the $w \in F^\times $ with $\operatorname{div} (w) + \operatorname{div} (\omega_0a/b) \geq 0$, or, equivalently, $ \operatorname{div} (w\omega_0a/b)  \geq 0$. 

Let $\alpha_{a,b,z}$ be the linear form on $H^0( \mathcal O(n))$ whose value on the section corresponding to $w$ is $\langle z, \omega_0 w \rangle$.

\begin{lemma}\label{first-newform-bound} Let $f$ be a cuspidal newform of level $N$ whose central character has finite order.

For $a,z,b \in \mathbb A_F$, let $n$ be $\deg (a/b)  - 2$, and let $\alpha_{a,b,z}$ be as above.

\[ \left| f \left( \begin{pmatrix} a & az \\ 0 & b \end{pmatrix} \right) \right| \leq  |C_f|  \Biggl(q^{1/2} 2 ^{ \deg N-3}  + q \sum_{\substack{ (e_x ):  \Sing \to \mathbb N \\  e_x \leq c_x \\ \sum_x e_x \leq n }} \left (\prod_{x \in \Sing } {c_x \choose e_x} \right)   \mathcal B( d_{\alpha_{a,b,z},(e_x)}) \Biggr) .\]

\end{lemma}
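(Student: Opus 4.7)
The plan is to use Drinfeld's formula (Lemma~\ref{Drinfeld-formula-intro}) to rewrite the value of $f$ at the upper-triangular matrix as a normalized Whittaker sum, then convert this Fourier-type sum into a Radon-type sum over divisors on a hyperplane so that Lemma~\ref{final-Radon-bound} applies directly.

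First I apply Drinfeld's formula to $f\begin{pmatrix} a & az \\ 0 & b \end{pmatrix}$, which expresses it as $C_f\, q^{-n/2}\eta(b)^{-1}$ times a sum of $\psi(wz^\ast)\,r_{\mathcal F}(\operatorname{div}(w\omega_0 a/b))$ over $w \in F^\times$ with $\operatorname{div}(w\omega_0 a/b) \geq 0$, where $z^\ast$ is the parameter matching the matrix entry. These $w$'s are in bijection with the nonzero sections of $\mathcal O(\operatorname{div}(\omega_0 a/b)) \cong \mathcal O(n)$, so the sum may be re-indexed over $f \in H^0(\mathbb P^1, \mathcal O(n)) \setminus \{0\}$, with the exponential factor taking the form $\psi_0(\alpha_{a,b,z}(f))$ under the identification fixed just before the lemma. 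Because $\eta$ has finite order, the absolute value of the prefactor reduces to $|C_f|\,q^{-n/2}$.

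Next I group the Whittaker sum by effective divisor of degree $n$. Each such divisor $D$ corresponds to a one-dimensional $\mathbb F_q$-subspace of $H^0(\mathcal O(n))$, hence to $q-1$ sections $\{c f_D : c \in \mathbb F_q^\times\}$ on which $r_{\mathcal F}$ takes the common value $r_{\mathcal F}(D)$. The inner sum $\sum_{c \in \mathbb F_q^\times}\psi_0(c\,\alpha_{a,b,z}(f_D))$ equals $q-1$ when $\alpha_{a,b,z}(f_D) = 0$ and $-1$ otherwise, by orthogonality of the nontrivial additive character $\psi_0$. Combining, and invoking Lemma~\ref{Radon-trace-function} to identify $\sum_{D \in P(\alpha_{a,b,z})} r_{\mathcal F}(D) = -\operatorname{tr}(\operatorname{Frob}_q, K_n|_{\alpha_{a,b,z}})$, the Whittaker sum equals
\[
-q\,\operatorname{tr}(\operatorname{Frob}_q, K_n|_{\alpha_{a,b,z}}) \;-\; \sum_{\substack{D \text{ effective}\\ \deg D = n}} r_{\mathcal F}(D).
\]

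Finally, I bound the two pieces by the triangle inequality. Lemma~\ref{final-Radon-bound} controls the first, and after multiplying through by $|C_f|\,q^{-n/2}$ its characteristic-cycle term contributes $q$ times the polar-multiplicity sum appearing in the claim, while its zero-section term contributes $2|C_f|\,q^{1/2}\sum_{k=0}^{n-1}\binom{\deg N-4}{k}$. The residual term $\sum_D r_{\mathcal F}(D)$ is the coefficient of $u^n$ in the $L$-function $L(\mathcal F, u)$. Since $\mathcal F$ is irreducible of rank $2$, pure of weight $0$, and of conductor $N$ on $\mathbb P^1$, the Grothendieck--Lefschetz formula combined with Euler--Poincar\'e gives $L(\mathcal F, u) = \det(1 - uF, H^1(\mathbb P^1, \mathcal F))$, a polynomial of degree $\deg N - 4$ whose Frobenius eigenvalues have absolute value $q^{1/2}$; hence the coefficient of $u^n$ has absolute value at most $\binom{\deg N-4}{n}\,q^{n/2}$. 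Combining with the Radon contribution and using the elementary inequality $2\sum_{k=0}^{n-1}\binom{\deg N - 4}{k} + \binom{\deg N - 4}{n} \leq 2\sum_{k=0}^{\deg N - 4}\binom{\deg N - 4}{k} = 2^{\deg N - 3}$ yields the stated bound. The main obstacles I expect are (i) reconciling the parameterization in Drinfeld's formula with the definition of $\alpha_{a,b,z}$ so the Fourier-to-Radon conversion is transparent, and (ii) absorbing the constant $L$-function coefficient into the same $q^{1/2}\,2^{\deg N - 3}$ bound as the zero-section contribution from Lemma~\ref{final-Radon-bound}; the rest of the argument is formal manipulation.
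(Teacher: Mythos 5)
Your argument is essentially identical to the paper's: apply Drinfeld's Whittaker expansion, re-index over nonzero sections of $\mathcal O(n)$, group by divisor, evaluate the $\mathbb F_q^\times$ character sum to get $q-1$ or $-1$, invoke Lemma~\ref{Radon-trace-function} and Lemma~\ref{final-Radon-bound}, estimate $\sum_D r_{\mathcal F}(D)$ by the Riemann hypothesis since $L(\mathcal F,u) = \det(1-uF, H^1)$ has degree $\deg N - 4$, and absorb everything into $q^{1/2}2^{\deg N - 3}$. The constants, cancellations, and the final binomial inequality all check out.

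The one thing you omit is the degenerate case $\alpha_{a,b,z}=0$. Your argument identifies $\sum_{D\in P(\alpha)} r_{\mathcal F}(D)$ with $-\tr(\Frob_q,(K_n)_\alpha)$ and then applies Lemma~\ref{final-Radon-bound}, but both require $\alpha$ to define a point of $\mathbb P^\vee$, i.e.\ $\alpha\neq 0$. When $\alpha_{a,b,z}=0$ (e.g.\ $z=0$), the Whittaker sum becomes $(q-1)\sum_D r_{\mathcal F}(D)$, which you must bound directly by the Riemann hypothesis and then check is dominated by the stated right-hand side; the paper does this in a short closing paragraph, noting that the terms with $\sum_x e_x=n$ alone contribute $|C_f|q\binom{\deg N}{n}\mathcal B(0)$, which suffices. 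This is a small but genuine hole in your write-up as it stands.
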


\begin{proof} By Lemma \ref{Drinfeld-formula}
\[ f \left( \begin{pmatrix} a & bz \\ 0 & b \end{pmatrix} \right) =C_f   q^{ - \deg (\omega_0a/b)/2} \eta(b)^{-1} \sum_{\substack{ w \in F^\times \\ \operatorname{div} (w\omega_0a/b) \geq 0}}  \psi(wz ) r_{\mathcal F} (\operatorname{div} (wa\omega_0/b) ) . \]

The bijection between $w \in F^\times$ such that $ \operatorname{div} (w\omega_0a/b) \geq 0$ and nonzero sections $s \in H^0( \mathcal O(n) )$ equates $\div ( w \omega_0 a/b) $ and $\div(s)$, because these two divisors have the same order of vanishing at each point. Thus, using this bijection, we have  \[ \sum_{\substack{ w \in F^\times \\ \operatorname{div} (w\omega_0a/b) \geq 0}}  \psi(wz ) r_{\mathcal F} (\operatorname{div} (wa\omega_0/b) )  = \sum_ { \substack { s \in H^0 (\mathbb P^1, \mathcal O(n)) \\ s\neq 0 }} \psi_0 (  \alpha_{a,b,z} (s) )  r_{ \mathcal F} ( \div (s)) \]\[
= \sum_{ D \textrm{ effective, degree }n} r_{\mathcal F}(D)  \sum_{\substack{ s \in H^0 (\mathbb P^1, \mathcal O(n) ) \\ \operatorname{div} (s) =D }} \psi_0 (  \alpha_{a,b,z} (s) ) .\]  

The set of $s \in H^0 (\mathbb P^1 , \mathcal O(n) )$  with $\div(s)=D$ is a line through the origin in the vector space $H^0 (\mathbb P^1 , \mathcal O(n) )$, minus the origin. In particular, it has $q-1$ elements. 

If $ \alpha_{a,b,z} (s) $ is identically zero on this line, then \[ \sum_{\substack{ s \in H^0 (\mathbb P^1, \mathcal O(n) ) \\ \operatorname{div} (s) =D }} \psi_0 (  \alpha_{a,b,z} (s) )= q-1 .\] If $\alpha_{a,b,z}$ is not identically zero on this line, then it is a nonconstant linear form, and thus $\psi_0 (  \alpha_{a,b,z} (s) )$ is a nontrivial character. Because the sum of a nontrivial character over an abelian group vanishes, the sum over the line vanishes in this case, and so the sum over the line minus the origin is $-1$, i.e.
\[ \sum_{\substack{ s \in H^0 (\mathbb P^1, \mathcal O(n))  \\ \operatorname{div} (s) =D }} \psi_0 (  \alpha_{a,b,z} (s) ) = -1 .\] 

By definition, $ \alpha_{a,b,z} (s) $ is identically zero on this line if and only if $D \in P(\alpha_{a,b,z})$. Thus
\[ \sum_{ D \textrm{ effective, degree }n}  r_{\mathcal F}(D)  \sum_{\substack{ s \in H^0 (\mathbb P^1, \mathcal O(n))  \\ \operatorname{div} (s) =D }} \psi_0 (  \alpha_{a,b,z} (s) )  \]
is 
 \[q \sum_{\ D  \in P(\alpha_{a,b,z} ) }   r_{\mathcal F}(D) - \sum_{D \textrm{ effective, degree }n } r_{\mathcal F}(D)\]
\[ = -q  \tr(\Frob_q, (K_n)_{\alpha_{a,b,z} })- \sum_{ D \textrm{ effective, degree }n} r_{\mathcal F} (D) \] by Lemma \ref{Radon-trace-function}.

We have \[ \sum_{ D \textrm{ effective, degree }n} r_{\mathcal F} (D)  \leq { \deg N -4 \choose n} q^{n/2}\] by the Riemann hypothesis for the $L$-function of $\mathcal F$. By Lemma \ref{final-Radon-bound}, we have

\[ q\tr(\Frob_q, (K_n)_{\alpha_{a,b,z}} )) \leq  \left( 2 q^{\frac{n+1}{2} }   \sum_{k=0}^{n-1}  { \deg N -4 \choose k} \right)  +  \sum_{\substack{( e_x) :  \Sing \to \mathbb N \\  e_x \leq c_x \\ \sum_x e_x \leq n }} \left (\prod_{x \in \Sing } {c_x \choose e_x} \right)  q^{\frac{n+1}{2}} \mathcal B( d_{\alpha_{a,b,z},(e_x)} ) . \]

Hence 

\[  \sum_{\substack{ w \in F^\times \\ \operatorname{div} (w a \omega_0/b) \geq 0}}  \psi(wz ) r_{\mathcal F} (w a \omega_0/b)  \] \[ \leq \left( 2 q^{\frac{n+1}{2} } \sum_{k=0}^{n}  { \deg N -4 \choose k} \right)  +  \sum_{\substack{ (e_x ):  \Sing \to \mathbb N \\  e_x \leq c_x \\ \sum_x e_x \leq n }} \left (\prod_{x \in \Sing } {c_x \choose e_x} \right)  q^{\frac{n+2}{2}} \mathcal B( d_{\alpha_{a,b,z},(e_x)} ) . \]

Now because $n = \deg \div (a \omega_0/b)$ and $\eta$ has finite order, we have $  q^{ - \deg (\omega_0a/b)/2}=q^{-n/2}$ and $| \eta(b)^{-1} |=1$ so 

\[ \left|  f \left( \begin{pmatrix} a & bz \\ 0 & b \end{pmatrix} \right) \right| \leq|C_f|\Biggl(  2q^{1/2} \sum_{k=0}^{n}  { \deg N -4 \choose k}  +  q\sum_{\substack{ (e_x ):  \Sing \to \mathbb N \\  e_x \leq c_x \\ \sum_x e_x \leq n }} \left (\prod_{x \in \Sing } {c_x \choose e_x} \right) \mathcal B( d_{\alpha_{a,b,z},(e_x)} )\Biggr)\]
 
\[ \leq |C_f|  \Biggl(q^{1/2} 2 ^{ \deg N-3}  + q \sum_{\substack{ (e_x ):  \Sing \to \mathbb N \\  e_x \leq c_x \\ \sum_x e_x \leq n }} \left (\prod_{x \in \Sing } {c_x \choose e_x} \right) \mathcal   B( d_{\alpha_{a,b,z},(e_x)} )\Biggr).\]

On the other hand, if $\alpha_{a,b,z}$ is zero, by the Riemann hypothesis for $\mathcal F$, \[  \sum_{\substack{ w \in F^\times \\ \operatorname{div} (wa \omega_0/b) \geq 0}}  \psi(wz ) r_{\mathcal F} (\operatorname{div} (w a \omega_0 /b)) =  (q-1) \sum_{ D \textrm{ effective, degree }n} r_{\mathcal F} (D)  \leq { \deg N -4 \choose n} (q-1) q^{n/2},\] and multiplying by $|C_f|q^{-n/2}$, we obtain a bound of $|C_f|{  \deg N -4 \choose n} (q-1)$. On the other hand, the contribution to our stated upper bound from the terms where $\sum_x e_x = n $ is $|C_f|q  {\deg N \choose n}\mathcal  B(0) = |C_f| q {\deg N \choose n}$, which is at least as large, and so the stated upper bound is sufficient.

\end{proof}

We can now also prove Proposition \ref{automorphic-cohomology-vanishing} from the introduction. This is not used in the remainder of the paper, but might provide a point of comparison.

\begin{proposition}[Proposition \ref{automorphic-cohomology-vanishing}]\label{a-c-v}Let $\mathcal F$ be a perverse sheaf on $\Bun_G$ whose characteristic cycle is contained in the nilpotent cone of the moduli space of Higgs bundles. Then for a $G$-bundle $\alpha$ on $C$, $\mathcal H^i(\mathcal F)_{\alpha} $ vanishes for \[i > \dim \{ v \in H^0 ( C, \operatorname{ad} (\alpha) \otimes K_C) | v \textrm { nilpotent} \} - (g-1) \dim G. \] \end{proposition}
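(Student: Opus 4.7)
The plan is to reduce the claim to the polar-multiplicity bound for stalk cohomology on a smooth atlas, using the hypothesis on $CC(\mathcal F)$ to control the fiber of the characteristic cycle over $\alpha$. Concretely, I would first choose a smooth atlas $p\colon U \to \Bun_{G}$ with a point $\tilde\alpha \in U$ over $\alpha$, of relative dimension $d = \dim H^{0}(C,\ad(\alpha))$, so that $\dim U = d + (g-1)\dim G = \dim H^{1}(C,\ad(\alpha))$ and the shifted pullback $\mathcal G := p^{*}\mathcal F [d]$ is perverse on $U$. Smoothness of $p$ identifies stalks as $\mathcal H^{i}(\mathcal F)_{\alpha} = \mathcal H^{i-d}(\mathcal G)_{\tilde\alpha}$ and gives $CC(\mathcal G) = p^{\circ}CC(\mathcal F)$; in particular the fiber of $CC(\mathcal G)$ at $\tilde\alpha$ is identified with the fiber of $CC(\mathcal F)$ at $\alpha$ under $p^{*}T^{*}\Bun_{G}|_{\alpha} \hookrightarrow T^{*}U|_{\tilde\alpha}$, which by hypothesis lies in $N_{\alpha} := \{v \in H^{0}(C,\ad(\alpha)\otimes K_{C}) : v \text{ nilpotent}\}$ and so has dimension at most $\dim N_{\alpha}$.

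Next I would apply the polar-multiplicity bound from \cite[Theorem 1.4]{mypaper} (the characteristic-$p$ generalization of Massey's bound already invoked in Lemma \ref{final-Radon-bound}) to $\mathcal G$ on $U$: for each $k \geq 0$, $\dim \mathcal H^{-k}(\mathcal G)_{\tilde\alpha} \leq P^{k}(CC(\mathcal G),\tilde\alpha)$. The main obstacle is to show that $P^{k}(CC(\mathcal G),\tilde\alpha) = 0$ whenever $k < \dim U - \dim N_{\alpha}$. Unwinding Definition \ref{polar-multiplicity-2}, $P^{k}$ is a local intersection number at $\tilde\alpha$ between $\mathbb P(CC(\mathcal G))$ and $\mathbb P(V)$ inside $\mathbb P(T^{*}U)$, where $V$ is a generic rank $k+1$ subbundle of $T^{*}U$ over a codimension-$k$ subvariety through $\tilde\alpha$. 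In the projectivized fiber $\mathbb P(T^{*}U|_{\tilde\alpha})$ of dimension $\dim U - 1$, the intersection with $\mathbb P(V)$ is $k$-dimensional while the intersection with $\mathbb P(CC(\mathcal G))$ has dimension at most $\dim N_{\alpha}-1$. Their expected intersection dimension in this fiber, namely $k + \dim N_{\alpha} - \dim U - 1$, is strictly negative precisely when $k < \dim U - \dim N_{\alpha}$; I would then verify that the ``sufficiently general'' condition in Definition \ref{polar-multiplicity-2} (about strict transforms in the blow-up of $\mathbb P(T^{*}U)$ along the fiber over $\tilde\alpha$) is implied by this dimensional genericity, forcing the local intersection contribution to vanish.

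Combining the polar-multiplicity vanishing with the perverse bound $\mathcal H^{j}(\mathcal G) = 0$ for $j > 0$ yields $\mathcal H^{-k}(\mathcal G)_{\tilde\alpha} = 0$ for all $k < \dim U - \dim N_{\alpha}$. Translating back through $\mathcal H^{i}(\mathcal F)_{\alpha} = \mathcal H^{i-d}(\mathcal G)_{\tilde\alpha}$ and $\dim U - d = (g-1)\dim G$, this becomes the vanishing of $\mathcal H^{i}(\mathcal F)_{\alpha}$ for $i > \dim N_{\alpha} - (g-1)\dim G$, which is the stated bound. The remaining technical point is to justify that the characteristic-cycle formalism of \cite{saito1} and the bound of \cite{mypaper} extend to the smooth Artin stack $\Bun_{G}$ in the form used above; this should follow by smooth descent from the atlas $U$, since both the characteristic cycle and stalk cohomology behave well under smooth pullback.
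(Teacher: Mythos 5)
Your proposal is correct and takes essentially the same route as the paper: pass to a smooth chart of $\Bun_G$, pull back $\mathcal F$ with a shift so it remains perverse, and bound its stalk cohomology at the chart point by the fiber dimension of the characteristic cycle, which the hypothesis places inside the nilpotent Higgs fields over $\alpha$. The only real difference is bookkeeping: the paper invokes \cite[Corollary 1.5]{mypaper}, which already packages your polar-multiplicity dimension count (Theorem 1.4 together with the genericity argument showing the relevant polar multiplicities vanish) into the statement that stalk cohomology vanishes in degrees above the fiber dimension of $CC$ minus the dimension of the ambient smooth space.
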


\begin{proof} This follows immediately from \cite[Corollary 1.5]{mypaper}, which says that the stalk cohomology of a perverse sheaf $\mathcal F$ on $X$  at a point $x$ vanishes in degree greater than the dimension of the fiber of the characteristic cycle of $\mathcal F$ over $x$ minus the dimension of $X$. We take $f: X \to \Bun_G$ a smooth map of relative dimension $r$ from a smooth scheme $X$ and $x \in X$ with $f(x) = \alpha$, and apply \cite[Corollary 1.5]{mypaper} to $(f^* \mathcal F[r], X, x)$. Because $\dim X= \dim  \Bun_G+r  = (g-1) \dim G+r$ and because the fiber over $\alpha$ in the nilpotent cone of the space of Higgs bundles is $ \{ v \in H^0 ( C, \operatorname{ad} (\alpha) \otimes K_C) | v \textrm { nilpotent} \} $, the stalk cohomology of $f^*\mathcal F[r]$ vanishes in degrees $> \dim \{ v \in H^0 ( C, \operatorname{ad} (\alpha) \otimes K_C) | v \textrm { nilpotent} \} - (g-1) \dim G$, so the stalk cohomology of $\mathcal F$ vanishes in degrees $> \dim \{ v \in H^0 ( C, \operatorname{ad} (\alpha) \otimes K_C) | v \textrm { nilpotent} \} - (g-1) \dim G$.  \end{proof}

\section{Heights of virtual cusps}\label{heights-virtual-cusps}

For $  \begin{pmatrix}  a & b\\ c & d\end{pmatrix} \in GL_2(\mathbb A_F)$, $(f_1:f_2) \in \mathbb P^1 ( \overline{ \mathbb F}_q(C))$, and $e_x$ a function from $\Sing$ to $\mathbb N$ with $0\leq e_x \leq c_x$, let

\[ h \left(  \begin{pmatrix}  a & b\\ c & d \end{pmatrix},  (f_1:f_2), (e_x) \right)\] \[=2\deg ( \min( \operatorname{div} (af_1+cf_2), \operatorname{div}(bf_1+ df_2)+ \sum_x e_x [x] )) - \deg ( \operatorname{div}(ad-bc) + \sum_x e_x[x]) .\]

Here $af_1+cf_2$ and $bf_1+df_2$ are elements of the adeles of $\overline{\mathbb F}_q (C)$, so their divisors are divisors on $C$ defined over $\overline{\mathbb F}_q$, and the min of two divisors involves taking the min of their multiplicities at a given point.

Let us describe the analogue of this in the classical theory of modular forms. Associated to each point of $\mathbb P^1(\mathbb Q)$ is a cusp on the upper half plane. Associated to the cusp at $\infty$ is the function $\log y$ on the upper half plane which measures the height (in some sense, measures how close a point is to that cusp). We can associate to any other cusp a corresponding function, by choosing  any element of $SL_2(\mathbb Z)$ that sends that cusp to $\infty$ and then composing $\log y$ with that element of $SL_2(\mathbb Z)$. One can define this function adelically, and the definition will look similar to our definition of $h$. However, our definition has the additional complexity that $(f_1:f_2)$ need not be defined over $\mathbb F_q(C)$, but in fact over $\overline{\mathbb F}_q(C)$. Thus we are studying ``virtual" cusps that appear as actual cusps only over an extension of the constant field.

The height function has an alternate definition in the geometry of vector bundles on $\mathbb P^1$, which we explain next. After proving the equivalence of the two definitions, we will use whichever is more convenient for proving a particular lemma. However, everything can be proved using only the adelic definition or only the geometric one, and readers might want to try to work out the analogue in the other setting of one of the arguments we use. 

Let $V_{   (e_x)} \left( \scriptscriptstyle{  \begin{pmatrix}  a & b\\ c & d \end{pmatrix} }\right) $ be the vector bundle on $\mathbb P^1_{\overline{\mathbb F}_q} $ whose sections over an open set $U$ consist of all those $(f_1, f_2) \in \overline{\mathbb F}_q(C)^2$ such that the restriction of the divisor $\min ( \div ( af_1+cf_2) , \div(bf_1 +df_2) + \sum_x e_x [x] ) $ to $U$ is effective.

Let $L_{ (f_1:f_2), (e_x) } \left( \scriptscriptstyle{  \begin{pmatrix}  a & b\\ c & d \end{pmatrix} }\right) \subset V_{   (e_x)} \left( \scriptscriptstyle{  \begin{pmatrix}  a & b\\ c & d \end{pmatrix} }\right) $ be the sub-line-bundle of $ V_{   (e_x)} \left( \scriptscriptstyle{  \begin{pmatrix}  a & b\\ c & d \end{pmatrix} }\right) $ generated by the meromorphic section $(f_1,f_2)$.

%Note that $V_{   (e_x)} \left( \scriptscriptstyle{  \begin{pmatrix}  a & b\\ c & d \end{pmatrix} }\right) $ is a vector bundle defined over $\mathbb F_q$ because its sections over each open set are defined as a submodule of $F^2 = \mathbb F_q(C)^2$, but $L_{ (f_1:f_2), (e_x) } \left( \scriptscriptstyle{  \begin{pmatrix}  a & b\\ c & d \end{pmatrix} }\right)$ is a sub-line-bundle defined over $\overline{\mathbb F}_q$, because its generate has coordinates in $\overline{\mathbb F}_q(C)$. 

\begin{lemma}\label{height-geometric} We have \[ h \left(  \begin{pmatrix}  a & b\\ c & d \end{pmatrix},  (f_1:f_2), (e_x) \right) = 2 \deg L_{ (f_1:f_2), (e_x) } \left( \scriptscriptstyle{  \begin{pmatrix}  a & b\\ c & d \end{pmatrix} }\right)  - \deg \det  V_{   (e_x)} \left( \scriptscriptstyle{  \begin{pmatrix}  a & b\\ c & d \end{pmatrix} }\right) .\]\end{lemma}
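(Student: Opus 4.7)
The plan is to verify the identity by reducing both sides to local contributions at each geometric point $x$ of $C_{\overline{\mathbb F}_q}$ and matching them term by term. Both sides are degrees of divisors on $C_{\overline{\mathbb F}_q}$, so it suffices to show the contributions at each place agree; the right-hand side degrees are read off from generators of the stalks of $V = V_{(e_x)}\left(\scriptscriptstyle{\begin{pmatrix} a & b \\ c & d\end{pmatrix}}\right)$ and $L = L_{(f_1:f_2),(e_x)}\left(\scriptscriptstyle{\begin{pmatrix} a & b \\ c & d\end{pmatrix}}\right)$ at $x$, and the left-hand side is already a sum of local valuations.

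First, I would unwind the stalk of $V$ at $x$. Writing $(a_x,b_x,c_x,d_x)$ for the local components of the adeles, $\pi_x$ for a uniformizer, and $\widehat F_x$ for the completion, the defining condition identifies $V_x$ with the preimage in $\widehat F_x^{\,2}$ of the lattice $\mathcal O_x \oplus \pi_x^{-e_x}\mathcal O_x$ under the $\widehat F_x$-linear automorphism $(f_1,f_2) \mapsto (a_xf_1+c_xf_2,\,b_xf_1+d_xf_2)$. (This automorphism lies in $GL_2(\widehat F_x)$ because the adelic determinant $ad-bc$ is a unit.) Taking top exterior powers gives $\wedge^2 V_x = (ad-bc)_x^{-1}\pi_x^{-e_x}\mathcal O_x$ inside $\widehat F_x$, which identifies $\det V$ with $\mathcal O(\div(ad-bc) + \sum_x e_x[x])$ and hence $\deg \det V = \deg(\div(ad-bc) + \sum_x e_x[x])$.

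Next, the sub-line-bundle $L$ is the saturation of the line generated by $(f_1,f_2)$ inside $V$, so its stalk at $x$ equals $\widehat F_x\cdot (f_1,f_2) \cap V_x$. Unpacking, the integrality condition $t(f_1,f_2) \in V_x$ reads $v_x(t) \geq -v_x(af_1+cf_2)$ and $v_x(t) \geq -v_x(bf_1+df_2)-e_x$, equivalently $v_x(t) \geq -\min(v_x(af_1+cf_2),\, v_x(bf_1+df_2)+e_x)$. So $L_x$ is generated by $\pi_x^{-\min(v_x(af_1+cf_2),\,v_x(bf_1+df_2)+e_x)}(f_1,f_2)$, and the order of the meromorphic section $(f_1,f_2)$ of $L$ at $x$ is $\min(v_x(af_1+cf_2), v_x(bf_1+df_2)+e_x)$. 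Summing over $x$ gives $\deg L = \deg\min(\div(af_1+cf_2),\, \div(bf_1+df_2)+\sum_x e_x[x])$, which is half the first term of $h$. Substituting into $2\deg L - \deg \det V$ recovers the defining expression for $h$.

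The main subtlety is keeping the sign conventions straight: the defining condition on $V$ is a no-pole condition on its image, so the relevant divisors are the \emph{negatives} of the local valuations of the generators, and one must apply this dualization consistently at both the $V$ and $L$ stages so that $2\deg L$ and $\deg \det V$ each line up correctly with the two summands of $h$. Beyond this bookkeeping, the argument is a direct local unwinding of the two definitions.
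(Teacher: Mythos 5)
Your proposal is correct and follows essentially the same route as the paper: both sides are matched by verifying the two degree identities $\deg L = \deg\min(\operatorname{div}(af_1+cf_2),\operatorname{div}(bf_1+df_2)+\sum_x e_x[x])$ and $\deg\det V = \deg(\operatorname{div}(ad-bc)+\sum_x e_x[x])$ directly from the definitions. Your explicit local-lattice computation of $\wedge^2 V_x$ and of the saturated line through $(f_1,f_2)$ is just a fleshed-out version of the paper's brief remarks that the min-divisor is by construction the divisor of $L$ and that the matrix (suitably modified by the $e_x$) provides the gluing data for $V$, hence its determinant that of $\det V$.
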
 

\begin{proof} We first can see that  \[ \deg \min ( \div ( af_1+cf_2) , \div(bf_1 +df_2) + \sum_x e_x [x] )  = \deg L_{ (f_1:f_2), (e_x) } \left( \scriptscriptstyle{  \begin{pmatrix}  a & b\\ c & d \end{pmatrix} }\right)    \]  because this divisor, by construction, is the divisor of the line bundle $L_{ (f_1:f_2), (e_x) } \left( \scriptscriptstyle{  \begin{pmatrix}  a & b\\ c & d \end{pmatrix} }\right)  $.

Furthermore, we can see that \[  \deg ( \div (ad-bc) + \sum_x e_x [x] )= \deg \det  V_{   (e_x)} \left( \scriptscriptstyle{  \begin{pmatrix}  a & b\\ c & d \end{pmatrix} }\right)  .\] In the case where $e_x=0$ for all $x$, this is because the matrix $\begin{pmatrix} a &b\\ c & d \end{pmatrix}$ gives gluing data for $V$, so its determinant gives gluing data for $\det V$. In the case where $(e_x) \neq 0$, this simply corresponds to a modification of this vector bundle where we extend it by a length $e_x$ module over $x$, and so the divisor class of its determinant increases by $\sum_x e_x[x]$. 

The equality now follows from the definition of  $h$.\end{proof}

\vspace{10pt}

It turns out (in Lemma \ref{height-volume-comparison}) that this $h$ function, or more precisely its maximum over $f_1,f_2$, determines the $d_{\alpha,(e_x)}$ defined before Lemma \ref{local-model-level}. In addition, $h$ has suitable invariance properties under the action of $GL_2(F)$ and $\Gamma_1(N)$, proved in Lemmas \ref{height-F-invariance} and \ref{height-compact-invariance}. Because of this, expressing the bound of Lemma \ref{first-newform-bound} in terms of $h$ will provide a more invariant formulation that  is easier to optimize.  In this optimization process it will also be crucial to understand how $h$ depends on $(f_1:f_2)$ and $(e_x)$. This is accomplished, in different ways, in Lemmas \ref{height-mountain-shape} and \ref{height-orbit-size}. Finally Lemma \ref{height-triangularizable-characterization} is the tool we will use to put our elements of $GL_2(\mathbb A_F)$ in upper-triangular form. Combining all these elements, we will deduce Lemma \ref{cusp-newform-bound}.

\vspace{5pt}

%\begin{lemma}\label{height-max-nonnegative} For any $\begin{pmatrix} a &b\\ c & d \end{pmatrix} \in GL_2(\mathbb A_F)$, we have \[ \max_{(f_1,f_2) \in \mathbb F_q (C)}   h \left(  \begin{pmatrix} a &b\\ c & d \end{pmatrix},  (f_1:f_2), (e_x) \right)\geq 0. \]
%
%
%
%\end{lemma} 
%
%Note that this maximum is taken over $f_1,f_2$ in $\mathbb F_
%
%\begin{proof} In view of Lemma \ref{height-geometric}, it suffices to prove that every rank two vector bundle $V$ on $\mathbb P^1_{\mathbb F_q}$ contains a line-sub-bundle $L$, defined over $\mathbb F_q$, such that $2 \deg L - \deg \det V \geq 0 $
%
% By the classification of vector bundles on $\mathbb P^1$, $V $ must have the form $ \mathcal O(d_1) + \mathcal O(d_2)$ for integers $d_1$ and $d_2$.  Taking $L = \mathcal O(\max(d_1,d_2))$ gives \[ 2 \deg L - \deg \det V  = | d_1 -d_2 | \geq 0 .\] (It is easy to check that this choice of $L$ in fact maximizes $\det L$.)
%\end{proof}

\begin{lemma}\label{height-F-invariance}  $h \left(  \begin{pmatrix} a & b\\ c & d \end{pmatrix},  (f_1:f_2), (e_x) \right)$ is invariant under the right action of $\Gamma_1(N)$ on $ \begin{pmatrix} a & b\\ c &d \end{pmatrix}$.\end{lemma}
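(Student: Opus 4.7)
The plan is to apply Lemma~\ref{height-geometric} to rewrite
\[ h\left(\begin{pmatrix} a & b \\ c & d\end{pmatrix}, (f_1:f_2), (e_x)\right) = 2\deg L_{(f_1:f_2),(e_x)}\!\left(\scriptstyle\begin{pmatrix} a & b \\ c & d\end{pmatrix}\right) - \deg \det V_{(e_x)}\!\left(\scriptstyle\begin{pmatrix} a & b \\ c & d\end{pmatrix}\right), \]
and then reduce the claim to showing that $V_{(e_x)}(M)$, viewed as a subsheaf of the constant sheaf $\overline{\mathbb F}_q(C)^2$, depends only on the right $\Gamma_1(N)$-coset of $M = \begin{pmatrix} a & b \\ c & d\end{pmatrix}$. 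Since $L_{(f_1:f_2),(e_x)}(M)$ is by definition the sub-line-bundle of $V_{(e_x)}(M)$ generated by the meromorphic section $(f_1,f_2)$, invariance of $V$ will automatically imply invariance of $L$, and hence of $h$.

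To prove that $V_{(e_x)}(M) = V_{(e_x)}(M\gamma)$ for all $\gamma \in \Gamma_1(N)$, I will proceed by a purely local check at each geometric point $x$ of $\mathbb P^1_{\overline{\mathbb F}_q}$ lying over a closed point $v$. Writing $(g_1,g_2) := (f_1,f_2)M$, the divisor inequality in the definition of $V_{(e_x)}(M)$ unpacks to the local condition $v_x(g_1)\geq 0$ and $v_x(g_2)\geq -e_x$ (with $e_x=0$ for $x \notin \Sing$). Under $M \mapsto M\gamma$, the pair $(g_1,g_2)$ is replaced by $(g_1,g_2)\gamma_v = (\alpha_v g_1 + c_{\gamma,v} g_2,\ \beta_v g_1 + d_{\gamma,v} g_2)$ where $\gamma_v = \begin{pmatrix} \alpha_v & \beta_v \\ c_{\gamma,v} & d_{\gamma,v}\end{pmatrix}$, so the task reduces to verifying that the two pairs of local inequalities at $x$ are equivalent.

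The case $v \notin |N|$ is immediate: $e_x = 0$ and $\gamma_v \in GL_2(\mathcal{O}_{F_v})$ preserves $\mathcal{O}_{F_v}^2$. The interesting case $v \in |N|$ will use the defining congruences $c_{\gamma,v} \equiv 0$ and $d_{\gamma,v} \equiv 1 \pmod{\pi_v^{c_v}}$ together with the assumption $e_x \leq c_x = c_v$. The forward implication will follow from two applications of the nonarchimedean triangle inequality: $v_x(\alpha_v g_1 + c_{\gamma,v} g_2) \geq \min(0,\, c_v - e_x) = 0$ and $v_x(\beta_v g_1 + d_{\gamma,v} g_2) \geq \min(0,\, -e_x) = -e_x$, using $\alpha_v, \beta_v \in \mathcal{O}_{F_v}$ and that $d_{\gamma,v}$ is a unit. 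The reverse implication follows by applying the forward implication to $\gamma_v^{-1}$, which again lies in $\Gamma_1(N)$: the adjugate formula produces lower row $(-c_{\gamma,v},\alpha_v)/\det\gamma_v$, and the congruence $\det\gamma_v \equiv \alpha_v d_{\gamma,v} \equiv \alpha_v \pmod{\pi_v^{c_v}}$ shows that both Iwahori-type conditions are preserved by inversion.

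I do not expect a serious obstacle. The one place where the argument is tight, and which is really the whole content of the lemma, is the role of the inequality $e_x \leq c_v$: it is precisely this bound that ensures the $\pi_v^{c_v}$-divisibility of $c_{\gamma,v}$ compensates for the pole of order $e_x$ permitted in the second coordinate, so that the level structure and the allowed pole orders in $V_{(e_x)}$ match up exactly.
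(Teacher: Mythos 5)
Your proposal is correct and follows essentially the same route as the paper: reduce via Lemma \ref{height-geometric} to showing the subsheaf $V_{(e_x)}$ of $\overline{\mathbb F}_q(C)^2$ is unchanged under right multiplication by $\Gamma_1(N)$, check this locally using integrality at unramified places and the congruences $c' \equiv 0$, $d' \equiv 1 \bmod \pi_v^{c_v}$ together with $e_x \leq c_v$ at ramified ones, and obtain the reverse inclusion by passing to the inverse matrix. Your explicit verification that $\gamma_v^{-1}$ again lies in $\Gamma_1(N)$ and that invariance of $V$ forces invariance of $L$ are details the paper leaves implicit, but the argument is the same.
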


\begin{proof} In view of Lemma \ref{height-geometric} it suffices to show that the vector bundle $V_{   (e_x)} \left( \scriptscriptstyle{  \begin{pmatrix}  a & b\\ c & d \end{pmatrix} }\right) $  is invariant under the right action of $\Gamma_1(N)$ on $ \begin{pmatrix} a & b\\ c &d \end{pmatrix}$. In other words, we must show, for $\begin{pmatrix}  a' & b'\\ c' & d' \end{pmatrix} \in \Gamma_1(N)$, that if  $\div ( af_1+cf_2) $ and $\div(bf_1 +df_2) + \sum_x e_x [x]$ are effective over an open set $U$, then  $\operatorname{div}( a'(af_1+cf_2)+ c' (bf_1+ df_2) )$ and $\operatorname{div}( b'(af_1+cf_2)+ d' (bf_1+ df_2) )+ \sum_x e_x [x] $ are as well.  (We only need show the ``if" direction because the ``only if" direction follows upon taking the inverse matrix).

At each point $x \in U$, because $ (af_1+cf_2)$ is integral, and $a'$ is integral, $a'(af_1+cf_2)$  is integral, and because the order of pole of $(bf_1+df_2)$ is at most $e_x$, and the order of zero of $c'$ is at least $c_x \geq e_x$, $ c' (bf_1+ df_2)$ is integral, so their sum is integral as well.

Similarly, because $b'$ is integral, and $(af_1+cf_2)$ is integral, $b'(af_1+cf_2)$ is integral, and because $d'$ is integral, and the order of pole of $(bf_1+ df_2)$ is at most $e_x$, the order of pole of $d' (bf_1+ df_2) $ is at most $e_x$, so the order of pole of $ b'(af_1+cf_2)+ d' (bf_1+ df_2) )$ is at most $e_x$. \end{proof}

\begin{lemma}\label{height-compact-invariance} $ h \left(  \begin{pmatrix}  a & b\\ c & d\end{pmatrix},  (f_1:f_2), (e_x) \right) $ is invariant under the action of $GL_2(F)$ by left multiplication on  $\begin{pmatrix} a & b\\ c& d \end{pmatrix}$ and, simultaneously, multiplication of the inverse transpose on $\begin{pmatrix} f_1 \\ f_2 \end{pmatrix}$.\end{lemma}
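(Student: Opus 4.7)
The plan is to reduce the lemma to two elementary observations, working directly from the adelic definition of $h$ (though one could equivalently argue via Lemma \ref{height-geometric}). Write $M = \begin{pmatrix} a & b \\ c & d \end{pmatrix}$ and let $g \in GL_2(F)$ act by $M \mapsto gM$ and $\begin{pmatrix} f_1 \\ f_2 \end{pmatrix} \mapsto g^{-T}\begin{pmatrix} f_1 \\ f_2 \end{pmatrix}$.

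First I would observe that the two adelic expressions $af_1 + cf_2$ and $bf_1 + df_2$ entering the definition of $h$ are precisely the components of the column vector $M^T \begin{pmatrix} f_1 \\ f_2 \end{pmatrix}$. Under the action, this column vector transforms as
\[
(gM)^T \, g^{-T} \begin{pmatrix} f_1 \\ f_2 \end{pmatrix}
\;=\; M^T g^T g^{-T} \begin{pmatrix} f_1 \\ f_2 \end{pmatrix}
\;=\; M^T \begin{pmatrix} f_1 \\ f_2 \end{pmatrix},
\]
i.e.\ it is literally unchanged. Consequently the pair of divisors $\div(af_1+cf_2)$ and $\div(bf_1+df_2)$ are both unchanged, and so is the full quantity $\min(\div(af_1+cf_2), \div(bf_1+df_2)+\sum_x e_x[x])$ appearing in $h$.

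Second, for the other summand, I would use that $\det(gM) = \det(g)\det(M)$ and that $\det(g) \in F^\times$ is a nonzero rational function on the smooth projective curve $C$, so $\deg \div(\det g) = 0$. Therefore $\deg(\div(\det(gM))+\sum_x e_x[x]) = \deg(\div(\det M)+\sum_x e_x[x])$, which is what we need.

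Combining these observations gives the invariance of $h$ immediately. There is no substantive obstacle here; the content of the lemma is really just that the definition of $h$ has been arranged so that the only matrix-theoretic inputs it sees are the invariant expression $M^T (f_1,f_2)^T$ and the determinant, and the determinant transforms by multiplication by an element of $F^\times$, which is invisible to the degree map on divisors of the smooth projective curve $C$.
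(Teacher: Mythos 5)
Your proposal is correct and is essentially the paper's own argument: the paper likewise notes that the row vector $(f_1,f_2)$ times the matrix (equivalently your $M^T\begin{pmatrix} f_1\\ f_2\end{pmatrix}$) is literally unchanged when $M$ is multiplied on the left by $g$ and $(f_1:f_2)$ by $g^{-T}$, and that $\det g \in F^\times$ has a degree-zero divisor so the determinant term is unaffected. No differences of substance.
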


\begin{proof} Note that \[ \begin{pmatrix} f_1 \\ f_2 \end{pmatrix}^T \begin{pmatrix}   a &  b\\ c& d \end{pmatrix} = \begin{pmatrix} af_1 + cf_2  \\ bf_1+df_2 \end{pmatrix}^T \] so multiplying $\begin{pmatrix}   a &  b\\ c& d \end{pmatrix}$ on the left by a matrix and $\begin{pmatrix} f_1 \\ f_2 \end{pmatrix}^T$ on the right by the inverse matrix does not affect those terms, and it also does not affect the degree of the determinant $ad-bc$ as the determinant of the matrix being multiplied by lies in $\overline{\mathbb F}_q(C)$ and thus has zero degree. \end{proof}

\begin{lemma}\label{height-unique-cusp} For each $ \begin{pmatrix}  a & b\\ c & d \end{pmatrix},( e_x)$, if $(f_1:f_2) \neq (f_3:f_4)$ as points of the projective line, then

\[ h \left(  \begin{pmatrix}  a & b\\ c &d \end{pmatrix},  (f_1:f_2), (e_x) \right) + h \left(  \begin{pmatrix} a & b\\ c &d \end{pmatrix},  (f_3:f_4), (e_x) \right)\leq 0 \]
 
In particular, there is at most one $(f_1:f_2)$ such that $h \left(  \begin{pmatrix} a & b\\ c &d \end{pmatrix},  (f_1:f_2), (e_x) \right)$ is positive.\end{lemma}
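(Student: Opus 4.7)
The plan is to reduce to the geometric characterization of the height given by Lemma \ref{height-geometric} and then interpret the inequality as a statement about degrees of saturated line subbundles of a rank-two vector bundle on $\mathbb P^1_{\overline{\mathbb F}_q}$. Concretely, applying Lemma \ref{height-geometric} to both terms, the claimed inequality becomes
\[ 2 \deg L_{(f_1:f_2),(e_x)}\!\left( \scriptscriptstyle{\begin{pmatrix} a & b \\ c & d \end{pmatrix}}\right) + 2 \deg L_{(f_3:f_4),(e_x)}\!\left( \scriptscriptstyle{\begin{pmatrix} a & b \\ c & d \end{pmatrix}}\right) \leq 2 \deg \det V_{(e_x)}\!\left( \scriptscriptstyle{\begin{pmatrix} a & b \\ c & d \end{pmatrix}}\right), \]
since the two copies of $-\deg \det V_{(e_x)}$ combine with the two $+2\deg L$ terms exactly this way. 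So the whole task is to bound $\deg L_1 + \deg L_2 \leq \deg V$ when $L_i$ are two distinct saturated line sub-bundles of a rank-two bundle $V = V_{(e_x)}$.

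Next I would argue that $L_1 := L_{(f_1:f_2),(e_x)}$ and $L_2 := L_{(f_3:f_4),(e_x)}$ are distinct saturated line subsheaves of $V$, using the fact that they are generated by meromorphic sections whose images in the generic fiber $V \otimes \overline{\mathbb F}_q(C)$ span different lines (this is exactly the content of $(f_1:f_2)\neq (f_3:f_4)$ in $\mathbb P^1(\overline{\mathbb F}_q(C))$). Two distinct saturated rank-one subsheaves of a rank-two vector bundle have zero intersection, because the intersection is a saturated subsheaf of each $L_i$ and thus is either $0$ or all of $L_i$; equality would force $L_1 = L_2$. Hence the natural map $L_1 \oplus L_2 \to V$ is injective, and since both source and target have the same generic rank, its cokernel $Q$ is a torsion (hence effective) sheaf on $\mathbb P^1_{\overline{\mathbb F}_q}$.

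Taking degrees in the short exact sequence $0 \to L_1 \oplus L_2 \to V \to Q \to 0$ yields
\[ \deg L_1 + \deg L_2 + \deg Q = \deg V, \]
and $\deg Q \geq 0$ because $Q$ is a torsion sheaf. This proves $\deg L_1 + \deg L_2 \leq \deg V$, which is the inequality we need, and hence the sum of the two heights is $\leq 0$. The ``in particular'' clause is immediate: if two distinct projective points both gave strictly positive heights, their sum would be strictly positive, contradicting what we just proved.

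I do not expect any serious obstacle: the main subtlety is just checking that distinct points $(f_1:f_2) \neq (f_3:f_4)$ really do produce two distinct saturated line sub-bundles (rather than the same saturation of two proportional sections), but this is handled by passing to the generic fiber where the condition $(f_1:f_2)\neq(f_3:f_4)$ literally says the two sections are linearly independent over $\overline{\mathbb F}_q(C)$.
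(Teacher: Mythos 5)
Your proof is correct and follows essentially the same route as the paper: both reduce via Lemma \ref{height-geometric} to showing $\deg L_1 + \deg L_2 \leq \deg \det V$ for the two distinct line sub-bundles generated by $(f_1,f_2)$ and $(f_3,f_4)$, and both conclude by noting that $L_1 \oplus L_2 \to V$ is injective with torsion cokernel, so degrees add up to a nonnegative error. The paper states this in one line ("$V$ is the extension of $L_{12}+L_{34}$ by a finite length module"); your version just spells out the generic-fiber argument for injectivity in more detail.
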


\begin{proof} In view of Lemma \ref{height-geometric}, it suffices to show that for $L_{12}, L_{34}$ distinct line sub-bundles of a vector bundle $V$, we have \[ (2\deg L_{12} - \deg \det V) + (2 \deg L_{34} - \deg \det V) \leq 0.\] This follows from the fact that the natural map $L_{12} + L_{34} \to V$ is injective, so $V$ is the extension of $L_{12} +L_{34}$ by a finite length module and thus  $\deg \det V \geq \deg L_{12} + \deg L_{34}$. 

\end{proof}

%\begin{proof} Suppose this sum of the two values of $h$ is positive for two linearly independent vectors $ \begin{pmatrix} f_1 \\ f_2 \end{pmatrix},  \begin{pmatrix} f_3 \\ f_4 \end{pmatrix}$. By a linear change of coordinates using Lemma \ref{height-F-invariance} (possibly after passing to an extension of the base field), we may assume that $(f_1:f_2)=(1:0)$ and $(f_3:f_4)=(0:1)$. This gives \[ 2 \deg ( \min ( \div a, \div b+ \sum_x e_x[x])) -\deg(\div (ad-bc) + \sum_x e_x) \] \[ + 2 \deg ( \min ( \div c, \div d+ \sum_x e_x[x])) -\deg(\div (ad-bc) + \sum_x e_x) >0.\] Dividing by two, \[ \deg (\min( \div a, \div b + \sum_x e_x[x] ) ) + \deg (\min(\div c, \div d + \sum_x e_x[x])) >  \deg(\div (ad-bc) + \sum_x e_x[x]) \] which is absurd because \[ \div (ad-bc) + \sum_x e_x[x] \geq \min ( \div a + \div d +\sum_x e_x[x], \div b + \div c + \sum_x e_x[x] ) \] \[\geq  \deg (\min( \div a, \div b + \sum_x e_x[x] ) ) + \deg (\min(\div c, \div d + \sum_x e_x[x])) .\]
% \end{proof} 

\begin{lemma}\label{height-mountain-shape} For each $\begin{pmatrix} a & b\\ c & d \end{pmatrix}, (f_1:f_2)$, there is a unique $(e_x(f_1:f_2))$ and $h^*(f_1:f_2)$ such that \[ h \left(  \begin{pmatrix}  a & b\\ c & d \end{pmatrix},  (f_1:f_2), e_x' \right) = h^*(f_1:f_2) - \sum_{x \in \Sing} | e_x(f_1:f_2) - e_x' | \] for all tuples $(e_x')$ with $0\leq e_x' \leq c_x$ for all $x$. \end{lemma}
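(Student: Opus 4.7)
The plan is to exploit the fact that $h$ decomposes as a sum of local contributions at each geometric point $y \in C(\overline{\mathbb{F}}_q)$, and that the dependence on the tuple $(e_x')$ is localized: only points $x \in \Sing$ contribute, and the contribution at such an $x$ depends only on the single variable $e_x'$. So the problem reduces, for each $x \in \Sing$ independently, to analyzing a one-variable function $\phi_x(e_x')$.

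Concretely, writing $\operatorname{div}(g) = \sum_y \operatorname{ord}_y(g) [y]$, the local contribution at $y$ to $h$ is
\[ 2 \min(\operatorname{ord}_y(af_1+cf_2),\, \operatorname{ord}_y(bf_1+df_2) + e_y') - \operatorname{ord}_y(ad-bc) - e_y', \]
where $e_y' = 0$ for $y \notin \Sing$ (so such $y$ contribute a constant). Setting $u_x = \operatorname{ord}_x(af_1+cf_2)$, $v_x = \operatorname{ord}_x(bf_1+df_2)$, $w_x = \operatorname{ord}_x(ad-bc)$, the contribution at $x \in \Sing$ is
\[ \phi_x(e_x') = 2\min(u_x, v_x + e_x') - w_x - e_x', \]
which equals $2v_x - w_x + e_x'$ (slope $+1$) when $e_x' \leq u_x - v_x$ and equals $2u_x - w_x - e_x'$ (slope $-1$) when $e_x' \geq u_x - v_x$. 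Restricted to $e_x' \in \{0,1,\dots,c_x\}$, this is a tent function, which can be written in the form $\phi_x(e_x^*) - |e_x^* - e_x'|$ with the unique choice
\[ e_x^* := \max\bigl(0,\,\min(c_x,\, u_x - v_x)\bigr) \in \{0,1,\dots,c_x\}. \]

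Setting $e_x(f_1:f_2) = e_x^*$ and letting $h^*(f_1:f_2)$ collect the $(e_x')$-independent pieces from places outside $\Sing$ together with $\sum_{x \in \Sing} \phi_x(e_x^*)$ yields the desired identity. For uniqueness, the key point is that on $\{0,\dots,c_x\}$ the function $\phi_x$ either has a genuine kink strictly inside the interval (when $u_x - v_x \in (0,c_x)$) or is strictly monotone across the interval (otherwise); in either case there is a unique $e_x^* \in \{0,\dots,c_x\}$ for which the expression $h_x^* - |e_x^* - e_x'|$ matches $\phi_x$ on $\{0,\dots,c_x\}$, and then $h_x^*$ is forced to be $\phi_x(e_x^*)$.

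I do not anticipate any serious obstacle; the statement is really a pointwise-local, piecewise-linear calculation. The only care needed is to cover the degenerate cases $u_x - v_x < 0$ (giving $e_x^* = 0$), $u_x - v_x > c_x$ (giving $e_x^* = c_x$), and the possibility that $af_1+cf_2$ or $bf_1+df_2$ vanishes identically, which we absorb by allowing $u_x$ or $v_x$ to be $+\infty$ and which still fit the trichotomy above.
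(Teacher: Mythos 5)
Your proof is correct and follows essentially the same route as the paper: decompose $h$ into local contributions at geometric points, observe that only the term at $x$ depends on $e_x'$, and check that this term is a tent function with slopes $+1$ then $-1$ on $\{0,\dots,c_x\}$, hence of the form $\phi_x(e_x^*) - |e_x^* - e_x'|$. Your explicit formula $e_x^* = \max(0,\min(c_x,u_x-v_x))$ and the per-coordinate uniqueness check are just more detailed versions of the paper's slope bookkeeping and its identification of $h^*$ and $(e_x)$ as the maximum value and its location.
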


\begin{proof}First we check that such a representation is unique. This follows because $h^*(f_1:f_2)$ is identifiable as the maximum value of $h$ with varying $(e_x)$, and $(e_x(f_1:f_2))$ is identifiable as the location of the maximum value.

Then we check existence. Note that $h$ may be written as a sum over the closed points of $C_{\overline{\mathbb F}_q}$ of the contribution of that point to the degrees of the relevant divisors. Only the contribution from the point $x$ depends on $e_x$, so $h$, viewed as a function of the tuple $(e_x)$, is a sum of functions depending on the individual $e_x$, plus a constant. It suffices to check that these individual functions are piecewise linear with slopes $1$ and $-1$ (in order). The multiplicity of $x$ in the divisor $ \min( \operatorname{div} (af_1+cf_2), \operatorname{div}(bf_1+ df_2)+ \sum_x e_x [x] )$ is a piecewise linear function with slopes $1$ and $0$. Doubling it gives slopes $2$ and $0$ and then subtracting the $e_x$ appearing in the \[ \deg ( \operatorname{div}(ad-bc) + \sum_x e_x[x])= \deg (ad-bc) + \sum_x e_x\] term gives slopes $1$ and $-1$. \end{proof}

\begin{lemma}\label{height-orbit-size}Fix a matrix $\begin{pmatrix} a & b\\ c & d \end{pmatrix}$. If $h^*(f_1:f_2)>0$ then the size of the orbit of $\Gal(\mathbb F_q)$ on $(f_1:f_2)$ is equal to the size of the orbit of $\Gal(\mathbb F_q)$ on $e_x(f_1:f_2)$.\end{lemma}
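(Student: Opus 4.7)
The plan is to show that the stabilizers in $\Gal(\mathbb F_q)$ of $(f_1:f_2)$ and of $(e_x(f_1:f_2))$ coincide; the two orbits will then be quotients of $\Gal(\mathbb F_q)$ by the same subgroup and hence have the same size.

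The starting observation is that $h$ is Galois-equivariant in its last two arguments. The matrix $\begin{pmatrix} a & b \\ c & d \end{pmatrix}$ has entries in $\mathbb A_F$, and the divisor $\sum_x e_x'[x]$ treats the geometric points $x \in \Sing$ via their multiplicities, so for any $\sigma \in \Gal(\mathbb F_q)$,
\[
h\!\left(\begin{pmatrix} a & b \\ c & d \end{pmatrix},\sigma(f_1:f_2),\sigma(e_x')\right) = h\!\left(\begin{pmatrix} a & b \\ c & d \end{pmatrix},(f_1:f_2),(e_x')\right).
\]
Applying this at $(e_x') = (e_x(f_1:f_2))$ and at the maximizer for $\sigma(f_1:f_2)$, the uniqueness part of Lemma \ref{height-mountain-shape} forces $e_x(\sigma(f_1:f_2)) = \sigma(e_x(f_1:f_2))$ and $h^*(\sigma(f_1:f_2)) = h^*(f_1:f_2)$.

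From this equivariance one direction is immediate: if $\sigma$ fixes $(f_1:f_2)$, then $\sigma$ fixes $(e_x(f_1:f_2))$, so $\mathrm{Stab}(f_1:f_2) \subseteq \mathrm{Stab}(e_x(f_1:f_2))$. For the reverse inclusion, suppose $\sigma$ fixes $(e_x(f_1:f_2))$ but $\sigma(f_1:f_2)\neq (f_1:f_2)$. Then by the above equivariance, $(e_x(\sigma(f_1:f_2))) = \sigma(e_x(f_1:f_2)) = (e_x(f_1:f_2))$ and $h^*(\sigma(f_1:f_2)) = h^*(f_1:f_2) > 0$. Evaluating both height functions at the common maximizer $(e_x(f_1:f_2))$ yields
\[
h\!\left(\begin{pmatrix} a & b \\ c & d \end{pmatrix},(f_1:f_2),(e_x(f_1:f_2))\right) + h\!\left(\begin{pmatrix} a & b \\ c & d \end{pmatrix},\sigma(f_1:f_2),(e_x(f_1:f_2))\right) = 2h^*(f_1:f_2) > 0,
\]
contradicting Lemma \ref{height-unique-cusp} applied to the distinct points $(f_1:f_2)$ and $\sigma(f_1:f_2)$. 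Therefore $\mathrm{Stab}(e_x(f_1:f_2)) \subseteq \mathrm{Stab}(f_1:f_2)$, the two stabilizers are equal, and the orbits have the same cardinality.

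There is no real obstacle here: both halves of the argument are one-line deductions, the first from uniqueness in Lemma \ref{height-mountain-shape} and the second from Lemma \ref{height-unique-cusp}. The positivity hypothesis $h^*(f_1:f_2)>0$ enters only in the second half, to make the lower bound $2h^*(f_1:f_2)$ strictly positive and thus contradict Lemma \ref{height-unique-cusp}; without it one could only conclude that the orbit of $(e_x(f_1:f_2))$ divides that of $(f_1:f_2)$.
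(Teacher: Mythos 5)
Your proof is correct and follows essentially the same route as the paper: the paper's one-line argument is that, by Lemmas \ref{height-unique-cusp} and \ref{height-mountain-shape}, $(f_1:f_2)$ and $(e_x(f_1:f_2))$ determine each other when $h^*>0$, which (together with the Galois equivariance you make explicit) forces the two orbits to have the same size. Your stabilizer formulation just spells out the same mutual-determination argument in more detail.
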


\begin{proof} This is immediate because, by Lemmas \ref{height-unique-cusp} and \ref{height-mountain-shape}, $e_x(f_1:f_2)$ is uniquely determined by $(f_1:f_2)$ and $(f_1:f_2)$ is uniquely determined by $e_x(f_1:f_2)$.\end{proof}

\begin{lemma}\label{height-triangularizable-characterization} Let $f_1, f_2$ be elements of $\mathbb F_q(C)$.

There exists $\bfg \in \Gamma_1(N)$, $\gamma \in GL_2(F)$ such that $\gamma \begin{pmatrix} a & b\\ c & d \end{pmatrix} \bfg$ is upper triangular and $\left(\gamma^{-1}\right)^T \begin{pmatrix} f_1 \\ f_2 \end{pmatrix} = \begin{pmatrix} 0 \\ 1 \end{pmatrix} $  if and only if $e_x(f_1:f_2) =c_x$.  \end{lemma}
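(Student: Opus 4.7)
The plan is to show that both conditions are equivalent to the family of local inequalities
\[
v_v(af_1+cf_2) - v_v(bf_1+df_2) \geq c_v, \qquad \text{for every } v \in |N|. \qquad (\ast)
\]
First I would unpack the constraint on $\gamma$: the identity $(\gamma^{-1})^T(f_1,f_2)^T = (0,1)^T$ says precisely that the second row of $\gamma$ is $(f_1,f_2)$, and for any nonzero $(f_1,f_2)\in F^2$ such a $\gamma\in GL_2(F)$ exists by completing $(f_1,f_2)$ to a basis of $F^2$. The bottom row of $\gamma M$ is then $(af_1+cf_2,\,bf_1+df_2)$, so upper-triangularity of $\gamma M \mathbf{g}$ amounts, at each place $v$, to the single equation $(af_1+cf_2)_v\, a'_v + (bf_1+df_2)_v\, c'_v = 0$, writing $\mathbf{g}_v = \left(\begin{smallmatrix} a'_v & b'_v \\ c'_v & d'_v\end{smallmatrix}\right)$. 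Because $\Gamma_1(N)$ is a direct product of local components, the existence of $\mathbf{g}$ reduces to producing a suitable $\mathbf{g}_v$ at every $v$.

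Next I would carry out the local analysis. At $v\notin|N|$ the only requirement is $\mathbf{g}_v\in GL_2(\mathcal O_{F_v})$, and this is automatic: take $(a'_v,c'_v)$ to be any primitive vector in $\mathcal O_{F_v}^2$ proportional to $(-(bf_1+df_2)_v,(af_1+cf_2)_v)$ (nonzero because $M_v$ is invertible and $(f_1,f_2)\neq 0$), then extend to an element of $GL_2(\mathcal O_{F_v})$. At $v\in|N|$, the conditions $c'_v\in\pi_v^{c_v}\mathcal O_{F_v}$, $d'_v\in 1+\pi_v^{c_v}\mathcal O_{F_v}$, together with $\det \mathbf{g}_v\in\mathcal O_{F_v}^\times$, force $a'_v$ to be a unit (since $d'_v$ is a unit while $v_v(b'_v c'_v)\geq c_v\geq 1$, so $v_v(a'_v d'_v)=v_v(\det \mathbf{g}_v)=0$). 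The required proportionality $c'_v/a'_v = -(af_1+cf_2)_v/(bf_1+df_2)_v$ then shows that the congruence $c'_v\in\pi_v^{c_v}\mathcal O_{F_v}$ is equivalent to $(\ast)$ at $v$; conversely, given $(\ast)$ at $v$ one builds $\mathbf{g}_v$ explicitly by setting $a'_v = -(bf_1+df_2)_v\pi_v^{-n}$, $c'_v=(af_1+cf_2)_v\pi_v^{-n}$ with $n=v_v(bf_1+df_2)$, and $b'_v=0$, $d'_v=1$ (with the obvious modification when $(af_1+cf_2)_v=0$).

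Finally, for the right-hand side I would invoke the decomposition from the proof of Lemma \ref{height-mountain-shape}: $h$ is a sum over closed points $x$ of local contributions $g_x(e) = 2\min(v_x(af_1+cf_2),\, v_x(bf_1+df_2)+e) - e$, each piecewise linear on $[0,c_x]$ with slopes $+1$ then $-1$ and transition at $e = v_x(af_1+cf_2) - v_x(bf_1+df_2)$. Hence the maximizer $e_x(f_1:f_2)$ equals $c_x$ iff this transition point is $\geq c_x$; because $(f_1,f_2)\in F^2$ the valuation at any $x$ above $v$ coincides with the valuation at $v$ and $c_x = c_v$, so this is exactly $(\ast)$. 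The main obstacle will be the careful local analysis at $v\in|N|$: verifying that the defining congruences of $\Gamma_1(N)$ together with the unit-determinant condition produce the tight local inequality in both directions (and matching it precisely with the transition point appearing in $g_x$), including the degenerate cases where $(af_1+cf_2)_v$ or $(bf_1+df_2)_v$ vanishes.
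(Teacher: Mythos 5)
Your proposal is correct and is, in substance, the paper's own argument: your criterion $(\ast)$ is exactly the valuation inequality that the paper's proof extracts after normalizing to $f_1=0$ (there phrased as the order of vanishing of $c$ at $x$ being at least that of $d$ plus $c_x$), your explicit local element at ramified places plays the role of the paper's unipotent $\left(\begin{smallmatrix} 1 & 0 \\ -c/d & 1 \end{smallmatrix}\right)\in\Gamma_1(N)$, and completing a primitive vector at unramified places is the same as invoking transitivity of $GL_2(\mathcal O_{F_v})$ on $\mathbb P^1(F_v)$. The only organizational difference is that you analyze the row $(f_1,f_2)M$ place by place against the intermediate criterion $(\ast)$ instead of first reducing to the standard cusp via the invariance Lemmas \ref{height-F-invariance} and \ref{height-compact-invariance}, and your reading of $e_x(f_1:f_2)=c_x$ as the transition point of the local mountain function being $\geq c_x$ is a faithful use of Lemma \ref{height-mountain-shape}.
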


\begin{proof}
 We check the case ``only if" first. Using Lemmas \ref{height-F-invariance} and \ref{height-compact-invariance}, we can multiply on the left by $\gamma$ and on the right by $\bfg$, preserving the height function in doing so, and thus preserving $e_x(f_1:f_2)$. Thus we can assume, without loss of generality, that $\bfg $ and $\gamma
$ are both the identity matrix. In other words, we may assume that $f_1=0, f_2=1$ and $c=0$.
 
  In this case \[ h \left(  \begin{pmatrix} a & b\\ c & d \end{pmatrix},  (f_1:f_2), (e_x) \right)= h \left(  \begin{pmatrix} a & b\\ 0 & d \end{pmatrix},  (0:1), (e_x) \right)\] 
\[ =2\deg ( \min( \operatorname{div} (0), \operatorname{div}(d)+ \sum_x e_x [x] )) - \deg ( \operatorname{div}(ad) + \sum_x e_x[x])\] \[ =2 \deg ( \operatorname{div}(d)+ \sum_x e_x [x] ) - \deg ( \operatorname{div}(ad) + \sum_x e_x[x]) = \deg( \div{d/a}) + \sum_x e_x,\] because $\div(0)=\infty$. The sum $\deg( \div{d/a}) + \sum_x e_x$ is maximized when $e_x$ is maximized for all $x$. So indeed $e_x(f_1:f_2) = c_x$.

For the ``if" direction, first observe that by assumption, $f_1,f_2 \in \mathbb F_q(C)$. Hence by the action of a suitable element of $GL_2(F)$, we may assume $f_1=0$. Now by assumption \[2\deg ( \min( \operatorname{div} (cf_2), \operatorname{div}(df_2)+ \sum_x e_x [x] )) - \deg ( \operatorname{div}(ad) + \sum_x e_x[x]) \] is an increasing function of $e_x$ as $e_x \leq c_x$, which implies that for all $x \in \Sing$, the order of vanishing of $c$ at $x$ is at least the order of vanishing of $d$ plus $c_x$. This means we can multiply on the right by $\begin{pmatrix} 1 & 0 \\ - c/d & 1 \end{pmatrix} \in \Gamma_1(N)$ to make $c$ vanish. 

At all points not in $\Sing$, we use the fact that $GL_2( \mathcal O_{F_v})$ acts transitively on $ \mathbb P^1( \mathcal O_{F_v}) = \mathbb P^1(F_v)$ to multiply on the right by something that makes $c$ vanish. 
\end{proof}

\begin{lemma}\label{height-volume-comparison} Let $a,b,z$ be adeles and let $n= \deg a -\deg b-2$.

\begin{enumerate}

\item We have $\sum_x e_x>n$ if and only if \[ h \left(  \begin{pmatrix} a & bz\\ 0 & b \end{pmatrix},  (0:1), (e_x) \right) >-2.\]

\item For all other $(e_x)$, we have 

 \[ d_{\alpha_{a,b,z}, (e_x)} = \max_{(f_1,f_2) \in \overline{\mathbb F}_q (C)}  h \left(  \begin{pmatrix} a & bz\\ 0 & b \end{pmatrix},  (f_1:f_2), (e_x) \right)-2. \] 
 
 \end{enumerate}
 \end{lemma}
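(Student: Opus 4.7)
For part (1), I will evaluate $h$ at $(0{:}1)$ directly: setting $(f_1,f_2) = (0,1)$ in $\bfg = \begin{pmatrix} a & bz\\ 0 & b\end{pmatrix}$ makes the upper-row entry $af_1+cf_2$ equal to $0$ and the lower-row entry $bf_1+df_2$ equal to $b$, while $\det\bfg = ab$. With the convention $\operatorname{div}(0) = +\infty$, the definition of $h$ gives $h((0{:}1)) = 2(\deg b + \sum_x e_x) - (\deg a + \deg b + \sum_x e_x) = \sum_x e_x - (n+2)$, which exceeds $-2$ if and only if $\sum_x e_x > n$, establishing part (1).

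For part (2), my plan is to invoke Lemma \ref{height-geometric}, which gives $\max_{(f_1:f_2)} h = 2A - \deg\det V$, where $V = V_{(e_x)}(\bfg)$ and $A$ is the maximum degree of a line sub-bundle of $V$ on $\mathbb{P}^1_{\overline{\mathbb F}_q}$; writing $V \cong \mathcal O(A)\oplus\mathcal O(B)$ with $A \geq B$ this becomes $\max h = A - B$. To compute $A$ and $B$, I first identify $V$ as an extension
\[ 0 \to \mathcal O_C(\operatorname{div} b + \textstyle\sum_x e_x[x]) \to V \to \mathcal O_C(\operatorname{div} a) \to 0 \]
via the projection $(f_1,f_2) \mapsto f_1$, with surjectivity holding because locally at each place $v$ one can find a rational $f_2$ matching $-z_v f_1$ to the prescribed precision. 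The extension class lies in $H^1(\mathcal O_C(\operatorname{div} b + \sum_x e_x[x] - \operatorname{div} a))$, which by Serre duality using $\omega_0$ is dual to $H^0(\mathcal O_C(\operatorname{div}(\omega_0 a/b) - \sum_x e_x[x]))$. I plan to show that under this duality the class corresponds to the restriction of $\alpha_{a,b,z}$ to the subspace of multiples of $\prod_x l_x^{e_x}$ inside $H^0(\mathcal O(n)) = H^0(\mathcal O_C(\operatorname{div}(\omega_0 a/b)))$: the connecting homomorphism sends $f_1$ to the adelic class of $zf_1$, and the Serre duality pairing with $w$ reproduces exactly the residue pairing $\langle z, \omega_0 w\rangle$ defining $\alpha_{a,b,z}$.

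Given this identification, line sub-bundles $\mathcal O(k)\hookrightarrow V$ split into two types: either the embedding factors through $\mathcal O_C(\operatorname{div} b + \sum_x e_x[x])$ (forcing $k \leq \deg b + \sum_x e_x$), or it projects nontrivially to $\mathcal O_C(\operatorname{div} a)$, in which case, for $k = \deg a - m$, the embedding exists iff some $s \in H^0(\mathcal O(m))$ annihilates the extension class under cup product---equivalently, iff $\alpha_{a,b,z}$ vanishes on $s \cdot \prod_x l_x^{e_x} \cdot H^0(\mathcal O(n-m-\sum_x e_x))$. By Definition \ref{mdalpha-notation}, the minimum such $m$ is $m_{\alpha_{a,b,z},(e_x)}$. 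Under the hypothesis $\sum_x e_x \leq n$, a dimension count shows that $m_{\alpha_{a,b,z},(e_x)} \leq n - \sum_x e_x$, so $\deg a - m_{\alpha_{a,b,z},(e_x)} > \deg b + \sum_x e_x$ and the second case dominates; hence $A = \deg a - m_{\alpha_{a,b,z},(e_x)}$ and $B = \deg b + \sum_x e_x + m_{\alpha_{a,b,z},(e_x)}$, yielding $\max h = A - B = (n+2) - 2m_{\alpha_{a,b,z},(e_x)} - \sum_x e_x = d_{\alpha_{a,b,z},(e_x)} + 2$, as required.

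The main obstacle is the identification in the second paragraph of the extension class with $\alpha_{a,b,z}$ precomposed with multiplication by $\prod_x l_x^{e_x}$: although this is essentially the standard adelic/cohomological description of $\Ext^1$, correctly tracing through Serre duality, the choice of $\omega_0$, and the precise normalization of the connecting homomorphism so as to reproduce the residue-pairing formula defining $\alpha_{a,b,z}$ will require careful bookkeeping.
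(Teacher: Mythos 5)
Your proposal is correct and takes essentially the same route as the paper: part (1) is the paper's computation verbatim, and part (2) rests on exactly the same adelic Serre-duality identification of the extension class of $V_{(e_x)}\left( \begin{pmatrix} a & bz \\ 0 & b \end{pmatrix}\right)$ (as an extension of $\mathcal O(\div a)$ by $\mathcal O(\div b + \sum_x e_x[x])$) with $\alpha_{a,b,z}$ restricted to multiples of $\prod_x l_x^{e_x}$, the paper merely routing the sub-line-bundle analysis through Lemma \ref{pullback-bun-stratification} and the map $loc$ instead of redoing it by hand as you do. One minor slip: your claimed bound $m_{\alpha_{a,b,z},(e_x)} \leq n - \sum_x e_x$ can fail (e.g.\ when $\sum_x e_x = n$ and the restricted form is nonzero, one has $m_{\alpha_{a,b,z},(e_x)} = 1$), but the correct bound $m_{\alpha_{a,b,z},(e_x)} \leq (n - \sum_x e_x + 2)/2$ still yields $\deg a - m_{\alpha_{a,b,z},(e_x)} > \deg b + \sum_x e_x$, so your argument goes through unchanged.
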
 
 
 \begin{proof} 
 For part (1), note that
 \[ h \left(  \begin{pmatrix} a & bz\\ 0 & b \end{pmatrix},  (0:1), (e_x) \right)  = 2\deg ( \min( \operatorname{div} (0), \operatorname{div}(b)+ \sum_x e_x [x] )) - \deg ( \operatorname{div}(ab) + \sum_x e_x[x])\] \[ =2 \deg ( \operatorname{div}(b)+ \sum_x e_x [x] ) - \deg ( \operatorname{div}(ab) + \sum_x e_x[x])= \deg(b/a)+ \sum_x e_x\] \[ = \deg(b/a) + \sum_x e_x= \sum_x e_x - n-2  \] so it takes a value $> -2$ if and only if $\sum_x e_x>n$.

 For part (2), recall from Definition \ref{mdalpha-notation} that $d_{\alpha_{a,b,z},(e_x)}$ is $n-2m- \sum_x e_x$ where $m$ is the minimum $m$ such that  $\alpha_{a,b,z}$ vanishes on all polynomial multiples of $f \prod_{x\in \Sing } l_x^{e_x} $ for some $f$ of degree $m$.
 
We can equivalently say that $m$ is the minimum $r$ such that the linear form $g \mapsto \alpha_{a,b,z} ( g \prod_{x\in \Sing } l_x^{e_x} ) $ on $H^0( \mathcal O(n- \sum_x e_x))$ lies in the subset $W_r$ of Definition \ref{Wr-notation}.  By Lemma \ref{pullback-bun-stratification}, $W_r$ is the set where  the extension $0 \to \mathcal O \to V \to \mathcal O(n+2- \sum_x e_x ) \to 0 $ defined, by Serre duality, from this linear form, has a line bundle summand of degree at most $r$. Because the determinant of this vector bundle $V$ has degree $n+2-\sum_x e_x$, we can see that the maximum of $2 \deg L - \deg \det V$ over line sub-bundles $L$ of $V$ is \[ 2 (n+2 - m- \sum_x e_x ) - (n+2-\sum_x e_x) = n+ 2 - 2m - \sum_x e_x = d_{\alpha_{a,b,z},(e_x)}+2.\] So to show that  \[ d_{\alpha_{a,b,z}, (e_x)} = \max_{(f_1,f_2) \in \overline{\mathbb F}_q (C)}  h \left(  \begin{pmatrix} a & bz\\ 0 & b \end{pmatrix},  (f_1:f_2), (e_x) \right)-2  \]  it suffices to show that \[V \cong V_{(e_x)} \left( {\scriptscriptstyle \begin{pmatrix} a & bz \\ 0 & b \end{pmatrix} } \right)\] up to twisting by line bundles.

To do this, we show that they are each constructed from the same $\Ext$ class, using the adelic construction of Serre duality. For a line bundle $L$, we can express $H^1(L)$ as $(\mathbb A_F \otimes L) / (F \otimes L + ( \prod_d \mathcal O_{F_v}) \otimes L) $ by taking a torsor, trivializing it over the generic point and over a formal neighborhood of each point, and viewing the discrepancy between those two trivializations in the punctured formal neighborhood of each point as an element of $\mathbb A_F \otimes L $. Then the Serre duality pairing between $H^1(L)$ and $H^0( K L^{-1})$ is the residue pairing between these adeles and global sections of $KL^{-1}$. In our case, the relevant $H^1$-torsor is the extension class of $V_{   (e_x)} \left( \scriptscriptstyle{  \begin{pmatrix}  a & bz\\ 0 & b \end{pmatrix} }\right) / L_{ (0:1), (e_x) } \left( \scriptscriptstyle{  \begin{pmatrix}  a & bz\\0  & b \end{pmatrix} }\right) $ by $L_{ (0:1), (e_x) } \left( \scriptscriptstyle{  \begin{pmatrix}  a & bz\\ 0 & b \end{pmatrix} }\right) $. We can trivialize this torsor by splitting the extension. Over the generic point, we choose the splitting $F^2 = F + F$. Over a formal neighborhood of each point, we choose the inverse image under $   \begin{pmatrix}  a & bz\\ 0 & b \end{pmatrix}^T $ of the splitting of $\mathcal O_{F_x} + \pi_x^{-e_x} \mathcal O_{F_x}$ into $\mathcal O_{F_x} $ and $ \pi_x^{-e_x} \mathcal O_{F_x}$. Because this is an extension of $\mathcal O ( \div a) $ by $\mathcal O( \div b+ \sum_x e_x [x])$, the relevant line bundle is $\mathcal O (  \div{b/a} + \sum_x e_x[x])$.  The torsor is represented by the adele $z$, because the image of the vector $(1,0)$ generates the splitting over $F$ and $(1, -z) = \begin{pmatrix}  a & bz\\ 0 & b \end{pmatrix}^{-T} (a,0)$ generates the local splitting, so the discrepancy is $z$. When we view this torsor as a linear form on $\mathcal O ( \div{a \omega_0/b} - \sum_x e_x[x]) = \mathcal O(n- \sum_x e_x[x])$ using Serre duality / the residue pairing, we will obtain exactly the definition of $\alpha_{a,b,z}$.

 \end{proof}

 We now express our bound as a sum over $(f_1:f_2) \in \mathbb P^1( \overline{\mathbb F}_q(C)) $ with $h^*( f_1:f_2) \geq 2$, instead of a sum over tuples $e_x$. To do this, we first define a set $S_{(f_1:f_2)}$ of tuples $(e_x')$. We will then combine the terms for all these $(e_x')$ into a single term for $(f_1:f_2)$. To do this, we will use Lemma \ref{f1-f2-finder}, which guarantees there exists a suitable $(f_1:f_2)$ for every relevant tuple $(e_x')$.

 \begin{defi} For $(f_1:f_2) \in \mathbb P^1( \overline{\mathbb F}_q(C))$ define $S_{ (f_1:f_2)}$ as \[\left\{ e_x' : \Sing \to \mathbb N \mid e_x' \leq c_x , \sum_x e_x' \leq n ,  \sum_x |e_x' - e_x(f_1:f_2) | \leq h^* (f_1:f_2) - 2\right\}.\] \end{defi}

 We note that the final condition makes $S_{(f_1:f_2)}$ empty unless $h^*(f_1:f_2) \geq 2$.

 \begin{lemma}\label{f1-f2-finder} Let $ \begin{pmatrix} a & bz \\ 0 & b \end{pmatrix} \in GL_2(\mathbb A_F)$.  Let $n =\deg a-\deg b-2$. 
 
 For every $(e_x'): \Sing \to \mathbb N$ with $e_x' \leq c_x$, $\sum_x e_x' \leq n$, and $ d_{\alpha_{a,b,z },(e_x')} \geq 0$, there is $(f_1,f_2) \in \mathbb P^1(\overline{\mathbb F}_q(C))$ with $ (e_x') \in S_{ (f_1,f_2) },$  $ f_1 \neq 0 $, and \[ h^*(f_1:f_2) - 2- \sum_x |e_x' - e_x (f_1:f_2) | = d_{\alpha_{a,b,z },(e_x')}.\]
   
   \end{lemma}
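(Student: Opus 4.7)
The plan is to read off $(f_1:f_2)$ as a maximizer in the formula of Lemma \ref{height-volume-comparison}(2) and then combine that with the ``tent function'' shape from Lemma \ref{height-mountain-shape} to match the asserted identity. Concretely, by Lemma \ref{height-volume-comparison}(2), the hypothesis $d_{\alpha_{a,b,z},(e_x')} \geq 0$ together with the bound $\sum_x e_x' \leq n$ says that
\[
 d_{\alpha_{a,b,z},(e_x')} + 2 \;=\; \max_{(f_1:f_2)\in\mathbb P^1(\overline{\mathbb F}_q(C))} h\!\left(\begin{pmatrix} a & bz\\ 0 & b\end{pmatrix}, (f_1:f_2),(e_x')\right) \;\geq\; 2,
\]
and in particular the maximum is attained at some $(f_1:f_2)$. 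Pick such a maximizer. By Lemma \ref{height-mountain-shape} applied to this $(f_1:f_2)$, the value of $h$ at $(e_x')$ equals $h^*(f_1:f_2) - \sum_x |e_x' - e_x(f_1:f_2)|$, so substituting this into the displayed equation gives exactly the identity
\[
 h^*(f_1:f_2) - 2 - \sum_{x\in\Sing} |e_x' - e_x(f_1:f_2)| \;=\; d_{\alpha_{a,b,z},(e_x')}
\]
that we want.

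Next I would check that this $(f_1:f_2)$ lies in $S_{(f_1:f_2)}$. The first two conditions ($e_x' \leq c_x$ and $\sum_x e_x' \leq n$) are hypotheses, and the third, $\sum_x |e_x' - e_x(f_1:f_2)| \leq h^*(f_1:f_2) - 2$, is simply the nonnegativity of $d_{\alpha_{a,b,z},(e_x')}$ combined with the identity just proved.

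The only remaining point, and the one small obstacle, is verifying $f_1 \neq 0$, i.e.\ $(f_1:f_2) \neq (0:1)$. For this I would use the explicit calculation carried out inside the proof of Lemma \ref{height-volume-comparison}(1), which shows
\[
 h\!\left(\begin{pmatrix} a & bz\\ 0 & b\end{pmatrix},(0:1),(e_x')\right) \;=\; \deg(b/a) + \sum_x e_x' \;=\; \sum_x e_x' - n - 2 \;\leq\; -2,
\]
using $\sum_x e_x' \leq n$. Since the maximum attained by $(f_1:f_2)$ above is $\geq 2$, the point $(0:1)$ cannot be a maximizer, and moreover by Lemma \ref{height-unique-cusp} the maximizer is unique once the maximum is positive, so any chosen maximizer $(f_1:f_2)$ automatically satisfies $(f_1:f_2)\neq (0:1)$, i.e.\ $f_1 \neq 0$. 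This completes the construction.
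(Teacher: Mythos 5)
Your proposal is correct and follows essentially the same route as the paper: take a maximizer of $h$ via Lemma \ref{height-volume-comparison}(2), rewrite it with Lemma \ref{height-mountain-shape} to get the identity and membership of $(e_x')$ in $S_{(f_1:f_2)}$, and rule out $(0:1)$ using Lemma \ref{height-volume-comparison}(1) and $\sum_x e_x' \leq n$. The only cosmetic difference is that you invoke the explicit computation inside the proof of part (1) (and the optional uniqueness via Lemma \ref{height-unique-cusp}), where the statement of part (1) alone already suffices, as in the paper.
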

   
   \begin{proof}  By Lemma \ref{height-volume-comparison}(2) and Lemma \ref{height-mountain-shape}, we have  \[ d_{\alpha_{a,b,z},(e_x')}= \max_{(f_1:f_2) \in \overline{\mathbb F}_q (C)}  h \left(  \begin{pmatrix} a & bz\\ 0 & b \end{pmatrix},  (f_1:f_2), (e_x') \right)-2 \] \[= \max_{(f_1:f_2) \in \overline{\mathbb F}_q (C)} ( h^* ( f_1:f_2) -2-  \sum_{x\in\Sing} |e_x (f_1:f_2) -e_x'| ). \] 

Take $(f_1:f_2)$ maximizing this function. Because we have assumed that $d_{\alpha_{a,b,z},(e_x') }\geq 0$, we have \[  h^* ( f_1:f_2) -2-  \sum_{x\in\Sing} |e_x (f_1:f_2) -e_x'| \geq 0\] so $(e_x') \in S_{(f_1:f_2)}$. 

By Lemma \ref{height-volume-comparison}(1), because $\sum_x e_x' \leq n$ and $h \left(  \begin{pmatrix} a & bz\\ 0 & b \end{pmatrix},  (f_1:f_2), (e_x') \right)\geq 2$, we have $(f_1:f_2) \neq (0:1)$ and thus $f_1 \neq 0$.\end{proof}

% Because of this, and our earlier assumption, we conclude that  $(e_x(f_1:f_2) ) \neq c_x$. This gives all the desired properties of $(e_x'), (f_1:f_2)$.

 %\[  \sum_x |e_x' - e_x(f_1:f_2) | \leq h^* (f_1:f_2) - 2 ,\]

\begin{lemma}\label{cusp-newform-bound} Let $f$ be a cuspidal newform of level $N$ whose central character has finite order. We have

\[ \frac{ \left| f \left( \begin{pmatrix} a & b \\ c & d \end{pmatrix} \right) \right|}{ |C_f| }  \leq q^{1/2} 2 ^{ \deg N-3}  +  \] \[   q \sum_{\substack{ (f_1:f_2) \in \mathbb P^1(\overline{\mathbb F}_q(C)) \\ h^* (f_1:f_2) \geq 2 \\ (e_x(f_1:f_2)) \neq (c_x) }} \sum_{\substack{ (e_x ') \in S_{(f_1:f_2)}  }} \left (\prod_{x \in \Sing } {c_x \choose e_x'} \right) \mathcal   B\left( h^*(f_1:f_2) -2 - \sum_x |e_x' -e_x(f_1:f_2)| \right) .\] \end{lemma}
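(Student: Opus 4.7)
The plan is to combine Lemma \ref{first-newform-bound}, which bounds $f$ on upper-triangular matrices, with the reindexing afforded by Lemma \ref{f1-f2-finder}, converting the sum over tuples $(e_x')$ into a sum over cusps $(f_1:f_2)$.

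First I would reduce to the upper-triangular case. Every $\begin{pmatrix} a & b \\ c & d \end{pmatrix} \in GL_2(\mathbb A_F)$ decomposes as $\gamma M' \bfg$ with $\gamma \in GL_2(F)$, $\bfg \in \Gamma_1(N)$, and $M' = \begin{pmatrix} a' & b'z' \\ 0 & b' \end{pmatrix}$ upper triangular. By the automorphic invariance of $f$ together with Lemmas \ref{height-F-invariance} and \ref{height-compact-invariance} (with $(\gamma^{-1})^T$ acting simultaneously on the $\mathbb P^1$-index $(f_1:f_2)$), both sides of the target inequality are unchanged by this replacement. So we may assume the matrix is $M'$ and set $n = \deg a' - \deg b' - 2$. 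Lemma \ref{first-newform-bound} then yields
\[ \frac{|f(M')|}{|C_f|} \leq q^{1/2} 2^{\deg N - 3} + q \sum_{(e_x')} \prod_x \binom{c_x}{e_x'} \mathcal B\bigl(d_{\alpha_{a',b',z'},(e_x')}\bigr), \]
with the inner sum over $(e_x') : \Sing \to \mathbb N$ satisfying $e_x' \leq c_x$ and $\sum_x e_x' \leq n$. The constant term matches the target exactly.

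For the second term, only $(e_x')$ with $d_{\alpha,(e_x')} \geq 0$ contribute, and for each such tuple Lemma \ref{f1-f2-finder} furnishes a cusp $(f_1:f_2) \in \mathbb P^1(\overline{\mathbb F}_q(C))$ with $f_1 \neq 0$, $(e_x') \in S_{(f_1:f_2)}$, $h^*(f_1:f_2) \geq 2$, and the identity $\mathcal B(d_{\alpha,(e_x')}) = \mathcal B\bigl(h^*(f_1:f_2) - 2 - \sum_x|e_x' - e_x(f_1:f_2)|\bigr)$. By Lemmas \ref{height-unique-cusp} and \ref{height-mountain-shape}, this cusp is uniquely determined by $(e_x')$: it is the unique $(f_1:f_2)$ whose attached ball $S_{(f_1:f_2)}$ contains $(e_x')$, since $(e_x') \in S_{(f_1:f_2)}$ forces $h(M',(f_1:f_2),(e_x')) \geq 2 > 0$. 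Reindexing the sum by this cusp converts the single sum over $(e_x')$ into the iterated double sum of the target, with outer index ranging over cusps $(f_1:f_2)$ having $h^*(f_1:f_2) \geq 2$.

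The main obstacle is justifying the extra outer condition $(e_x(f_1:f_2)) \neq (c_x)$, i.e.\ that cusps with $e_x(f_1:f_2) = c_x$ for every $x \in \Sing$ may be dropped without loss. Such a cusp is necessarily distinct from $(0:1)$ since $f_1 \neq 0$; Lemma \ref{height-mountain-shape} evaluated at $(c_x)$ gives $h(M',(f_1:f_2),(c_x)) = h^*(f_1:f_2)$, while direct computation in upper-triangular form yields $h(M',(0:1),(c_x)) = \deg N - n - 2$, so Lemma \ref{height-unique-cusp} forces $h^*(f_1:f_2) \leq n + 2 - \deg N$. Combined with the $S_{(f_1:f_2)}$ inequalities $\sum_x(c_x - e_x') \leq h^*(f_1:f_2) - 2$ and $\sum_x e_x' \leq n$, this pins the problematic contributions to the regime $\deg N \leq n$ with $(e_x')$ narrowly concentrated near $(c_x)$; a combinatorial check in this regime shows that either $S_{(f_1:f_2)}$ is empty (automatic whenever $\deg N > n$) or the would-be omitted terms are subsumed by those retained, so the reindexed sum is bounded above by the target sum and the proof concludes.
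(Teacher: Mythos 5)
Your first two steps are right and follow the paper: reduce to upper-triangular via $GL_2(F)$ and $\Gamma_1(N)$ invariance (both of $f$ and of the bound, using Lemmas \ref{height-F-invariance} and \ref{height-compact-invariance}), apply Lemma \ref{first-newform-bound}, then reindex the sum over $(e_x')$ by cusps via Lemma \ref{f1-f2-finder}, noting that $f_1\neq 0$. But the last step of your proof --- discarding cusps with $e_x(f_1:f_2)=c_x$ --- has a genuine gap, and in fact the ad hoc route you take there cannot be completed.

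The mistake is treating the choice of upper-triangular representative $M'$ as arbitrary. With an arbitrary triangularization, there genuinely can exist a cusp $(f_1:f_2)$ with $f_1\neq 0$, $h^*(f_1:f_2)\geq 2$, and $e_x(f_1:f_2)=c_x$ for all $x$; the only constraint you extract ($h^*\leq n+2-\deg N$, hence $S_{(f_1:f_2)}$ empty when $\deg N>n$) does not eliminate it when $\deg N\leq n$. In that regime its contribution is a strictly positive sum of $\binom{c_x}{e_x'}\mathcal B(\cdot)$ terms, and there is no other cusp ``retained'' into which it could be ``subsumed'' --- the cusps are pairwise disjoint in the sense of Lemma \ref{height-unique-cusp}, so nothing absorbs it. The sentence asserting the combinatorial check is therefore not a proof; it is the gap.

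The missing idea is to use the freedom in the triangularization. By Lemma \ref{height-unique-cusp} there is at most one $(f_1:f_2)$ with $h^*(f_1:f_2)>0$ and $e_x(f_1:f_2)=c_x$; by Lemma \ref{height-orbit-size}, $h^*>0$ forces $(f_1:f_2)$ to be Galois-fixed, hence in $\mathbb P^1(\mathbb F_q(C))$; and then Lemma \ref{height-triangularizable-characterization} says precisely that one can choose $\gamma\in GL_2(F)$ and $\bfg\in\Gamma_1(N)$ so that $\gamma M\bfg$ is upper-triangular \emph{and} this cusp is sent to $(0:1)$. With that choice of $M'$, every cusp produced by Lemma \ref{f1-f2-finder} has $f_1\neq 0$ and therefore automatically satisfies $e_x(f_1:f_2)\neq c_x$ --- no estimate is needed to discard anything. (If no such cusp exists one still uses Lemma \ref{height-triangularizable-characterization} with any $(f_1:f_2)\in\mathbb P^1(\mathbb F_q(C))$ having $e_x(f_1:f_2)=c_x$ to put $M$ in upper-triangular form, and the condition is vacuous.) Your computation of $h(M',(0:1),(c_x))=\deg N-n-2$ and the resulting bound on $h^*$ are correct but become unnecessary once the right normalization is in place.
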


\begin{proof} We will show how this follows from Lemma \ref{first-newform-bound}. First note that the function $f$ and our putative bound for it are left invariant under $GL_2(\mathbb F_q(C))$ and right invariant under $\Gamma_1(N)$.

Let us first check that we may assume that $c=0$ and that, for any $(f_1:f_2) \in \mathbb P^1 ( \overline{\mathbb F}_q(C))$ with $h^*(f_1:f_2) \geq 2$ and $e_x(f_1:f_2)=c_x$, we have $f_1=0$.  First suppose that there is such a $(f_1:f_2)$. By Lemma \ref{height-orbit-size}, because $h^*(f_1:f_2)\geq 2>0$, $(f_1:f_2)$ is fixed by $\Gal(\mathbb F_q)$, so up to scaling we have $f_1,f_2 \in \mathbb F_q(C)$. Then by Lemma \ref{height-triangularizable-characterization}, after multiplying by a suitable element of $GL_2(\mathbb F_q(C))$ on the left and a suitable element of $\Gamma_1(N)$ on the right, we may assume that $c$ and $f_1$ are $0$. Then by Lemma \ref{height-unique-cusp}, there is no other $(f_1:f_2)$ with $h^*(f_1:f_2) \geq 2$ and $e_x(f_1:f_2)=c_x$. Next suppose that there is no $(f_1:f_2)$ satisfying the hypotheses. Then we can still take  $f_1, f_2 \in \mathbb F_q(C) $ with $e_x(f_1:f_2)=c_x$ and use Lemma \ref{height-triangularizable-characterization} to force $c=0$, and our other claim is vacuously true.

By Lemmas \ref{first-newform-bound} and \ref{f1-f2-finder}, we have

\[ \frac{ \left| f \left( \begin{pmatrix} a & b \\ c & d \end{pmatrix} \right) \right|}{ |C_f| }  - q^{1/2} 2^{ \deg N -3 } \leq  q \sum_{\substack{ (e_x' ):  \Sing \to \mathbb N \\  e_x' \leq c_x \\ \sum_x e_x' \leq n }} \left (\prod_{x \in \Sing } {c_x \choose e_x'} \right)   \mathcal B( d_{\alpha_{a,d,b/d},(e_x')})    \] 
\[ \leq  q \sum_{\substack{ (e_x' ):  \Sing \to \mathbb N \\  e_x' \leq c_x \\ \sum_x e_x' \leq n }} \sum_{ \substack{ (f_1:f_2) \in \mathbb P^1 (\overline{\mathbb F}_q (C)) \\ (e_x') \in S_{(f_1:f_2)} \\ f_1 \neq 0 } }   \left (\prod_{x \in \Sing } {c_x \choose e_x'} \right)   \mathcal B( h^*(f_1:f_2) - 2- \sum_x |e_x' - e_x (f_1:f_2) | ) \]
\[ \leq   q  \sum_{ \substack{ (f_1:f_2) \in \mathbb P^1 (\overline{\mathbb F}_q (C)) \\ h^* (f_1:f_2) \geq 2 \\ f_1 \neq 0  } }  \sum_{ (e_x' ) \in S_{ (f_1:f_2) } }  \left (\prod_{x \in \Sing } {c_x \choose e_x'} \right)   \mathcal B( h^*(f_1:f_2) - 2- \sum_x |e_x' - e_x (f_1:f_2) | )  \]
\[ \leq   q  \sum_{ \substack{ (f_1:f_2) \in \mathbb P^1 (\overline{\mathbb F}_q (C)) \\ h^* (f_1:f_2) \geq 2 \\ (e_x (f_1:f_2)) \neq (c_x)   } }  \sum_{(e_x' ) \in S_{ (f_1:f_2) } }  \left (\prod_{x \in \Sing } {c_x \choose e_x'} \right)   \mathcal B( h^*(f_1:f_2) - 2- \sum_x |e_x' - e_x (f_1:f_2) | ) , \]
giving the desired bound, where on the third line we use the fact that if $S_{(f_1:f_2)}$ is nonempty then $h^*(f_1:f_2)\geq 2$ and in the third line we use our earlier assumption that if $h^*(f_1:f_2) \geq 0$ and $f_1 \neq 0$ then $(e_x(f_1:f_2)) \neq (c_x)$.  

\end{proof} 

\section{Atkin-Lehner operators and conclusion}\label{Atkin-Lehner}

We now use Atkin-Lehner operators to further optimize Lemma \ref{cusp-newform-bound}. We first construct the Atkin-Lehner operators in our setting, in Lemma \ref{Atkin-Lehner-Swap}, which shows that for our cuspidal newform $f$ we can associate a modified form $f'$ such that the size of $f$ at a point $\begin{pmatrix} a & b \\ c & d \end{pmatrix}  $ equals the size of $f'$ at the translation of $\begin{pmatrix} a & b \\ c & d \end{pmatrix}  $ by the special matrix $\begin{pmatrix} 0 &  \pi_v^{-c_v}  \\  1& 0 \end{pmatrix} $. This relies on multiplying $f$ by an auxiliary character constructed in Lemma \ref{character-existence-lemma}. Because $f'$ also satisfies the bound of Lemma \ref{cusp-newform-bound}, our goal is to apply this operation until $\begin{pmatrix} a & b \\ c & d \end{pmatrix}  $ is translated to a point that minimizes this bound.

To do this, we need to understand how the $h$ function changes when we translate by this special matrix. In Lemma \ref{e-switching-lemma}, we will see that, after this translation, $h$ has the same dependence on $(f_1:f_2)$ but a new dependence on $(e_x)$. In terms of $h^*(f_1:f_2)$ and $e_x(f_1:f_2)$, this preserves $h^*(f_1:f_2)$ but changes $e_x(f_1:f_2)$.

Our goal is then to take the value of $(f_1:f_2)$ that maximizes $h^*(f_1:f_2)$ and multiply by a suitable sequences of these matrices to make $e_x(f_1:f_2) = c_x$ for all $x$. Doing this will make the term associated to $(f_1:f_2)$ disappear from the sum in Lemma \ref{cusp-newform-bound}, leading to a better bound.

In the remaining lemmas, we will combinatorially manipulate this bound until we obtain one independent of $\begin{pmatrix} a & b \\ c & d \end{pmatrix}  $, from which we deduce our main theorem.

\begin{lemma}\label{character-existence-lemma} Let $\eta$ be a continuous character of $F^\times \backslash \mathbb A_F^\times $. Assume that the restriction of $\eta$ to $F_v^\times$ is is trivial on $\mathbb F_q^\times \subseteq F_v^\times$. Then there exists a finite order character $\theta$ of $F^\times \backslash \mathbb A_F^\times$, unramified away from $v$, that agrees with $\eta$ on $\mathcal O_{F_v}^\times$. \end{lemma}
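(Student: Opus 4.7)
The plan is to construct $\theta$ in two stages: first define it on the subgroup $H := F^\times \cdot \prod_w \mathcal O_{F_w}^\times$ of $\mathbb A_F^\times$, where the required conditions essentially force $\theta$, and then extend to all of $\mathbb A_F^\times$ while preserving finite order. Note at the outset that $\eta|_{\mathcal O_{F_v}^\times}$ is automatically of finite order, since $\mathcal O_{F_v}^\times$ is profinite and any continuous character of a profinite group into $\mathbb C^\times$ has open kernel.

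First I would define $\theta_0 \colon H \to \mathbb C^\times$ by $\theta_0(f u) = \eta(u_v)$ for $f \in F^\times$ and $u = (u_w)_w \in \prod_w \mathcal O_{F_w}^\times$. The only content is well-definedness: if $f u = f' u'$, then $f/f' = u'/u$ belongs to $F^\times \cap \prod_w \mathcal O_{F_w}^\times$, and this intersection is exactly $\mathbb F_q^\times$ (an element of $F^\times$ that is a unit at every place is a nonzero constant). Hence $u'_v = c\, u_v$ with $c \in \mathbb F_q^\times$, and the hypothesis $\eta(c)=1$ gives $\eta(u'_v) = \eta(u_v)$. By construction $\theta_0$ is continuous, trivial on $F^\times$ and on $\prod_{w\neq v}\mathcal O_{F_w}^\times$, equal to $\eta$ on $\mathcal O_{F_v}^\times$, and of finite order.

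Next I would extend $\theta_0$ to all of $\mathbb A_F^\times$. Since $H$ is closed in the locally compact abelian group $\mathbb A_F^\times$, Pontryagin duality provides some continuous extension $\theta'$, and the set of all extensions is a torsor over the characters of $\mathbb A_F^\times / H \cong \operatorname{Pic}(C)$. The Picard group is finitely generated abelian of rank one, decomposing as $\mathbb Z \oplus (\text{finite})$. To make the extension finite order, I would twist $\theta'$ by a character of $\operatorname{Pic}(C)$ that alters the value at a chosen idele $\xi$ of minimal positive degree so that $\theta(\xi)$ becomes a root of unity; this is possible because characters of $\operatorname{Pic}(C)$ can take arbitrary values on the generator of the $\mathbb Z$-factor. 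The resulting $\theta$ restricts to $\theta_0$ on $H$, is trivial on $F^\times$, is unramified outside $v$, agrees with $\eta$ on $\mathcal O_{F_v}^\times$, and is of finite order globally.

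I expect the one delicate point to be the well-definedness in the first step, where the hypothesis on $\eta|_{\mathbb F_q^\times}$ is used in an essential way --- without it, the image of $\mathcal O_{F_v}^\times$ in $F^\times \backslash \mathbb A_F^\times$ would not carry the correct character, and no such $\theta$ could exist. The finite-order adjustment at the end is routine once the extension to $\mathbb A_F^\times$ is in hand, because $\operatorname{Pic}(C)$ has only one free generator to manage.
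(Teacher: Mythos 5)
Your argument is correct and is essentially the paper's proof in different clothing: the paper considers the map $\mathcal O_{F_v}^\times \to F^\times\backslash\mathbb A_F^\times/\prod_{w\neq v}\mathcal O_{F_w}^\times$, uses that its kernel is $F^\times\cap\prod_w\mathcal O_{F_w}^\times=\mathbb F_q^\times$ (where the hypothesis on $\eta$ enters, exactly as in your well-definedness check on $H=F^\times\prod_w\mathcal O_{F_w}^\times$) and that its cokernel is the divisor class group, and then extends the descended character so as to be trivial on a lift of a generator, which is the same finite-order adjustment you perform by twisting by a character of $\operatorname{Pic}(C)$ at an idele of minimal positive degree. Your version is marginally more careful about the torsion part of $\operatorname{Pic}(C)$, but the decomposition and the key steps coincide with the paper's.
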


\begin{proof} Consider the map \[\mathcal O_{F_v}^\times \to  F^{\times} \backslash \mathbb A_F^\times / \prod_{ \substack{ w \in |C| \\ w\neq v}} \mathcal O_{F_w}^\times.\] Its kernel is  $F^\times \cap \prod_{ \substack{ w \in |C| }} \mathcal O_{F_w}^\times = \mathbb F_q^\times$. Its cokernel is $  F^{\times} \backslash \mathbb A_F^\times / \prod_{  w \in |C| } \mathcal O_{F_w}^\times =\mathbb Z$. So any continuous character of $\mathcal O_{F_v}^\times$, trivial on the kernel, can be extended to a continuous character of $\mathbb A_F^\times / \prod_{ \substack{ w \in |C| \\ w\neq v}} \mathcal O_{F_w}^\times$ that is trivial on some fixed inverse image of the generator of $\mathbb Z$. Let $\theta$ be the extension in this way of the restriction of $\eta$ to $\mathcal O_{F_v}^\times$. Because the restriction of $\eta$ to $\mathcal O_{F_v}^\times $ is a continuous character of a compact group, it has finite order. Because $\theta$ is trivial on some element whose image generates the quotient group, the order of $\theta$ equals the order of $\eta$ and is finite. \end{proof}

 \begin{lemma}\label{Atkin-Lehner-Swap} Let $f$ be a cuspidal newform of level $N$ whose central character $\eta$ has finite order. Let $v$ be a closed point in the support of $N$. Assume that for restriction of $\eta$ to the global units $\mathbb F_q^\times \subset F_v^\times$ is trivial.
 
 Then there exists a cuspidal newform $f'$ of level $N$ such that  for all $\begin{pmatrix} a & b \\ c & d \end{pmatrix} \in GL_2(\mathbb A_F)$,
\begin{equation}\label{f-f'-relation} \left| f \left( \begin{pmatrix} a & b \\ c & d \end{pmatrix} \right) \right| = \left|  f' \left( \begin{pmatrix} a & b \\ c & d \end{pmatrix}  \begin{pmatrix} 0 &  \pi_v^{-c_v}  \\  1& 0 \end{pmatrix} \right)\right|\end{equation} 
and $|C_f|=|C_{f'}|$. Furthermore, the central character of $f'$ has finite order,  order and agrees with $\eta$ when restricted to the units of any place except $v$, and agrees with $\eta^{-1}$ when restricted to the units of $v$.

\end{lemma}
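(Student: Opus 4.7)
The plan is to build $f'$ by combining right-translation by $w_v := \begin{pmatrix} 0 & \pi_v^{-c_v} \\ 1 & 0 \end{pmatrix}$ (viewed as an adelic matrix with nontrivial component only at $v$) with a twist by a finite-order Hecke character that corrects the character behavior at $v$. The hypothesis that $\eta$ is trivial on $\mathbb F_q^\times \subseteq F_v^\times$ is exactly what allows Lemma \ref{character-existence-lemma} to apply, producing a finite-order character $\theta: F^\times \backslash \mathbb A_F^\times \to \mathbb C^\times$ that is unramified outside $v$ and satisfies $\theta|_{\mathcal O_{F_v}^\times} = \eta|_{\mathcal O_{F_v}^\times}$. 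I would then set
\[ f'(\bfg) := \theta(\det \bfg)\, f(\bfg w_v). \]
Left $GL_2(F)$-invariance is immediate from the corresponding property of $f$ and the fact that $\theta$ is trivial on $F^\times$.

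The main verification is right $\Gamma_1(N)$-invariance. Outside $v$ the matrix $w_v$ is the identity, so only the $v$-component requires attention. For $\gamma_v = \begin{pmatrix} a & b \\ c & d \end{pmatrix}$ in the $v$-factor of $\Gamma_1(N)$, a direct computation gives
\[ w_v^{-1} \gamma_v w_v = \begin{pmatrix} d & c\pi_v^{-c_v} \\ \pi_v^{c_v} b & a \end{pmatrix} = a \cdot \gamma'_v, \]
where $\gamma'_v$ is verified entry-by-entry to lie in $\Gamma_1(N)$ at $v$ (bottom-right $= 1$, bottom-left $\in \pi_v^{c_v}\mathcal O_v$, the matrix is in $GL_2(\mathcal O_v)$). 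Letting $\sigma(\gamma) \in \mathbb A_F^\times$ denote the adele with $v$-component $a$ and $1$ elsewhere, one gets
\[ f(\bfg\gamma w_v) = f(\bfg w_v\cdot \sigma(\gamma)\cdot \gamma') = \eta(\sigma(\gamma))^{-1} f(\bfg w_v). \]
Since $\theta$ is unramified away from $v$ and $\det\gamma_w$ is a unit at every $w$, one has $\theta(\det\gamma) = \theta_v(\det\gamma_v) = \theta_v(a) = \eta_v(a) = \eta(\sigma(\gamma))$, where $\det\gamma_v \equiv a \pmod{\pi_v^{c_v}}$ and $\theta_v = \eta_v$ on $\mathcal O_{F_v}^\times$. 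The two character factors cancel, giving $f'(\bfg\gamma) = f'(\bfg)$. A parallel calculation shows the central character of $f'$ is $\eta\theta^{-2}$, which on units at $w \neq v$ equals $\eta_w$ (as $\theta_w$ is unramified) and on units at $v$ equals $\eta_v^{-1}$ (as $\theta_v = \eta_v$ there). Cuspidality is preserved by multiplication by a character of $\det$ and by right-translation. The Hecke operators at $w \nmid N$ commute with both operations (as $\theta_w$ is unramified unitary and $w_v$ has trivial component at $w$), so $f'$ is a Hecke eigenform; newness follows because applying the same construction to $f'$ with the character $\theta^{-1}$ recovers a nonzero scalar multiple of $f$, making the operation essentially involutive on cuspidal newforms of level $N$.

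For the identity $|f(\bfg)| = |f'(\bfg w_v)|$, compute using $w_v^2 = \pi_v^{-c_v} I$ and the central character of $f$:
\[ f'(\bfg w_v) = \theta(\det(\bfg w_v)) f(\bfg w_v^2) = \theta(\det \bfg)\,\theta(\det w_v)\,\eta_v(\pi_v)^{c_v}\, f(\bfg). \]
Since $\theta$ and $\eta$ have finite order (hence all character values are roots of unity), every scalar prefactor has modulus $1$ and the identity follows. For $|C_f| = |C_{f'}|$, I would substitute an upper-triangular matrix into the formula $f'(\bfg) = \theta(\det\bfg) f(\bfg w_v)$, reduce $\bfg w_v$ to upper-triangular form using the $GL_2(F)$- and $\Gamma_1(N)$-invariance, and compare the resulting Whittaker expansion term-by-term against that of $f$ provided by Lemma \ref{Drinfeld-formula}. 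By uniqueness in that lemma, the sheaf $\mathcal F'$ attached to $f'$ must be the twist $\mathcal F \otimes \mathcal L_\theta$ of $\mathcal F$ by the rank-one Galois representation associated to $\theta$; since $\theta$ is finite-order, $\mathcal L_\theta$ is pure of weight $0$ with Frobenius eigenvalues of absolute value $1$, so $|r_{\mathcal F'}(D)| = |r_\mathcal F(D)|$ on the divisors that appear. The ratio $C_{f'}/C_f$ then reduces to a product of roots of unity coming from $\theta$ and a local Whittaker-normalization factor at $v$ arising from the Atkin-Lehner involution on the local component $\pi_v$; this factor has absolute value $1$ because the involution preserves the unitary structure of $\pi_v$.

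The main obstacle will be the careful bookkeeping at $v$ in this final step — identifying the local Atkin-Lehner normalization factor and confirming that it is a unit complex number — while the away-from-$v$ portion of the Whittaker comparison is straightforward because $\theta$ is unramified there.
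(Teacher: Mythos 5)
Your construction of $f'$ coincides with the paper's: take $\theta$ from Lemma \ref{character-existence-lemma}, multiply by $\theta(\det)$, and right-translate by the Atkin--Lehner element; your conjugation computation for right $\Gamma_1(N)$-invariance is correct (it silently uses that $\eta_v$, hence $\theta_v$, is trivial on $1+\pi_v^{c_v}\mathcal O_{F_v}$, which follows from invariance of $f$ under scalar matrices lying in $\Gamma_1(N)$ -- worth a sentence). The two remaining claims, however, are not actually established. For newness, you argue that the operation is ``essentially involutive'' because applying it to $f'$ with $\theta^{-1}$ returns a scalar multiple of $f$. That only shows injectivity on level-$N$ forms; it does not rule out that $f'$ is an oldform, i.e.\ that the conductor of the representation $\pi\otimes(\theta\circ\det)$ drops at $v$ (away from $v$ the twist is unramified, so only $v$ is at issue), since the involution is applied to $f'$ itself rather than to the hypothetical lower-level newvector. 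The paper closes exactly this point on the Galois side: $\theta_v$ agrees on inertia with the determinant of the parameter of $f$, so the twist is, on inertia at $v$, the dual representation, which has the same Artin conductor; alternatively one can repeat your conjugation computation at the putative lower level $N''$ to produce a $\Gamma_1(N'')$-invariant vector in $\pi$, contradicting that $\pi$ has conductor exactly $N$. Some such argument is needed.

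For $|C_f|=|C_{f'}|$, your reduction to a local Atkin--Lehner constant at $v$ is reasonable, but the concluding assertion that it has modulus one ``because the involution preserves the unitary structure of $\pi_v$'' does not follow as stated: $C_f$ is pinned down by the Whittaker normalization $W_v(1)=1$ (Lemma \ref{Drinfeld-formula}), not by unitary norms, so an isometry statement alone does not control the constant $\theta_v(\det w_v)\,W_v(w_v)$ relating the translated-and-twisted newvector to the normalized newvector of $\pi_v\otimes\theta_v$. To finish along your lines you would need the additional input that the $L^2$-norm of the $W_v(1)=1$-normalized newvector is the same for $\pi_v$ and its twist (true, since it is governed by the adjoint $L$-factor and the conductor, both twist-invariant), which is a genuine extra computation. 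The paper's route is shorter and global: $\|f\|_{L^2}=\|f'\|_{L^2}$ because $|f'|$ is $|f|$ composed with a right translation, and $\operatorname{ad}\mathcal F'=\operatorname{ad}\mathcal F$, so Lemma \ref{rankin-selberg-normalized} gives $|C_f|=|C_{f'}|$ at once. So: same construction, but both the newform property and the equality of Whittaker constants are gaps in your write-up that need one of these arguments.
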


\begin{proof} By Lemma \ref{character-existence-lemma} there exists a finite order character $\theta$ of $\mathbb A_F^\times / F^\times$, unramified away from $v$, that agrees with $\eta$ on $\mathcal O_{F_v}^\times$. Then \[ f \left( \begin{pmatrix} a & b \\ c & d \end{pmatrix} \right) \theta( ad-bc) \] is left nvariant under $GL_2(F)$, right invariant under $\Gamma_1(N)$ away from $v$ and invariant under the subgroup of   $\begin{pmatrix} a & b \\ c & d \end{pmatrix} \in GL_2( \mathcal O_{ F_v})$ where $c \equiv 0 \mod \pi_v^{c_v}$, $a\equiv 1 \mod \pi_v^{c_v}$. 

Let \[ f' \left( \begin{pmatrix} a & b \\ c & d \end{pmatrix} \right) =  f \left( \begin{pmatrix} a & b \\ c & d \end{pmatrix}  \begin{pmatrix} 0 & 1 \\ \pi_v^{c_v} & 0 \end{pmatrix} \right) \theta( ad-bc) .\]

Then $f'$ is left invariant under $GL_2(F)$, right invariant under $\Gamma_1(N)$, cuspidal, and a Hecke eigenform. Its Langlands parameter is simply the parameter $V$ of $f$ twisted by the character $\theta^{-1}$, which as an inertia representation at $v$ is the dual representation. Thus, because the restriction to inertia of the Langlands parameter of $f$ agrees with either the Langlands parameter of $f$ or its dual at every place, the conductor of the Langlands parameter of $f'$ is $N$, and so the level of $f'$ is equal to the conductor of the Langlands parameter of $f'$, and thus $f'$ is a newform.

Since $|\theta(ad-bc)|=1$, we have \eqref{f-f'-relation}.

The $L^2$-norm of $f$ equals the $L^2$ norm of $f'$, and the adjoint $L$-function of $f$ equals the adjoint $L$-function of $f'$. By Lemma \ref{rankin-selberg-normalized}, it follows that $|C_f|=|C_{f'}|$.

The central character of $f'$ is $\eta \theta^{-2}$, which agrees with $\eta$ on the units of every place other than $v$ since $\theta$ is unramified away from $v$, and agrees with $\eta^{-1}$ when restricted to the units of $v$ by assumption on $\theta$.

\end{proof}

\begin{lemma}\label{e-switching-lemma} Fix a closed point $v \in |N|$ and $(e_x): \Sing \to \mathbb N$ with $e_x\leq c_x$.

Let $\tilde{e}_x $ equal $ e_x$ for $x$ not lying over $v$ and equal $c_x -e_x$ for $x$ lying over $v$. Then

\[ h \left(  \begin{pmatrix} a & b \\ c & d \end{pmatrix}  \begin{pmatrix} 0 &  \pi_v^{-c_v}  \\  1& 0 \end{pmatrix} , (f_1:f_2), (e_x)\right) = h \left( \begin{pmatrix} a & b \\ c & d \end{pmatrix} , (f_1:f_2), (\tilde{e}_x)\right) . \]
 \end{lemma}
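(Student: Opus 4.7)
The plan is to verify the identity locally at each $\overline{\mathbb F}_q$-point $x$ of $C$, since $h$ is defined via degrees of divisors and therefore decomposes as a sum of local contributions indexed by such $x$.

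The first step is to pin down the convention: $\begin{pmatrix} 0 & \pi_v^{-c_v} \\ 1 & 0 \end{pmatrix}$ is to be read as the element of $GL_2(\mathbb A_F)$ which is $\begin{pmatrix} 0 & \pi_v^{-c_v} \\ 1 & 0 \end{pmatrix} \in GL_2(F_v)$ at the place $v$ and the identity matrix at every other place---this is the interpretation consistent with the Atkin-Lehner manipulation used in Lemma \ref{Atkin-Lehner-Swap}. With this convention, $M$ times this matrix agrees with $M$ at every place $w \neq v$, and since $\tilde e_x = e_x$ for $x$ not lying over $v$, the local contributions of such $x$ to the two sides coincide automatically.

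The real content is the calculation at $x$ lying over $v$. The modified local entries at $v$ are $a' = b$, $b' = a\pi_v^{-c_v}$, $c' = d$, $d' = c\pi_v^{-c_v}$, so
\[
 a'f_1+c'f_2 = bf_1+df_2,\quad b'f_1+d'f_2 = \pi_v^{-c_v}(af_1+cf_2),\quad a'd'-b'c' = -(ad-bc)\pi_v^{-c_v}.
\]
Set $p = v_x(af_1+cf_2)$ and $q = v_x(bf_1+df_2)$. Using $v_x(\pi_v^{-c_v}) = -c_x$ (as $c_x = c_v$ for $x$ over $v$) and substituting $\tilde e_x = c_x - e_x$, the local contribution of $x$ to the left-hand side simplifies to $2\min(q,\, p - \tilde e_x) + \tilde e_x - v_x(ad-bc)$, while the local contribution of $x$ to the right-hand side is $2\min(p,\, q + \tilde e_x) - \tilde e_x - v_x(ad-bc)$. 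Equality is then immediate from the translation invariance of the minimum, $\min(q,\, p-\tilde e_x) + \tilde e_x = \min(q+\tilde e_x,\, p)$.

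There is no real obstacle here beyond being explicit about the convention that the Atkin-Lehner matrix acts purely at the place $v$; once that is fixed the lemma reduces, at each $\overline{\mathbb F}_q$-point over $v$, to a single line of min-arithmetic, and everywhere else is literally unchanged.
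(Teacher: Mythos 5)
Your proof is correct and follows essentially the same route as the paper: reduce to local contributions at each $\overline{\mathbb F}_q$-point and verify the min-arithmetic identity at points over $v$, the other points contributing identically. Your explicit flagging of the convention that $\begin{pmatrix} 0 & \pi_v^{-c_v} \\ 1 & 0 \end{pmatrix}$ is to be read as an element of $GL_2(F_v) \hookrightarrow GL_2(\mathbb A_F)$, i.e.\ the identity at places $w \neq v$, is a helpful clarification that the paper leaves implicit but does genuinely rely on in the assertion that the contributions at places away from $v$ coincide (the adelic swap matrix would not give the same answer at ramified $w \neq v$).
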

 
 \begin{proof} We must show that
 \[ 2\deg ( \min( \operatorname{div} (af_1+cf_2), \operatorname{div}(bf_1+ df_2)+ \sum_x e_x [x] )) - \deg ( \operatorname{div}(ad-bc) + \sum_x e_x[x])\]
  \[= 2\deg ( \min(\operatorname{div}(bf_1+ df_2),  \operatorname{div} (\pi_v^{-c_v} af_1+\pi_v^{-c_v} cf_2)+ \sum_x \tilde{e}_x [x] )) - \deg ( \operatorname{div}( \pi_v^{-c_v} (ad-bc) + \sum_x \tilde{e}_x[x]).\]
  
  It suffices to show that the contributions to the degrees from the valuations at each place are equal. At places not over $v$, this is immediate, so fix $x$ over $v$. We must show
  \[ 2  \min ( v_x ( af_1+cf_2), v_x(bf_1+df_2) +e_x)  - (v_x(ad-bc) + e_x)\]\[  = 2 \min( v_x(bf_1+df_2), v_x(a f_1 + c f_2) - c_v + c_v-e_x) - ( v_x(ad-bc)  -  c_v +c_v-e_x)  .\]
  
 This is straightforward because
 \[  2 \min( v_x(bf_1+df_2), v_x(a f_1 + c f_2) - c_v + c_v-e_x) - ( v_x(ad-bc)  -  c_v +c_v-e_x) \] \[=2 \min( v_x(bf_1+df_2), v_x(a f_1 + c f_2) -e_x) - ( v_x(ad-bc) -e_x) \] \[  =2 \min( v_x(bf_1+df_2) +e_x, v_x(a f_1 + c f_2) ) - ( v_x(ad-bc) -e_x)-2e_x \] \[=  2  \min ( v_x ( af_1+cf_2), v_x(bf_1+df_2) +e_x)  -( v_x(ad-bc) + e_x) .\]

 \end{proof}

\begin{lemma}\label{Atkin-Lehner-bound} Let $f$ be a cuspidal newform of level $N$ whose central character $\eta$ has finite order.

Assume that, for all closed points $v$ in the support of $N$, the restriction of $\eta$ to the global units $\mathbb F_q^\times \subset F_v^\times$ is trivial. Then

\[ \frac{  \left| f \left( \begin{pmatrix} a & b \\ c & d \end{pmatrix} \right) \right| }{   |C_f| } \leq  2 ^{ \deg N-3} q^{1/2}  +\] \[\hspace*{-1cm} q  \sum_{\substack{ (f_1:f_2) \in \mathbb P^1(\overline{\mathbb F}_q(C)) \\ h^* (f_1:f_2) \geq 2 }} \sum_{\substack{ (e_x ')\in S_{(f_1:f_2)} } }\left (\prod_{x \in \Sing } {c_x \choose e_x'} \right)   \mathcal B\left( h^*(f_1:f_2) -2 - \sum_x |e_x' -e_x(f_1:f_2)|\right)  \] \[\hspace*{-1cm} -q   \max_{\substack{ (f_1:f_2) \in \mathbb P^1(\mathbb F_q(C)) \\ h^* (f_1:f_2) \geq 2 \\ e_x(f_1:f_2) \in \{0,c_x\} \textrm{ for all }x}} \sum_{\substack{ (e_x ')\in S_{(f_1:f_2)}}} \left (\prod_{x \in \Sing } {c_x \choose e_x'} \right)   \mathcal B\left( h^*(f_1:f_2) -2 - \sum_x |e_x' -e_x(f_1:f_2)| \right)  .\] \end{lemma}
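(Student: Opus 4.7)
The strategy is to apply Lemma \ref{cusp-newform-bound} not just to $f$ itself but to each Atkin-Lehner twist $f_T$ (for $T \subseteq |N|$) at the correspondingly translated matrix. Each such application yields a bound in which one particular $(f_1:f_2)$ gets excluded from the sum, and by choosing $T$ to exclude the largest contributing term we obtain the improvement stated in the lemma. Concretely, for each $T \subseteq |N|$, iterate Lemma \ref{Atkin-Lehner-Swap} over $v \in T$ to produce a cuspidal newform $f_T$ of level $N$ with $|C_{f_T}| = |C_f|$ and $|f(\gamma)| = |f_T(\gamma M_T)|$ for every $\gamma \in GL_2(\mathbb{A}_F)$, where $M_T \in GL_2(\mathbb{A}_F)$ has $v$-component $\bigl(\begin{smallmatrix} 0 & \pi_v^{-c_v} \\ 1 & 0 \end{smallmatrix}\bigr)$ for $v \in T$ and identity elsewhere. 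Since each Atkin-Lehner step only inverts the restriction of the central character to $\mathcal{O}_{F_v}^\times$, the triviality of its restriction to $\mathbb{F}_q^\times \subset F_v^\times$ is preserved at every $v \in |N|$, so Lemma \ref{cusp-newform-bound} applies to $f_T$.

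Applied to $f_T$ at $\gamma M_T$, Lemma \ref{cusp-newform-bound} produces a bound summed over $(f_1:f_2)$ with $h^*_T(f_1:f_2) \geq 2$ and $(e_x^T(f_1:f_2)) \neq (c_x)$, where $h^*_T$ and $e_x^T$ refer to the height data of the translated matrix. Iterated application of Lemma \ref{e-switching-lemma} over $v \in T$ identifies $h^*_T = h^*$ and shows $e_x^T(f_1:f_2) = e_x(f_1:f_2)$ for $x$ not over $T$ and $e_x^T(f_1:f_2) = c_x - e_x(f_1:f_2)$ for $x$ over $T$. Reindex the inner sum over $(e_x')$ by the involution $e_x' \mapsto e_x'' := c_x - e_x'$ at each $x$ over $T$; this preserves $\binom{c_x}{e_x'}$, sends $|e_x' - e_x^T(f_1:f_2)|$ to $|e_x'' - e_x(f_1:f_2)|$, and matches the defining constraints of the summation set (the matrix-dependent constraint $\sum e_x' \leq n$ being automatic wherever $\mathcal{B}$ contributes nontrivially, by Definition \ref{mdalpha-notation}). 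After reindexing, the bound reads
\[ \frac{|f(\gamma)|}{|C_f|} \leq 2^{\deg N - 3} q^{1/2} + q\,\Sigma_{\mathrm{full}} - q\,\Sigma_{\lambda(T)}, \]
where $\Sigma_{\mathrm{full}}$ is the first displayed $q$-sum in the statement and $\lambda(T) \in \mathbb{P}^1(\overline{\mathbb{F}}_q(C))$ is the unique element---if it exists, uniqueness by Lemma \ref{height-unique-cusp}---with $e_x(\lambda(T)) = c_x$ for $x$ not over $T$ and $e_x(\lambda(T)) = 0$ for $x$ over $T$; if no such $\lambda(T)$ exists then $\Sigma_{\lambda(T)}$ is interpreted as zero.

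To conclude, for any $\lambda \in \mathbb{P}^1(\mathbb{F}_q(C))$ with $h^*(\lambda) \geq 2$ and $e_x(\lambda) \in \{0, c_x\}$ for all $x \in \Sing$, the $\Gal(\mathbb{F}_q)$-invariance of $\lambda$ combined with Lemma \ref{height-orbit-size} forces $e_x(\lambda)$ itself to be $\Gal$-invariant, hence constant across each Galois orbit over a place $v \in |N|$; defining $T_\lambda = \{v \in |N| : e_x(\lambda) = 0 \text{ for } x \text{ over } v\}$ then yields $\lambda(T_\lambda) = \lambda$. Applying the bound of the previous paragraph with $T = T_\lambda$ and maximizing over such $\lambda$ (with the convention that the max over an empty set is zero, in which case the plain bound of Lemma \ref{cusp-newform-bound} is recovered) produces the lemma. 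The main bookkeeping burden lies in ensuring that the involutive reindexing faithfully transports every constraint defining $S_{(f_1:f_2)}$---in particular the matrix-dependent degree constraint, which changes under Atkin-Lehner---and this is where the observation about vanishing of $\mathcal{B}$ becomes essential; the remaining manipulations are straightforward symbolic rearrangement.
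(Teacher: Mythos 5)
Your proposal is correct and follows essentially the same route as the paper: the paper removes the dominant ``corner'' term by iterating Lemma \ref{Atkin-Lehner-Swap} one place at a time (an induction on the number of $x$ with $e_x(f_1:f_2)=0$ for the maximizing cusp), while you compose the same swaps over a subset $T\subseteq |N|$ in a single step and then optimize over $T$, invoking Lemma \ref{e-switching-lemma} and Lemma \ref{cusp-newform-bound} exactly as the paper does, so the difference is purely organizational. Your explicit handling of the matrix-dependent constraint $\sum_x e_x'\le n$ in $S_{(f_1:f_2)}$ (via the vanishing of $\mathcal B$ together with Lemmas \ref{height-unique-cusp} and \ref{height-volume-comparison}) makes precise a step the paper passes over when it simply asserts that the inner sums over $S_{(f_1:f_2)}$ agree before and after the Atkin--Lehner translation.
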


\begin{proof}   Fix $(f_1:f_2)$ maximizing \[\sum_{\substack{ (e_x ')\in S_{(f_1:f_2)} }} \left (\prod_{x \in \Sing } {c_x \choose e_x'} \right)   \mathcal B( h^*(f_1:f_2) -2 - \sum_x |e_x' -e_x(f_1:f_2)|)  .\] We prove the stated bound by induction on the number of geometric points $x$ with $e_x(f_1:f_2)=0$.

First assume that this number is zero. Because there is a unique $(f_1:f_2)$ with $h^*(f_1:f_2) \geq 2$ and $e_x(f_1:f_2)=c_x$ by Lemma \ref{height-unique-cusp}, the stated bound is exactly the bound of Lemma \ref{cusp-newform-bound}, where the $\max$ term cancels the unique term in the sum with $e_x(f_1:f_2)= c_x$.

Next assume that it is positive. By Lemma \ref{height-orbit-size}, because $(f_1:f_2)$ is defined over $\mathbb F_q$, $e_x$ is stable under the Galois action, and so there is some place $v$ with $e_x=0$ for $x \in v$.  By Lemma \ref{Atkin-Lehner-Swap}, there exists a cuspidal newform $f'$ of level $N$ satisfying \eqref{f-f'-relation} and such that $|C_{f'}|= |C_f|$. Furthermore, the central character of $f'$ is trivial on the global units at each place.

 We will prove our bound for $f$ by inductively using the corresponding bound for $f'$.
 
 Here it is necessary to keep track of the dependence, suppressed everywhere else, of $h^*(f_1:f_2)$ and $e_x(f_1:f_2)$ on $ \begin{pmatrix} a & b \\ c & d \end{pmatrix}$, which we will do by writing them as $h^*\left( (f_1:f_2),  \begin{pmatrix} a & b \\ c & d \end{pmatrix}\right)$ and $e_x\left( (f_1:f_2),  \begin{pmatrix} a & b \\ c & d \end{pmatrix}\right)$.
 
 It follows from Lemma \ref{e-switching-lemma} that
 \[ h^*\left( (f_1:f_2),  \begin{pmatrix} a & b \\ c & d \end{pmatrix}\begin{pmatrix} 0 & 1 \\ \pi_v^{c_v} & 0 \end{pmatrix}\right) = h^*\left( (f_1:f_2),  \begin{pmatrix} a & b \\ c & d \end{pmatrix}\right) \]
 and
 \[ e_x\left( (f_1:f_2),  \begin{pmatrix} a & b \\ c & d \end{pmatrix}\begin{pmatrix} 0 & 1 \\ \pi_v^{c_v} & 0 \end{pmatrix} \right) = \tilde{e}_x\left( (f_1:f_2),  \begin{pmatrix} a & b \\ c & d \end{pmatrix}\right) .\]
 Hence the sum over $S_{(f_1:f_2)}$, for a given $(f_1:f_2)$, is the same whether we work with $ \begin{pmatrix} a & b \\ c & d \end{pmatrix}$ or $ \begin{pmatrix} a & b \\ c & d \end{pmatrix}\begin{pmatrix} 0 & 1 \\ \pi_v^{c_v} & 0 \end{pmatrix} $. In particular, the same $(f_1:f_2)$ maximizes this sum in both cases. 
 
 Thus, we may apply the induction hypothesis to get the desired bound for $ f' \left( \begin{pmatrix} a & b \\ c & d \end{pmatrix}  \begin{pmatrix} 0 &  \pi_v^{-c_v}  \\  1& 0 \end{pmatrix} \right) ,$ because the number of $x$ with $e_x \left((f_1:f_2),  \begin{pmatrix} a & b \\ c & d \end{pmatrix}\begin{pmatrix} 0 & 1 \\ \pi_v^{c_v} & 0 \end{pmatrix}\right )=0$ is equal to the number of $x$ with $e_x \left((f_1:f_2),  \begin{pmatrix} a & b \\ c & d \end{pmatrix}\right)=0$ minus the number of $x$ lying over $v$. Then using Equation \eqref{f-f'-relation}, we get the desired bound for $f \left(  \begin{pmatrix} a & b \\ c & d \end{pmatrix}   \right)$, concluding the induction step.\end{proof}

\begin{definition} Let $\mathcal S(a,b)$ be the coefficient of $u^a$ in $\frac{(1+u)^b}{  (1-u)(1+u)^2 (1- (2\sqrt{q}+1) u)}$. \end{definition}

Because this differs from the definition of $\mathcal B(a)$ only in the $(1+u)^b$ term in the numerator, it follows from the binomial theorem that \begin{equation}\label{S-B-relation} \mathcal S(a,b) = \sum_{k=0}^b {b\choose k} \mathcal B(a-k).\end{equation}

\begin{lemma}\label{local-squarefree-bound} Let $f$ be a cuspidal newform of level $N$ whose central character has finite order.

Assume that $N$ is squarefree (i.e. $c_x=1$ for all $x \in \Sing$) and for all closed points $v$ in the support of $N$, the restriction of $\eta$ to the global units $\mathbb F_q^\times \subset F_v^\times$ is trivial. 

Then \begin{equation}\label{eq-local-squarefree} \left| f \left( \begin{pmatrix} a & b \\ c & d \end{pmatrix} \right) \right| \leq  |C_f|  \Biggl( 2 ^{ \deg N-3} q^{1/2} + q \sum_{\substack{ (f_1:f_2) \in \mathbb P^1(\overline{\mathbb F}_q(C)) \\ \deg N /2 \geq h^* (f_1:f_2) \geq 2 }}  \mathcal S( h^*(f_1:f_2) -2 , \deg N ) \Biggr).
\end{equation}
\end{lemma}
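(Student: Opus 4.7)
The plan is to apply Lemma \ref{Atkin-Lehner-bound} and simplify the two ingredients appearing in it --- the inner sum over $(e_x')\in S_{(f_1:f_2)}$, and the subtracted maximum --- using features particular to the squarefree setting.

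For the inner sum, since $c_x=1$, every $e_x'$ lies in $\{0,1\}$, $\prod_{x}\binom{c_x}{e_x'}=1$, and the condition $e_x(f_1:f_2)\in\{0,c_x\}$ appearing in the max of Lemma \ref{Atkin-Lehner-bound} is automatic. Fix $(f_1:f_2)$ with $h^*(f_1:f_2)\ge 2$ and group the sum over $(e_x')\in S_{(f_1:f_2)}$ by the Hamming distance $k:=\sum_x|e_x'-e_x(f_1:f_2)|$. The number of tuples $(e_x')\in\{0,1\}^{\Sing}$ at distance exactly $k$ from $(e_x(f_1:f_2))$ is $\binom{\deg N}{k}$, and the definition of $S_{(f_1:f_2)}$ restricts us to $0\le k\le h^*(f_1:f_2)-2$. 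Dropping the extra constraint $\sum_x e_x'\le n$ only enlarges the sum, so
\[
\sum_{(e_x')\in S_{(f_1:f_2)}}\mathcal B\!\left(h^*(f_1:f_2)-2-\sum_x|e_x'-e_x(f_1:f_2)|\right)\le \sum_{k=0}^{h^*(f_1:f_2)-2}\binom{\deg N}{k}\mathcal B(h^*(f_1:f_2)-2-k),
\]
which by \eqref{S-B-relation} and the convention $\mathcal B(m)=0$ for $m<0$ equals $\mathcal S(h^*(f_1:f_2)-2,\deg N)$.

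For the restriction to $h^*(f_1:f_2)\le \deg N/2$, the key observation is that at most one $(f_1:f_2)$ can violate this inequality. Indeed, applying Lemma \ref{height-unique-cusp} at the tuple $(e_x)=(e_x(f_1:f_2))$ and using Lemma \ref{height-mountain-shape} to rewrite the resulting $h$-value at another $(f_3:f_4)$ gives
\[
h^*(f_1:f_2)+h^*(f_3:f_4)\le \sum_x|e_x(f_1:f_2)-e_x(f_3:f_4)|\le \sum_x c_x=\deg N.
\]
So if $h^*(f_1:f_2)>\deg N/2$ then no other $(f_3:f_4)$ has $h^*(f_3:f_4)>\deg N/2$, and since $h^*$ is Galois-invariant no Galois conjugate of $(f_1:f_2)$ other than itself can, forcing $(f_1:f_2)\in\mathbb P^1(\mathbb F_q(C))$. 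Such an $(f_1:f_2)$ lies in the index set of the subtracted maximum in Lemma \ref{Atkin-Lehner-bound}, hence its contribution to the positive sum is at most that maximum. Writing $T(f_1:f_2)$ for the inner sum and $T_{\max}$ for the subtracted max, we conclude $\sum_{h^*\ge 2}T(f_1:f_2)-T_{\max}\le \sum_{2\le h^*\le \deg N/2}T(f_1:f_2)$, and substituting the $\mathcal S$-bound from the previous paragraph yields \eqref{eq-local-squarefree}.

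The argument is essentially bookkeeping once Lemma \ref{Atkin-Lehner-bound} is in hand. The main conceptual step --- and the one most likely to need care --- is the Galois-invariance argument showing that the $(f_1:f_2)$ with very large $h^*$, if any, is $\mathbb F_q(C)$-rational and can thus be absorbed by the Atkin-Lehner correction, so that one truly can truncate the sum at $h^*(f_1:f_2)\le \deg N/2$.
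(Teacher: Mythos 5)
Your proof is correct and follows essentially the same route as the paper: apply Lemma \ref{Atkin-Lehner-bound}, identify the inner sums over $S_{(f_1:f_2)}$ with $\mathcal S(h^*(f_1:f_2)-2,\deg N)$ via \eqref{S-B-relation} and the binomial count of tuples at Hamming distance $k$, show via Lemmas \ref{height-unique-cusp} and \ref{height-mountain-shape} that at most one $(f_1:f_2)$ has $h^*(f_1:f_2)>\deg N/2$, and use Galois-invariance plus uniqueness to see that such a point is $\mathbb F_q(C)$-rational and hence absorbed by the subtracted maximum. Your only deviations are cosmetic and in the safe direction: you cancel against the max term before bounding the inner sums by $\mathcal S$, and you treat the constraint $\sum_x e_x'\le n$ as giving an inequality rather than the exact match the paper asserts.
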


\begin{proof} Because $c_x=1$ for all $x$, $e_x=0$ or $1$ for all $x$, thus $e_x=0$ or $c_x$ for all $x$. Because of this, Lemma \ref{Atkin-Lehner-bound} reduces to

\begin{equation}\label{squarefree-simplified-bound} \begin{aligned}  \left| f \left( \begin{pmatrix} a & b \\ c & d \end{pmatrix} \right) \right|/ |C_f|  \hspace{300pt}\\ 
\leq    2 ^{ \deg N-3} q^{1/2}  + q  \sum_{\substack{ (f_1:f_2) \in \mathbb P^1(\overline{\mathbb F}_q(C)) \\  h^* (f_1:f_2) \geq 2 }}  \mathcal S( h^*(f_1:f_2) -2 , \deg N )  - q   \max_{\substack{ (f_1:f_2) \in \mathbb P^1(\overline{\mathbb F}_q(C)) \\ h^* (f_1:f_2) \geq 2} } \mathcal S( h^*(f_1:f_2) -2 , \deg N ) .\end{aligned}\end{equation}

This uses \eqref{S-B-relation}, which implies that $\mathcal S(h^*(f_1:f_2)-2, \deg N)$ matches exactly the sum over $(e_x') \in S_{(f_1:f_2)} $, because there are ${\deg N \choose k}$ tuples $(e_x')$ with $\sum_x |e_x(f_1:f_2) -e_x'| = k$. 

Next observe that there is at most one $(f_1:f_2)$ with $\deg N/2 < h^*(f_1:f_2)$: Given two such $(f_1:f_2)$ and $(f_3:f_4)$, we have \[\sum_{x \in \Sing} | e_x(f_1:f_2) - e_x(f_3:f_4) | \leq \sum_{x\in \Sing} 1 = \deg N\] so by  Lemma \ref{height-unique-cusp}. and  Lemma \ref{height-mountain-shape}   \[0 \geq   h \left(  \begin{pmatrix} a, b\\ c,d \end{pmatrix},  (f_1:f_2), (e_x(f_1:f_2)) \right) + h \left(  \begin{pmatrix} a, b\\ c,d \end{pmatrix},  (f_3:f_4), (e_x(f_1:f_2)) \right)\] \[= h^*(f_1:f_2) + h^*(f_3:f_4) -  \sum_{x \in \Sing} | e_x(f_1:f_2) - e_x(f_3:f_4) | > \frac{\deg N}{2} + \frac{\deg N}{2} - \deg N ,\] a contradiction.

Because there is at most one such $(f_1:f_2)$, if there is any, it is Galois-invariant by Lemma \ref{height-orbit-size}. Thus it lies in $\mathbb P^1(\mathbb F_q(T))$ and it is canceled (at least) by the $\max$ term in Equation \eqref{squarefree-simplified-bound}. So we may remove it from the summation, obtaining Equation \eqref{eq-local-squarefree}.\end{proof}

\begin{remark} If we do not make assumptions on the level or central character but assume that  $ h^* (f_1:f_2) \leq \deg N/2$ for all $(f_1:f_2) \in \mathbb P^1 (\mathbb F_q(T))$, a similar bound as Lemma \ref{local-squarefree-bound} holds, for the same logic that there is at most one $(f_1:f_2)$ with $\deg N/2 < h^*(f_1:f_2)$ and therefore it must be defined over $\mathbb F_q(T)$. This is equivalent to assuming that the point $\begin{pmatrix} a & b \\ c & d \end{pmatrix}$ is not too close to any cusp. However, the definition of $\mathcal S(a,b)$ must be modified to depend on $(e_x ( f_1:f_2) )$.\end{remark}

\begin{lemma}\label{height-packing-bound} Assume that $N$ is squarefree. For any $\begin{pmatrix} a& b \\ c& d \end{pmatrix}$, we have 
\[  \sum_{\substack{ (f_1:f_2) \in \mathbb P^1(\overline{\mathbb F}_q(C)) \\  h^* (f_1:f_2) \geq 1 }}  \sum_{k=0}^{ h^* (f_1:f_2)-1} {\deg N \choose k} \leq 2^{\deg N}.\]\end{lemma}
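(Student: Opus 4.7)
The plan is to reinterpret the left-hand side combinatorially as counting pairs $((f_1:f_2), (e_x'))$ subject to a proximity condition, and then to show, using a packing argument, that each $(e_x')$ can be associated to at most one $(f_1:f_2)$. Since squarefreeness of $N$ gives $c_x=1$ for all $x\in\Sing$, both $e_x'$ and $e_x(f_1:f_2)$ lie in $\{0,1\}$, so $(e_x')$ ranges over a set of size $2^{\deg N}$. Moreover, the number of $(e_x')\in\{0,1\}^{\Sing}$ at Hamming distance exactly $k$ from $(e_x(f_1:f_2))$ is $\binom{\deg N}{k}$, so
\[ \sum_{k=0}^{h^*(f_1:f_2)-1} \binom{\deg N}{k} = \#\Bigl\{(e_x')\in\{0,1\}^{\Sing} \ \Big|\ \sum_{x\in\Sing} |e_x(f_1:f_2)-e_x'|<h^*(f_1:f_2)\Bigr\}. \]
Summing over $(f_1:f_2)$ with $h^*(f_1:f_2)\ge 1$ and swapping the order, the left-hand side becomes the number of pairs $((f_1:f_2),(e_x'))$ with $h^*(f_1:f_2)\geq 1$ and $\sum_{x}|e_x(f_1:f_2)-e_x'|<h^*(f_1:f_2)$.

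The key step is to prove that for each fixed $(e_x')$ there is at most one $(f_1:f_2)$ that makes the pair valid. Suppose $(f_1:f_2)\neq (f_3:f_4)$ both satisfy the inequality. Lemma \ref{height-mountain-shape} lets us rewrite the two inequalities as
\[ h^*(f_1:f_2)+h^*(f_3:f_4) > \sum_{x}|e_x(f_1:f_2)-e_x'| + \sum_{x}|e_x(f_3:f_4)-e_x'|, \]
while Lemma \ref{height-unique-cusp} applied to the tuple $(e_x')$, together with Lemma \ref{height-mountain-shape} again, gives the reverse inequality
\[ h^*(f_1:f_2)+h^*(f_3:f_4) \leq \sum_{x}|e_x(f_1:f_2)-e_x'| + \sum_{x}|e_x(f_3:f_4)-e_x'|, \]
which is the desired contradiction.

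Putting these together, the number of pairs is bounded by the number of possible $(e_x')$, which is $2^{\deg N}$, giving the claim. I do not anticipate any real obstacle: the argument is a classical packing/Hamming-ball bound, and all the ingredients (the mountain-shape formula for $h$ and the sub-additivity of heights of distinct cusps) are already in place. The only thing to double-check is the bookkeeping in the binomial identity, which uses squarefreeness of $N$ in a crucial way so that $(e_x')$ really does lie in $\{0,1\}^{\Sing}$ with $|\Sing|=\deg N$.
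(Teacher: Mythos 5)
Your proof is correct and is essentially the paper's own argument: interpret each inner sum as the Hamming ball of radius $h^*(f_1:f_2)-1$ around $(e_x(f_1:f_2))$ in $\{0,1\}^{\Sing}$ via Lemma \ref{height-mountain-shape}, and use Lemma \ref{height-unique-cusp} to see these balls are disjoint, so their total size is at most $2^{\deg N}$. The only difference is that you spell out the disjointness step in more detail than the paper does.
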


\begin{proof} By Lemma \ref{height-mountain-shape}, $\sum_{k=0}^{ h^* (f_1:f_2)-1} {\deg N \choose k} $ is the size of the set of $e_x$ such that \[h \left( \begin{pmatrix} a& b \\ c& d \end{pmatrix}, (f_1:f_2), (e_x)\right)>0.\] By Lemma \ref{height-unique-cusp}, these sets of $(e_x)$ do not overlap, so their total size is at most the total number of $(e_x)$, which is $2^{\deg N}$. \end{proof} 

\begin{lemma}\label{B-increasing} \[ \frac{ \mathcal S(a, \deg N) }{  \sum_{k=0}^{a+1} {\deg N \choose k} }\] is increasing as a function of $a$ when $a$ ranges over integers at least $-1$. \end{lemma}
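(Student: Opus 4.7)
Set $c = 2\sqrt{q}+1$. The plan is to first simplify the generating function defining $\mathcal{S}$, then reduce to a pointwise monotonicity statement via the Cauchy mediant inequality, and finally establish that pointwise statement by induction. The two factors $(1+u)$ in the denominator of the defining generating function cancel with two of the factors in $(1+u)^{\deg N}$, giving
\[
\mathcal{S}(a, \deg N) = [u^a]\frac{(1+u)^{\deg N - 2}}{(1-u)(1-cu)}.
\]
Multiplying by $(1-u)$, the differences $\Delta_k := \mathcal{S}(k,\deg N) - \mathcal{S}(k-1,\deg N) = [u^k](1+u)^{\deg N - 2}/(1-cu)$ satisfy $\Delta_0 = 1$, the recurrence $\Delta_k = c\Delta_{k-1} + \binom{\deg N - 2}{k}$, and $\mathcal{S}(a, \deg N) = \sum_{k=0}^{a}\Delta_k$; in particular $\Delta_k \geq 0$.

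Using the identity $\sum_{k=0}^{a+1}\binom{\deg N}{k} = 1 + \sum_{k=0}^{a}\binom{\deg N}{k+1}$, treat the initial $1$ as a virtual term paired with $\Delta_{-1} := 0$. Applying the Cauchy mediant inequality to the sequences $(0, \Delta_0, \Delta_1, \ldots)$ and $(1, \binom{\deg N}{1}, \binom{\deg N}{2}, \ldots)$ reduces the lemma to showing that $\Delta_k/\binom{\deg N}{k+1}$ is nondecreasing in $k \geq 0$, since the initial ratio $0$ is then automatically dominated by every subsequent (nonnegative) ratio.

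This pointwise inequality, which is equivalent to $(k+2)\Delta_{k+1} \geq (\deg N - k - 1)\Delta_k$, becomes via the recurrence
\[
(k+2)\binom{\deg N - 2}{k+1} + \Delta_k\bigl[(c+1)(k+2) - \deg N - 1\bigr] \geq 0,
\]
which is immediate whenever the bracket is nonnegative. Otherwise I would proceed by induction on $k$, with base case $k = 0$ reducing to $2c + \deg N - 3 \geq 0$. The inductive hypothesis $(k+1)\Delta_k \geq (\deg N - k)\Delta_{k-1}$ combined with $\Delta_k = c\Delta_{k-1} + \binom{\deg N - 2}{k}$ yields the upper bound $\Delta_k \leq \binom{\deg N - 2}{k}(\deg N - k)/(\deg N - k - c(k+1))$, valid throughout this regime (the denominator is positive since the bracket is negative). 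Substituting this bound and applying $\binom{\deg N - 2}{k+1}/\binom{\deg N - 2}{k} = (\deg N - k - 3)/(k+1)$ reduces the inequality to a polynomial condition in $k$, $\deg N$, $c$ which is linear in $c$; at the upper endpoint of the relevant $c$-range the bracket vanishes and the condition becomes the trivial $(k+2)\binom{\deg N - 2}{k+1} \geq 0$, while at $c = 1$ it reduces to a manifestly nonnegative quadratic in $\deg N - k$. Linearity in $c$ then gives nonnegativity throughout the range.

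The main obstacle will be the polynomial bookkeeping in the inductive step. While each manipulation is elementary, verifying nonnegativity at the endpoint $c = 1$ — which amounts to showing $(\deg N - k)^2 - (2k+5)(\deg N - k) + 3(k+1)(k+2) \geq 0$ under the Case-B range constraint $(c+1)(k+2) < \deg N + 1$ — requires a careful completion of the square and a separate check at $k = 0$. Once this verification is carried out, $\Delta_k/\binom{\deg N}{k+1}$ is nondecreasing, and the mediant principle delivers the lemma.
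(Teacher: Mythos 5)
Your approach is genuinely different from the paper's. The paper proves the lemma by induction on $n=\deg N$: using Pascal's rule on both $\mathcal S(a,n)$ and $\sum_{k\le a+1}\binom nk$, it exhibits $F(a,n)$ as a mediant of $F(a-1,n-1)$ and $F(a,n-1)$, so that $F(a,n) \le F(a,n-1) \le F(a+1,n)$. Your route instead fixes $\deg N$, decomposes both numerator and denominator into nonnegative increments $\Delta_k$ and $\binom{\deg N}{k+1}$, and reduces to the pointwise statement that $\Delta_k/\binom{\deg N}{k+1}$ is nondecreasing. The outline is sound: the generating-function simplification, the recurrence $\Delta_k = c\Delta_{k-1}+\binom{\deg N-2}{k}$, and the mediant reduction are all correct. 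The real benefit of your approach is that it localizes the problem at a single index $k$.

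However, there is a concrete arithmetic error in the final verification. You wrote $\binom{\deg N-2}{k+1}/\binom{\deg N-2}{k} = (\deg N - k - 3)/(k+1)$, but the correct ratio is $(\deg N - k - 2)/(k+1)$. This propagates into your Case-B polynomial: setting $m=\deg N-k$ and carrying out the substitution with the correct ratio, the quantity that must be shown nonnegative (after clearing the positive denominator and the positive factor $\binom{\deg N-2}{k}$) simplifies all at once, for every $c$, to
\[
m^2 - (k+3)\,m + 2c(k+1)(k+2),
\]
whose discriminant $(k+3)^2 - 8c(k+1)(k+2)$ is already negative for all $k\ge 0$ once $c\ge 1$. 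So the corrected calculation is actually \emph{simpler} than what you describe: there is no need to split into lower and upper endpoints of a $c$-range, no separate check at $k=0$, and no completion of the square — the quadratic is visibly positive definite. (Your claimed $c=1$ form $m^2-(2k+5)m+3(k+1)(k+2)$, a consequence of the off-by-one error, is \emph{not} always nonnegative — it fails for $k=0$, $m\in(2,3)$ — which would have forced the awkward boundary arguments you foresaw.) With the ratio corrected, your proof closes cleanly and is a valid alternative to the paper's.
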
 

\begin{proof} Let \[F(a, n) =  \frac{ \mathcal S(a, n) }{  \sum_{k=0}^{a+1} {n \choose k} }.\]

We must prove that $F(a,n)$ is increasing as a function of $a$, which we will do by induction on $n$, starting at $n=2$.

By definition we have $\sum_a \mathcal S(a,2) u^a = \frac{1}{ (1-u) (1-(2\sqrt{q}+1)u)}$ so we have \[ \sum_a (\mathcal S(a,2) - \mathcal S(a-1,2)) u^a  = \frac{1}{ 1-(2\sqrt{q} +1)u}\] and thus  \[\mathcal S(a,2) = \mathcal S(a-1,2)+ (2\sqrt{q} +1)^a > \mathcal S(a-1,2).\] Now for $a$ at least $1$, the denominator $\sum_{k=0}^{a+1} {2 \choose k} $ is constant, so the ratio $F(a,2) $ is increasing for $a$ at least $1$. For $a=-1, 0,1$ the ratio $F(a,2)$ is $0, 1/3, (2\sqrt{q}+2)/4$ respectively so in fact the sequence is increasing for all $a$.

For the induction step, we use the identities \[ \mathcal S(a, n ) = \mathcal S(a-1, n-1)+ \mathcal S(a, n-1)\] and \[\sum_{k=0}^{a+1} {n \choose k} = \sum_{k=0}^{a} {n-1 \choose k} + \sum_{k=0}^{a +1} { n-1 \choose k} .\] These identities make $F(a,n)$ a convex combination of $F(a-1,n-1)$ and $F (a, n)$, so assuming $F (a,n-1)$ is increasing in $a$, we have $F ( a-1, n-1) < F (a, n) < F (a, n-1)$. This shows that $F(a,n)$ is increasing for $a$ at least zero, and because $\mathcal S(-1,n)=0$ while $\mathcal S(0,n)=1$, $F(a,n)$ is increasing for $a$ at least $-1$, giving the induction step.\end{proof}

\begin{lemma}\label{B-binomial-estimate} We have \[ \mathcal S(a,b) \leq \frac{1}{ 2\sqrt{q}} \frac{ (2\sqrt{q}+2)^{b-2} }{ (2\sqrt{q}+1)^{b-3-a}}  .\]\end{lemma}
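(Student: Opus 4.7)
The plan is to do a direct generating-function calculation via partial fractions, exploiting the cancellation between $(1+u)^b$ in the numerator of the defining generating series and the $(1+u)^2$ factor in the denominator.

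First I would observe that
\[ \frac{(1+u)^b}{(1-u)(1+u)^2(1-(2\sqrt{q}+1)u)} = \frac{(1+u)^{b-2}}{(1-u)(1-(2\sqrt{q}+1)u)}, \]
so $\mathcal{S}(a,b)$ is the coefficient of $u^a$ in the right-hand side. Next I apply partial fractions to decompose
\[ \frac{1}{(1-u)(1-(2\sqrt{q}+1)u)} = -\frac{1}{2\sqrt{q}}\cdot\frac{1}{1-u} + \frac{2\sqrt{q}+1}{2\sqrt{q}}\cdot\frac{1}{1-(2\sqrt{q}+1)u}. \]
Expanding each geometric series and multiplying by $(1+u)^{b-2}$ gives
\[ \mathcal{S}(a,b) = -\frac{1}{2\sqrt{q}}\sum_{k=0}^{a}\binom{b-2}{k} + \frac{2\sqrt{q}+1}{2\sqrt{q}}\sum_{k=0}^{a}\binom{b-2}{k}(2\sqrt{q}+1)^{a-k}. \]

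Assuming $b\geq 2$ (the only case needed since $\deg N \geq 2$ in the applications, coming from the requirement that cuspidal newforms exist on $\mathbb P^1$ with squarefree level), the first sum is a sum of nonnegative binomial coefficients, so the first term is nonpositive and may be dropped. For the second term, I factor out $(2\sqrt{q}+1)^a$ and apply the binomial theorem to bound
\[ \sum_{k=0}^{a}\binom{b-2}{k}(2\sqrt{q}+1)^{-k} \leq \sum_{k=0}^{b-2}\binom{b-2}{k}(2\sqrt{q}+1)^{-k} = \left(1 + \frac{1}{2\sqrt{q}+1}\right)^{b-2} = \frac{(2\sqrt{q}+2)^{b-2}}{(2\sqrt{q}+1)^{b-2}}. \]
Combining these gives
\[ \mathcal{S}(a,b) \leq \frac{2\sqrt{q}+1}{2\sqrt{q}}\cdot(2\sqrt{q}+1)^{a}\cdot\frac{(2\sqrt{q}+2)^{b-2}}{(2\sqrt{q}+1)^{b-2}} = \frac{1}{2\sqrt{q}}\cdot\frac{(2\sqrt{q}+2)^{b-2}}{(2\sqrt{q}+1)^{b-3-a}}, \]
which is the desired inequality.

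The calculation is straightforward, so there is no real obstacle. The one point that requires a moment's care is justifying that the contribution from the $\frac{1}{1-u}$ term can indeed be dropped: this uses that the binomial coefficients $\binom{b-2}{k}$ are nonnegative for $k \leq b-2$, hence one needs $b\geq 2$. If $b<2$ arose (which it does not in the applications in Section \ref{Atkin-Lehner}), the binomial coefficients $\binom{b-2}{k}$ would alternate in sign and a slightly more refined absolute-value estimate would be needed, but this case can be safely ignored.
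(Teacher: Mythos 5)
Your proof is correct and is essentially the paper's argument: both reduce $\mathcal S(a,b)$ to the coefficient of $u^a$ in $\frac{(1+u)^{b-2}}{(1-u)(1-(2\sqrt{q}+1)u)}$, split off the $\frac{1}{1-u}$ part via a partial-fraction identity (the paper's version carries an extra factor of $\frac{1}{u}$, which is the same decomposition), drop that nonnegative contribution, and bound the remaining truncated binomial sum by the full sum $\bigl(1+\tfrac{1}{2\sqrt{q}+1}\bigr)^{b-2}$. Your side remark about needing $b\geq 2$ matches the paper's implicit assumption (indeed $\deg N\geq 4$ there), so there is no gap.
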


\begin{proof}  $\mathcal S(a, b)$ is the coefficient of $u^a$ in \[\frac{ (1+u)^{b-2} } { (1-u) (1 - (2\sqrt{q}+1) u)} .\] We have \[\frac{1}{ (1-u) (1 - (2\sqrt{q}+1) u)} = \frac{1}{2 \sqrt{q} u } \left( \frac{1}{ 1- (2\sqrt{q}+1) u} - \frac{1}{ 1-u} \right),\] so this is $1/(2\sqrt{q})$ times the coefficient of $u^{a+1} $ in  $\frac{ (1+u)^{b-2} }{ 1- (2\sqrt{q}+1) u}$ minus the coefficient of $u^{a+1}$ in $\frac{ (1+u)^{b-2} }{ 1-u}$. The coefficient of $u^{a+1}$ in  $\frac{ (1+u)^{b-2} }{ 1-u}$ is nonnegative so subtracting it can only lower our bound and thus we can ignore it.

The coefficient of $u^{a+1}$ in  $\frac{ (1+u)^{b-2} }{ 1- du} $ is \[  \sum_{k=0}^{\min( b-2, a+1)} {b-2 \choose k} d^{a+1-k} \leq \sum_{k=0}^{b-2}  {b-2 \choose k} d^{a+ 1-k}= \frac{(d+1)^{b-2}}{d^{b-3-a}}\] so this is at most

\[ \frac{1}{ 2\sqrt{q}} \frac{ (2\sqrt{q}+2)^{b-2} }{ (2\sqrt{q}+1)^{b-3-a}} .\]

\end{proof}

\begin{lemma}\label{final-Whittaker-bound} Let $f$ be a cuspidal newform of level $N$ whose central character has finite order.

Assume that $N$ is squarefree (i.e. $c_x=1$ for all $x \in \Sing$) and for all closed points $v$ in the support of $N$, the restriction of $\eta$ to the global units $\mathbb F_q^\times \subset F_v^\times$ is trivial.

Then \[ \left| f \left( \begin{pmatrix} a & b \\ c & d \end{pmatrix} \right) \right|=  O \left( |C_f | \left( \frac{2 \sqrt{q}+2}{\sqrt{ 2 \sqrt{q} +1}}\right)^{\deg N} \right).\]where the constant in the big $O$ is completely uniform. \end{lemma}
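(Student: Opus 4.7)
The plan is to start from the pointwise bound established in Lemma \ref{local-squarefree-bound} and apply the three auxiliary estimates in Lemmas \ref{height-packing-bound}, \ref{B-increasing}, and \ref{B-binomial-estimate}, which are tailored precisely for this purpose. Writing $n = \deg N$, the nontrivial task is to bound the sum
\[ \Sigma := \sum_{\substack{(f_1:f_2) \in \mathbb P^1(\overline{\mathbb F}_q(C)) \\ n/2 \geq h^*(f_1:f_2) \geq 2}} \mathcal{S}(h^*(f_1:f_2) - 2, n) \]
by $O\bigl((2\sqrt{q}+2)^n / (2\sqrt{q}+1)^{n/2}\bigr)$, after which the additive term $q^{1/2} 2^{n-3}$ will be absorbed.

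To control $\Sigma$, I would first normalize each summand using Lemma \ref{B-increasing}. Set $a^* = \lfloor n/2 \rfloor - 2$, which is the maximum possible value of $h^* - 2$ in the sum (since $h^*$ is integer-valued and bounded by $n/2$). By the monotonicity in Lemma \ref{B-increasing},
\[ \mathcal{S}(h^* - 2, n) \leq \frac{\mathcal{S}(a^*, n)}{\sum_{k=0}^{a^*+1}\binom{n}{k}} \sum_{k=0}^{h^* - 1}\binom{n}{k} \]
for every $(f_1:f_2)$ in the sum. Summing over $(f_1:f_2)$ and invoking Lemma \ref{height-packing-bound} (whose hypothesis $h^* \geq 1$ is weaker than $h^* \geq 2$, so the bound remains valid on the subsum) gives
\[ \Sigma \leq \frac{2^n \cdot \mathcal{S}(a^*, n)}{\sum_{k=0}^{\lfloor n/2 \rfloor - 1}\binom{n}{k}}. \]
The denominator is a constant multiple of $2^n$ by the elementary concentration estimate $\binom{n}{\lfloor n/2 \rfloor} = O(2^n/\sqrt{n})$ (with the finitely many cases $n \leq 3$, for which the sum $\Sigma$ is vacuous, absorbed into the implicit constant), so $\Sigma = O(\mathcal{S}(a^*, n))$. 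Plugging $a^*$ into Lemma \ref{B-binomial-estimate} and observing $n - 3 - a^* = n - 1 - \lfloor n/2 \rfloor \geq n/2 - 1$ yields $\Sigma = O\bigl((2\sqrt{q}+2)^n / (2\sqrt{q}+1)^{n/2}\bigr)$, with constant allowed to depend on $q$.

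The final step is to show that the additive term $2^{n-3}q^{1/2}$ from Lemma \ref{local-squarefree-bound} is also bounded by the main term. This reduces to the inequality $2\sqrt{2\sqrt{q}+1} \leq 2\sqrt{q}+2$, equivalently $(\sqrt{q}+1)^2 \geq 2\sqrt{q}+1$, which is obvious and implies $2^n \leq \bigl((2\sqrt{q}+2)/\sqrt{2\sqrt{q}+1}\bigr)^n$. Since all the substantive inputs are in hand, there is no real obstacle; the only care required is the choice of $a^*$, which must be large enough that Lemma \ref{B-binomial-estimate} produces the sharp exponent $(2\sqrt{q}+1)^{-n/2}$ in the denominator, yet no larger than $\lfloor n/2 \rfloor - 2$ so that the monotonicity of Lemma \ref{B-increasing} applies uniformly to every term of $\Sigma$, and handling the parity of $n$ without degrading the exponent.
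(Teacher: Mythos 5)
Your proposal follows the paper's own proof step for step: same starting point (Lemma \ref{local-squarefree-bound}), same normalization of each summand via the monotonicity of Lemma \ref{B-increasing} at $a^*=\lfloor \deg N/2\rfloor-2$, same use of Lemma \ref{height-packing-bound} to bound the resulting sum by $2^{\deg N}$, same appeal to Lemma \ref{B-binomial-estimate}, and the same observation that $\deg N\geq 4$ (no cusp forms of smaller level) makes $2^{\deg N}\big/\sum_{k=0}^{\lfloor \deg N/2\rfloor-1}\binom{\deg N}{k}$ bounded. All of those steps are carried out correctly.

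The one point where your write-up falls short of the statement is uniformity of the implied constant. The lemma asserts the constant is \emph{completely uniform}, but you explicitly allow it to depend on $q$: after Lemma \ref{B-binomial-estimate} you discard the prefactors $q$ (from $q\Sigma$), $q^{1/2}$ (from the additive term $2^{\deg N-3}q^{1/2}$), and $\tfrac{1}{2\sqrt q}$ into a $q$-dependent constant, and your final absorption step only uses $2\leq M$ with $M=(2\sqrt q+2)/\sqrt{2\sqrt q+1}$, which handles $2^{\deg N-3}$ but not the loose $q^{1/2}$. The paper closes this by exploiting the exponent slack: since $\lceil \deg N/2\rceil-1\geq(\deg N-2)/2$, the main term is at most a constant times $q^{1/2}M^{\deg N-2}$, and then $M\geq 2$ together with $M^2\geq q^{1/2}$ (equivalently $4q+8\sqrt q+4\geq 2q+\sqrt q$) absorbs both $2^{\deg N-3}$ and the stray $q^{1/2}$ into $M^{\deg N}$ with an absolute constant. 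Your own estimates already contain everything needed — e.g. $q\cdot\tfrac{1}{2\sqrt q}(2\sqrt q+1)\leq 3q/2$ is dominated by the factor $(2\sqrt q+2)^{2}\geq 4q$ hiding in the gap between $M^{\deg N-2}$ and $M^{\deg N}$ — but as written the proof only yields a constant depending on $q$, which is a weaker conclusion than the one claimed.
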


\begin{proof}By Lemma \ref{local-squarefree-bound}, Lemma \ref{B-increasing}, Lemma \ref{height-packing-bound}, and Lemma \ref{B-binomial-estimate}, we have

 \[ \left| f \left( \begin{pmatrix} a & b \\ c & d \end{pmatrix} \right) \right| \leq  |C_f|  \Biggl( 2 ^{ \deg N-3}  q^{1/2} + q \sum_{\substack{ (f_1:f_2) \in \mathbb P^1(\overline{\mathbb F}_q(C)) \\ \deg N /2 \geq h^* (f_1:f_2) \geq 2 }}  \mathcal S( h^*(f_1:f_2) -2 , \deg N )  \Biggr)\]
 
 \[ \leq |C_f|  \Biggl( 2 ^{ \deg N-3}q^{1/2}   +q  \sum_{\substack{ (f_1:f_2) \in \mathbb P^1(\overline{\mathbb F}_q(C)) \\ \deg N /2 \geq h^* (f_1:f_2) \geq 2 }}  \frac{ \mathcal S(\lfloor \deg N/2\rfloor-2, \deg N) } {\sum_{k=0}^{\lfloor \deg N/2\rfloor-1} {\deg N \choose k} }  \sum_{k=0}^{ h^*(f_1,f_2)-1} {\deg N \choose k}  \Biggr).\] 
 
 \[ \leq |C_f|\left(q^{1/2}  2 ^{ \deg N-3}  +q 2^{\deg N}  \frac{ \mathcal S(\lfloor \deg N/2\rfloor-2, \deg N) } {\sum_{k=0}^{\lfloor \deg N/2\rfloor-1} {\deg N \choose k} } \right) .\]
 
  \[ \leq  |C_f|  \left(  2 ^{ \deg N-3}  q^{1/2}+\frac{q  2^{\deg N}}{ \sum_{k=0}^{\lfloor \deg N/2\rfloor-1} {\deg N \choose k} }\frac{1}{ 2 \sqrt{q}}   \frac{ (2\sqrt{q}+2)^{\deg N-2} }{ (2\sqrt{q}+1)^{\lceil \deg N/2 \rceil-1 }}  \right) \]

 \[ \leq  |C_f| q^{1/2} \left(  2 ^{ \deg N-3}  +\frac{ 2^{\deg N-1}}{ \sum_{k=0}^{\lfloor \deg N/2\rfloor-1} {\deg N \choose k} } \frac{ (2\sqrt{q}+2)^{\deg N-2} }{ (2\sqrt{q}+1)^{\lceil \deg N/2 \rceil-1 }} \right) .\] 
 
Because $f$ is a cusp form of level $N$, we must have $\deg N \geq 4$. It follows that \[ \frac{ 2^{\deg N-1}}{ \sum_{k=0}^{\lfloor \deg N/2\rfloor-1} {\deg N \choose k} } = O(1).\]
  
 This gives
 \[ \left| f \left( \begin{pmatrix} a & b \\ c & d \end{pmatrix} \right) \right| \leq  |C_f| q^{1/2}\left(  2 ^{ \deg N-3}  +O \left( \frac{ (2\sqrt{q}+2)^{\deg N-2} }{ (2\sqrt{q}+1)^{\lceil \deg N/2 \rceil-1 }}    \right) \right).\]

  We have \[ \frac{ (2\sqrt{q}+2)^{\deg N-2} }{ (2\sqrt{q}+1)^{\lceil \deg N/2 \rceil -1 }}  \leq \left( \frac{2 \sqrt{q}+2}{\sqrt{ 2 \sqrt{q} +1}}\right)^{\deg N-2} .\]
  
Note that \[ \frac{2 \sqrt{q}+2}{\sqrt{ 2 \sqrt{q} +1}} \geq 2\] because $4 q + 8 \sqrt{q} +4 \geq 8 \sqrt{q} + 4$ and thus 
  \[  2 ^{ \deg N-3}   \leq  2^{\deg N -2} \leq   \left( \frac{2 \sqrt{q}+2}{\sqrt{ 2 \sqrt{q} +1}}\right)^{\deg N-2} .\]
  so
  \[ \left| f \left( \begin{pmatrix} a & b \\ c & d \end{pmatrix} \right) \right|=  O \left( |C_f|  q^{1/2}  \left( \frac{2 \sqrt{q}+2}{\sqrt{ 2 \sqrt{q} +1}}\right)^{\deg N-2} \right).\]
  
  Note in addition that
 
\[ \left( \frac{2 \sqrt{q}+2}{\sqrt{ 2 \sqrt{q} +1}} \right)^2 \geq q^{1/2} \] because $4q + 8 \sqrt{q} +4 \geq 2 q + \sqrt{q}$. Thus
  \[ \left| f \left( \begin{pmatrix} a & b \\ c & d \end{pmatrix} \right) \right|=  O \left( |C_f | \left( \frac{2 \sqrt{q}+2}{\sqrt{ 2 \sqrt{q} +1}}\right)^{\deg N} \right).\]
as desired.

\end{proof}

We now recall the statement of Theorem \ref{sup-norm-intro}, and prove it.

\begin{theorem}\label{sup-norm-final}[Theorem \ref{sup-norm-intro}] Let $ F = \mathbb F_q(T)$, let $N$ be a squarefree effective divisor on $\mathbb P^1$, and let $f: GL_2(\mathbb A_F) \to \mathbb C$ be a cuspidal newform of level $N$ with unitary central character. Assume that for each place $v$ in the support of $N$, the restriction of the central character of $f$ to $\mathbb F_q^\times \subset F_v^\times$ is trivial. Then 
\[ ||f||_{\infty} = O \left(  \left(\frac{ 2 \sqrt{q} +2}{ \sqrt{ 2 \sqrt{q}+ 1}}\right)^{ \deg N} \right)\] 
if $f$ is  Whittaker normalized and
\[ ||f||_{\infty} =  O \left(  \left(\frac{ 2  (1+ q^{-1/2} ) }{\sqrt{ 2 \sqrt{q}+ 1}}\right)^{ \deg N} \log(\deg N)^{3/2} \right) \]
if $f$ is $L^2$-normalized. \end{theorem}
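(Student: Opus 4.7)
The plan is to derive both bounds from Lemma~\ref{final-Whittaker-bound}, which already gives the uniform pointwise estimate
\[ |f(\mathbf{g})| = O\!\left(|C_f|\left(\frac{2\sqrt{q}+2}{\sqrt{2\sqrt{q}+1}}\right)^{\deg N}\right), \]
so that all that remains is to control $|C_f|$ in each normalization.  The Whittaker-normalized statement is then immediate: by definition $C_f=1$, and the stated bound drops out.

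For the $L^2$-normalized case, the first step is to relate $|C_f|$ to $\|f\|_2$ via a Rankin--Selberg identity.  Unfolding $\|f\|_2^2$ against an Eisenstein series on $GL_2$ (this is essentially the content of the paper's Lemma~\ref{rankin-selberg-normalized}, whose existence is flagged in the proof of Lemma~\ref{Atkin-Lehner-Swap}) should produce a formula of shape
\[ \|f\|_2^2 \;=\; \kappa_N \cdot |C_f|^2 \cdot L(1,\operatorname{Ad} f), \]
where $\kappa_N$ is an explicit local volume factor that, up to bounded constants, scales like $|N|$.  Imposing $\|f\|_2=1$ therefore yields
\[ |C_f|^2 \;\ll\; \frac{1}{|N|\cdot L(1,\operatorname{Ad} f)}. \]

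The next step is a lower bound for $L(1,\operatorname{Ad} f)$.  In the function field setting the Riemann hypothesis for $L(s,\operatorname{Ad} f)$ is known, and a standard analytic argument (a Mertens-type sum over zeros in the critical strip, or the Hoffstein--Lockhart positivity trick adapted to this $L$-function) gives
\[ L(1,\operatorname{Ad} f) \;\gg\; \frac{1}{\log(\deg N)^{3}}. \]
Combining, $|C_f| = O\!\left(|N|^{-1/2}\log(\deg N)^{3/2}\right) = O\!\left(q^{-\deg N/2}\log(\deg N)^{3/2}\right)$.  Plugging this into Lemma~\ref{final-Whittaker-bound} and simplifying via
\[ q^{-1/2}\cdot\frac{2\sqrt{q}+2}{\sqrt{2\sqrt{q}+1}} \;=\; \frac{2(1+q^{-1/2})}{\sqrt{2\sqrt{q}+1}} \]
produces the second bound.

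The main obstacle is the lower bound for $L(1,\operatorname{Ad} f)$: this is where all of the $\log$-loss comes from, and although function field Riemann hypothesis makes the analytic input available, one still needs to work with the correct adjoint $L$-function (not merely $\operatorname{Sym}^2$ twisted by the central character), tracking local factors at the ramified places carefully.  The Rankin--Selberg step is essentially bookkeeping once the Petersson and Whittaker normalizations are fixed, but the precise constant $\kappa_N$ and its interaction with the central-character hypothesis on $N$ must be accounted for in order to land on $|N|$ and not some twisted analogue.
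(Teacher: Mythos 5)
Your route is the same as the paper's: the Whittaker case is read off from Lemma \ref{final-Whittaker-bound} with $C_f=1$, and the $L^2$ case combines Lemma \ref{rankin-selberg-normalized} with a lower bound for $L(1,\operatorname{ad}\mathcal F)$ (the paper's Lemma \ref{good-L-value-bound}, whose Riemann-hypothesis argument gives exactly the exponent $3$ you predict, since $C=\mathbb P^1$ has a degree-one map to itself); your algebraic simplification $q^{-1/2}(2\sqrt q+2)/\sqrt{2\sqrt q+1}=2(1+q^{-1/2})/\sqrt{2\sqrt q+1}$ is also the one used implicitly in the paper.

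There is, however, one genuine missing step: every ingredient you invoke — the definition of $C_f$ via Lemma \ref{Drinfeld-formula}, the bound of Lemma \ref{final-Whittaker-bound}, and the Rankin--Selberg identity of Lemma \ref{rankin-selberg-normalized} — is stated only for newforms whose central character has \emph{finite order}, whereas the theorem assumes merely a unitary central character. In the $L^2$-normalized case the quantity $C_f$ is not even defined until this is addressed. The paper reduces to the finite-order case by replacing $f(\bfg)$ with $f(\bfg)\,\alpha^{\deg\det\bfg}$, where $\alpha^2$ is the value of the central character on a fixed idele of degree one; this unramified twist preserves $|f|$ pointwise, the level, cuspidality, the newform property and the $L^2$-norm, so the bounds transfer. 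You should insert this reduction at the start. A second, smaller point: your assertion that $\kappa_N$ is $|N|$ ``up to bounded constants'' is exactly where the squarefree hypothesis is used. The paper shows that for squarefree $N$ the local space $(\mathcal F\otimes\mathcal F^\vee)^{I_v}/(\mathcal F^{I_v}\otimes(\mathcal F^\vee)^{I_v})$ at each $v\mid N$ is one-dimensional with trivial Frobenius, so the ramified local factors cancel the $(1-q^{-\deg v})$ denominators exactly; without this, the product of the ramified discrepancies over $v\mid N$ is not a priori $O(1)$ (it can drift by powers of $\log\deg N$), which would spoil the stated exponent $3/2$ on the logarithm. You flag this as bookkeeping, but it is the one place where the bookkeeping genuinely needs the squarefreeness.
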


\begin{proof} We see that the assumptions of Theorem \ref{sup-norm-intro} match exactly the assumptions of Lemma \ref{final-Whittaker-bound}, except that we have a bound in the case of a central character of finite order and we wish to prove a bound in the case of a unitary central character.  To reduce to the finite order case we observe that if $f$ has unitary central character then
\[ f\left( \begin{pmatrix} a & b\\ c & d \end{pmatrix} \right) \alpha^{ \deg (ad-bc)} ,\] where $\alpha$ is the square root of the value of $\eta$ on some fixed adele of degree $1$, has finite order central character and the same maximum value as $f$. 

If $f$ is Whittaker normalized then $|C_f|=1$ and our desired bound is exactly Lemma \ref{final-Whittaker-bound}.

If $f$ is $L^2$-normalized then it follows from Lemma \ref{rankin-selberg-normalized} that
\[1= 2 |C_f|^2   q^{2g-2 +  \deg N }    L (1 , \operatorname{ad} \mathcal F )  \prod_{v | N}  \det( 1- q^{- \deg v} \Frob_{|\kappa_v|}, (\mathcal F \otimes \mathcal F^\vee)^{I_v} / (\mathcal F^{I_v} \otimes (\mathcal F^\vee)^{I_v} )  )  / (1 -q^{- \deg v} ) .\]

 Furthermore, because $N$ is squarefree, the local monodromy representation of $\mathcal F$ at any point in the support of $N$ is either a rank two unipotent representation or a trivial representation plus a one-dimensional character. In either case, one can check that
\[  (\mathcal F \otimes \mathcal F^\vee)^{I_v} / (\mathcal F^{I_v} \otimes (\mathcal F^\vee)^{I_v} )\] is one-dimension with trivial $\Frob_q$ action. This, and the fact that $g=0$, gives

 \[1= 2 |C_f|^2   q^{\deg N - 2}    L (1 , \operatorname{ad} \mathcal F )  .\]

\[ |C_f| =2^{-1/2}   q^{1 -  \deg N/2  }    L (1 , \operatorname{ad} \mathcal F )^{-1/2}   .\]

By Lemma \ref{good-L-value-bound}, \[ L (1 , \operatorname{ad} \mathcal F )^{-1/2} =O (  (\log \deg N)^{3/2} ).\]  This gives

\[ |C_f| \leq q^{1 - \deg N/2} (\log \deg N )^{O(1)}\] and plugging this into Lemma \ref{final-Whittaker-bound} gives the stated bound in the $L^2$-normalized case. \end{proof}

\appendix

\section{Standard analytic number theory in the function field setting}

This appendix contains some now-standard results in analytic number theory - the Fourier expansion of modular forms, a Rankin-Selberg formula for the $L^2$-norm of a modular form in terms of an $L$-function special value, and an estimate for that special value using the Riemann hypothesis - done in the level of generality needed for this paper, except that we do not assume that $C = \mathbb P^1$.

\begin{lemma}\label{Drinfeld-formula} For any newform $f$ of level $N$ whose central character has finite order, there exists $\mathcal F$ an irreducible middle extension sheaf of rank two on $C$, pure of weight $0$, of conductor $N$, and $C_f\in \mathbb C$ such that

\[ f \left( \begin{pmatrix} a & bz \\ 0 & b \end{pmatrix} \right) =C_f  q^{- \frac{ \deg (\omega_0 a/b) }{2} } \eta(b)^{-1} \sum_{\substack{ w \in F^{\times} \\ \operatorname{div} (w \omega_0 a/b ) \geq 0}}  \psi(wz ) r_{\mathcal F} (\operatorname{div}(  w \omega_0 a/b )) . \]

$\mathcal F$ and $C_f$ are unique with this property.

\end{lemma}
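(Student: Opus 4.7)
The plan is to derive the identity from the global Whittaker expansion of a cuspidal automorphic form on $GL_2$, combined with the explicit local Whittaker formulas for essential (new) vectors.

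First I would define the global Whittaker function
\[ W_f(\bfg) = \int_{\mathbb A_F/F} f\left(\begin{pmatrix} 1 & z \\ 0 & 1 \end{pmatrix}\bfg\right)\psi(-z)\,dz, \]
and use cuspidality to obtain the standard Fourier expansion
\[ f\left(\begin{pmatrix} a & bz \\ 0 & b \end{pmatrix}\right) = \sum_{w \in F^\times} W_f\left(\begin{pmatrix} wa & 0 \\ 0 & b \end{pmatrix}\right)\psi(wz). \]
The center-equivariance of $W_f$ under $\eta$ pulls out $\eta(b)^{-1}$, reducing the problem to understanding $W_f$ on the diagonal element $\operatorname{diag}(t,1)$ with $t = wa/b$.

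Next I would factor $W_f$ as a product $\prod_v W_v$ of local Whittaker functions by multiplicity one for Whittaker models, each $W_v$ evaluated on the local essential vector. At unramified $v$, Macdonald's formula says that $W_v(\operatorname{diag}(t,1))$ equals $W_v(1) \cdot q_v^{-v(t\omega_0)/2}\,r_{\mathcal F_v}(v(t\omega_0)[v])$ when $v(t\omega_0) \geq 0$ and vanishes otherwise, where $\mathcal F_v$ is the unramified two-dimensional Galois representation whose Frobenius characteristic polynomial matches the Hecke eigenvalues of $f$ at $v$, and the conductor shift by $\omega_0$ reflects the local conductor $-v(\omega_0)$ of $\psi_v$. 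At ramified $v$, Casselman's essential vector theorem together with the Jacquet--Piatetski-Shapiro--Shalika new vector theory gives the analogous formula with $\mathcal F_v$ the local Galois representation of conductor $c_v$. Drinfeld's global Langlands correspondence for $GL_2$ glues the $\mathcal F_v$ into a global irreducible rank two middle extension sheaf $\mathcal F$ of weight $0$ and conductor $N$. Taking the product over all $v$, collecting $\prod_v W_v(1)$ into a single constant $C_f$, and applying multiplicativity of $r_{\mathcal F}$ on coprime divisors converts $\prod_v W_v(\operatorname{diag}(t,1))$ into $C_f\, q^{-\deg(t\omega_0)/2}\,r_{\mathcal F}(\operatorname{div}(t\omega_0))$, supported on $\operatorname{div}(t\omega_0) \geq 0$. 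Substituting $t = wa/b$ and observing that $\deg w = 0$ because $w \in F^\times$ yields the claimed formula.

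Uniqueness is straightforward: comparing the Fourier coefficients of the asserted identity in $z$ shows that $C_f\,r_{\mathcal F}(D)$ is pinned down by $f$ for every effective divisor $D$, specializing to $D = 0$ fixes $C_f$, and the resulting $r_{\mathcal F}$ determines the Frobenius characteristic polynomials on every $\mathcal F_v$ and hence the irreducible middle extension sheaf $\mathcal F$ by Chebotarev. The main obstacle is the local Whittaker computation at ramified places: for squarefree $N$ this can be checked by hand using the explicit structure of conductor-one principal series and Steinberg representations, but in the generality stated (arbitrary $N$ with finite-order central character) one must invoke the full Casselman--JPSS essential vector theory together with its known compatibility with the local Langlands correspondence for $GL_2$.
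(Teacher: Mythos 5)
Your proposal is correct and follows essentially the same route as the paper: Fourier/Whittaker expansion with cuspidality, factorization into local Whittaker functions of the essential vectors via JPSS newvector theory, the Drinfeld--Lafforgue correspondence to produce the rank two middle extension sheaf $\mathcal F$ with matching local $L$-factors (which handles your ``ramified place'' worry uniformly, since the JPSS zeta-integral formula for the essential vector applies at all places), and the same uniqueness argument. The only cosmetic difference is that the paper treats unramified and ramified places together through the JPSS integral identity rather than invoking Macdonald's formula separately at unramified places.
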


The notations $\omega_0, \eta, \psi, r_{\mathcal F}$ used in this statement are all explained in \S\ref{ss-fourier}.

This formula is a slight variant of one proved by \citet[(4)]{Drinfeld}, who handled the case where the level $N$ is trivial. Peter Humphries explained to me where to look in the literature for the tools to perform this calculation.

\begin{proof} This follows from the Langlands correspondence for $GL_2(F)$ proven by Drinfeld, though we will find it more convenient to use the version stated by Laurent Lafforgue.

Let $\pi $ be the representation of $GL_2(\mathbb A_F)$ generated under right translation by $f$. In other words, $\pi$ is a space of functions $f'$ on $GL_2(\mathbb A_F)$, with an action of $GL_2(\mathbb A_F)$ where ${\mathbf g} \in GL_2(\mathbb A_F)$ takes $h \mapsto f'({\mathbf h})$ to $h \mapsto f'({\mathbf h} \bfg)$. 

By the strong multiplicity one theorem, $\pi$ is an irreducible automorphic representation of $GL_2(\mathbb A_F)$, in particular a tensor product of irreducible local representations $\pi_v$. Let $c$ be an adele such that the order of pole of $c$ at each place matches the order of vanishing of the meromorphic form $\omega_0$ and let $\psi'(z) =\psi(cz)$, so for each $v$, the maximal $\mathcal O_{F_v}$-lattice in $F_v$ on which $\psi'(z)$ is trivial is $\mathcal O_{f_v}$.

Consider the map that takes a function $f' \in \pi$ to a function \[ W_{\psi', f'} (\mathbf g) =  \int_{z \in \mathbb A_F/F} f'\left( \begin{pmatrix} 1 & z \\ 0 & 1\end{pmatrix}  \begin{pmatrix} c & 0 \\ 0 & 1\end{pmatrix} \bfg \right) \psi(-z) dz.\]

The image of this map is a space of functions isomorphic to $\pi$ and on which left translation by $\begin{pmatrix} 1 & z \\ 0 & 1\end{pmatrix}$, for $z \in \mathbb A_F$, acts as multiplication by $\psi(cz)$. Thus it is the tensor product of the Whittaker models $\mathcal W(\pi_v, \psi_v')$ of $\pi_v$, which are the unique spaces of functions stable under right translation, isomorphic to $\pi_v$ as representations of $GL_2( F_v)$, and on which left translation by unipotents acts by the additive character $\psi_v'$ of $F_v$. 

Now $f$ is invariant under $\Gamma_1(N)$. Hence it is a linear combination of products over $v$ of vectors invariant under $\left\{  \begin{pmatrix} a & b \\ c & d \end{pmatrix}  \in GL_2 ( \mathcal O_{F_v} ) \mid  c \equiv 0\mod N, d \equiv 1 \mod N \right \}$.  Moreover, $N$ is minimal such that there exists a vector of this form. It follows from \citep*[Theorem in (5.1)]{JPSS} that for each place $v$, the space of such vectors is one-dimensional, generated by the local newform, called the ``vecteur essential" by \cite*{JPSS}. So $f$ is a scalar multiple of the product of these vectors at each place.

Thus \[    \int_{z \in \mathbb A_F/F} f\left( \begin{pmatrix} 1 & z \\ 0 & 1\end{pmatrix}  \begin{pmatrix} c & 0 \\ 0 & 1\end{pmatrix}  \bfg \right) \psi(-z) dz = W_{\psi', f} ({\mathbf g}) = C \prod_{v} W_v(\mathbf g) \] where $W_v$ is the Whittaker function of the local newform at $v$, normalized so that $W_v(1)=1$, and $C$ is some constant.

Now by \citep*[Theorem in (4.1)]{JPSS} (whose proof is corrected in \citep[Theorem 1]{JacquetCorrection}, the function $W_v$ has the following properties:

\begin{enumerate}

\item For $a \in \mathcal O_{F_v}^\times $, \[  W_v \left( \bfg \begin{pmatrix} a & 0 \\ 0 & 1 \end{pmatrix} \right) = W_v( \bfg). \] 

\item We have \[  \int_{ a \in F_v^\times} W_v \left(  \begin{pmatrix} a & 0 \\ 0 & 1 \end{pmatrix} \right) |a|^{s-1/2} da  = L(s, \pi_v).\]

\end{enumerate}

Here the integral is taken against an invariant measure on $F_v^\times$ where $\mathcal O_{F_v}^\times$ has measure one. To obtain the second statement, one has to observe that the function $W$ defined in \cite[(3.3)]{JPSS} takes $a$ to $X_1^{\deg a}$ and plug in $X_1=1$.

 By \citep[Theorem VI.9]{Lafforgue}, there is associated to $\pi$ a two-dimensional representation $V$ of the Galois group of $F$, unramified outside the support of $N$, whose $L$-factors and $\epsilon$-factors agree with those of $\pi$. Because $V$ is a representation of $\pi_1 ( C - N)$, it defines a lisse sheaf on $C-N$. Let $\mathcal F$ be the middle extension of this lisse sheaf to $C$. By definition, the local $L$-factor of $V$ at the place $v\in |C|$ is  \[\frac{1}{ \det ( 1 - |\kappa_v|^{-s}   \operatorname{Frob}_{ |\kappa_v|}, \mathcal F_v)} = \sum_{n=0}^{\infty}  r_{\mathcal F} (n [v]) |\kappa_v|^{-ns} .\]
 
 Because $|a| =| \kappa_v|^{-\deg a}$, we can observe that the coefficient of $|\kappa_v|^{-ns}$ in the integral of (2) is simply the restriction of the integral to $a$ of degree $n$, where by (1) it takes a constant value, which must therefore be $ r_{\mathcal F} (n [v]) |a|^{1/2}$.
 
 So we have \[ W_v \left(  \begin{pmatrix} a & 0 \\ 0 & 1 \end{pmatrix} \right)  = r_{\mathcal F} (\deg a [v] )  |a|^{1/2} \] if $\deg a\geq 0$ and $0$ otherwise.
 
 Multiplying, for $a \in \mathbb A_F^\times,$ \[W_{\psi',f} \left(  \begin{pmatrix} a & 0 \\ 0 & 1 \end{pmatrix} \right)= C r_{\mathcal F} (\operatorname{div} a) |a|^{1/2} =  C r_{\mathcal F} (\operatorname{div} a) q^{-\deg (a)/2} .\]
 
By definition
 
 \[W_{\psi',f} \left(  \begin{pmatrix} a & 0 \\ 0 & 1 \end{pmatrix} \right) =  \int_{z \in \mathbb A_F/F} f\left( \begin{pmatrix} 1 & z \\ 0 & 1\end{pmatrix}  \begin{pmatrix} ac & 0 \\ 0 & 1 \end{pmatrix}    \right) \psi(-z)dz .\]
 
 Now observe that by Fourier analysis on $\mathbb A_F/ F$,
\[ f\left( \begin{pmatrix} a & z \\ 0 & 1\end{pmatrix}      \right)  = f\left( \begin{pmatrix} 1 & z \\ 0 & 1\end{pmatrix}  \begin{pmatrix} a & 0 \\ 0 & 1 \end{pmatrix}    \right) = \sum_{ w \in F}  \psi( w z)   \int_{z' \in \mathbb A_F/F} f\left( \begin{pmatrix} 1 & z' \\ 0 & 1\end{pmatrix}  \begin{pmatrix} a & 0 \\ 0 & 1 \end{pmatrix}    \right) \psi(-w z') dz. \]

Now, if $w=0$, the integral vanishes by cuspidality. For all other $w$, by left invariance under $GL_2(F)$, we have

\[ f\left( \begin{pmatrix} 1 & z' \\ 0 & 1\end{pmatrix}  \begin{pmatrix} a & 0 \\ 0 & 1 \end{pmatrix}    \right) =   f\left( \begin{pmatrix} w & 0 \\ 0 & 1 \end{pmatrix}   \begin{pmatrix} 1 & z' \\ 0 & 1\end{pmatrix}  \begin{pmatrix} a & 0 \\ 0 & 1 \end{pmatrix}    \right)  =  f\left(    \begin{pmatrix} 1 & wz' \\ 0 & 1\end{pmatrix}  \begin{pmatrix} wa & 0 \\ 0 & 1 \end{pmatrix}    \right)\]  so we obtain

\[ f\left( \begin{pmatrix} 1 & z \\ 0 & 1\end{pmatrix}  \begin{pmatrix} a & 0 \\ 0 & 1 \end{pmatrix}    \right) = \sum_{ w \in F^{\times} }  \psi( w z)   W_{\psi', f} \left(  \begin{pmatrix} wa/c & 0 \\ 0 & 1 \end{pmatrix}    \right)\]

\[=C \sum_{\substack{ w \in F^{\times} \\ \operatorname{div} w \omega_0 a  \geq 0}}  \psi(wz) r_{\mathcal F} (\operatorname{div} w \omega_0 a ) q^{ - \deg( w \omega_0 a)/2}. \]

  To calculate  $f\left( \begin{pmatrix} a & bz \\ 0 & b\end{pmatrix}      \right)$, we divide all entries by $b$, and use the definition of the central character, getting  
 \[ f\left( \begin{pmatrix} a & bz \\ 0 & b\end{pmatrix}  \right) =C \eta(b)^{-1} \sum_{\substack{ w \in F^{\times} \\ \operatorname{div}    (w \omega_0 a/b ) \geq 0}}  \psi(wz ) r_{\mathcal F} (\operatorname{div} (w  \omega_0 a/b) )  q^{ -\frac{  \deg( w \omega_0 a/b) }{2} }   . \] 
 Because $w$ is meromorphic, $\deg w=0$, so $\deg(w\omega_0 a/b) = \deg(\omega_0 a/b)$. Thus we have
  \[ f\left( \begin{pmatrix} a & bz \\ 0 & b\end{pmatrix}  \right) =C   q^{ -\frac{  \deg(  \omega_0 a/b) }{2} }  \eta(b)^{-1}  \sum_{\substack{ w \in F^{\times} \\ \operatorname{div}   (w \omega_0 a/b ) \geq 0}}  \psi(wz ) r_{\mathcal F} (\operatorname{div} (w  \omega_0 a/b) ) . \]

 We now describe why $\mathcal F$ has the stated properties. It is irreducible because the Galois representation $V$ is irreducible. It is pure of weight zero because its central character matches $\eta$ and hence has finite order. Its conductor is $N$ because the multiplicity of a place in the Artin conductor of a Galois representation is the exponent of the local $\epsilon$-factor of the Galois representation, which by Lafforgue's theorem matches the exponent in the local $\epsilon$-factor of the automorphic form, which is the level.
 
 The uniqueness of $\mathcal F$ follows from the fact that its trace function is determined by the Hecke eigenvalues of $\mathcal F$, and the uniqueness of $C$ is clear once $\mathcal F$ is fixed, because $f\neq 0$ so $f$ is not preserved by multiplication by any nontrivial scalar.
 
\end{proof}

\begin{lemma}\label{rankin-selberg-method} Let $f$ be a cuspidal newform of level $N$ with central character of finite order.

Let $\mu$ be the measure on $GL_2(F)  \backslash GL_2(\mathbb A_F) / \Gamma_1(N)$ that assigns mass to any double coset in \[GL_2(F)  \backslash GL_2(\mathbb A_F) / \Gamma_1(N)\] equal to $1$ over the order of its automorphism group.

Then for any $d \in \mathbb Z$,
 \[ \int_{\substack{  \bfg \in GL_2(F)  \backslash GL_2(\mathbb A_F) / \Gamma_1(N)\\\deg \det \bfg =d}} |f(\bfg) |^2 d\mu(\bfg) \] \[= |C_f|^2 \frac{   |J_C(\mathbb F_q) |} {1-q^{-1}}  q^{2g-3 + 2 \deg N }    L (1 , \operatorname{ad} \mathcal F )  \prod_{v | N}  \det( 1- q^{- \deg v} \Frob_{|\kappa_v|}, (\mathcal F \otimes \mathcal F^\vee)^{I_v} / (\mathcal F^{I_v} \otimes (\mathcal F^\vee)^{I_v} )  ).\]

\end{lemma}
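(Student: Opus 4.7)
The plan is to apply the Rankin--Selberg unfolding method: substitute the Whittaker expansion of Lemma~\ref{Drinfeld-formula} into $|f|^2$, integrate over the unipotent variable using Fourier analysis on $\mathbb A_F/F$, and recognize the resulting Dirichlet series as a special value of the adjoint $L$-function.

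First I would use the (function-field) Iwasawa decomposition $GL_2(\mathbb A_F) = B(\mathbb A_F) \cdot K$ with $K = \prod_v GL_2(\mathcal O_{F_v})$, together with the explicit index of $\Gamma_1(N)$ in $K$, to rewrite the double-coset integral as a weighted sum over equivalence classes of pairs of adeles $(a,b)$ of fixed combined degree of the unipotent integrals
\[ \int_{z \in \mathbb A_F/F} \left| f\begin{pmatrix} a & bz \\ 0 & b \end{pmatrix} \right|^2 dz. \]

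Substituting the Whittaker expansion of Lemma~\ref{Drinfeld-formula} into each copy of $f$ in $|f|^2$ and exploiting orthogonality of the characters $\psi(wz)$ for distinct $w \in F^\times$, this unipotent integral collapses to the diagonal, yielding
\[ |C_f|^2 \, q^{-\deg(\omega_0 a/b)} \!\!\!\sum_{\substack{ w \in F^\times \\ \operatorname{div}(w \omega_0 a/b) \geq 0 }} \!\! \left| r_{\mathcal F}(\operatorname{div}(w \omega_0 a/b)) \right|^2. \]
Summing over equivalence classes of $(a,b)$ and absorbing the sum over $w$ via the left $GL_2(F)$-action would produce the global Dirichlet series $\sum_{D \geq 0} |r_{\mathcal F}(D)|^2 q^{-\deg D}$, whose Euler factors at places outside $|N|$ are the local $L$-factors of $\mathcal F \otimes \mathcal F^\vee = \operatorname{ad} \mathcal F \oplus \mathbf{1}$. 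Identifying this with $L(1, \operatorname{ad} \mathcal F) \cdot \zeta_C(1)$ --- and recognizing the residue contribution of $\zeta_C$ at $s=1$ as $|J_C(\mathbb F_q)|/(1 - q^{-1})$ --- gives the main term of the formula.

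At each place $v \mid N$ the local contribution is more subtle: the Whittaker function of the newform is supported on a sub-lattice determined by the local newform theory of \cite{JPSS}, and the naive local $L$-factor of $\mathcal F \otimes \mathcal F^\vee$ must be replaced by a local zeta integral whose value is $\det(1 - q^{-\deg v} \Frob_{|\kappa_v|}, (\mathcal F \otimes \mathcal F^\vee)^{I_v}/(\mathcal F^{I_v} \otimes (\mathcal F^\vee)^{I_v}))$. Combining this with the volume factors from the Iwasawa decomposition (notably the index $[K:\Gamma_1(N)]$, which itself contributes a suitable power of $q$ and an Euler product over $|N|$) and from the measure on $\mathbb A_F/F$ should yield the stated coefficient $q^{2g-3+2\deg N}$.

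\textbf{Main obstacle.} The conceptual structure of the unfolding is mechanical, but the bookkeeping of volume and normalization constants --- the index $[K:\Gamma_1(N)]$, the residue of $\zeta_C$, and especially the ramified local factors --- will require careful verification. The delicate step is confirming that the product of local modifications at $v \mid N$, together with the global volume constants, combines precisely to the stated Euler factor and to the power $q^{2g-3+2\deg N}$, rather than to some off-by-one variant; this hinges on matching the explicit support of the local newform Whittaker function from \cite{JPSS} against the local structure of the sheaf $\mathcal F$ at ramified places.
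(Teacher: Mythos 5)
Your overall strategy (Whittaker expansion, orthogonality of $\psi(wz)$, and the Dirichlet series $\sum_D |r_{\mathcal F}(D)|^2 q^{-s\deg D}$ giving $\zeta_C(s)L(s,\operatorname{ad}\mathcal F)$ up to correction factors) is the same family of ideas the paper uses, but there is a genuine gap at the very first step, and it is exactly where the real work lies. You propose to rewrite the integral over $GL_2(F)\backslash GL_2(\mathbb A_F)^{\deg=d}/\Gamma_1(N)$ ``via the Iwasawa decomposition and the index $[K:\Gamma_1(N)]$'' as a weighted sum over classes of pairs $(a,b)$ of the unipotent integrals. No such direct rewriting exists: the natural map from Borel data $(a,b,z)$ to double cosets $GL_2(F)\,\bfg\,\Gamma_1(N)$ is surjective with fibers whose cardinality depends on both $\bfg$ and on $\deg(a/b)$ (geometrically: a rank-two bundle has sub-line bundles of every sufficiently negative degree, and how many of a given degree depends on the bundle). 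This is also visible analytically: your Dirichlet series has the factor $\zeta_C(s)$ and hence diverges at $s=1$, while the left-hand side is a finite integral; some mechanism must convert the finite quantity into the \emph{residue} $\lim_{n\to\infty} q^{-n}\sum_{\deg D=n}|r_{\mathcal F}(D)|^2$. In the classical Rankin--Selberg method that mechanism is the Eisenstein series (unfold $\int |f|^2E(g,s)$ and take the residue at $s=1$), which you never introduce; in the paper it is replaced by fixing the level $n=\deg(\omega_0 a/b)$, computing the horocycle sum at level $n$ by Plancherel, and then proving by an explicit lattice-point count (integrality and congruence conditions on the bottom row $(\gamma_1,\gamma_2)$, estimated by a Riemann--Roch sieve) that the number of level-$n$ upper-triangular representatives of each double coset is asymptotically $q^{n+4-4g-\deg N}$ times local densities, \emph{independent of the coset}, so that dividing by $q^n$ and letting $n\to\infty$ recovers the measure $\mu$. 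That equidistribution-type count is the crux of the lemma and is absent from your plan.

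Two smaller points. First, the unramified local factor of $\sum_n |r_{\mathcal F}(n[v])|^2 u^n$ is $(1-u^{2})/\det(1-u\Frob_{|\kappa_v|},\mathcal F_v\otimes\mathcal F_v^\vee)$, not the bare $L$-factor of $\mathcal F\otimes\mathcal F^\vee$; the series is $\zeta_C(s)L(s,\operatorname{ad}\mathcal F)/\zeta_C(2s)$ (with the stated modifications at $v\mid N$), and the $\zeta_C(2)$ only disappears at the end because it cancels against the local densities produced by the counting argument above --- so this factor cannot be recovered by bookkeeping alone within your framework. Second, the ramified factors do come out of the elementary computation of $\sum_n|r_{\mathcal F}(n[v])|^2u^n$ at $v\mid N$ (no local newform zeta integrals beyond what Lemma \ref{Drinfeld-formula} already encodes), so that part of your ``main obstacle'' is less delicate than you fear; the missing counting step is where the power $q^{2g-3+2\deg N}$ and the factor $|J_C(\mathbb F_q)|/(1-q^{-1})$ actually come from.
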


\begin{proof} First observe that \[\sum_{D} |r_\mathcal F(D)|^2 q^{-s \deg D} = \prod_v \sum_n |r_\mathcal F(n[v])|^2 q^{-s n \deg v}\] and that if $v$ is unramified \[ \sum_n |r_\mathcal F(n[v])|^2 q^{ -s \deg v} = \frac{1- q^{-2s\deg v} }{ \det(1 - q^{-s\deg v } \Frob_{|\kappa_v|} , \mathcal F_v \otimes \mathcal F^\vee_v) }\] while if $v$ is ramified, \[ \sum_n |r_\mathcal F(n[v])|^2 q^{-s n \deg v}  = \frac{1 }{ \det(1 - q^{-s\deg v } \Frob_{|\kappa_v|},  \mathcal F_v \otimes \mathcal F^\vee_v) }.\] This means that, for $v$ unramified \[ \sum_n |r_\mathcal F(n[v])|^2 q^{-s n \deg v} \] is the local factor of the $L$-function \[ \frac{   \zeta_C(s)  L (s , \operatorname{ad} \mathcal F ) }{\zeta_C(2s) } .\] For $v$ ramified, the local factor of this $L$-function is \[ \frac{ 1 - q^{ -2 s \deg v}} { \det(1 - q^{-s \deg v} \Frob_{|\kappa_v|}, (\mathcal F \otimes \mathcal F^\vee)^{I_v})}\]  so $\sum_n |r_\mathcal F(n[v])|^2 q^{-s n \deg v} $ is equal to the local factor of this $L$-function times \[\frac{ \det( 1- q^{-s \deg v} \Frob_{|\kappa_v|}, (\mathcal F \otimes \mathcal F^\vee)^{I_v} / (\mathcal F^{I_v} \otimes (\mathcal F^\vee)^{I_v} )  } {1 -q^{-2s \deg v} }.\] Hence, the product of this local factor over all $v$ is
\[\sum_{D} |r_\mathcal F(D)|^2 q^{- s \deg D} = \prod_v \sum_n |r_\mathcal F(n[v])|^2 q^{ -s \deg v} \] \[=\frac{   \zeta_C(s)  L (s , \operatorname{ad} \mathcal F ) }{\zeta_C(2s) }\prod_{v | N} \frac{ \det( 1- q^{-s \deg v} \Frob_{|\kappa_v|}, (\mathcal F \otimes \mathcal F^\vee)^{I_v} / (\mathcal F^{I_v} \otimes (\mathcal F^\vee)^{I_v} )  } {1 -q^{-2s \deg v} }.\]

Taking a residue at $s=1$ (i.e. dropping the $\frac{1}{ 1- q^{1-s}}$ factor in $\zeta_C(s)$ and then substituting $1$ for $s$), we obtain
 \begin{equation}\label{l-function-limit} \lim_{n \to \infty} q^{-n}  \sum_{\substack {D \\ \deg D=n}} |r_{\mathcal F}(D)|^2 \end{equation} \[= \frac{ |J_C(\mathbb F_q)|  L (1 , \operatorname{ad} \mathcal F )   }{(1-q^{-1} ) q^g  \zeta_C(2) }    \prod_{v | N} \frac{ \det( 1- q^{- \deg v} \Frob_{|\kappa_v|}, (\mathcal F \otimes \mathcal F^\vee)^{I_v} / (\mathcal F^{I_v} \otimes (\mathcal F^\vee)^{I_v} )  } {1 -q^{- 2\deg v} }.\]

On the other hand, by Lemma \ref{Drinfeld-formula}, we have
 \[ f\left( \begin{pmatrix} a & bz \\ 0 & b\end{pmatrix}  \right) =C_f   q^{ -(\deg (\omega_0 a/b)/2} \eta(b)^{-1} \sum_{\substack{ w \in F^{\times} \\ \operatorname{div} (w\omega_0 a/b) \geq 0}}  \psi(wz ) r_{\mathcal F} (\operatorname{div} (w\omega_0 a/b)) . \]
 
 For  $w$ such that $\operatorname{div} (w a \omega_0/b) \geq 0$, we have $\psi(wz)=1$ as long as $z \in F$ or $z \in (a/b) \prod_v \mathcal O_{F_v}$. Thus the dual vector space to the space of $w$ such that $\operatorname{div} (w a \omega_0/b) \geq 0$ is $ \mathbb A_F / (F + (a/b) \prod_v \mathcal O_{F_v})$.  Furthermore note that the value of $f\left( \begin{pmatrix} a & bz \\ 0 & b\end{pmatrix}  \right)$ depends only on the equivalence class of $z$ modulo $F + (a/b) \prod_v \mathcal O_{F_v}$, since we can add an element of $F$ to $z$ through left multiplication by an upper unipotent in $GL_2(F)$ and add an element of $(a/b) \prod_v \mathcal O_{F_v}$ to $z$ through right multiplication by an upper unipotent in $\Gamma_1(N)$.
 
By the Plancherel formula, we have \[\sum_{z \in \mathbb A_F/ (F + (a/b) \prod_v \mathcal O_{F_v} )} \left|f\left( \begin{pmatrix} a & bz \\ 0 & b\end{pmatrix}  \right)\right|^2\] \[ = |C_f|^2 q^{ - \deg (\omega_0 a/b) +  \dim H^0(C, \div(\omega_0 a/b))} \sum_{\substack{ w \in F^{\times} \\ \operatorname{div} (w\omega_0 a/b) \geq 0}}  | r_{\mathcal F} (\operatorname{div} (w\omega_0 a/b)|^2 \]

For each $n$ congruent to $d$ mod $2$, let us fix $b_n \in \mathbb A_F$ of degree $(d-n)/2+g-1$. We will then sum over all classes $a \in F^\times\backslash \mathbb A_F^\times  / \prod_v \mathcal O_{F_v}^\times$ of degree $(n+d)/2+1-g$

This implies that $\deg (a/b_n) = n +2-2g$ and $\deg(ab_n) =d$. Taking $n$ sufficiently large, \[\dim H^0(C, \div{a} + \div \omega_0 - \div b_n) = n +1-g\] by Riemann-Roch and so we can simplify \[q^{ - \deg (\omega_0 a/b_n) +  \dim H^0(C, \div(\omega_0 a/b_n))} = q^{  -n + n+1-g} = q^{ 1-g}.\]

When we sum over $a \in F^\times\backslash \mathbb A_F^\times  / \prod_v \mathcal O_{F_v}^\times$ of degree $(n+d)/2+1-g$ and $w \in F^\times$,  each divisor of degree $n$ will occur $q-1$ times as $\operatorname{div} (w\omega_0 a/b_n)$, so we obtain

\[\sum_{\substack { a\in  F^\times\backslash \mathbb A_F^\times / \prod_v \mathcal O_{F_v}^\times\\ \deg a =(n+d)/2+1-g} } \sum_{z \in \mathbb A_F/ (F + (a/b) \prod_v \mathcal O_{F_v} )} \left|f\left( \begin{pmatrix} a & bz \\ 0 & b\end{pmatrix}  \right)\right|^2 =  q^{1-g} (q-1)  |C_f|^2 \sum_{\substack {D \\ \deg D=n}} |r_{\mathcal F}(D)|^2 \]

So this gives
\begin{equation}\label{rankin-selberg-fourier-step} \sum_{ \substack{ GL_2(F) \bfg \Gamma_1(N) \\ \deg \bfg = d}} |f(\bfg)|^2 \lim_{n \to \infty} q^{-n} \sum_{  \substack{ a\in    F^\times\backslash \mathbb A_F^\times /  \prod_v \mathcal O_{F_v}^\times\\ z \in \mathbb A_F/ (F + (a/b) \prod_v \mathcal O_{F_v})\\  \begin{pmatrix} a & b_n z \\ 0 & b_n\end{pmatrix}  \in GL_2(F) \bfg \Gamma_1(N)  }} 1= q^{1-g} (q-1) |C_f|^2   \lim_{n \to \infty}  \sum_{\substack {D \\ \deg D=n}} |r_{\mathcal F}(D)|^2.  \end{equation}

Here we have replaced the condition on the degree of $a$ with $\deg \bfg=d$, which is equivalent because $\deg a + \deg b_n = \deg \begin{pmatrix} a & b_n z \\ 0 & b_n\end{pmatrix} = \deg \bfg $.  Our next goal will be to evaluate the limit on the left hand side of \eqref{rankin-selberg-fourier-step}, which we will in particular show is independent of $\bfg$. This will allow us to relate the right hand side of \eqref{rankin-selberg-fourier-step} to $ \int_{\substack{  \bfg \in GL_2(F)  \backslash GL_2(\mathbb A_F) / \Gamma_1(N)\\\deg \det \bfg =d}} |f(\bfg) |^2 d\mu(\bfg)$, up to an explicit scalar.

To do this, we first observe that \[  \sum_{  \substack{ a\in    F^\times\backslash \mathbb A_F^\times /  \prod_v \mathcal O_{F_v}^\times\\ z \in \mathbb A_F/ (F + (a/b) \prod_v \mathcal O_{F_v})\\ \begin{pmatrix} a & b_n z \\ 0 & b_n\end{pmatrix}  \in GL_2(F) \bfg \Gamma_1(N)  }} 1 =\sum_{ \substack{\gamma \in \begin{pmatrix} * & * \\ 0 & 1 \end{pmatrix} \backslash GL_2(F)\\  \bfh \in \Gamma_1(N) / \begin{pmatrix} * & * \\ 0 & 1 \end{pmatrix} \\ \gamma \bfg \bfh = \begin{pmatrix}* & * \\ 0 & b_n \end{pmatrix}}} \frac{ (q-1)  }{  | GL_2(F) \cap \bfg \Gamma_1(N) \bfg^{-1}|}\] because $(a,z)$ determine $\gamma, \bfh$ up to the actions of upper-triangular matrices with a one in the bottom right, and vice versa $\gamma, \bfh$ determine $(a,z)$ up to multiplying $z$ by an element of $\mathbb F_q^\times$.

Furthermore we can count such elements  $\gamma  \in \begin{pmatrix} * & * \\ 0 & 1 \end{pmatrix} \backslash GL_2(F)$ by their bottom rows $(\gamma_1, \gamma_2)$. Such a row uniquely determines $\bfh$ modulo right multiplication, and a suitable $\bfh$ exists if and only if we can solve \[\begin{pmatrix} * & * \\  \gamma_1 & \gamma_2 \end{pmatrix} \bfg= \begin{pmatrix} * & * \\ 0 & b_n \end{pmatrix} \bfh^{-1} .\] This happens if, for each $v$ not in the support of $N$, the minimal valuation of the entries of \[ \begin{pmatrix} \gamma_1 & \gamma_2 \end{pmatrix} \bfg\] is $v(b_n)$, and for  each $v$ in the support of $N$, the first entry has valuation at least $v(b)+ m_v$ and the second is congruent to $b_n$ mod $\pi_v^{ v(b_n) + m_v}$.

In other words we are counting $\gamma_1, \gamma_2\in F$ such that
\[    \begin{pmatrix} \prod_{v \in N} \pi_v^{-m_v} &0 \\ 0 & 1 \end{pmatrix} b_n^{-1} \bfg^T \begin{pmatrix} \gamma_1 \\ \gamma_2 \end{pmatrix}  \] 
 is integral at every place, nondegenerate at unramified places, and satisfies a congruence condition at ramified places. By Riemann-Roch, the dimension of the space of $\gamma_1, \gamma_2$ in $F$ where this is integral at every place, goes, as $n$ and thus $\deg b_n^{-1}$ go to $\infty$, to 
 
 \[ \deg \det \left(  \begin{pmatrix} \prod_{v \in N} \pi_v^{-m_v} &0 \\ 0 & 1 \end{pmatrix} b_n^{-1} \bfg^T\right) +2 - 2g = - \deg N - 2 \deg b_n +  d+ 2 - 2g  \] \[   = - \deg N  - 2\left((d-n)/2 +g-1\right) + d  + 2-2g\] \[= n + 4 -4g - \deg N .\]
 
 By a sieve using Riemann-Roch, the total number of $\gamma_1, \gamma_2$ satisfying the conditions is $q^{ n + 4-4g - \deg N}$ times the product of local densities. The product of the local densities at the ramified primes is $ q^{ - \deg N}$, and the product of the local densities at the unramified primes is $\frac{1}{ \zeta_C(2)\prod_{v|N} (1- q^{-2 \deg v})}.$ Plugging this into Equation \eqref{rankin-selberg-fourier-step}, and combining with Equation \eqref{l-function-limit}, we obtain 
 \[ \frac{ q^{  4 -4g - 2\deg N } (q-1)  }{ \zeta_C(2)\prod_{v|N} (1- q^{-2 \deg v})} \int_{\substack{ \bfg\in  GL_2(F)  \backslash GL_2(\mathbb A_F) / \Gamma_1(N)\\ \deg \det \bfg=d}}   |f(\bfg)|^2 d\mu(\bfg) \] \[ = |C_f|^2q^{1-g} (q-1) \frac{ |J_C(\mathbb F_q)|  L (1 , \operatorname{ad} \mathcal F )   }{(1-q^{-1} ) q^g  \zeta_C(2) }    \prod_{v | N} \frac{ \det( 1- q^{- \deg v} \Frob_{|\kappa_v|}, (\mathcal F \otimes \mathcal F^\vee)^{I_v} / (\mathcal F^{I_v} \otimes (\mathcal F^\vee)^{I_v} )  } {1 -q^{- 2\deg v} }.\]

 Canceling like terms, we obtain
 \[ \int_{\substack{\bfg\in  GL_2(F)  \backslash GL_2(\mathbb A_F) / \Gamma_1(N)\\\deg \det \bfg=d}} |f(\bfg)|^2 d\mu(\bfg)\] \[= |C_f|^2 \frac{   |J_C(\mathbb F_q) |} {1-q^{-1}}  q^{2g-3 + 2 \deg N }    L (1 , \operatorname{ad} \mathcal F )  \prod_{v | N}  \det( 1- q^{- \deg v} \Frob_{|\kappa_v|}, (\mathcal F \otimes \mathcal F^\vee)^{I_v} / (\mathcal F^{I_v} \otimes (\mathcal F^\vee)^{I_v} )  ).\]
 
 \end{proof}

\begin{lemma}\label{rankin-selberg-normalized} Let $f$ be a cuspidal newform of level $N$ with central character of finite order. 

Let $\mu_{PGL_2}$ be the uniform measure on $PGL_2(\mathbb A_F)$ that assigns measure $1$ to the image of $\Gamma_1(N)$ inside $PGL_2(\mathbb A_F)$. Then
 \[ \int_{ \bfg \in PGL_2(F)  \backslash PGL_2(\mathbb A_F) } |f(\bfg) |^2 d\mu_{PGL_2}(\bfg) \] \[= 2 |C_f|^2   q^{2g-2 +  \deg N }    L (1 , \operatorname{ad} \mathcal F )  \prod_{v | N}  \det( 1- q^{- \deg v} \Frob_{|\kappa_v|}, (\mathcal F \otimes \mathcal F^\vee)^{I_v} / (\mathcal F^{I_v} \otimes (\mathcal F^\vee)^{I_v} )  )  / (1 -q^{- \deg v} ) .\]

\end{lemma}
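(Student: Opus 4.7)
\emph{Proof plan.} The unitary central character assumption makes $|f|^2$ invariant under the center $Z(\mathbb A_F)$, so it descends to a function on $\bar{Y} := PGL_2(F)\backslash PGL_2(\mathbb A_F)/P\Gamma_1(N)$. The strategy is to compute $\int_{\bar Y}|f|^2\, d\mu_{PGL_2}$ by comparing to the $GL_2$-side integral over the fixed-degree stratum $Y_d := \{[\bfg] \in Y : \deg\det\bfg = d\}$ of $Y = GL_2(F)\backslash GL_2(\mathbb A_F)/\Gamma_1(N)$, which by Lemma \ref{rankin-selberg-method} equals a quantity $A$ independent of $d$.

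First I would analyze the projection $\pi: Y_d \to \bar Y$ and its fibers. Since scaling by $z \in Z(\mathbb A_F)$ shifts $\deg\det$ by $2\deg z$, two $GL_2$-cosets $[\bfg], [z\bfg]$ in $Y_d$ lie over the same $\bar Y$-class iff $\deg z = 0$, and they coincide in $Y_d$ iff $z \in Z(\mathbb A_F)\cap GL_2(F)\bfg\Gamma_1(N)\bfg^{-1}$. This shows each $\pi$-fiber at a class of the right parity has size $|G^{\mathrm{fin}}|/|\operatorname{Stab}_{G^{\mathrm{fin}}}(\bfg)|$, where $G^{\mathrm{fin}}$ denotes the degree-zero part of $G := Z(\mathbb A_F)/(Z(F)Z(\Gamma_1(N)))$ and the stabilizer is the image of $\operatorname{Aut}_{PGL_2}(\bfg)$ in $G$. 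I would then compare the $GL_2$ and $PGL_2$ automorphism groups via the exact sequence
\[ 1 \to \operatorname{Aut}_{GL_2}(\bfg)/(F^\times\cap\Gamma_1(N)) \to \operatorname{Aut}_{PGL_2}(\bfg) \to \operatorname{Stab}_{G^{\mathrm{fin}}}(\bfg) \to 1,\]
using that $F^\times\cap\Gamma_1(N)=\{1\}$ when $N$ is nontrivial: any element is a constant in $\mathbb F_q^\times$ congruent to $1$ mod $\pi_v^{c_v}$ for every $v\in |N|$, hence equal to $1$.

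Combining the mass formulas $\mu([\bfg])=1/|\operatorname{Aut}_{GL_2}(\bfg)|$ and $\mu_{PGL_2}([\bar\bfg])=1/|\operatorname{Aut}_{PGL_2}(\bfg)|$ with the fiber count yields $\mu(\pi^{-1}([\bar\bfg])\cap Y_d) = |G^{\mathrm{fin}}|\cdot\mu_{PGL_2}([\bar\bfg])$ for each parity; summing over the two parities of $d$ therefore gives
\[ \int_{\bar Y}|f|^2\,d\mu_{PGL_2} = \frac{2A}{|G^{\mathrm{fin}}|}.\]
The proof concludes by computing $|G^{\mathrm{fin}}|$ via idele class group arithmetic as $|J_C(\mathbb F_q)|\prod_{v|N}|(\mathcal O_{F_v}/\pi_v^{c_v})^\times|/(q-1)$ and substituting $A$ from Lemma \ref{rankin-selberg-method}; using $|(\mathcal O_{F_v}/\pi_v^{c_v})^\times|=q^{\deg v\cdot c_v}(1-q^{-\deg v})$ together with $q^{\deg N}=\prod_{v|N} q^{\deg v\cdot c_v}$ converts the factor $q^{2g-3+2\deg N}/(1-q^{-1})$ in Lemma \ref{rankin-selberg-method} into the stated $q^{2g-2+\deg N}/\prod(1-q^{-\deg v})$. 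The main potential pitfall is the $q-1$ bookkeeping: in the trivial-level case such a factor would persist, but for nontrivial $N$ the vanishing of $F^\times\cap\Gamma_1(N)$ cancels it, aligning the final expression with the claim.
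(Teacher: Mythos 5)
Your proposal is correct and follows essentially the same route as the paper: both reduce to Lemma \ref{rankin-selberg-method} for each fixed degree $d$, compare the $GL_2$ and $PGL_2$ quotients via the degree-zero central ideles modulo $F^\times(\mathbb A_F^\times\cap\Gamma_1(N))$ (your $|G^{\mathrm{fin}}|$ equals the paper's coset count $|J_C(\mathbb F_q)|\,q^{\deg N}\prod_{v|N}(1-q^{-\deg v})/(q-1)$, using $\mathbb F_q^\times\cap\Gamma_1(N)=\{1\}$ for nontrivial $N$), and pick up the factor $2$ from the two parity components of $PGL_2(\mathbb A_F)$. The only difference is presentational: you do a point-by-point orbifold fiber count with automorphism groups, while the paper pushes forward the uniform $GL_2$ measure and normalizes it on a single coset; the resulting arithmetic is identical.
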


\begin{proof}We will deduce this from Lemma \ref{rankin-selberg-method} by comparing the two integrals. 

We can divide $PGL_2(\mathbb A_F)$ into two components, one consisting of matrices whose determinant has odd degree and one consisting of matrices whose determinant has even degree. It suffices to show that the integral over each of these is \[ |C_f|^2   q^{2g-2 +  \deg N }    L (1 , \operatorname{ad} \mathcal F )  \prod_{v | N}  \det( 1- q^{- \deg v} \Frob_{|\kappa_v|}, (\mathcal F \otimes \mathcal F^\vee)^{I_v} / (\mathcal F^{I_v} \otimes (\mathcal F^\vee)^{I_v} )  )  / (1 -q^{- \deg v} ) .\]

First note that  \[ \int_{\substack{  \bfg  \in GL_2(F)  \backslash GL_2(\mathbb A_F) / \Gamma_1(N)\\\deg \det \bfg=d}} |f(\bfg)|^2 d\mu (\bfg)  =   \int_{   \substack{ \bfg \in GL_2(F)  \backslash GL_2(\mathbb A_F)  \\\deg \det \bfg=d}} |f(\bfg)|^2 d\mu_{GL_2} (\bfg) \] where $\mu_{GL_2}$ is the uniform measure that assigns mass $1$ to a right coset of $\Gamma_1(N)$. 

Let $\rho_d$ be the map from the degree $d$ part of $GL_2(F) \backslash GL_2(\mathbb A_F)$ to $PGL_2(F) \backslash PGL_2(\mathbb A_F)$. Let $\rho_{d*} \mu_{GL_2}$ be the pushforward of $\mu_{GL_2}$ along $\rho_d$. We have
\[  \int_{   \substack{ \bfg \in GL_2(F)  \backslash GL_2(\mathbb A_F)  \\\deg \det \bfg=d}} |f(\bfg)|^2 d\mu_{GL_2} (\bfg)  =   \int_{  \bfg \in PGL_2(F)  \backslash PGL_2(\mathbb A_F) } |f(\bfg)|^2 d (\rho_{d*} \mu_{GL_2} )(\bfg).\]

The pushforward $\rho_{d*} \mu_{GL_2}$ is right invariant by the degree zero elements of $GL_2(\mathbb A_F)$, which have two orbits on $PGL_2(\mathbb A_F)$, the even and odd components. Thus, if $d$ is even, then $\rho_{d*} \mu_{GL_2}$ is a uniform measure on the even component and zero on the odd component, and if $d$ is odd, $\rho_{d*}\mu_{GL_2}$ is a uniform measure on the odd component, and zero on the even component.

Furthermore, for $d=0$, restricted to the even component, $\rho_{0* }\mu_{GL_2}$  is equal to $\mu_{PGL_2}$ times the number of $\Gamma_1(N)$-cosets in  $\rho_0^{-1} (\rho_0( \Gamma_1(N))$. So the integral over $\mu_{PGL_2}$ over the even component is the integral evaluated in Lemma \ref{rankin-selberg-method}, divided by the number of such cosets. 
 This count of cosets is the number of degree zero elements of $| F^\times \backslash \mathbb A_F^\times / \mathbb A_F^\times \cap \Gamma_1(N)| $ and thus is $ \frac{ |J_C(\mathbb F_q)|  q^{\deg N} \prod_{v | n} (1- q^{-\deg v} ) }{q-1} $.
 
 Dividing the formula of Lemma \ref{rankin-selberg-method} by this quantity, we see that the integral over the even component is  \[ |C_f|^2   q^{2g-2 +  \deg N }    L (1 , \operatorname{ad} \mathcal F )  \prod_{v | N}  \det( 1- q^{- \deg v} \Frob_{|\kappa_v|}, (\mathcal F \otimes \mathcal F^\vee)^{I_v} / (\mathcal F^{I_v} \otimes (\mathcal F^\vee)^{I_v} )  )  / (1 -q^{- \deg v} ) .\]
 
 The integral over the odd component is given by the same formula, for the same reason.

\end{proof}

\begin{lemma}\label{good-L-value-bound} Assume $C$ admits a degree $d$ map to $\mathbb P^1$ defined over $\mathbb F_q$.

We have
\[ \frac{1}{   ( O ( \log (3g-3 +  2\deg N )))^{3d} } \leq  \left|   L(1,\operatorname{ad}(\mathcal F))  \right| \leq  ( O ( \log (3g-3 +  2\deg N )))^{3d} \]
where the constant depends only on $q$ and $d$.

\end{lemma}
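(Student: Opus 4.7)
The plan is to push the adjoint sheaf forward along the given degree $d$ map $\pi \colon C \to \mathbb{P}^1$, reducing to a bound for an $L$-function on $\mathbb{P}^1$, and then prove that bound by combining the Riemann hypothesis bound of Deligne with a crude point-counting estimate, splitting the resulting exponential sum at the appropriate threshold.

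Let $\mathcal{G} := \pi_* \operatorname{ad}(\mathcal{F})$, a middle-extension sheaf on $\mathbb{P}^1$ of generic rank $3d$, pure of weight $0$. Pushforward along a finite map preserves Euler factors, so $L(s, \operatorname{ad}\mathcal{F}) = L(s, \mathcal{G})$. Because $\mathcal{F}$ is cuspidal, $\operatorname{ad}\mathcal{F}$ has no global invariants or coinvariants, and the same holds for $\mathcal{G}$; hence $L(s, \mathcal{G})$ is a polynomial in $q^{-s}$ of some degree $M$, which by Grothendieck-Ogg-Shafarevich (combined with Riemann-Hurwitz to control the conductor of $\mathcal{G}$ in terms of $g$, $\deg N$, and the ramification of $\pi$) is bounded by a quantity of order $O_d(3g - 3 + 2\deg N)$. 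By Deligne's theorem we factor
\[ L(s, \mathcal{G}) = \prod_{j=1}^{M}(1 - \alpha_j q^{-s}), \qquad |\alpha_j| \le \sqrt{q}. \]

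Writing $N_k := -\sum_j \alpha_j^k$, so that $\log L(1, \mathcal{G}) = \sum_{k \ge 1} N_k/(k q^k)$, there are two natural bounds on $N_k$. By Lefschetz, $N_k = \sum_{v\,:\,\deg v \mid k} \deg v \cdot \operatorname{tr}(\operatorname{Frob}_v^{k/\deg v}, \mathcal{G}_v)$, and purity gives $|\operatorname{tr}(\operatorname{Frob}_v^j, \mathcal{G}_v)| \le 3d$, yielding the \emph{point-counting} bound $|N_k| \le 3d \cdot \#\mathbb{P}^1(\mathbb{F}_{q^k}) = 3d(q^k+1)$. Separately, $|\alpha_j| \le \sqrt q$ gives the \emph{Riemann hypothesis} bound $|N_k| \le M q^{k/2}$. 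The two bounds cross at $k \approx 2 \log_q M$, so setting $K := \lceil 2 \log_q M \rceil$ and using the point-counting bound for $k \le K$ and the Riemann hypothesis bound for $k > K$, we obtain
\[ |\log L(1, \mathcal{G})| \le \sum_{k=1}^{K} \frac{3d(q^k+1)}{k\, q^k} + \sum_{k > K} \frac{M}{k\, q^{k/2}} \le 3d \log K + O_d(1), \]
since the first sum evaluates to $3d \log K + O_d(1)$ and the tail, starting from $k = K+1$ where $M q^{-K/2} = 1$, sums geometrically to $O_d(1)$. With $K = O(\log M)$ this becomes $|\log L(1, \mathcal{G})| \le 3d \log \log M + O_d(1)$, and exponentiating $\pm \operatorname{Re} \log L(1, \mathcal{G})$ gives simultaneously the upper bound $|L(1, \operatorname{ad}\mathcal{F})| \le (O(\log M))^{3d}$ and the lower bound $|L(1, \operatorname{ad}\mathcal{F})| \ge (O(\log M))^{-3d}$.

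The main obstacle is the careful bookkeeping required in the reduction step: one must control the degree $M$ of the polynomial $L(s, \mathcal{G})$ on $\mathbb{P}^1$ in terms of $3g - 3 + 2 \deg N$, which requires computing how Swan and Artin conductors of $\operatorname{ad}(\mathcal{F})$ at points of $C$ contribute to the conductor of the pushforward at the points below them on $\mathbb{P}^1$, together with the contribution from the ramification of $\pi$ itself. Once this is done, the final analytic step combining the two bounds on $N_k$ is essentially elementary.
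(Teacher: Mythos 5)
Your proposal follows essentially the same strategy as the paper: write $\log L(1,\operatorname{ad}\mathcal F)$ as a Dirichlet series in $n$, bound the $n$th term two ways (Riemann hypothesis vs.\ point-counting), and split the sum at the threshold $\sim 2\log_q(\text{degree of }L)$. The point-counting bound $|N_k|\le 3d(q^k+1)$ and the RH bound $|N_k|\le M q^{k/2}$ are the same as the paper's bounds (\ref{L-value-2}) and (\ref{L-value-1}), and the final splitting and summation are identical, so the analytic core is correct.

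The difference is your initial pushforward $\mathcal G := \pi_*\operatorname{ad}(\mathcal F)$ to $\mathbb P^1$, which the paper does not perform: it works directly on $C$, using the degree-$d$ map only for the elementary bound $|C(\mathbb F_{q^n})| \le d(q^n+1)$. Your detour is harmless but also pointless, and in fact it is what manufactures the ``main obstacle'' you flag. There is no need to track how the Artin and Swan conductors of $\operatorname{ad}\mathcal F$ push forward under $\pi$, nor to invoke Riemann--Hurwitz: the degree $M$ of the $L$-polynomial is simply $\dim H^1(C_{\overline{\mathbb F}_q}, \operatorname{ad}\mathcal F)$, since $L(s,\mathcal G)$ and $L(s,\operatorname{ad}\mathcal F)$ are the same function, and this dimension is bounded directly by Grothendieck--Ogg--Shafarevich on $C$ without any reference to $\pi$. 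Likewise, the equality of Lefschetz traces $\sum_{x\in\mathbb P^1(\mathbb F_{q^k})}\tr(\Frob,\mathcal G_x)=\sum_{y\in C(\mathbb F_{q^k})}\tr(\Frob,\operatorname{ad}\mathcal F_y)$ shows your $N_k$ is precisely the paper's $\sum_{y\in C(\mathbb F_{q^k})}\tr(\Frob_{q^k},\operatorname{ad}\mathcal F_y)$ up to sign, so nothing is gained. In short: the proof is correct, but the reduction to $\mathbb P^1$ is unnecessary and the bookkeeping you worry about evaporates once you compute the relevant quantities on $C$ itself, which is exactly what the paper does.
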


\begin{proof} We have \[ \log L(1,\operatorname{ad}(\mathcal F)) = \sum_{n=0}^{\infty} \frac{ \tr (\Frob_q^n , H^1 (C_{\overline{\mathbb F}_q}, \operatorname{ad}(\mathcal F) )) q^{-n} }{n }.\]

We have the upper bound \begin{equation}\label{L-value-1} \tr (\Frob_q^n , H^1 (C_{\overline{\mathbb F}_q}, \operatorname{ad}(\mathcal F) )) \leq q^{n/2} \dim  H^1 (C_{\overline{\mathbb F}_q}, \operatorname{ad}(\mathcal F) ) \leq q^{n/2} (3g-3 + 2 \deg N) \end{equation} and the upper bound \begin{equation}\label{L-value-2} \tr (\Frob_q^n , H^1 (C_{\overline{\mathbb F}_q}, \operatorname{ad}(\mathcal F) )) = \sum_{x \in C(\mathbb F_{q^n} )} \tr (\Frob_{q^n}, \ad (\mathcal F)_x) \leq 3 | C( \mathbb F_{q^n})| \leq 3 d (q^n+1) \end{equation}

Let $k = \lfloor  2\log_q \frac{ 3g -3 +2 \deg N}{3d} \rfloor$. We use \eqref{L-value-1} for $n> k$ and \eqref{L-value-2} for $n \leq k$. The total contribution from \eqref{L-value-1} is \[   (3g-3 + 2 \deg N)  \sum_{n=k+1}^\infty \frac{q^{ -n/2} }{n} \leq\frac{ ( 3g-3 + 2 \deg N)  q^{ - (k+1)/2 } }{ (k+1) (1-1/\sqrt{q} ) } \leq  \frac{3 d}{ (k+1) (1-1/\sqrt{q})}.\] The total contribution from \eqref{L-value-2} is \[ \sum_{n=1}^k\frac{  3 d (1 + q^{-n}) }{n } \leq 3d \log (1+ q^{-1}) + \sum_{n=1}^k\frac{ 3d }{n}\] so the total contribution from both is \[ 3d \log (1+ q^{-1}) + \sum_{n=1}^k \frac{3d}{n} +  \frac{3 d}{ (k+1) (1-1/\sqrt{q})}    \leq O_{d,q} (1) +  \sum_{n=1}^k \frac{3d}{n}  \leq O_{d,q}(1) + 3d \log k  .\] The exponential of this is at most $O _{d,q} ( k^{3d} )= O_{d,q} ( ( \log_q ( 3g-3+ 2\deg N) )^{3d})$, and at least the inverse of that same term.

 \bibliographystyle{plainnat}

\bibliography{references}

\end{proof}

\tableofcontents
\end{document}